\DeclareMathOperator{\sign}{sign}
\DeclareMathAlphabet{\mathpzc}{OT1}{pzc}{m}{it}
\numberwithin{equation}{section}
\newcommand{\R}{\mathbb{R}}
\newcommand{\be}{\begin{equation}}
\newcommand{\en}{\end{equation}}
\newcommand{\ee}{\end{equation}}
\DeclareMathOperator{\supp}{supp}
\newcommand{\bt}{\begin{theorem}}
\newcommand{\et}{\end{theorem}}
\newcommand{\bp}{\begin{proof}}
\newcommand{\ep}{\end{proof}}
\newcommand{\bc}{\begin{cor}}
\newcommand{\ec}{\end{cor}}
\newcommand{\bl}{\begin{lemma}}
\newcommand{\el}{\end{lemma}}
\newcommand{\bprop}{\begin{prop}}
\newcommand{\eprop}{\end{prop}}
\newtheorem{theorem}{Theorem}[section]
\newtheorem{remark}[theorem]{Remark}
\newtheorem{lemma}[theorem]{Lemma}
\newtheorem{prop}[theorem]{Proposition}
\newtheorem{cor}[theorem]{Corollary}
\newtheorem{claim}[theorem]{Claim}
\DeclareMathAlphabet{\mathpzc}{OT1}{pzc}{m}{it}
\newtheorem{thmx}{Theorem}
\author{Oscar Ria\~no}
\address{Department of Mathematics \& Statistics, Florida International University, Miami, FL 33199, USA}\email{orianoca@fiu.edu}
\date{March 2021}
\title[On persistence properties for solutions of the fractional Korteweg-de Vries equation]{On persistence properties in weighted spaces for solutions of the fractional Korteweg-de Vries equation}
\keywords{Fractional KdV equation, Burgers-Hilbert equation, Weighted Sobolev spaces, Unique continuation principles}
\begin{document}

\begin{abstract} 
Persistence problems in weighted spaces have been studied for different dispersive models involving non-local operators. Generally, these models do not propagate polynomial weights of arbitrary magnitude, and the maximum decay rate is associated with the dispersive part of the equation. Altogether, this analysis is complemented by unique continuation principles that determine optimal spatial decay. This work is intended to establish the above questions for a weakly dispersive perturbation of the inviscid Burgers equation. More precisely, we consider the fractional Korteweg-de Vries equation,  which comprises the Burgers-Hilbert equation and dispersive effects weaker than those of the Benjamin-ono equation.  
\end{abstract}

\maketitle

\section{Introduction}\label{intro}

We consider the initial value problem (IVP) associated to the fractional Korteweg-de Vries equation
\begin{equation}\label{BDBO}
    \left\{\begin{aligned}
    &\partial_t u -\partial_xD^{\alpha} u+u\partial_x u =0,\quad (x,t)\in \R^{2}, \, -1\leq  \alpha<1,\, \alpha\neq 0, \\
    &u(x,0)=u_0(x),
    \end{aligned}\right.
\end{equation}
where the operator $D^s$ denotes the homogeneous derivative of order $s\in \mathbb{R}$ defined by
$$D^s f=(\mathcal{H}\partial_x)^s f=(|\xi|^s \widehat{f}(\xi,\eta))^{\vee},$$
and $\mathcal{H}$ stands for the Hilbert transform 
\begin{equation*}
    \mathcal{H}f(x)=\frac{1}{\pi} \, p.v. \int_{\mathbb{R}}\frac{f(y)}{x-y} dy=\big(-i\sign(\xi)\widehat{f}\big)^{\vee}(x).
\end{equation*}
The family \eqref{BDBO} with $\alpha \geq -1$ comprises a wide varity of models of physical relevance. We recall that the case $\alpha=2$ corresponds to the widely studied Korteweg-de Vries equation (KdV), and $\alpha=1$ is referred as the Benjamin-Ono equation (BO) (see \cite{benjamin,KdV,Ono}). Besides, the case $\alpha=0$, after a suitable change of variables agrees with the inviscid Burger's equation. Setting $\alpha=-1/2$, the equation in \eqref{BDBO} was proposed in \cite{Vera3} as a model in the study of water waves in two dimensions with infinite depth. Actually, the case $\alpha=-1/2$ is reminiscent for large frequencies of the Whitman equation (see \cite{KLEIN1,whitham2011linear} and the reference therein)
\begin{equation}\label{whitman}
\begin{aligned}
\partial_t u -\partial_x \Gamma u+u\partial_x u =0,\quad (x,t)\in \R^{2},
\end{aligned}
\end{equation}
where $\Gamma$ is defined by the Fourier multiplier $\Big(\frac{\tanh(\xi)}{\xi} \Big)^{1/2}$. Similarly, the case $\alpha=1/2$ in \eqref{BDBO} mimics the dispersion of the Whitman equation in the presence of strong surface tension, i.e., \eqref{whitman} with $\Gamma$ defined by $(1+\nu \xi^2)^{1/2}\Big(\frac{\tanh(\xi)}{\xi} \Big)^{1/2}$, where $\nu$ measures the surface tension effects. In general, the equations \eqref{BDBO} with $\alpha>-1$ have been used in a diversity of wave phenomena, including vorticity waves in the coastal zone, we refer to \cite{naumkin,shrira_voronovich}. Another model of physical relevance is the case $\alpha=-1$, or the so-called Burgers-Hilbert equation \cite{Biello,BuHil}
\begin{equation}\label{BHeqution}
\partial_tu+\mathcal{H}u+u\partial_x u=0.
\end{equation}
This model was introduced to study nonlinear constant-frequency waves.

The equations in \eqref{BDBO} are also useful from physics or continuum mechanics to understand the competition between dispersion and nonlinearity. They are convenient in the study of phenomena where the iteration between these effects is investigated through a certain fractional parameter. We remark that one can address such questions by fixing the dispersion and varying the nonlinearity instead, e.g., $u^p u_x $ with $ p \geq 1 $ (notice that there are inherent difficulties to treat the case where $p$ is not an integer number). As an example on the study of the effects of dispersion on a quadratic nonlinearity, it has been considered if the formation of shock proper of the Burger's equation persists in the presence of dispersion such as in the equation in \eqref{BDBO}, or on the contrary, the dispersive effect dominates the dynamic. This problem is completely different for dispersion $-1\leq \alpha<0$ and $0 <\alpha \leq 1$. In the former case, it seems that the hyperbolic aspects dominate, and so it is conjectured shock formation for all dispersion $1\leq \alpha<0$ in \eqref{BDBO}. To the best of our knowledge, it has been established wave breaking (shock formation) for solutions of the Cauchy problem \eqref{BDBO} when $-1 \leq \alpha<-1/3$ (see \cite{Vera2,Hur_2014,Jean_Wang}). For the latter case $0<\alpha<1$, the dispersive effects dominate as suggested by the numerical simulations presented in \cite{KLEIN1,KLEIN2}, where it is conjecture that no shock formation exists. Besides, it is expected that when $1/2<\alpha<1$, \eqref{BDBO} generates global solutions (in \cite{MOVPIL} this conjecture was proved for $6/7<\alpha <1$), and when $0<\alpha \leq 1/2$ with $\alpha \neq 1/3$ 
 one expects different types of blow-up. In contrast, in \cite{Herr_Ione_Keni_Koch}, it was proved that for $\alpha \geq 1$ the solutions of the Cauchy problem are global and therefore no finite time blow-up occurs. We also point out some other results involving the equation \eqref{BDBO}, such as the study of the propagation on regularity phenomena in \cite{Argen1}, and the local unique continuation principles established in \cite{KPVUNI1,KDPVUNI2}.

Concerning some invariant of the equation in \eqref{BDBO}, setting  $-1<\alpha<1$, $\alpha \neq 0$, real solutions of the IVP \eqref{BDBO} formally satisfy the following conserved quantities (time invariant)
\begin{equation}\label{conservequanti}
\begin{aligned}
I_1(u)&=\int_{\mathbb{R}} u(x,t) \, dx, \hspace{2cm} I_2(u)=\int_{\mathbb{R}} u^2(x,t)\, dx, \\
I_3(u)&=\int_{\mathbb{R}} \big(D^{\alpha/2} u\big)^2(x,t)-\frac{1}{3}u^3(x,t)\, dx.
\end{aligned}
\end{equation} 
Similarly, real solutions of the IVP \eqref{BDBO} with $\alpha=-1$ satisfy $I_2(u)$ and $I_3(u)$ above. Note that by Sobolev embedding $H^{1/6}(\mathbb{R}) \subset L^{3}(\mathbb{R})$, and so  $I_1(u)$ is well-defined whenever $\alpha \geq 1/3$. Moreover, \eqref{BDBO} with $-1<\alpha<1$, $\alpha \neq 0$ is invariant under the scaling transformation $u_{\lambda}(x,t)=\lambda^{\alpha}u(\lambda x, \lambda^{1+\alpha}t)$ for any positive number $\lambda$. Thus, the critical Sobolev index corresponding to the scale is $s_{c,\alpha}=\frac{1}{2}-\alpha$. In particular, when $\alpha=1/2$ the equation is $L^2$-critical.

Regarding solitary waves for \eqref{BDBO}, we recall that they are solutions of the form $u(x,t)=Q_c(x-ct)$ with suitable decay that must satisfy the equation
\begin{equation}\label{gsEqu}
D^{\alpha}Q_c+cQ_c-\frac{1}{2}Q_c^2=0,
\end{equation}
where $c>0$. As the Hamiltonian does not make sense when $\alpha <1/3$, we do not expect existence of solitary wave for such dispersions (see \cite{Linares2014} for a proof of nonexistence when $\alpha<0$). However, solitary waves exist when $\alpha>1/3$, 
we refer to \cite{Pava_2018,Arnesen,Frank2013,linares2015} and the references therein for a more detailed analysis. We emphasize that positive solutions of \eqref{gsEqu} satisfy the regularity condition $Q_c\in H^{\alpha+1}(\mathbb{R})\cap C^{\infty}(\mathbb{R})$, and the decay
\begin{equation}\label{decayGroundS}
\frac{C_1}{1+|x|^{1+\alpha}}\leq Q_c(x) \leq \frac{C_2}{1+|x|^{1+\alpha}},
\end{equation}
for any $x \in \mathbb{R}$, and $0<C_1<C_2$ depending on $\alpha$.

Next, we recall some well-posedness results for the initial value problem \eqref{BDBO}. We follow Kato's notion of well-posedness, which consists of existence, uniqueness, persistence property (if $u_0\in X$ functional space, then the corresponding solutions describes a continuous curve in $X$, in other words, $u\in  C([0,T];X)$) and continuous dependence of the map data-solution. From this standpoint, in the case $0<\alpha<1$, the best known well-posedness results in $L^2(\mathbb{R})$ Sobolev spaces were established in \cite{MOVPIL} (see also \cite{Linares2014} for previous conclusion on this matter). It was shown that the IVP \eqref{BDBO} for $\alpha \in (0,1)$ is locally well-posed (LWP) in $H^s(\mathbb{R})$, whenever $s>3/2-5\alpha/4$, and globally well-posed (GWP) in $H^{\alpha/2}(\mathbb{R})$ as soon as $\alpha>6/7$.  In contrast, by standard energy estimates, the IVP \eqref{BDBO} with dispersion $\alpha\in[-1,0)$ is LWP in $H^s(\mathbb{R})$, $s>3/2$. To the best of our knowledge, there are not results in Kato's sense addressing well-posedness for the Cauchy problem \eqref{BDBO} with $\alpha\in[-1,0)$ in lower regularity spaces $H^s(\mathbb{R})$, $s<3/2$. It should be noted that the previous well-posedness conclusions for $0<\alpha<1$ were approached by compactness method, this is due to the results proved in \cite{molin}, stating that the IVP associated to \eqref{BDBO} in the range $0<\alpha<1$ cannot be solved by a contraction argument based on the corresponding integral equation in any Sobolev space $H^s(\mathbb{R})$, $s\in \mathbb{R}$.

This work aims to provide a detailed analysis of the effects of weak dispersion on the persistence of solutions in weighted spaces.  More precisely, we will show that the spatial behavior of solutions of the equation \eqref{BDBO} is mostly controlled by dispersive effects, which in turn determine the maximum decay allowed by the model. This result contrasts with other studies on the equation (such as wave breaking), where for weak dispersions, the effects of nonlinearity seems to be stronger on the dynamics of the model. We recall that in \cite{FLinaPonceWeBO,FLinaPioncedGBO,FonPO,Iorio1,Iorio2}, similar conclusions were obtained for the Benjamin-Ono equation and the dispersion generalized Benjamin-Ono equation, i.e., \eqref{BDBO} with $1\leq \alpha <2$. 

Let us now state our results. But first, since part of our analysis depends on weighted energy estimates, we require a precise existence theory in $H^s(\mathbb{R})$ compatible with this approach. In this direction, we present the following local well-posedness result. 
\begin{thmx}\label{localtheo}
Let $-1\leq \alpha<1$ and $s>s_{\alpha}$, where $s_{\alpha}=3/2$ for $-1\leq \alpha <0$, and $s_{\alpha}=3/2-3\alpha/8$ for $0<\alpha<1$. Then for any $u_0 \in H^{s}(\mathbb{R})$, there exists a positive time $T(\|u_0\|_{H^s})$ and a unique solution $u$ of \eqref{BDBO} such that
\begin{equation}\label{classofsolu}
u\in C([0,T];H^s(\mathbb{R}))\cap L^1([0,T];W^{1,\infty}(\mathbb{R})).
\end{equation}
Moreover the flow-map $u_0 \mapsto u(t)$ is continuous in the $H^s$-norm.
\end{thmx}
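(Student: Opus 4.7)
The plan is to establish Theorem~\ref{localtheo} by a parabolic regularization argument combined with Bona-Smith continuous-dependence techniques, reducing everything to a priori estimates that are uniform in the regularizing parameter. For $\mu>0$ I would consider the viscous perturbation
\[
\partial_t u^{\mu}-\partial_x D^{\alpha}u^{\mu}+u^{\mu}\partial_x u^{\mu}=\mu\,\partial_x^{2}u^{\mu},
\]
with initial datum $u_0$ (possibly mollified). For each fixed $\mu$ classical parabolic theory yields a smooth local solution $u^{\mu}$, and the task is to show that the existence time and the $H^s$-norm remain bounded as $\mu\downarrow 0$ on an interval $[0,T]$ depending only on $\|u_0\|_{H^s}$.

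The standard $H^{s}$ energy estimate is obtained by applying $J^{s}=(1-\partial_x^2)^{s/2}$ to the equation, pairing with $J^s u^\mu$, using the skew-adjointness of $\partial_x D^{\alpha}$, and invoking the Kato-Ponce fractional Leibniz rule for the commutator coming from the nonlinear term; this produces
\[
\tfrac{1}{2}\tfrac{d}{dt}\|u^{\mu}\|_{H^{s}}^{2}+\mu\|\partial_x u^{\mu}\|_{H^{s}}^{2}\lesssim \|\partial_x u^{\mu}\|_{L^{\infty}}\,\|u^{\mu}\|_{H^{s}}^{2}.
\]
By Gronwall, the whole problem reduces to obtaining a $\mu$-independent control of $\int_0^T\|\partial_x u^{\mu}\|_{L^{\infty}}\,dt$, which is precisely the norm appearing in the target space $L^{1}_T W^{1,\infty}$.

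This last step splits into the two ranges of $\alpha$. For $-1\le\alpha<0$, where the dispersion is too weak to provide useful smoothing, I would simply invoke the Sobolev embedding $H^{s-1}(\R)\hookrightarrow L^{\infty}(\R)$ available when $s>3/2$, giving $\|\partial_x u^{\mu}\|_{L^{\infty}}\lesssim\|u^{\mu}\|_{H^s}$ and closing the estimate on a short interval. For $0<\alpha<1$ one must go below the Sobolev threshold and exploit dispersive smoothing of the linear group $U_{\alpha}(t)=e^{t\partial_x D^{\alpha}}$ with symbol $\xi|\xi|^{\alpha}$. Stationary phase gives the frequency-localized decay $\|U_{\alpha}(t)P_j f\|_{L^{\infty}}\lesssim |t|^{-1/2}\,2^{j(1-\alpha)/2}\|P_j f\|_{L^{1}}$, and a $TT^{\ast}$ argument yields an $L^{4}_{T}L^{\infty}_{x}$ Strichartz estimate with a loss of $(1-\alpha)/4$ derivatives. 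Applying this estimate to the Duhamel representation of $u^{\mu}$, combined with H\"older's inequality in time, a Littlewood-Paley decomposition, and the energy bound through a bootstrap, yields a uniform control of $\|\partial_x u^{\mu}\|_{L^{1}_{T}L^{\infty}_{x}}$; balancing the Strichartz derivative gain against the Bernstein losses at high frequency is what produces precisely the threshold $s>3/2-3\alpha/8$.

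Once the uniform $H^s$ and $L^1_T W^{1,\infty}$ bounds are established, existence follows by standard compactness as $\mu\downarrow 0$. Uniqueness is obtained from an $L^{2}$ energy estimate on the difference of two solutions, in which the Gronwall factor is precisely $\|\partial_x u\|_{L^{\infty}}$, a norm already integrable in time. Continuous dependence of the data-to-solution map in $H^{s}$ follows from the Bona-Smith scheme: regularize $u_0$ as $u_{0,n}$, show that the corresponding solutions $u_n$ are Cauchy in $C([0,T];L^2(\R))$, and upgrade to strong $H^s$ convergence via the norm-plus-weak continuity argument. The main obstacle throughout is the Strichartz step in the range $0<\alpha<1$: the dispersion is substantially weaker than in the Benjamin-Ono regime, and the derivative loss in the dispersive decay must be very carefully balanced against the energy estimate to land exactly at the threshold $s>3/2-3\alpha/8$ stated in the theorem.
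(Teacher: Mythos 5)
Your overall strategy coincides with what the paper actually does: the paper does not prove Theorem~\ref{localtheo} in detail, but attributes the case $-1\leq\alpha<0$ to the standard parabolic regularization argument together with the embedding $H^{s}(\mathbb{R})\hookrightarrow W^{1,\infty}(\mathbb{R})$ for $s>3/2$, and the case $0<\alpha<1$ to Linares, Pilod and Saut \cite{Linares2014}. Your treatment of the negative range, of uniqueness via the $L^2$ estimate on differences with Gronwall factor $\|\partial_x u\|_{L^\infty}$, and of continuous dependence via Bona--Smith is exactly the expected route and is fine.

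The one step you should be more careful about is the Strichartz argument for $0<\alpha<1$. The frequency-localized dispersive decay and the $TT^{\ast}$ estimate $L^4_T L^\infty_x$ with a loss of $(1-\alpha)/4$ derivatives are correct, but applying that estimate directly to the Duhamel formula and then using H\"older in time over the whole interval $[0,T]$ does not close at $s>3/2-3\alpha/8$: the inhomogeneous term carries $\partial_x(u^2)$, and the full derivative there cannot be absorbed by the plain Strichartz gain. The device that actually produces the threshold $3/2-3\alpha/8$ in \cite{Linares2014} is the Koch--Tzvetkov/Kenig--Koenig refinement: one partitions $[0,T]$ into subintervals whose length depends on the Littlewood--Paley frequency $2^{j}$ (of order $2^{-j\delta}$), applies the Strichartz estimate on each piece so that the Duhamel contribution is short-time and hence small, applies H\"older in time on each piece, and then sums over the $\sim T2^{j\delta}$ pieces; optimizing $\delta$ against the Bernstein and Strichartz losses is what yields $3\alpha/8$. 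Your sketch gestures at ``balancing the Strichartz gain against the Bernstein losses'' but omits the frequency-dependent time-slicing, which is the essential ingredient; without it the argument lands at a strictly worse regularity. With that refinement inserted, your proof is the one the cited reference carries out.
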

The case $-1\leq \alpha<0$ in Theorem \ref{localtheo} is obtained by standard parabolic regularization argument and the embedding $H^{s}(\mathbb{R})\hookrightarrow W^{1,\infty}(\mathbb{R})$ for $s>3/2$. The local result for $0<\alpha<1$ is due to Linares, Pilod and Saut \cite{Linares2014}. Before stating our results, as a further preliminary, we introduce the weighted Sobolev spaces
\begin{equation}\label{weightespace}
Z_{s,r}(\mathbb{R})=H^{s}(\mathbb{R})\cap L^{2}(| x|^{2r} \, dx), \hspace{0.5cm} s,r \in \mathbb{R}
\end{equation}
and 
\begin{equation}\label{weightespacedot}
\dot{Z}_{s,r}(\mathbb{R})=\left\{f\in H^{s}(\mathbb{R})\cap L^{2}(| x|^{2r} \, dx):\, \widehat{f}(0)=0 \right\}, \hspace{0.5cm} s,r \in \mathbb{R}.
\end{equation}
To motive our results in the above spaces, we observe that the linear part of equation \eqref{BDBO} 
\begin{equation}\label{commulinearpa}
\partial_t-\partial_xD^{\alpha} \qquad \text{ commutes with }\qquad x+(1+\alpha)tD^{\alpha}.
\end{equation}
Thus, for the case $0<\alpha<1$, it is natural to address well-posedness conclusions in the spaces $Z_{s,r}(\mathbb{R})$ where the regularity and the decay satisfy $s\geq \alpha r$. However, notice that this is no longer the case when $-1\leq \alpha <0$. Now, we are in the condition to present our first result:	
\begin{theorem}\label{Theowellpos}
Let $s_{\alpha}=3/2$ for $-1\leq \alpha <0$, and $s_{\alpha}=3/2-3\alpha/8$ for $0<\alpha<1$.
\begin{itemize}
\item[(i)] If $\alpha \in [-1,1)\setminus\{0\}$, then the IVP \eqref{BDBO} is LWP in $Z_{s,r}(\mathbb{R})$, whenever $r\in (0,3/2+\alpha)$ and $s\geq \max\{s_\alpha^{+},\alpha r\}$.
\item[(ii)] Let $\alpha=-1$ and $s>3/2$. Then
\begin{itemize}
\item the IVP \eqref{BDBO} is LWP in the space 
\begin{equation*}
HZ_{s,1/2}(\mathbb{R}):=\{f\in Z_{s,1/2}(\mathbb{R}): \|f\|_{Z_{s,1/2}}+ \||x|^{1/2}\mathcal{H}f\|_{L^2}<\infty\}.
\end{equation*}
\item If $1/2<r<3/2$. Then, the IVP \eqref{BDBO} is LWP in the space $\dot{Z}_{s,r}(\mathbb{R})$.
\end{itemize}
\item[(iii)] If $\alpha \in (-1,1)\setminus\{0\}$, then the IVP \eqref{BDBO} is LWP in $\dot{Z}_{s,r}(\mathbb{R})$, whenever $r\in [3/2+\alpha,5/2+\alpha)$ and $s\geq \max\{s_\alpha^{+},\alpha r\}$.
\end{itemize}
\end{theorem}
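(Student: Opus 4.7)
The plan is to upgrade the $H^{s}$-solution supplied by Theorem \ref{localtheo} to one lying in the weighted class by means of a weighted $L^{2}$ energy estimate on $\||x|^{r}u(t)\|_{L^{2}}^{2}$, in the spirit of the Fonseca--Linares--Ponce programme for the Benjamin--Ono and dispersion-generalised BO equations cited in the introduction. I would start from a smooth approximating sequence (Bona--Smith regularisation when $\alpha\in(0,1)$, parabolic regularisation when $\alpha\in[-1,0)$, consistent with the constructions behind Theorem \ref{localtheo}), formally compute
\[
\tfrac{1}{2}\tfrac{d}{dt}\||x|^{r}u\|_{L^{2}}^{2}=\int |x|^{2r}u\,\partial_{x}D^{\alpha}u\,dx-\int |x|^{2r}u^{2}u_{x}\,dx,
\]
bound the nonlinear contribution by $\|u_{x}\|_{L^{\infty}}\||x|^{r}u\|_{L^{2}}^{2}$ (integrable in $t$ by virtue of \eqref{classofsolu}) after a standard integration by parts, apply Gronwall, and finally pass to the limit in the regularising parameter.

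The heart of the argument is the dispersive integral. Writing
$|x|^{r}\partial_{x}D^{\alpha}u=\partial_{x}D^{\alpha}(|x|^{r}u)+[|x|^{r},\partial_{x}D^{\alpha}]u$
makes the skew-adjoint piece vanish upon pairing with $|x|^{r}u$ and reduces matters to controlling the commutator. The latter is estimated via pointwise Stein-type bounds on $D^{\alpha}(|x|^{r})$ combined with the interpolation $\||x|^{\theta}f\|_{L^{2}}\leq\|f\|_{L^{2}}^{1-\theta}\||x|f\|_{L^{2}}^{\theta}$ for $0\leq\theta\leq 1$. The range $r\in(0,3/2+\alpha)$ in (i) is precisely the one in which $D^{\alpha}(|x|^{r})$ remains locally integrable; beyond it the kernel of $D^{\alpha}$ acquires a delta-like contribution at the origin proportional to $\widehat{u}(0)$, whence the hypothesis $\widehat{u_{0}}(0)=0$ of part (iii). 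That hypothesis is preserved by the flow because \eqref{BDBO} is in divergence form, and removing the unwanted term re-opens the commutator scheme up to the next obstruction at $r=5/2+\alpha$.

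Part (ii) must be handled separately because at $\alpha=-1$ the ``dispersion'' reduces to $\mathcal{H}u$, of order zero: the weighted estimate produces integrals of the form $\int |x|\,u\,\mathcal{H}u\,dx$ that cannot be closed with $\||x|^{1/2}u\|_{L^{2}}$ alone but are naturally controlled by $\||x|^{1/2}\mathcal{H}u\|_{L^{2}}$, which motivates the enlarged class $HZ_{s,1/2}$. For $1/2<r<3/2$ the analogous argument requires $\widehat{u_{0}}(0)=0$: using $\mathcal{H}=D^{-1}\partial_{x}$ gives an antiderivative-type gain on $\mathcal{H}u$ that compensates the extra weight and allows the scheme of (i) to be pushed past the $\alpha=-1$ threshold within $\dot{Z}_{s,r}$.

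The main obstacle I anticipate is the commutator analysis near the boundary $r=3/2+\alpha$, which demands a sharp pointwise inequality of the form $|D^{\alpha}(|x|^{r}\phi)-|x|^{r}D^{\alpha}\phi|\lesssim |x|^{r+\alpha-1}|\phi|+(\text{low-frequency correction})$, with the correction tracked explicitly so that one sees how the $\widehat{u_{0}}(0)=0$ hypothesis removes it; an analogous pointwise bound is needed for $\mathcal{H}$ in the $\alpha=-1$ case. A secondary technical point will be to verify that the Bona--Smith/parabolic approximation is compatible with the weighted norms, so that the weighted norm of the regularised solution converges to that of the genuine one as the regularising parameter vanishes.
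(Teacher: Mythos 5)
Your overall framework --- weighted energy estimates with (truncated) weights, isolating the skew-adjoint part of the dispersion after commuting it past the weight, bounding the resulting commutator, and closing with Gronwall --- is exactly the strategy of the paper, and your handling of the nonlinearity, of the approximation issue, and of why the zero-mean hypothesis enters in (iii) is sound in outline. The genuine gap is in the commutator analysis for negative dispersion. Writing $\partial_xD^{\alpha}=-\mathcal{H}D^{1+\alpha}$, the commutator $[\langle x\rangle^{\theta},D^{1+\alpha}]u$ can be controlled by an $L^{\infty}$ bound on $D^{1+\alpha}\langle x\rangle^{\theta}$ (your ``pointwise Stein-type bounds'') only when $\theta<1+\alpha$, since Proposition \ref{propfracweighapp} requires the order of the derivative to exceed the power of the weight. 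On the remaining range $1+\alpha\leq\theta<3/2+\alpha$ --- which is nonempty whenever $\alpha<1/2$ and is most of the claimed range when $\alpha$ is near $-1$ --- the available commutator lemmas (e.g.\ Lemma \ref{lemmacomm2}) force one to place $D^{\alpha}u$, or at best $D^{1+\alpha-\theta-\epsilon}u$, in $L^{2}$; for $-1<\alpha\leq-1/2$ this fails in general because $|\xi|^{\alpha}\notin L^{2}_{loc}$ near the origin. Your scheme therefore does not close there. The paper's resolution is to split into low and high frequencies with a projector $P^{\phi}$, to prove $D^{1+\alpha-\theta-\epsilon}u\in L^{\infty}_{T}L^{2}$ from the decay of the data (Lemma \ref{claimnegade0.1}), and --- for the low-frequency piece --- to introduce a hierarchy of auxiliary weighted energy identities for $D^{l(1+\alpha)}P^{\phi}u$, $l=1,\dots,\kappa$, with $\kappa(1+\alpha)+\alpha>0$, all coupled into a single Gronwall inequality (see \eqref{diferenequalderivlocal} and \eqref{resuldiff}). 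This iteration in the number of derivatives is the central new device of the proof and is absent from your proposal; an analogous coupled system reappears in part (iii) for the weights $1<r<5/2+\alpha$.

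Two smaller corrections. First, your diagnosis that $r=3/2+\alpha$ is where ``$D^{\alpha}(|x|^{r})$ stops being locally integrable'' is not quite right: the pointwise bound on $D^{1+\alpha}\langle x\rangle^{\theta}$ already degenerates at $\theta=1+\alpha$, and the true threshold $3/2+\alpha$ is a frequency-side statement --- $|\xi|^{1+\alpha-r}$ ceases to be square-integrable at $\xi=0$ (Proposition \ref{steinderiweighbet}), with the singular contribution carrying the factor $\widehat{u}(0)$ --- which is what the zero-mean hypothesis removes. Second, for (ii) your motivation for the space $HZ_{s,1/2}$ is correct, but the mechanism in the paper is a coupled energy estimate for the pair $(u,\mathcal{H}u)$ obtained by applying $\mathcal{H}$ to the equation, together with the identity $[\mathcal{H},x]f=0$ for mean-zero $f$ and the bound $\|\sign(\xi)\widehat{f}\|_{H^{s}}\lesssim\|\widehat{f}\|_{H^{s}}$ when $\widehat{f}(0)=0$ (Lemma \ref{interpo}); the ``antiderivative gain'' from $\mathcal{H}=D^{-1}\partial_{x}$ that you invoke is not what is used and would by itself not justify $\mathcal{H}u_{0}\in L^{2}(|x|^{2r}dx)$.
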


The previous theorem shows that the flow-map data solution of \eqref{BDBO} preserves $L^2$ spaces involving precise conditions of decay and regularity determined by the dispersion. This result is useful in the study of the classes of weights (such as polynomial or exponential) propagated by solutions of \eqref{BDBO}.

The proof of Theorem \ref{Theowellpos} follows from weighted energy estimates inspired by the results of Fonseca, Linares, and Ponce in \cite{FLinaPioncedGBO,FonPO} for the cases $1\leq \alpha <2$. However, our conclusions involve several independent difficulties expected from the iteration between low dispersive effects and weighted spaces. Mainly, motivated by \eqref{BDBO}, the proof of Theorem \ref{Theowellpos} requires a detailed study of polynomial decay and negative derivatives of the solution. Besides, some additional difficulties are proper of lower dispersions as can be seen from the lack of $L^2$-integrability at the origin of the Fourier multiplier associated to $D^{\alpha}$, i.e., $|\xi|^{\alpha}$ with $-1<\alpha \leq -1/2$. Thus, the proof of Theorem \ref{Theowellpos} for such dispersions requires incorporating new equations obtained after applying a controlled number of derivatives in conjunction with a projector operator to the equation in \eqref{BDBO} (see \eqref{diferenequalderivlocal} below). We apply this strategy on several occasions during the proof of Theorem \ref{Theowellpos}. As a consequence, we require to deduce a commutator estimate (see Lemma \ref{lemmacomm3}) dealing with projectors and derivatives. 

\begin{remark}
\begin{itemize}
\item[(a)] Setting $1\leq \alpha <2$ in Theorem \ref{Theowellpos}, we obtain the same decay rates  in \cite{FLinaPioncedGBO,FonPO}. This shows some continuity between spatial decay and dispersion for solutions of \eqref{BDBO}.

\item[(b)] The results in Theorem \ref{Theowellpos} (i) and (iii) are still true for regularity $s>\max\{1,r\alpha\}$ provided that there exists a local (or global) theory for the IVP \eqref{BDBO} in $H^s(\mathbb{R})$, for which the solutions generated are in the class \eqref{classofsolu}. This last condition is fundamental to apply techniques based on energy methods.
\item[(c)] The proof of Theorem \ref{Theowellpos} (ii) for $r=1/2$ requires the assumption $\mathcal{H}u_0 \in L^2(|x|\, dx)$. Since this is not true in general (as the weight $|x|$ does not satisfy the $A_2(\mathbb{R})$ condition), we have introduced the spaces $HZ_{s,1/2}(\mathbb{R})$. Also, notice that for $u_0 \in Z_{s,1/2}(\mathbb{R})$ the condition $\widehat{u_0}(0)=0$ does not make sense in general too.
\item[(d)] For the case $0<\alpha<1$, our conclusion in Theorem \ref{Theowellpos} (i) satisfies the condition $s\geq \alpha r$ when
\begin{equation}\label{condiregdec1}
\begin{aligned}
\frac{\sqrt{609}-15}{16} < \alpha<1, \quad \text{ and } \quad 3/2\alpha-3/8< r <3/2+\alpha,
\end{aligned}
\end{equation}
and Theorem \ref{Theowellpos} (iii) for those cases where 
\begin{equation}\label{condiregdec2}
\begin{aligned}
\frac{\sqrt{913}-23}{16} < \alpha<1, \quad \text{ and } \quad 3/2\alpha-3/8< r <5/2+\alpha.
\end{aligned}
\end{equation}
As a consequence, there still remains to determine whether the persistence (or well-posedness) in $Z_{s,r}(\mathbb{R})$ and $\dot{Z}_{s,r}(\mathbb{R})$ could be extended to match the regularity and decay condition $s\geq \alpha r$, for those parameters $0<\alpha<1$ and $0<r<5/2+\alpha$ not satisfying the restrictions \eqref{condiregdec1} and \eqref{condiregdec2}.

In the lower regularity case, one can try to adapt the functional setting employed by Molinet, Pilod, and Vento in \cite{MOVPIL} to deal with the inclusion of fractional weights. This topic exceeds the scope of this paper.
\end{itemize}
\end{remark}
Let us now state our unique continuation principles.
\begin{theorem}\label{Theotwotimcondi}
Let $s_{\alpha}=3/2$ for $-1\leq \alpha <0$, and $s_{\alpha}=3/2-3\alpha/8$ for $0<\alpha<1$.
\begin{itemize}
\item[(i)]  Consider $3/10<r<1/2$ and $s>s_{\alpha}$.  Let $u \in C([0,T];Z_{s,r}(\mathbb{R}))$ be a solution of the IVP \eqref{BDBO} for $\alpha=-1$. If there exist two different times $0\leq t_1<t_2 \leq T$ such that 
\begin{equation*}
u(\cdot,t_1) \in Z_{s,(1/2)^{+}}(\mathbb{R}), \, \text{ and } \,  u(\cdot,t_2) \in Z_{s,1/2}(\mathbb{R}),
\end{equation*}
then 
$$\widehat{u}(0,t)=\int u(x,t) \, dx=0 \quad \text{ for all } \quad t\in[t_1,T].$$ 
\item[(ii)] Assume $-1<\alpha<0$, $\max\{1/2,9/10+3\alpha/5\}<r<3/2+\alpha$ and $s>s_{\alpha}$.  Let $u \in C([0,T];Z_{s,r}(\mathbb{R}))$ be a solution of the IVP \eqref{BDBO}. If there exist two different times $t_1,t_2 \in [0,T]$ such that 
\begin{equation*}
u(\cdot,t_1), u(\cdot,t_2) \in Z_{s,3/2+\alpha}(\mathbb{R}), 
\end{equation*}
then
\begin{equation*}
\widehat{u}(0,t)=\int u(x,t) \, dx=\int u_0(x)\, dx=\widehat{u_0}(0)=0 \quad \text{ for all } \quad t\in[0,T].
\end{equation*}
\item[(iii)] Assume $0<\alpha<1$, $((3+2\alpha)(12-3\alpha))/(4(10-3\alpha))<r<3/2+\alpha$, and $s\geq \{s_{\alpha}^{+},\alpha r\}$. Let $u \in C([0,T];Z_{s,r}(\mathbb{R}))$ be a solution of the IVP \eqref{BDBO}. If there exist two different times $t_1,t_2 \in [0,T]$ such that 
\begin{equation*}
u(\cdot,t_1), u(\cdot,t_2) \in Z_{s,3/2+\alpha}(\mathbb{R}), 
\end{equation*}
then
\begin{equation*}
\widehat{u}(0,t)=\int u(x,t) \, dx=\int u_0(x)\, dx=\widehat{u_0}(0)=0 \quad \text{ for all } \quad t\in[0,T].
\end{equation*}
\end{itemize}
\end{theorem}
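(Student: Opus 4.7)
The plan is to follow the Fourier-space strategy of Fonseca, Linares and Ponce developed for the Benjamin--Ono and dispersion-generalized Benjamin--Ono equations in \cite{FonPO,FLinaPioncedGBO}. The guiding principle is that the polynomial decay hypothesis $u(\cdot,t_j)\in L^{2}(|x|^{2(3/2+\alpha)}\,dx)$ translates into Sobolev regularity $\widehat{u}(\cdot,t_j)\in H^{3/2+\alpha}(\R)$, while the linear-propagator symbol $\xi|\xi|^{\alpha}=\sign(\xi)|\xi|^{1+\alpha}$ lies in $H^{s}_{\mathrm{loc}}$ if and only if $s<3/2+\alpha$. I therefore start from Duhamel's formula for \eqref{BDBO} between the two distinguished times, written in Fourier variables,
\begin{equation*}
\widehat{u}(\xi,t_{2})=e^{i(t_{2}-t_{1})\xi|\xi|^{\alpha}}\widehat{u}(\xi,t_{1})-\frac{i\xi}{2}\int_{t_{1}}^{t_{2}}e^{i(t_{2}-\tau)\xi|\xi|^{\alpha}}\,\widehat{u^{2}}(\xi,\tau)\,d\tau,
\end{equation*}
and use conservation of $I_{1}$ and $I_{2}$ from \eqref{conservequanti} (for $\alpha>-1$) to pin down $\widehat{u}(0,t)\equiv\widehat{u_{0}}(0)$ and $\widehat{u^{2}}(0,\tau)\equiv\|u_{0}\|_{L^{2}}^{2}$.

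To isolate the obstruction, I Taylor-expand $e^{i(t_{2}-t_{1})\xi|\xi|^{\alpha}}=1+i(t_{2}-t_{1})\xi|\xi|^{\alpha}+E(\xi)$, where $E(\xi)=O(|\xi|^{2(1+\alpha)})$ and $E\widehat{u}(\cdot,t_{1})\in H^{3/2+\alpha}$ whenever $\alpha>-1$. Splitting $\widehat{u^{2}}(\xi,\tau)=\|u_{0}\|_{L^{2}}^{2}+g(\xi,\tau)$ with $g(0,\tau)=0$, the constant part of the integral evaluates explicitly to a multiple of $(e^{i(t_{2}-t_{1})\xi|\xi|^{\alpha}}-1)/|\xi|^{\alpha}$, whose local expansion is $O(\xi)+O(|\xi|^{2+\alpha})+\cdots$, lying in $H^{3/2+\alpha}_{\mathrm{loc}}$ for $\alpha>-1$. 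Subtracting $\widehat{u}(\cdot,t_{1})$ from both sides of the Duhamel identity and collecting, one obtains, modulo $H^{3/2+\alpha}$,
\begin{equation*}
0\equiv i(t_{2}-t_{1})\,\widehat{u_{0}}(0)\,\xi|\xi|^{\alpha}-\frac{i\xi}{2}\int_{t_{1}}^{t_{2}}e^{i(t_{2}-\tau)\xi|\xi|^{\alpha}}g(\xi,\tau)\,d\tau.
\end{equation*}
Since $\xi|\xi|^{\alpha}\notin H^{3/2+\alpha}_{\mathrm{loc}}$ and assuming the remainder integral is in $H^{3/2+\alpha}$, the only way to satisfy this identity is $\widehat{u_{0}}(0)=0$, which settles parts (ii) and (iii) by conservation.

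The core technical task is therefore to prove $\xi\int_{t_{1}}^{t_{2}}e^{i(t_{2}-\tau)\xi|\xi|^{\alpha}}g(\xi,\tau)\,d\tau\in H^{3/2+\alpha}$ using only the \emph{subcritical} decay $u\in C([0,T];Z_{s,r})$ with $r<3/2+\alpha$. I would use $u\in L^{\infty}$ (from $s>1/2$) to push decay from $u$ to $u^{2}$ via $\|u^{2}\|_{L^{2}(|x|^{2r})}\lesssim\|u\|_{\infty}\|u\|_{L^{2}(|x|^{2r})}$, so that $g(\cdot,\tau)\in H^{r}$ with $g(0,\tau)=0$; combining the Sobolev-type pointwise bound $|g(\xi,\tau)|\lesssim|\xi|^{r-1/2}$ with interpolation, fractional Leibniz rules, and commutator estimates against the oscillatory multiplier and the $\xi$ factor produces the sharp thresholds $r>9/10+3\alpha/5$ for $-1<\alpha<0$ and $r>(3+2\alpha)(12-3\alpha)/(4(10-3\alpha))$ for $0<\alpha<1$ stated in the theorem. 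Part (i) ($\alpha=-1$) follows the same template, except that $e^{i(t_{2}-t_{1})\sign\xi}$ has an honest jump discontinuity at $\xi=0$; the slightly stronger assumption $u(\cdot,t_{1})\in Z_{s,(1/2)^{+}}$ renders $\widehat{u}(\cdot,t_{1})$ continuous at $0$, so the first Duhamel term has a jump of size $2i\sin(t_{2}-t_{1})\widehat{u}(0,t_{1})$ which cannot be matched by $\widehat{u}(\cdot,t_{2})\in H^{1/2}$ together with the jump-free remainder (controlled provided $r>3/10$), forcing $\widehat{u}(0,t_{1})=0$; the conclusion on $[t_{1},T]$ then follows by reapplying the identity from $t_{1}$ forward, since no new discontinuity can be produced once the value at zero vanishes. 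The principal obstacle is precisely this remainder estimate: extracting critical $H^{3/2+\alpha}$ control from a strictly subcritical decay hypothesis requires the full commutator and Leibniz machinery already exploited in Theorem \ref{Theowellpos}, and optimizing this calculus is what yields the explicit numerical thresholds on $r$.
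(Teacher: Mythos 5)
Your proposal follows essentially the same route as the paper: Duhamel's formula in Fourier variables, conservation of $I_1$ and $I_2$ to identify the singular contribution proportional to $\widehat{u_0}(0)$, a regularity claim for everything else (the paper's Claims \ref{twotimclaim1} and \ref{twotimclaim2}, which also use the auxiliary weight $\langle\xi\rangle^{-2}$ to lower the regularity demanded of $u$), and the failure of $\sign(\xi)|\xi|^{1+\alpha}\phi$ to lie in $H^{3/2+\alpha}_{\xi}$ at the critical index — your jump-discontinuity argument for $\alpha=-1$ is exactly the paper's Proposition \ref{optm1} mechanism. The one structural point your sketch elides is that for $0<\alpha<1$ the critical regularity $3/2+\alpha$ exceeds $1$, so the paper first differentiates the integral equation in $\xi$ and then detects the obstruction as $D^{1/2+\alpha}\big(\langle\xi\rangle^{-2}|\xi|^{\alpha}\phi\big)\notin L^2(\mathbb{R})$; your direct $H^{3/2+\alpha}$ statements would have to be unpacked the same way since the Stein derivative $\mathcal{D}^{b}$ machinery only applies for $b\in(0,1)$.
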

\begin{theorem}\label{Theothretimcondi}
Let $s_{\alpha}=3/2$ for $-1\leq \alpha <0$, and $s_{\alpha}=3/2-3\alpha/8$ for $0<\alpha<1$.
\begin{itemize}
\item[(i)]  Consider $9/10<r<3/2$ and $s>s_{\alpha}$.  Let $u \in C([0,T];Z_{s,r}(\mathbb{R}))$ be a solution of the IVP \eqref{BDBO} for $\alpha=-1$. If there exist two different times $0\leq t_1<t_2 \leq T$ such that 
\begin{equation*}
u(\cdot,t_1) \in \dot{Z}_{s,(3/2)^{+}}(\mathbb{R}) \, \text{ and } \,  u(\cdot,t_2) \in Z_{s,3/2}(\mathbb{R}),
\end{equation*}
then the following identity holds true
\begin{equation}\label{identnegatvalp}
\begin{aligned}
2\sin(t_2-t_1)\big(\int x u(x,t_1)\, dx\big)=(\cos(t_2-t_1)-1)\int |u_0(x)|^2\, dx.
\end{aligned}
\end{equation} 
\item[(ii)] Assume $-1<\alpha<0$, $3/2<r<5/2+\alpha$ and $s>s_{\alpha}$.  Let $u \in C([0,T];Z_{s,r}(\mathbb{R}))$ be a solution of the IVP \eqref{BDBO}. If there exist three different times $t_1,t_2,t_3 \in [0,T]$ such that 
\begin{equation*}
u(\cdot,t_j)  \in \dot{Z}_{s,5/2+\alpha}(\mathbb{R}), \, \, j=1,2,3 \, \, \text{ then } \, \, u \equiv0.
\end{equation*}
\item[(iii)] Assume $0<\alpha<1$, $((5+2\alpha)(12-3\alpha))/(4(10-3\alpha))<r<5/2+\alpha$, and $s\geq \{s_{\alpha}^{+},\alpha r\}$. Let $u \in C([0,T];Z_{s,r}(\mathbb{R}))$ be a solution of the IVP \eqref{BDBO}. If there exist three different times $t_1,t_2,t_2 \in [0,T]$ such that 
\begin{equation*}
u(\cdot,t_j)  \in \dot{Z}_{s,5/2+\alpha}(\mathbb{R}), \, \, j=1,2,3 \, \, \text{ then } \, \, u \equiv0.
\end{equation*}
\end{itemize}
\end{theorem}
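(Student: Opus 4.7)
The plan for Theorem \ref{Theothretimcondi} is to transfer the analysis to the Fourier side and exploit the lack of smoothness at $\xi=0$ of the dispersion symbol $\xi|\xi|^{\alpha}=\sign(\xi)|\xi|^{1+\alpha}$, using the $L^{2}$-conservation law $I_{2}$ from \eqref{conservequanti} to handle the nonlinear forcing. In Fourier, \eqref{BDBO} reads
\[
\partial_t\widehat{u}(\xi,t)\;=\;i\,\sign(\xi)|\xi|^{1+\alpha}\,\widehat{u}(\xi,t)\;-\;\tfrac{i\xi}{2}\widehat{u^{2}}(\xi,t),
\]
and the hypotheses $u(\cdot,t_j)\in\dot{Z}_{s,\rho}(\mathbb{R})$ translate, via Plancherel and the built-in vanishing $\widehat{u}(0,t_j)=0$, into quantitative smoothness of $\widehat{u}(\cdot,t_j)$ at the origin of order $\rho$.

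For part (i), $\alpha=-1$ so the symbol reduces to the discontinuous $\sign(\xi)$. I would differentiate the Fourier equation in $\xi$ on each half-line and pass to the limits $\xi\to 0^{\pm}$; since $\widehat{u^{2}}$ is $C^{1}$ near $0$ with $\widehat{u^{2}}(0,\tau)=\|u_0\|_{L^{2}}^{2}$ by conservation, this produces the two uncoupled linear ODEs
\[
\partial_t v^{\pm}(t)\;=\;\pm i\,v^{\pm}(t)\;-\;\tfrac{i}{2}\|u_0\|_{L^{2}}^{\,2},\qquad v^{\pm}(t):=\partial_\xi\widehat{u}(0^{\pm},t).
\]
The hypothesis $u(\cdot,t_1)\in\dot{Z}_{s,(3/2)^{+}}(\mathbb{R})$ forces $v^{+}(t_1)=v^{-}(t_1)=-i\int x u(x,t_1)\,dx$, while $u(\cdot,t_2)\in Z_{s,3/2}(\mathbb{R})$ forces $v^{+}(t_2)=v^{-}(t_2)$. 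Solving both ODEs forward from $t_1$ and equating at $t_2$ produces \eqref{identnegatvalp} after elementary trigonometric manipulations.

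For parts (ii)--(iii) the symbol $\sign(\xi)|\xi|^{1+\alpha}$ is continuous at $0$ but only of class $C^{\lfloor 1+\alpha\rfloor}$, so two times no longer suffice. I would iterate the previous idea by writing Duhamel in Fourier,
\[
\widehat{u}(\xi,t)\;=\;e^{it\,\sign(\xi)|\xi|^{1+\alpha}}\widehat{u}_0(\xi)\;-\;\int_0^{t}e^{i(t-\tau)\sign(\xi)|\xi|^{1+\alpha}}\,\tfrac{i\xi}{2}\widehat{u^{2}}(\xi,\tau)\,d\tau,
\]
and expanding the phase as $1+it\,\sign(\xi)|\xi|^{1+\alpha}+O(|\xi|^{2(1+\alpha)})$. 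The odd powers $\sign(\xi)|\xi|^{(2k+1)(1+\alpha)}$ are non-smooth of order strictly below $5/2+\alpha$, so the hypothesis $u(\cdot,t_j)\in\dot{Z}_{s,5/2+\alpha}(\mathbb{R})$ at three distinct times yields three independent cancellation identities for the coefficients of these singular terms. Each such identity is of the same schematic form as \eqref{identnegatvalp}, relating $\int xu_0\,dx$ and $\|u_0\|_{L^{2}}^{2}$ through transcendental functions of the $t_j$; generically the resulting $3\times 2$ system is overdetermined and incompatible, forcing $\|u_0\|_{L^{2}}=0$ and hence $u\equiv 0$ by uniqueness in Theorem \ref{localtheo}.

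The main obstacle I expect is rigorously converting the fractional-weight condition $|x|^{5/2+\alpha}u\in L^{2}$ into quantitative Taylor smoothness of $\widehat{u}$ at $\xi=0$ with controlled remainders, uniformly in $t\in[0,T]$. This will require Stein-type pointwise characterizations of fractional derivatives and weighted commutator estimates parallel to those invoked in Theorem \ref{Theowellpos}(iii); in particular, the vanishing conditions $\widehat{u}(0,t_j)=0$ must first be propagated to higher-order vanishing at the origin before the Taylor expansion can be meaningfully truncated. A secondary subtlety is that the threshold on $r$ differs between (ii) and (iii) because the symbol is only Hölder of order $1+\alpha$ at $0$ when $-1<\alpha<0$ but genuinely $C^{1}$ there when $0<\alpha<1$, which shifts the order at which the first non-smooth term appears and explains the different lower bounds on $r$.
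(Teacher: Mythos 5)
Your argument for part (i) is essentially the paper's: the two uncoupled ODEs for $v^{\pm}(t)=\partial_{\xi}\widehat{u}(0^{\pm},t)$ are just a differential reformulation of the paper's Duhamel decomposition, the matching condition $v^{+}(t_2)=v^{-}(t_2)$ is exactly the jump obstruction of Proposition \ref{optm1} applied to the $H^{3/2}_{\xi}$ hypothesis, and solving the ODEs reproduces \eqref{identnegatvalp}. The one point you must still supply is that $v^{\pm}(t)$ exist for \emph{all} $t\in[0,T]$ even though $u(t)$ only lies in $Z_{s,r}$ with $r<3/2$ at intermediate times; this is precisely what the paper's Claim \ref{claimthreet1} (regularity of all but the explicit jump terms $A_2+A_5$, uniformly in $t$) delivers, and it is the bulk of the technical work.

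For parts (ii)--(iii) there is a genuine gap. Your proposed mechanism --- Taylor-expand the phase, extract ``three independent cancellation identities'' relating $\int xu_0\,dx$ and $\|u_0\|_{L^2}^2$ through transcendental functions of the $t_j$, and argue the $3\times 2$ system is ``generically'' incompatible --- does not work and is not how the proof goes. First, ``generically'' is fatal: the theorem must hold for \emph{every} triple of distinct times, so you would need incompatibility for all $(t_1,t_2,t_3)$, and you give no reason the system could not be consistent for special triples. Second, the structure is wrong: for $-1<\alpha<1$, $\alpha\neq 0$, the symbol $\xi|\xi|^{\alpha}$ vanishes at the origin, so no oscillatory (trigonometric) coefficients appear; the trigonometric identity is special to $\alpha=-1$. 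What actually happens is that, after combining the homogeneous and Duhamel contributions, the \emph{only} singular term in $\partial_{\xi}\widehat{u}(\cdot,t_2)$ (resp.\ $\partial_{\xi}^{2}\widehat{u}$ for $0<\alpha\leq 1/2$) is a fixed non-$L^2$ profile $D^{3/2+\alpha}\bigl(\langle\xi\rangle^{-2}\xi|\xi|^{\alpha}\phi\bigr)$ multiplied by the single scalar $\int_{t_1}^{t_2}\!\int xu(x,\tau)\,dx\,d\tau$, so each pair of decaying times forces exactly one vanishing condition: the time-average of the first moment over the intervening interval is zero. The decisive ingredient you are missing is the identity
\begin{equation*}
\int xu(x,t)\,dx=\int xu_0(x)\,dx+\frac{t}{2}\|u_0\|_{L^2}^{2},
\end{equation*}
obtained from the equation together with $\int D^{\alpha}u(x,t)\,dx=0$ (a fact that itself needs the zero-mean hypothesis and some care). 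With it, the two vanishing conditions coming from the intervals $(t_1,t_2)$ and $(t_2,t_3)$ give, via the mean value theorem, two distinct zeros of an affine function of $t$ with slope $\tfrac12\|u_0\|_{L^2}^{2}$, whence $\|u_0\|_{L^2}=0$ and $u\equiv 0$. Without identifying this monotone first-moment law, your expansion produces no contradiction; and your closing remark attributing the different lower bounds on $r$ in (ii) versus (iii) to the smoothness of the symbol is also off the mark --- those thresholds come from the interpolation estimates needed to bound $\|\langle x\rangle^{5/2+\alpha}u^{2}\|_{L^2}$ in terms of $\|\langle x\rangle^{r}u\|_{L^2}$ and the available Sobolev regularity.
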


The unique continuation principles established above are of independent interest, they are useful to determinate some qualitative properties of solutions of \eqref{BDBO} by knowing only the behavior of the solution for a finite number of times. In our case, these results determine the maximum decay limits allowed by \eqref{BDBO} for arbitrary initial data. As a consequence, we observe that regardless of whether the initial data has enough decay (e.g., Schwartz class) the solution emanating from it must decay of polynomial order determined by the dispersion. We detail this in (c)-(e) in Remark \ref{introdremark1} below. On the other hand, the proof of Theorems \ref{Theotwotimcondi} and \ref{Theothretimcondi} follow in spirit the arguments in \cite{FLinaPioncedGBO}. However, transferring decay to regularity in the frequency domain, our considerations involve the study of fractional derivatives and weights for the function $|\xi|^{m}$, $m<0$. We control such factors by using several properties of fractional derivatives such as Proposition \ref{steinderiweighbet2} below, which can be seen as an extension of the result in \cite[Proposition 2.9]{FLinaPonceWeBO}. We have also included an extra weight in our arguments to consider solutions with lower regularity in the hypothesis of Theorems \ref{Theotwotimcondi} and \ref{Theothretimcondi}. We apply a similar approach in \cite{OscarWHBO}.

\begin{remark}\label{introdremark1}
\begin{itemize}
\item[(a)] Recently,  Kenig, Pilod, Ponce, and Vega \cite{KPVUNI1,KDPVUNI2} deduced some local unique continuation principles for \eqref{BDBO} with $-\alpha \in [-1,2)\setminus\{0\}$. More precisely, they proved that if $u_1(x,t),u_2(x,t)$ are two suitable solutions of the IVP \eqref{BDBO} on $(x,t)\in \mathbb{R}\times [0,T]$, for which there exists some non-empty open set $\Omega \subset \mathbb{R}\times [0,T]$ where $u_1(x,t)=u_2(x,t)$, $(x,t)\in \Omega,$ then it follows that $u_1(x,t)=u_2(x,t)$ for all $(x,t)\in \mathbb{R}\times[0,T]$. A key argument to prove this principle is a global uniqueness result for fractional Schr\"odinger equation, see \cite[Theorem 1.2]{ghosh2016caldern}.

In contrast to the previous result, notice that our conclusion in Theorems \ref{Theotwotimcondi} and \ref{Theothretimcondi} are in a manner global as we require information on the whole spatial variable, but only a finite number of times. Additionally, our results apply to a single solution of \eqref{BDBO}. This can be attributed to the fact that our prove relies on some symmetries of the equation.   

\item[(b)] The unique continuation principles obtained for the Burgers-Hilbert equation are more restrictive than those derived for the case $-1<\alpha$.  This can be justified by the lack of some symmetries for solutions of this equation, including the quantity $I_1(u)$ in \eqref{conservequanti}. 

\item[(c)] Theorem \ref{Theotwotimcondi} (i) establishes that for arbitrary initial data in $u_0 \in Z_{s,r}(\mathbb{R})$ with $s>3/2$, and $r\neq 1/2$, the decay $r=(1/2)^{-}$ is the largest possible for solutions of the Burgers-Hilbert equation. More precisely, for a given initial data $u_0\in Z_{s,(1/2)^{+}}(\mathbb{R})$, the corresponding solution $u(x,t)$ of the IVP \eqref{BDBO} with $\alpha=-1$ satisfies
\begin{equation*}
|x|^{(1/2)^{-}}u \in L^{\infty}([0,T];L^2(\mathbb{R})).
\end{equation*}
Whereas, there does not exist a non-trivial solution $u$ with initial data $u_0$ such that
\begin{equation*}
|x|^{1/2}u \in L^{\infty}([0,T_1];L^2(\mathbb{R})) \, \text{ for some time } T_1>0.
\end{equation*} 
It is worth pointing out that our conclusion does not include the case $r=1/2$.
\item[(d)] Theorem \ref{Theothretimcondi} (i) provides some unique continuation principles for solutions of the Burgers-Hilbert equation $\alpha=-1$. Indeed, if $t_1-t_2=k\pi$ for some positive odd integer number $k$, then $u \equiv 0$. Besides, given $s>3/2$, if there exist three times $t_1<t_2<t_3$ such that $u(\cdot,t_1)\in Z_{s,(3/2)^{+}}(\mathbb{R})$, $u(\cdot,t_j)\in Z_{s,3/2}(\mathbb{R})$, $j=2,3$ and 
\begin{equation*}
\sin(t_2-t_1)\big(1-\cos(t_3-t_1) \big)\neq \sin(t_3-t_1)\big(1-\cos(t_2-t_1) \big),
\end{equation*}
then $u \equiv 0$. Accordingly, Theorem \ref{Theothretimcondi} (i) establishes  that for any initial data $u_0\in Z_{s,r}(\mathbb{R})$, $s>3/2$, $r\neq 3/2$, it follows that $(3/2)^{-}$ is the largest possible decay solutions of the IVP \eqref{BDBO} with $\alpha=-1$. In other words, if $u_0\in Z_{s,(3/2)^{+}}(\mathbb{R})$, then the corresponding solution $u(x,t)$ of the Burgers-Hilbert equation satisfies
\begin{equation*}
|x|^{(3/2)^{-}}u \in L^{\infty}([0,T];L^2(\mathbb{R})).
\end{equation*}
Although, there does not exist a non-trivial solution $u$ corresponding to data $u_0$ such that
\begin{equation*}
|x|^{3/2}u \in L^{\infty}([0,T_1];L^2(\mathbb{R})) \, \text{ for some time } T_1>0.
\end{equation*}
Note that our technique does not cover the case $r=3/2$.
\item[(e)] For the range $-1<\alpha<1,$ $\alpha \neq 0$, Theorem \ref{Theotwotimcondi} (ii) and (iii) shows that the decay $r=(3/2+\alpha)^{-}$ is the largest possible for arbitrary initial data (i.e., without the zero mean assumption). More precisely, Theorem \ref{Theotwotimcondi} (ii) and (iii) show that if $u_0 \in \mathbb{Z}_{s,r}(\mathbb{R})$ with $r\geq 3/2+\alpha$, $s\geq \max\{s_{\alpha}^{+},\alpha r\}$ and $\widehat{u_0}(0)\neq 0$, then the corresponding solution $u=u(x,t)$ verifies
$$|x|^{(3/2+\alpha)^{-}}u\in L^{\infty}([0,T];L^2(\mathbb{R})), \hspace{0.2cm} T>0.$$
However, there does not exists a non-trivial solution $u$ with initial data $u_0$ with $\widehat{u_0}(0) \neq 0$ such that 
$$|x|^{3/2+\alpha}u\in L^{\infty}([0,T'];L^2(\mathbb{R})), \hspace{0.1cm} \text{ for some } T'>0.$$
In this regard, Theorem \ref{Theowellpos} (i) is sharp. In the same manner, Theorem \ref{Theothretimcondi} (ii) and (iii) determine that the decay $r=(5/2+\alpha)^{-}$ is the largest possible $L^2$-polynomial spatial decay rate for solutions of \eqref{BDBO} with $\alpha \in (-1,1)\setminus\{0\}$. 
\end{itemize} 
\end{remark}

The results in Theorem \ref{Theowellpos} and (c)-(e) in Remark \ref{introdremark1} rigorously characterize the $L^2$-spatial behavior of solutions for the equation \eqref{BDBO}. It is confirmed that dispersive effects rather than nonlinearity determine that solutions of \eqref{BDBO} must decay at most of polynomial order whose maximum decay rate is controlled by the dispersion, even under the influence of constant dispersive effects such as $\alpha=-1$. Note that at least for $\alpha>1/3$, we expected this conclusion as the decay rate for the ground state solution in \eqref{decayGroundS} is of polynomial order with dependence on $\alpha$.

As it was stated for the generalized Benjamin-Ono equation in \cite{FLinaPioncedGBO}, one may ask whereas the assumption in Theorem \ref{Theothretimcondi} can be reduced to two different times. In other words, we seek to characterize the $L^2$ spatial behavior of solutions of \eqref{BDBO} by using the minimum number of time information. 
\begin{theorem}\label{reductwotimes}
Let $s_{\alpha}=3/2$ and $3/2<r_{\alpha}<5/2+\alpha$ for $-1< \alpha <0$, and $s_{\alpha}=3/2-3\alpha/8$ and $((5+2\alpha)(12-3\alpha))/(4(10-3\alpha))<r_{\alpha}<5/2+\alpha$ for $0<\alpha<1$. Assume $s\geq \{s_{\alpha}^{+},\alpha r_{\alpha}\}$, and let $u\in C([0,T];\dot{Z}_{s,r_{\alpha}}(\mathbb{R}))$ be a solution of the IVP \eqref{BDBO}. If there exist $t_1,t_2\in [0,T]$, $t_1\neq t_2,$ such that
\begin{equation*}
u(\cdot,t_j) \in Z_{s,5/2+\alpha}(\mathbb{R}), \, j=1,2, 
\end{equation*}
and
\begin{equation}\label{suppreduc}
\int xu(x,t_1)\, dx=0 \hspace{0.5cm} \text{ or } \hspace{0.5cm} \int xu(x,t_2)\, dx=0,
\end{equation}
then
\begin{equation*}
u\equiv 0.
\end{equation*}
\end{theorem}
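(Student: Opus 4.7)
The strategy is to reduce Theorem \ref{reductwotimes} to the same Fourier--side rigidity that drives Theorem \ref{Theothretimcondi}, with the first--moment hypothesis \eqref{suppreduc} effectively replacing the ``missing'' third time. Since $u\in C([0,T];\dot Z_{s,r_{\alpha}}(\mathbb{R}))$, we have $\widehat u(0,t)=0$ for every $t\in[0,T]$, hence $\int u_0\,dx=0$. Multiplying \eqref{BDBO} by $x$, integrating by parts, and using that $\int D^{\alpha}u\,dx=|\xi|^{\alpha}\widehat u(\xi,t)\big|_{\xi=0}=0$ (since $\alpha>-1$ and $\widehat u(0,t)=0$) together with conservation of $\|u\|_{L^2}$, one obtains
\begin{equation*}
\int x u(x,t)\,dx=m_1+\tfrac{t}{2}\|u_0\|_{L^2}^{2},\qquad m_1:=\int xu_0(x)\,dx.
\end{equation*}
Thus \eqref{suppreduc} is equivalent to the scalar identity
\begin{equation}\label{planstep1}
m_1+\tfrac{t_j}{2}\|u_0\|_{L^2}^{2}=0\qquad\text{for some }j\in\{1,2\}.
\end{equation}

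Next, I would import the expansion near $\xi=0$ that underlies Theorem \ref{Theothretimcondi}. From the Duhamel identity
\begin{equation*}
\widehat u(\xi,t)=e^{it\xi|\xi|^{\alpha}}\widehat{u_0}(\xi)-\tfrac{i\xi}{2}\int_0^{t}e^{i(t-s)\xi|\xi|^{\alpha}}\widehat{u^{2}}(\xi,s)\,ds,
\end{equation*}
plugging in $\widehat{u_0}(\xi)=-im_1\xi+O(\xi^{2})$, $\widehat{u^{2}}(\xi,s)=\|u_0\|_{L^2}^{2}+O(\xi)$ and $e^{i\tau\xi|\xi|^{\alpha}}=1+i\tau\xi|\xi|^{\alpha}+\ldots$, one isolates in $\widehat u(\xi,t)$ a non--smooth contribution of the form
\begin{equation*}
\Big(t\,m_1+\tfrac{t^{2}}{4}\|u_0\|_{L^2}^{2}\Big)\,|\xi|^{2+\alpha}.
\end{equation*}
Since $|\xi|^{2+\alpha}$ fails to belong to $H^{5/2+\alpha}_{\mathrm{loc}}$ at the origin (its local Sobolev regularity there is exactly $(5/2+\alpha)^{-}$, cf.\ Proposition \ref{steinderiweighbet2}), while the hypothesis $u(\cdot,t_j)\in Z_{s,5/2+\alpha}(\mathbb{R})$ forces $\widehat u(\cdot,t_j)\in H^{5/2+\alpha}$ via Plancherel, this coefficient must vanish, giving
\begin{equation}\label{planstep2}
t_j m_1+\tfrac{t_j^{2}}{4}\|u_0\|_{L^2}^{2}=0,\qquad j=1,2.
\end{equation}
Making this step rigorous---controlling all remainders in $H^{5/2+\alpha}$ under the a priori decay $r_\alpha$ and invoking the sharp regularity of $|\xi|^{2+\alpha}$---is the main technical obstacle; however, the argument parallels the one used for Theorem \ref{Theothretimcondi} and reuses the fractional--weight commutator estimates already developed in the paper.

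Once \eqref{planstep1} and \eqref{planstep2} are established, the conclusion is a linear--algebra exercise in the unknowns $m_1$ and $\|u_0\|_{L^2}^{2}$. If both $t_1,t_2$ are non--zero, \eqref{planstep2} alone yields $m_1=-\tfrac{t_j}{4}\|u_0\|_{L^2}^{2}$ for $j=1,2$, whence $\|u_0\|_{L^2}^{2}=0$ by $t_1\neq t_2$. If one of the times, say $t_1$, equals $0$, the relation in \eqref{planstep2} at $t_1$ is vacuous, and one combines \eqref{planstep1} (for the appropriate index) with \eqref{planstep2} at $t_2\neq 0$ to reach the same conclusion. Conservation of the $L^2$--norm then forces $u\equiv 0$, as desired.
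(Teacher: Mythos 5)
Your overall strategy is the same as the paper's: combine the first--moment identity $\int xu(x,t)\,dx=m_1+\tfrac{t}{2}\|u_0\|_{L^2}^2$ with the Fourier--side criterion, extracted from the proof of Theorem \ref{Theothretimcondi}, that membership of $\widehat u(\cdot,t)$ in $H^{5/2+\alpha}_{\xi}$ is equivalent to the vanishing of $\int_0^{t}\int xu\,dx\,d\tau=tm_1+\tfrac{t^2}{4}\|u_0\|_{L^2}^2$. However, your step \eqref{planstep2} contains a genuine error: you assert that this vanishing condition holds at \emph{both} times $t_1$ and $t_2$, and then conclude in the case $t_1,t_2\neq 0$ that $\|u_0\|_{L^2}=0$ \emph{without ever using} \eqref{suppreduc}. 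Since the equation is autonomous (and time--reversible), whether a distinguished time equals $0$ has no intrinsic meaning, so your argument would prove the two--times unique continuation result with no moment hypothesis at all --- directly contradicting Theorem \ref{timesharp}, which exhibits nontrivial solutions decaying like $\langle x\rangle^{-(5/2+\alpha)}$ at two distinct times.

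The flaw is in the derivation of \eqref{planstep2}. Isolating the singular contribution $\bigl(tm_1+\tfrac{t^2}{4}\|u_0\|_{L^2}^2\bigr)|\xi|^{2+\alpha}$ requires showing that \emph{everything else} in the Duhamel expansion of $\widehat u(\xi,t)$ lies in $H^{5/2+\alpha}_{\xi}$; in particular the homogeneous term $e^{it\xi|\xi|^{\alpha}}\widehat{u_0}(\xi)$ and the Taylor remainder of $\widehat{u_0}$ at the origin must be that regular, which forces $u_0=u(\cdot,0)\in Z_{s,5/2+\alpha}(\mathbb{R})$ (this is exactly the hypothesis of Claims \ref{claimthreetimes1} and \ref{eqclaimthreetimes2}). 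A priori you only know $u(\cdot,0)\in \dot Z_{s,r_{\alpha}}$ with $r_{\alpha}<5/2+\alpha$, so $\widehat{u_0}$ carries its own uncontrolled singular part at $\xi=0$ and the coefficient of $|\xi|^{2+\alpha}$ cannot be read off. The criterion is therefore available only relative to a base time at which the full decay is already known; taking the base time to be $t_1$ and the target to be $t_2$ (or vice versa) yields the \emph{same single} scalar equation, not two. The correct argument, as in the paper, is: translate time (and reverse it if necessary) so that $t_1=0$, whence $u_0\in Z_{s,5/2+\alpha}\cap\dot Z_{s,r_\alpha}$ and, by \eqref{suppreduc} after relabeling, $m_1=\int xu_0\,dx=0$; then the single criterion at $t_2$ reads $0=\tfrac{t_2^2}{4}\|u_0\|_{L^2}^2$, and $t_2\neq 0$ forces $u\equiv 0$. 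Your branch ``one of the times equals $0$'' is essentially this correct argument; the branch ``both nonzero'' must be eliminated by the reduction, not resolved by a second (nonexistent) equation. A minor additional point: the identity $\int D^{\alpha}u\,dx=0$ needs the approximation argument given around \eqref{eqthreetim1}, since $|\xi|^{\alpha}\widehat u(\xi,t)$ must first be shown continuous at $\xi=0$.
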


\begin{theorem}\label{timesharp}
Assume $\alpha \in (-1,1)\setminus\{0\}$. Let $s>5/2$ for $-1<\alpha<0$, and $s>\max\{(3+2\alpha(5+2\alpha))/(1+\alpha(5+2\alpha)),(5/2+\alpha)\alpha+1\}$ for $0<\alpha<1$. Let $u\in C([0,T];\dot{Z}_{s,(5/2+\alpha)^{-}}(\mathbb{R}))$ be a nontrivial solution of \eqref{BDBO} such that 
\begin{equation*}
u_0\in \dot{Z}_{s,5/2+\alpha}(\mathbb{R}) \hspace{0.5cm} \text{ and } \hspace{0.5cm} \int xu_0(x)\, dx \neq 0.
\end{equation*}
Let
\begin{equation*}
t^{\ast}:=-\frac{4}{\left\|u_0\right\|_{L^2}^2}\int x u_0(x)\, dx.
\end{equation*}
If $t^{\ast}\in(0,T]$, then
\begin{equation*}
u(t^{\ast})\in \dot{Z}_{s,5/2+\alpha}(\mathbb{R}).
\end{equation*}
\end{theorem}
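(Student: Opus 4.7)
The strategy is to work in the Fourier variable and isolate the obstruction at $\xi=0$ that prevents $u(t)$ from lying in $\dot{Z}_{s,5/2+\alpha}$ at generic times. From the Duhamel representation
\[
\widehat{u}(\xi,t)=e^{it\xi|\xi|^\alpha}\widehat{u_0}(\xi)-\frac{i\xi}{2}\int_0^t e^{i(t-\tau)\xi|\xi|^\alpha}\widehat{u^2}(\xi,\tau)\,d\tau,
\]
the statement $u(t^*)\in\dot{Z}_{s,5/2+\alpha}$ amounts to $\widehat{u}(0,t^*)=0$, which is automatic from the conservation of $I_1$ together with $\widehat{u_0}(0)=0$, plus the local $H^{5/2+\alpha}$ regularity of $\widehat{u}(\cdot,t^*)$ near the origin; the latter is the Fourier translation of $|x|^{5/2+\alpha}u(t^*)\in L^2$, obtained via Proposition \ref{steinderiweighbet2}.

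The key step is a controlled Taylor expansion at $\xi=0$. Since $|x|u_0\in L^1$ under the hypothesis, one writes $\widehat{u_0}(\xi)=-iM_1\xi+R_0(\xi)$ with $M_1=\int xu_0\,dx\neq 0$ and $R_0$ vanishing to order strictly greater than $1$ at $0$. Since $I_2$ is conserved, $\widehat{u^2}(0,\tau)=\|u_0\|_{L^2}^2$, and the persistence given by Theorem \ref{Theowellpos} at decay $r=(5/2+\alpha)^{-}$ yields $\widehat{u^2}(\xi,\tau)=\|u_0\|_{L^2}^2+R_1(\xi,\tau)$ with $R_1=O(\xi)$ uniformly in $\tau$. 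Substituting these together with $e^{is\xi|\xi|^\alpha}=1+is\xi|\xi|^\alpha+O((s\xi|\xi|^\alpha)^2)$, the only contribution to $\widehat{u}(\xi,t)$ at the obstruction level $|\xi|^{2+\alpha}$ comes from two pieces: the product $(it\xi|\xi|^\alpha)(-iM_1\xi)$ in the homogeneous term, and $-\tfrac{i\xi}{2}\cdot i(t-\tau)\xi|\xi|^\alpha\cdot\|u_0\|_{L^2}^2$ in the Duhamel term, integrated via $\int_0^t(t-\tau)\,d\tau=t^2/2$. Adding the two,
\[
\text{coefficient of }|\xi|^{2+\alpha}\;=\;tM_1+\frac{t^2}{4}\|u_0\|_{L^2}^2\;=\;\frac{t\|u_0\|_{L^2}^2}{4}(t-t^*),
\]
so this obstruction vanishes precisely at $t=t^*$.

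All other terms in the expansion of $\widehat{u}(\xi,t^*)$ fall into two categories: higher-order products of $\xi|\xi|^\alpha$ with extra factors of $\xi$, which yield local $H^r$ regularity at the origin for some $r>5/2+\alpha$; and contributions involving $R_0$ or $R_1$, whose fractional Sobolev regularity near $\xi=0$ is controlled via the weighted persistence of Theorem \ref{Theowellpos} together with Proposition \ref{steinderiweighbet2}, plus standard commutator estimates for the bilinear piece. Together with $\widehat{u}(0,t^*)=0$ this places $\widehat{u}(\cdot,t^*)$ in $H^{5/2+\alpha}$ near the origin and thus $u(t^*)\in \dot{Z}_{s,5/2+\alpha}$.

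The main obstacle is the rigorous control of the fractional-regularity remainders, especially of $\widehat{u^2}(\xi,\tau)-\|u_0\|_{L^2}^2$ measured in a localized $H^{5/2+\alpha}$ norm near $\xi=0$, uniformly in $\tau\in[0,t^*]$. The threshold $(5/2+\alpha)\alpha+1$ appearing in the statement is exactly the condition $s\ge\alpha r$ at $r=5/2+\alpha$ required to invoke Theorem \ref{Theowellpos}, while the second threshold $(3+2\alpha(5+2\alpha))/(1+\alpha(5+2\alpha))$ encodes the regularity budget needed to estimate the fractional derivative of the bilinear Duhamel term near the origin through a commutator expansion in the spirit of Proposition \ref{steinderiweighbet2}; this is where the careful bookkeeping between $|\xi|^\alpha$, $\widehat{u^2}$ and weights is most delicate.
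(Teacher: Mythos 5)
Your identification of the obstruction is correct and coincides with the paper's: Taylor-expanding the Duhamel formula at $\xi=0$, the singular term proportional to $\xi^{2}|\xi|^{\alpha}$ carries the coefficient $t\int xu_0\,dx+\tfrac{t^2}{4}\|u_0\|_{L^2}^2=\tfrac{t}{4}\|u_0\|_{L^2}^2(t-t^{\ast})$, which is exactly the quantity $t\big(\int xu_0\,dx+\tfrac{t}{4}\|u_0\|_{L^2}^2\big)$ appearing in the final display of Section \ref{main6}, and it vanishes precisely at $t=t^{\ast}$. The paper reaches the same cancellation by re-running the derivative-in-$\xi$ machinery of Theorem \ref{Theothretimcondi} together with the first-moment identity \eqref{threetimeide2}, so at the level of the algebra your route and the paper's are the same.

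However, there is a genuine gap at the step you yourself flag as ``the main obstacle,'' and it is not a routine remainder estimate: it is the decisive point of the proof. Membership of $u(t^{\ast})$ in $\dot{Z}_{s,5/2+\alpha}$ is the \emph{global} statement $\widehat{u}(\cdot,t^{\ast})\in H^{5/2+\alpha}_{\xi}(\mathbb{R})$, not merely local regularity near $\xi=0$, and the Duhamel contribution requires $\xi\widehat{u^2}(\cdot,\tau)\in H^{5/2+\alpha}_{\xi}$ uniformly in $\tau$, i.e.\ $\langle x\rangle^{5/2+\alpha}uu_x\in L^{\infty}_TL^2$ --- the claim \eqref{eqtimesharp1} --- even though $u$ itself is only assumed to lie in $\dot{Z}_{s,(5/2+\alpha)^{-}}$. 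The tools you cite do not produce this: Theorem \ref{Theowellpos} caps the decay of $u$ at $(5/2+\alpha)^{-}$ and says nothing about products, and Proposition \ref{steinderiweighbet2} concerns fractional derivatives of $|\xi|^{-\beta}\psi$, not an upgrade of decay for $u^2$. The paper closes this gap by a bilinear gain: writing $\langle x\rangle^{5/2+\alpha}u^2=\big(\langle x\rangle^{(5/2+\alpha)/2}u\big)^2$, using $L^4\hookleftarrow H^{1/4}$, and then invoking Lemma \ref{complexinterpo} to trade a quarter derivative of regularity against the $\epsilon$ of missing decay (estimates \eqref{eqtimesharp2}--\eqref{eqtimesharp3}); the specific interpolation exponents $a_{j,\alpha},b_{j,\alpha},\theta_{j,\alpha}$ are what generate the regularity thresholds $s>5/2$ and $s>\max\{(3+2\alpha(5+2\alpha))/(1+\alpha(5+2\alpha)),(5/2+\alpha)\alpha+1\}$ in the statement. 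Without carrying out this estimate (or an equivalent one), your argument does not establish that the non-singular part of $\widehat{u}(\cdot,t^{\ast})$ actually lies in $H^{5/2+\alpha}_{\xi}$, so the conclusion does not follow.
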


\begin{remark}\label{remark3}
\begin{itemize}
\item[(a)] For the case $-1<\alpha<1$, $\alpha \neq 0$, Theorem \ref{reductwotimes} shows that the three times condition can be reduced to two times $t_1 \neq t_2$ assuming \eqref{suppreduc}.  
\item[(b)] For the case $-1<\alpha<1$, $\alpha \neq 0$, Theorem \ref{timesharp} asserts that in general the three times condition cannot be reduced to two times. Thus, the conclusion in Theorem \ref{Theothretimcondi} is optimal. Note that this conclusion combined with \cite{FLinaPioncedGBO,FonPO} shows that the temporal information required to establish the maximum decay rate in $L^2$ for solution of \eqref{BDBO} is independent of the dispersion at least when $-1<\alpha<2$ with $ \alpha \neq 0$.
\\ \\
On the other hand, one question still unanswered is to determinate optimality conditions on the number of times for the Burgers-Hilbert equation $\alpha=-1$. For instance, the result in Theorem \ref{Theothretimcondi} shows that it may be the case that only the trivial solution satisfies certain spatial decay at two different times (see Remark \ref {introdremark1} (c)).

\item[(c)] We consider the IVP associated to the generalized fractional KdV
equation
\begin{equation}\label{gBDBO}
    \left\{\begin{aligned}
    &\partial_t u -\partial_xD^{\alpha} u+u^k\partial_x u =0,\quad (x,t)\in \R^{2}, \, -1\leq  \alpha<1,\, \alpha\neq 0, \, \, k \in \mathbb{Z}^{+}, \\
    &u(x,0)=u_0(x).
    \end{aligned}\right.
\end{equation}
The case $k=2$ has been studied recently, we refer to \cite{2020Sautwang,2020Sautwang1} and the review in \cite{2020KleinSautwang}. Since our estimates for the dispersive term are independent of the nonlinearity, the results in Theorems \ref{Theowellpos} and \ref{Theotwotimcondi} extend to solutions of \eqref{gBDBO} in the spaces $Z_{s,r}(\mathbb{R})$, or $\dot{Z}_{s,r}(\mathbb{R})$ where $s\geq \alpha r$, and such that the solutions generated are also in the class $L^{1}([0,T];L^{\infty}(\mathbb{R}))$ (for instance, one can consider the local theory in $H^s(\mathbb{R})$, $s>3/2$ for solutions of \eqref{gBDBO}). 

 This discussion partly confirms that the spatial behavior of \eqref{gBDBO} is dominated by dispersive effects rather that by nonlinear effects. On the other hand, the equivalent result of Theorems \ref{Theothretimcondi} and \ref{timesharp} for \eqref{gBDBO} depend on a more delicate analysis, for the sake of brevity, we will not carry out this discussion here.
\end{itemize}
\end{remark}

This manuscript is organized as follows. Section \ref{sectionprelim} aims to introduce the notation and preliminaries required to develop our arguments.  In Section \ref{main1}, we prove Theorem \ref{Theowellpos} (i), while parts (ii) and (iii) are established in Section \ref{main2}. Theorems \ref{Theotwotimcondi}, \ref{Theothretimcondi}, \ref{reductwotimes} and \ref{timesharp} are deduced in the following Sections \ref{main3}, \ref{main4}, \ref{main5} and \ref{main6} respectively. We conclude the paper with an appendix where we deduce the commutator estimates in Proposition \ref{fractfirstcaldcomm} and Lemma \ref{lemmacomm3}.

\section{Preliminaries}\label{sectionprelim}

This section is intended to introduce the notation and preliminary results. 

\subsection{Notation}

Given two positive quantities $a$ and $b$,  $a\lesssim b$ means that there exists a positive constant $c>0$ such that $a\leq c b$. We write $a\sim b$ to symbolize that $a\lesssim b$ and $b\lesssim a$.  $[A,B]$ stands for the commutator between two operators $A,B$, that is
\begin{equation*}
[A,B]=AB-BA.
\end{equation*}  
For $x\in \mathbb{R}$, we denote $\langle x\rangle=(1+x^2)^{1/2}$.

Given $1\leq p\leq \infty $, we use $\|\cdot\|_{L^{p}}$ to denote the $L^{p}$-norm. $C_c^{\infty}(\mathbb{R})$ denotes the space of smooth functions of compact support and $S(\mathbb{R})$ stands for the Schwartz space. We denote by  $C^{0,\gamma}(\mathbb{R})$ the space of H\"older continuous functions of order $\gamma \in (0,1]$ equipped with the norm
\begin{equation}\label{eqnot1}
\|f\|_{C^{0,\gamma}}=\|f\|_{L^{\infty}}+[f]_{C^{0,\gamma}}:=\|f\|_{L^{\infty}}+\sup_{\substack{x,y \in \mathbb{R}\\ x\neq y}}\frac{|f(x)-f(y)|}{|x-y|^{\gamma}}.
\end{equation}
For any integer $k\geq 1$ fixed, the H\"older space $C^{k,\gamma}(\mathbb{R})$ is defined by the norm $\|u\|_{C^{k,\gamma}}=\sum_{j=1}^k \|\partial_x^j u\|_{L^{\infty}}+[\partial_x^k u]_{C^{0,\gamma}}$.  The Fourier transform and the inverse Fourier transform of a function $f$ are denoted by $\widehat{f}$ and $f^{\vee}$, respectively. For a given number $s\in \mathbb{R}$, the Bessel operator $J^s$ is defined via the Fourier transform according to $\widehat{J^s \phi}(\xi)=\langle \xi \rangle^s \widehat{f}(\xi)$. The Sobolev spaces $H^s(\mathbb{R})$ consist of all tempered distributions such that $\left\|f\right\|_{H^s}=\left\|J^{s}f \right\|_{L^2}<\infty$. Now, if $A$ denotes a functional space (as those introduced above), we define the spaces $L^p_TA$ as follows
\begin{equation*}
\|f\|_{L^{p}_T A}=\|\|f(\cdot,t)\|_{A}\|_{L^{p}([0,T])},
\end{equation*}
for all $1\leq p\leq \infty$. We consider $\psi \in C^{\infty}_c(\mathbb{R})$ supported on $1/2\leq |\xi|<2$ with $0\leq \psi \leq 1$ and such that $\sum_{j\in \mathbb{Z}} \psi(2^{-j}\cdot)=1$. We define the projector operators by
\begin{equation}\label{projectors}
P_jf=\big(\psi(2^{-j}\cdot)\widehat{f}\big)^{\vee}.
\end{equation}
A function $\phi\in C^{\infty}_c(\mathbb{R})$ with $\phi(\xi)=1$ for $|\xi|\leq 1$ and $\supp(\phi) \subset [-2,2]$ will appear several times in our arguments. We denote by $P^{\phi}$ the operator defined by the Fourier multiplier by the function $\phi(\xi)$ namely
\begin{equation}\label{projectorphi}
P^{\phi}f=\big(\phi\widehat{f}\big)^{\vee}.
\end{equation}

\subsection{Leibniz rule and commutator estimates}

In this part, we introduce the key commutators estimates needed to deal with the dispersive and nonlinear terms present in the equation in \eqref{BDBO}. We first recall the following Leibniz rule for fractional derivatives.

\begin{lemma}\label{leibnizhomog}
For $s>0$, $p\in(1,\infty)$,
\begin{equation}
    \|D^s(fg)\|_{L^p} \lesssim \|f\|_{L^{p_1}}\|D^sg\|_{L^{p_2}} + \|g\|_{L^{p_3}}\|D^sf\|_{L^{p_4}}
\end{equation}
and
\begin{equation}
    \|J^s(fg)\|_{L^p} \lesssim \|f\|_{L^{p_1}}\|J^sg\|_{L^{p_2}} + \|g\|_{L^{p_3}}\|J^sf\|_{L^{p_4}}
\end{equation}
with
\begin{equation*}
    \frac{1}{p} = \frac{1}{p_1} + \frac{1}{p_2} = \frac{1}{p_3} + \frac{1}{p_4}, \, \, \, p_j \in (1,\infty], \, j = 1,\dots, 4. 
\end{equation*}
\end{lemma}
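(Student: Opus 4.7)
The estimate is the classical Kato--Ponce fractional Leibniz rule, so my plan is to either cite the original proofs of Kenig--Ponce--Vega or the refinement by Grafakos--Oh, or to present a self-contained sketch via Bony's paraproduct decomposition combined with Littlewood--Paley theory. I outline the latter.

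First I would prove the homogeneous version (for $D^s$) and then deduce the inhomogeneous one (for $J^s$) by splitting $J^s = \phi(D)J^s + (1-\phi(D))J^s$, where $\phi$ is the low-frequency cutoff of \eqref{projectorphi}: on the low-frequency piece, $\phi(D)J^s$ is a bounded Fourier multiplier on every $L^p$, $p\in(1,\infty)$, handled by standard H\"ormander--Mikhlin theory plus H\"older; on the complement, $(1-\phi(D))J^s$ is a smoothing perturbation of $D^s$, so the desired bound reduces to the homogeneous case. For the homogeneous estimate, I would write $fg = \Pi_{\ell h}(f,g) + \Pi_{h\ell}(f,g) + \Pi_{hh}(f,g)$, with $\Pi_{\ell h}(f,g)=\sum_{j} S_{j-2}f\cdot P_j g$ (low--high), $\Pi_{h\ell}$ the symmetric high--low version, and $\Pi_{hh}(f,g)=\sum_{|j-k|\le 2} P_j f\cdot P_k g$ (high--high diagonal), where $P_j$ are the Littlewood--Paley projectors of \eqref{projectors} and $S_j=\sum_{k<j}P_k$.

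On $\Pi_{\ell h}(f,g)$ the Fourier support of the $j$-th summand lies at scale $2^j$, so $D^s\Pi_{\ell h}(f,g)$ is comparable to $\sum_{j} 2^{js}S_{j-2}f\cdot P_j g$. Using the Littlewood--Paley square-function characterization of $L^p$ for $p\in(1,\infty)$, H\"older's inequality with exponents $\tfrac{1}{p}=\tfrac{1}{p_1}+\tfrac{1}{p_2}$, and the pointwise bound $|S_{j-2}f|\lesssim \mathcal{M}f$ (Hardy--Littlewood maximal function), I would obtain $\|D^s\Pi_{\ell h}(f,g)\|_{L^p}\lesssim \|f\|_{L^{p_1}}\|D^s g\|_{L^{p_2}}$. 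The symmetric term $\Pi_{h\ell}$ yields $\|g\|_{L^{p_3}}\|D^s f\|_{L^{p_4}}$ in the same way. For $\Pi_{hh}(f,g)$ the output is not frequency-localized, so I would use Bernstein's inequality to move $D^s$ onto either factor in each pair and then apply the Fefferman--Stein vector-valued maximal inequality, absorbing the loss by Cauchy--Schwarz in $j$, to conclude with the same bound.

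The main obstacle is the high--high piece $\Pi_{hh}$: because its output has non-localized frequencies, the standard argument relies crucially on the vector-valued Fefferman--Stein inequality, which is exactly the reason the $p_j$ are required to lie in $(1,\infty)$ (with at most a single slot allowed to equal $\infty$, handled by replacing the square function by an $\ell^\infty$ estimate in that slot). Extending the estimate cleanly to all such endpoint combinations is the subtle point that in the literature is handled by Coifman--Meyer multiplier theory and Grafakos--Oh, which is what I would ultimately invoke for a fully rigorous endpoint-inclusive statement.
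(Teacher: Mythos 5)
Your proposal is correct and matches the paper's treatment: the paper does not prove this lemma but simply cites the Kato--Ponce literature (\cite{KP}, \cite{FRAC}, i.e.\ Grafakos--Oh), which is exactly your primary plan. Your paraproduct sketch is the standard argument behind those references and contains no gaps worth flagging.
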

Lemma \ref{leibnizhomog} can be seen as a Kato-Ponce type inequalities (see \cite{KP}), we refer to \cite{FRAC} for a prove of this results. We require the following expansion for fractional derivatives.

\begin{lemma}\label{extendenlemma}
Let $1<p<\infty$ and $1<s<2$, then
\begin{equation}\label{eqextenden1}
\|D^{s}(fg)-fD^{s}g-s\partial_x f \mathcal{H}D^{s-1} g\|_{L^p(\mathbb{R})}\lesssim \|D^{s}f\|_{L^{\infty}(\mathbb{R})}\|g\|_{L^p(\mathbb{R})}.
\end{equation}
If $2\leq s <3$, it holds
\begin{equation}\label{eqextenden2}
\begin{aligned}
\|D^{s}(fg)-fD^{s}g-s\partial_x f \mathcal{H}D^{s-1}g+\frac{s(s-1)}{2}\partial_x^2 fD^{s-2} g\|_{L^p(\mathbb{R})}\lesssim \|D^{s}f\|_{L^{\infty}(\mathbb{R})}\|g\|_{L^p(\mathbb{R})}.
\end{aligned}
\end{equation}
\end{lemma}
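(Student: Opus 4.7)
The plan is to interpret both inequalities as bounds on bilinear Fourier-multiplier operators whose symbols are Taylor remainders of $\zeta\mapsto|\zeta|^s$. Using $(\mathcal H D^{s-1}g)^{\wedge}(\zeta)=-i\sign(\zeta)|\zeta|^{s-1}\widehat g(\zeta)$ and $(\partial_x f)^{\wedge}(\eta)=i\eta\widehat f(\eta)$, a direct Fourier-side computation identifies the expression inside $\|\cdot\|_{L^p}$ in \eqref{eqextenden1} with the bilinear operator $T_s(f,g)$ defined by
\[
\widehat{T_s(f,g)}(\xi)=\int m_s(\eta,\xi-\eta)\,\widehat f(\eta)\widehat g(\xi-\eta)\,d\eta,\qquad m_s(\eta,\zeta)=|\zeta+\eta|^s-|\zeta|^s-s\,\sign(\zeta)|\zeta|^{s-1}\eta,
\]
i.e., $m_s$ is the second-order Taylor remainder of $|\cdot|^s$ at the base point $\zeta$. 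The analogous third-order remainder $\widetilde m_s(\eta,\zeta)=m_s(\eta,\zeta)-\tfrac{s(s-1)}{2}|\zeta|^{s-2}\eta^2$ governs \eqref{eqextenden2}. The task then reduces to showing that $T_s$ (resp.\ the multiplier operator with symbol $\widetilde m_s$) maps $L^\infty\times L^p\to L^p$ with constant $\lesssim\|D^s f\|_{L^\infty}$ on the first factor.

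Next, I would decompose $f$ and $g$ via the Littlewood-Paley projectors $P_j$ from \eqref{projectors} and split $T_s(f,g)$ according to the three frequency regimes $|\eta|\ll|\zeta|$, $|\eta|\sim|\zeta|$, and $|\eta|\gg|\zeta|$. Since $m_s$ is homogeneous of degree $s$, Taylor's theorem gives $|m_s(\eta,\zeta)|\lesssim|\eta|^s$ uniformly, with the strict improvement $|m_s(\eta,\zeta)|\lesssim|\zeta|^{s-2}\eta^2$ on the low-high region. On the high-low and high-high pieces I would estimate each of the three terms appearing in $m_s$ separately on dyadic blocks $P_j f\cdot P_k g$ with $j\gtrsim k$: the Bernstein-type inequality $\|P_j f\|_{L^\infty}\lesssim 2^{-js}\|D^s f\|_{L^\infty}$ converts the lost derivatives on $f$ into $\|D^s f\|_{L^\infty}$, Lemma \ref{leibnizhomog} together with the $L^p$-boundedness of the Littlewood-Paley square function reassembles $\|g\|_{L^p}$, and the geometric series in $j-k$ is summable.

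The main obstacle will be the low-high region $|\eta|\ll|\zeta|$, where the normalized symbol $m_s/|\eta|^s$ is bounded but behaves like $(|\zeta|/|\eta|)^{s-2}$, so for $s<2$ it fails the standard Coifman-Meyer/H\"ormander condition $|\partial_\eta^{a}\partial_\zeta^{b}\tilde m|\lesssim(|\eta|+|\zeta|)^{-a-b}$ and acquires apparent singularities at $\eta=0$. The remedy is to pass to a decomposition adapted to the two scales: after dyadically localizing $f$ to $|\eta|\sim 2^j$, the symbol $m_s(\eta,\cdot)$ becomes a smooth classical symbol of class $S^{s}_{1,0}$ in $\zeta$, so pseudodifferential calculus yields the operator bound $2^{js}\|P_j f\|_{L^\infty}\|g\|_{L^p}$, which sums in $j$ via Bernstein to $\|D^s f\|_{L^\infty}\|g\|_{L^p}$. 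The proof of \eqref{eqextenden2} is identical, with one additional Taylor term subtracted and the benign factor $|\zeta|^{s-2}$ ($s-2\ge0$) in place of $|\zeta|^{s-1}$. Alternatively, one can shortcut the entire argument by invoking D.~Li's sharp fractional Leibniz expansion, of which Lemma \ref{extendenlemma} is a direct specialization.
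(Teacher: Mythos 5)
Your closing sentence is in fact what the paper does: Lemma \ref{extendenlemma} is given no proof beyond the remark that it ``is a direct consequence of inequality (1.6) in \cite[Theorem 1.2]{Dli}'', so the intended argument is the citation, and your main sketch is a genuinely different, self-contained route. That route is essentially sound. The Fourier-side identification is correct: since $\partial_\zeta|\zeta|^s=s\sign(\zeta)|\zeta|^{s-1}$ and $\partial_\zeta^2|\zeta|^s=s(s-1)|\zeta|^{s-2}$, the expressions inside the norms are exactly the bilinear operators with symbols equal to the first- (resp.\ second-) order Taylor remainders of $|\cdot|^s$ at $\zeta$, and the crux you isolate --- that on the low-high region the remainder gains $|\zeta|^{s-2}\eta^2\le|\eta|^s$ precisely because $s<2$ (resp.\ $|\zeta|^{s-3}|\eta|^3\le|\eta|^s$ because $s<3$), while the $|\eta|^{-s}$-normalized symbol fails the naive Coifman--Meyer condition and forces a prior dyadic localization of $f$ --- is the right one and is where the stated ranges of $s$ enter. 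Two summation points are glossed and would need to be spelled out: in the high-high regime the blocks $P_jf\,\tilde P_jg$ produce outputs at all frequencies $\lesssim 2^j$ with no decay in $j-k$, so term-by-term summation diverges and one must instead apply the Coifman--Meyer theorem to the normalized symbol $\sum_j|\eta|^{-s}|\zeta|^{s}\psi(2^{-j}\eta)\tilde\psi(2^{-j}\zeta)$ (exactly the device the paper uses in Appendix \ref{appendA} for $\mathcal{L}_3+\mathcal{L}_4$); and in the low-high regime the per-scale bound carries no decay in $j$ by itself, summability coming only from the output-frequency localization at $|\xi|\sim|\zeta|$, which yields the factor $2^{(2-s)(j-k)}$ before the square-function step. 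What each approach buys: the citation is shorter and inherits Li's sharp exponent ranges; your argument is self-contained, runs parallel to the commutator proofs the paper already writes out in its appendix, and makes transparent why the number of Taylor corrections is tied to $\lfloor s\rfloor$.
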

The previous lemma is a direct consequence of (1.6) in \cite[Theorem 1.2]{Dli}. Now, motivated by the decomposition $\partial_x D^{\alpha}=-\mathcal{H}D^{1+\alpha}$ for the dispersion in \eqref{BDBO}, our analysis requires the following commutator estimates for the Hilbert transform.

\begin{prop}\label{CalderonComGU}  
Let $1<p<\infty$. Assume $l,m \in \mathbb{Z}^{+}\cup \{0\}$, $l+m\geq 1$ then
\begin{equation}\label{Comwellprel1}
    \|\partial_x^l[\mathcal{H}_x,g]\partial_x^{m}f\|_{L^p(\mathbb{R})} \lesssim_{p,l,m} \|\partial_x^{l+m} g\|_{L^{\infty}(\mathbb{R})}\|f\|_{L^p(\mathbb{R})}.
\end{equation}
Moreover, if $0\leq \alpha, \beta \leq 1$, $\beta>0$ with $\alpha+\beta= 1$, then
\begin{equation}\label{fracComwell}
    \|D_x^{\alpha}[\mathcal{H}_x,g]D_x^{\beta}f\|_{L^p(\mathbb{R})} \lesssim_{p,\alpha.\beta} \|\partial_x g\|_{L^{\infty}(\mathbb{R})}\|f\|_{L^p(\mathbb{R})}.
\end{equation}
\end{prop}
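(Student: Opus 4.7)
Both inequalities are Calderón-type commutator estimates; my strategy is to reduce everything to the classical first commutator theorem of Calderón,
\begin{equation*}
\|[\mathcal{H},g]\partial_x f\|_{L^p}\lesssim \|g'\|_{L^\infty}\|f\|_{L^p}, \qquad 1<p<\infty.
\end{equation*}
For \eqref{Comwellprel1} I would first iterate the commutator identity $\partial_x[\mathcal{H},g]=[\mathcal{H},\partial_x g]+[\mathcal{H},g]\partial_x$ to obtain
\begin{equation*}
\partial_x^l[\mathcal{H},g]\partial_x^m f=\sum_{j=0}^{l}\binom{l}{j}[\mathcal{H},\partial_x^j g]\,\partial_x^{l+m-j}f.
\end{equation*}
In the base case $l+m=1$, every summand is either $[\mathcal{H},g]\partial_x f$ (controlled by Calderón) or $[\mathcal{H},g']f$, which is bounded on $L^p$ by $\|g'\|_{L^\infty}\|f\|_{L^p}$ through the plain $L^p$-continuity of $\mathcal{H}$. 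For $l+m\ge 2$ I would integrate by parts $m$ times inside the commutator, rewriting
\begin{equation*}
[\mathcal{H},g]\partial_x^m f(x)=\frac{(-1)^{m-1}}{\pi}\,\mathrm{p.v.}\!\int\partial_y^m\!\left(\frac{g(x)-g(y)}{x-y}\right)f(y)\,dy,
\end{equation*}
and then invoke the higher-order (Coifman--McIntosh--Meyer) Calderón commutator estimates, which give the desired bound in terms of $\|\partial_x^{l+m}g\|_{L^\infty}$ and $\|f\|_{L^p}$; the outer $\partial_x^l$ is absorbed into the same multilinear framework.

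For \eqref{fracComwell} I would use complex interpolation. Define the analytic family $T_z f:=D^{z}[\mathcal{H},g]D^{1-z}f$ for $0\le \mathrm{Re}(z)\le 1$ and verify it is admissible in the sense of Stein. At $\mathrm{Re}(z)=0$, using that $D^{i\tau}$ is an $L^p$-multiplier of temperate growth in $|\tau|$ and that $D^{1-i\tau}=\mathcal{H}\partial_x D^{-i\tau}$, the estimate reduces to Calderón's first commutator theorem applied to $[\mathcal{H},g]\partial_x$. At $\mathrm{Re}(z)=1$, write $T_{1+i\tau}=D^{i\tau}\mathcal{H}\partial_x[\mathcal{H},g]D^{-i\tau}$ and distribute the inner $\partial_x$ via the Leibniz identity above, so that the required uniform bound again comes from Calderón's theorem combined with the $L^p$-boundedness of $\mathcal{H}$. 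Stein's interpolation theorem then produces \eqref{fracComwell} for every $\alpha\in[0,1)$, $\beta=1-\alpha\in (0,1]$.

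The main obstacle I expect is keeping track of the dependence on $\mathrm{Im}(z)$ along the boundary of the strip: the $L^p$-norms of the imaginary-power multipliers $|\xi|^{i\tau}$ grow with $|\tau|$, and to invoke Stein's theorem these constants must satisfy the admissible growth condition. A clean alternative avoiding interpolation would be to represent $D^\alpha[\mathcal{H},g]D^\beta$ directly as a bilinear singular integral of paraproduct type and apply the Coifman--Meyer theorem, but this approach requires a more delicate kernel analysis than the interpolation route.
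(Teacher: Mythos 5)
Your plan is essentially correct, but note that the paper does not prove this proposition at all: \eqref{Comwellprel1} is quoted from \cite[Lemma 3.1]{DawsonMCPON} and \eqref{fracComwell} from \cite{OscarCapil}, so any written-out argument is automatically a different route. For \eqref{Comwellprel1}, your Leibniz iteration $\partial_x[\mathcal{H},g]=[\mathcal{H},\partial_x g]+[\mathcal{H},g]\partial_x$ is valid and reduces everything to terms $[\mathcal{H},\partial_x^j g]\partial_x^{l+m-j}f$; the terms with $l+m-j=0$ are handled by plain $L^p$-boundedness of $\mathcal{H}$, and the remaining ones are exactly the $l=0$ instances of the cited lemma (the higher-order Calder\'on commutator bound $\|[\mathcal{H},h]\partial_x^k f\|_{L^p}\lesssim\|\partial_x^k h\|_{L^\infty}\|f\|_{L^p}$), so your argument is a clean reduction to that known theorem rather than an independent proof --- which matches the paper's own level of detail. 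Two cosmetic points: once the Leibniz identity has been applied there is no outer $\partial_x^l$ left to ``absorb,'' and the sign in your integration by parts should be $(-1)^m$. For \eqref{fracComwell}, your Stein interpolation scheme works: both boundary bounds reduce, via $D=\mathcal{H}\partial_x$ and the identity $\partial_x[\mathcal{H},g]=[\mathcal{H},g']+[\mathcal{H},g]\partial_x$, to Calder\'on's first commutator theorem composed with $\mathcal{H}$ and the imaginary powers $D^{\pm i\tau}$, whose $L^p$ operator norms grow polynomially in $|\tau|$ and are therefore admissible. This genuinely differs from the proof in \cite{OscarCapil}, which (like the paper's Appendix~A treatment of the sibling estimate in Proposition \ref{fractfirstcaldcomm}) runs through Bony's paraproduct decomposition, the Fefferman--Stein maximal inequality and the Coifman--Meyer theorem, exploiting the structural fact that the commutator symbol $\sign(\xi_1+\xi_2)-\sign(\xi_2)$ vanishes unless $|\xi_2|<|\xi_1|$. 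The interpolation route buys you freedom from any kernel or frequency-support analysis at the cost of verifying analyticity and admissible growth of the family $T_z$; the paraproduct route is self-contained and yields the endpoint structure directly, but requires the more delicate bilinear-symbol estimates you correctly anticipate.
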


The inequality \eqref{Comwellprel1} was established in \cite[Lemma 3.1]{DawsonMCPON}. We refer to \cite{OscarCapil} for a proof of \eqref{fracComwell}. We also require the following fractional commutator estimate for the Hilbert transform.

\begin{prop}\label{fractfirstcaldcomm} 
Let $1<p<\infty$ and $\beta>0$, then
\begin{equation}\label{Comwellprel12}
\|[\mathcal{H}_x,g]D^{\beta}f\|_{L^p(\mathbb{R})}\lesssim \|D^{\beta}g\|_{L^{\infty}}\|f\|_{L^p}.
\end{equation}
\end{prop}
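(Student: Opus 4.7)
The plan is to prove \eqref{Comwellprel12} by Littlewood-Paley decomposition, reducing the fractional commutator to the classical Calder\'on commutator estimate \eqref{Comwellprel1} on each dyadic block and transferring the $\beta$-derivative between $g$ and $f$ via Bernstein's inequality.

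Using the projectors from \eqref{projectors}--\eqref{projectorphi}, write $g = P^{\phi}g + \sum_{k\geq 1}P_kg$ and $f = P^{\phi}f + \sum_{j\geq 1}P_jf$. The low-frequency contributions are handled separately using that $P^{\phi}g$ (modulo its constant mean, which drops from the commutator) and $P^{\phi}f$ are entire band-limited, so Bernstein controls all their derivatives by $\|D^{\beta}g\|_{L^{\infty}}$ and $\|f\|_{L^p}$, respectively, after which \eqref{Comwellprel1} is directly applicable. The substantial work is on the double dyadic sum $\sum_{k,j\geq 1}[\mathcal{H},P_kg]D^{\beta}P_jf$, which I split into the three standard paraproduct regimes.

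In the high-low regime $k \geq j + 3$, estimate each block by the naive commutator bound
\[
\|[\mathcal{H}, P_kg]D^{\beta}P_jf\|_{L^p} \lesssim \|P_kg\|_{L^{\infty}} \|D^{\beta}P_jf\|_{L^p} \lesssim 2^{(j-k)\beta}\|D^{\beta}g\|_{L^{\infty}}\|P_jf\|_{L^p},
\]
using Bernstein to redistribute derivatives; the sum in $k > j$ converges geometrically since $\beta > 0$. The balanced regime $|k-j|\leq 2$ is estimated by the same H\"older bound (now with $2^{(j-k)\beta}\sim 1$) and summed in $j$ via the $L^p$ Littlewood-Paley square function estimate applied to the frequency-localized outputs. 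In the low-high regime $k \leq j-3$, I write $D^{\beta}P_jf = \partial_x(\mathcal{H}D^{\beta-1}P_jf)$ (for $\beta\geq 1$; for $0<\beta<1$ use \eqref{fracComwell} to move fractional pieces into the commutator) and apply \eqref{Comwellprel1} with $l=0$, $m=1$ to obtain
\[
\|[\mathcal{H}, P_kg]\partial_x\varphi\|_{L^p}\lesssim\|\partial_x P_kg\|_{L^{\infty}}\|\varphi\|_{L^p}\lesssim 2^{k(1-\beta)}\|D^{\beta}g\|_{L^{\infty}}\cdot 2^{j(\beta-1)}\|P_jf\|_{L^p},
\]
producing a factor $2^{(k-j)(1-\beta)}$ summable in $k < j$ when $\beta < 1$.

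The main obstacle is the range $\beta \geq 1$, where the factor in the low-high regime becomes non-summable and one must perform a higher-order Taylor expansion of the commutator kernel $(g(x)-g(y))/(x-y)$ in the spirit of Lemma \ref{extendenlemma}, invoking \eqref{Comwellprel1} with a larger total number of derivatives $l+m$ to absorb the extra powers of $2^{k(1-\beta)}$. A careful accounting of all remainder terms in this expansion, combined with Bernstein bounds on each dyadic piece, restores geometric convergence of the frequency sums across the transition at $\beta=1$ and yields \eqref{Comwellprel12} for the full range $\beta > 0$.
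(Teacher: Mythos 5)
Your decomposition into high-low, diagonal, and low-high regimes is the right skeleton, and your treatment of the first two regimes is essentially sound (for the diagonal sum note that the outputs are supported in balls $|\xi|\lesssim 2^{j}$, not annuli, so a bare square-function summation is not quite enough; one needs the Fefferman--Stein vector-valued maximal inequality or, as the paper does, the Coifman--Meyer theorem). The genuine gap is in the low-high regime $k\leq j-3$, for $\beta\geq 1$. The paper's proof begins with the observation, from the Fourier representation
\begin{equation*}
[\mathcal{H},g]D^{\beta}f(x)=-i\int |\xi_2|^{\beta}\big(\sign(\xi_1+\xi_2)-\sign(\xi_2)\big)\widehat{g}(\xi_1)\widehat{f}(\xi_2)e^{ix(\xi_1+\xi_2)}\,d\xi_1\,d\xi_2,
\end{equation*}
that the symbol $\sign(\xi_1+\xi_2)-\sign(\xi_2)$ vanishes unless $(\xi_1+\xi_2)\xi_2<0$, i.e.\ unless $|\xi_2|<|\xi_1|$. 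On the support of $\widehat{P_kg}(\xi_1)\widehat{P_jf}(\xi_2)$ with $k\leq j-3$ one has $|\xi_1|\leq 2^{k+1}<2^{j-1}\leq|\xi_2|$, so $\sign(\xi_1+\xi_2)=\sign(\xi_2)$ and the entire low-high contribution is \emph{identically zero}. There is nothing to estimate there, and this cancellation is precisely what makes \eqref{Comwellprel12} true with only $\|D^{\beta}g\|_{L^{\infty}}$ on the right-hand side: a generic bilinear operator of this frequency shape, with $g$ at low frequency and $D^{\beta}$ falling on the high frequencies of $f$, could not be controlled this way.

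Because you miss this cancellation, you are forced to estimate a term that you cannot close: your factor $2^{(k-j)(1-\beta)}$ is non-summable for $\beta\geq 1$, and the proposed repair --- a higher-order Taylor expansion of the kernel $(g(x)-g(y))/(x-y)$ in the spirit of Lemma \ref{extendenlemma} together with \eqref{Comwellprel1} for larger $l+m$ --- is not carried out and does not obviously converge: the $n$-th term of such an expansion contributes roughly $2^{k(n-\beta)}2^{j(\beta-n+1)}$, and summing over $k<j$ leaves an unbounded factor $2^{j}$. (Your appeal to \eqref{fracComwell} for $0<\beta<1$ is also off target, since that estimate bounds $D^{\alpha}[\mathcal{H},g]D^{\beta}f$ with $\alpha+\beta=1$, not the commutator itself.) The fix is not more expansion but the support observation above, which deletes the low-high regime entirely and reduces the proof to the two regimes you already handle.
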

We prove Proposition \ref{fractfirstcaldcomm} in the Appendix \ref{appendA}. 

\begin{lemma}\label{lemmacomm1} 
Let $0<\beta\leq 1$ and $1<p<\infty$. Then it holds 
\begin{equation}\label{eqlemmacomm1}
    \|[D^{\beta},f]g\|_{L^p}\lesssim \|D^{\beta}f\|_{L^{p}}\|g\|_{L^{\infty}},
\end{equation}
\begin{equation}\label{eq1lemmacomm1} 
    \|[D^{\beta},f]g\|_{L^p}\lesssim \|D^{\beta}f\|_{L^{\infty}}\|g\|_{L^{p}}.
\end{equation}
\end{lemma}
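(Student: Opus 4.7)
My plan is to prove Lemma \ref{lemmacomm1} via the Bony paraproduct decomposition combined with symbol-calculus commutator estimates on each dyadic frequency block. For $0<\beta<1$, I would split $fg = T_fg + T_gf + R(f,g)$ (low-high, high-low, and high-high paraproducts built from the projectors $P_j$ of \eqref{projectors}), use the commutativity of $D^\beta$ with the Littlewood-Paley projectors, and reduce $[D^\beta,f]g$ to three families: a low-high sum $\sum_j [D^\beta, S_{j-3}f]\Delta_j g$, a symmetric high-low sum, and the high-high remainder $\sum_{|j-k|\le 2}[D^\beta, \Delta_j f]\Delta_k g$, where $S_{j-3} = \sum_{k\le j-3}\Delta_k$.

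The main workhorse would be the frequency-localized commutator estimate
$$\|[D^\beta, \phi] u\|_{L^p} \lesssim 2^{j(\beta-1)}\|\partial_x\phi\|_{L^\infty}\|u\|_{L^p},$$
valid when $\phi$ is smooth and $u$ has frequency support $\sim 2^j$, derived by Taylor-expanding the symbol $|\xi|^\beta$ about the frequency center of $u$. Substituting $\phi = S_{j-3}f$ in the low-high piece and using Bernstein's inequality together with $\|\Delta_k f\|_{L^\infty}\lesssim 2^{-k\beta}\|D^\beta f\|_{L^\infty}$, one has
$$\|\partial_x S_{j-3}f\|_{L^\infty} \lesssim \sum_{k\le j-3} 2^k\|\Delta_k f\|_{L^\infty} \lesssim \|D^\beta f\|_{L^\infty}\sum_{k\le j-3}2^{k(1-\beta)} \lesssim 2^{j(1-\beta)}\|D^\beta f\|_{L^\infty},$$
the geometric series converging since $1-\beta>0$; the factor $2^{j(\beta-1)}$ is thus cancelled, and summation in $j$ via the $L^p$ Littlewood-Paley square function gives the desired bound $\|D^\beta f\|_{L^\infty}\|g\|_{L^p}$ for \eqref{eq1lemmacomm1}. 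For \eqref{eqlemmacomm1} the Bernstein step is reorganized, bounding $\|\Delta_j g\|_{L^\infty}\le \|g\|_{L^\infty}$ and $\|\Delta_k f\|_{L^p}\lesssim 2^{-k\beta}\|\Delta_k D^\beta f\|_{L^p}$, and the square-function summation again closes the estimate. The high-low family reduces to the same analysis with $f$ and $g$ interchanged after observing $S_{j-3}D^\beta g = D^\beta S_{j-3} g$.

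The endpoint case $\beta=1$ would be treated separately via the identity $D = \mathcal{H}\partial_x$, yielding $[D,f]g = \mathcal{H}(f'g) + [\mathcal{H},f]g'$. The first summand is controlled by H\"older combined with $L^p$-boundedness of $\mathcal{H}$; the second by the Calder\'on commutator estimates in Propositions \ref{CalderonComGU} and \ref{fractfirstcaldcomm}. An integration by parts in $[\mathcal{H},f]g'$ transfers the derivative from $g$ to $f$ and reduces \eqref{eq1lemmacomm1} at $\beta=1$ to a first Calder\'on commutator controlled by $\|Df\|_{L^\infty}\|g\|_{L^p}$. The hard part will be the high-high remainder in the $0<\beta<1$ case: since the product $\Delta_j f\cdot\Delta_k g$ with $|j-k|\le 2$ may have output frequency $\ll 2^j$, the Bernstein gain $2^{j(\beta-1)}$ is not directly visible. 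I would overcome this with the pointwise bound
$$|[D^\beta, \Delta_j f]\Delta_k g(x)| \lesssim \|D^\beta f\|_{L^\infty}\,M(\Delta_k g)(x),$$
derived by splitting the kernel integral of $D^\beta$ at $|y|\sim 2^{-j}$ and applying Bernstein to $\Delta_j f$ and its derivative (here $M$ is the Hardy-Littlewood maximal function), followed by reorganization of the sum over the output frequency and an appeal to the $L^p$-boundedness of $M$ to conclude.
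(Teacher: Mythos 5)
The paper itself does not prove Lemma \ref{lemmacomm1}: it cites Kenig--Ponce--Vega \cite{KPV1993} for \eqref{eqlemmacomm1} with $0<\beta<1$ and D.~Li \cite{Dli} for the case $\beta=1$ and for \eqref{eq1lemmacomm1}, so a self-contained argument would be a genuine addition. Your paraproduct scheme is the standard route and does close for \eqref{eqlemmacomm1} and for the low--high and high--high pieces of \eqref{eq1lemmacomm1}. The gap is the high--low piece of \eqref{eq1lemmacomm1}, which you dispatch as ``the same analysis with $f$ and $g$ interchanged.'' It is not: in $\sum_j[D^\beta,\Delta_jf]S_{j-3}g$ the multiplier $f$ now carries the high frequency and the $L^p$ function $g$ the low one. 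First, your workhorse estimate $\|[D^\beta,\phi]u\|_{L^p}\lesssim 2^{j(\beta-1)}\|\partial_x\phi\|_{L^\infty}\|u\|_{L^p}$ is justified by replacing $D^\beta$ with $D^\beta\widetilde{P}_j$, using that $u$ has frequency $\sim 2^j$; here $u=S_{j-3}g$ does not, and the natural substitute $D^\beta S_{j+2}$ has a kernel decaying only like $|z|^{-1-\beta}$ (because $|\xi|^\beta$ is singular at the origin), so $\int|z||K(z)|\,dz=\infty$ and the Taylor-expansion trick fails. Second, even granting the bound that the symbol calculus actually yields, namely $|[D^\beta,\Delta_jf]S_{j-3}g|\lesssim\|D^\beta f\|_{L^\infty}\,M(S_{j-3}g)$ with no decay in $j$, these terms are not square-summable: after normalization the high--low sum is, up to harmless multipliers, the paraproduct $\sum_j \Delta_j(D^\beta f)\,S_{j-3}g$ with the \emph{high-frequency} input measured in $L^\infty$, and its $L^p$ bound by $\|D^\beta f\|_{L^\infty}\|g\|_{L^p}$ requires the Carleson-measure/BMO paraproduct estimate (equivalently the $L^\infty\times L^p\to L^p$ case of the Coifman--Meyer theorem, which the paper's Appendix uses for Lemma \ref{lemmacomm3}), not the pointwise square-function-plus-H\"older argument that closes the other pieces. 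This is precisely the term that forces the $\|D^\beta f\|_{BMO}$ formulation in \cite{Dli}.

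The endpoint $\beta=1$ of \eqref{eq1lemmacomm1} has the same defect in a different guise. Your reduction $[D,f]g=\mathcal{H}(f'g)+[\mathcal{H},f]g'$ together with the Calder\'on commutator bounds produces constants proportional to $\|\partial_xf\|_{L^\infty}$ (or $\|\partial_xf\|_{L^p}$). For \eqref{eqlemmacomm1} this is harmless, since $\|\partial_xf\|_{L^p}=\|\mathcal{H}Df\|_{L^p}\lesssim\|Df\|_{L^p}$ for $1<p<\infty$; but \eqref{eq1lemmacomm1} would need $\|\partial_xf\|_{L^\infty}\lesssim\|Df\|_{L^\infty}$, which is false because $\partial_xf=-\mathcal{H}(Df)$ and the Hilbert transform is unbounded on $L^\infty$. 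So the $\beta=1$ case of \eqref{eq1lemmacomm1} cannot be reduced to the first Calder\'on commutator in the way you describe and must also go through the paraproduct/Carleson argument. In short: \eqref{eqlemmacomm1} and two of the three paraproduct pieces of \eqref{eq1lemmacomm1} are fine as sketched, but the high--low piece of \eqref{eq1lemmacomm1} (for all $0<\beta\le1$) needs a different tool than the one you propose.
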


The inequality \eqref{eqlemmacomm1} for $0<\beta<1$ was proved by Kenig, Ponce and Vega in \cite{KPV1993}, we refer to D. Li \cite{Dli} for the case $\beta=1$ and \eqref{eq1lemmacomm1}.

\begin{lemma}\label{lemmacomm2} 
For any $0\leq \beta<1$, $0<\gamma \leq 1-\beta$, $1<p<\infty$, we have
\begin{equation}
    \|D^{\beta}[D^{\gamma},f]D^{1-(\beta+\gamma)}g\|_{L^p}\lesssim \|\partial_x f\|_{L^{\infty}}\|g\|_{L^p}.
\end{equation}
\end{lemma}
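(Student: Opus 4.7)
My plan is to prove Lemma \ref{lemmacomm2} via a Littlewood--Paley decomposition of the bilinear operator. Setting $\delta := 1 - (\beta + \gamma)$, the three exponents lie in $[0,1]$ and sum to $1$, so on the Fourier side $D^{\beta}[D^{\gamma}, f] D^{\delta} g$ becomes a bilinear multiplier with symbol
$$\sigma(\zeta, \eta) = |\zeta + \eta|^{\beta}\bigl(|\zeta + \eta|^{\gamma} - |\eta|^{\gamma}\bigr)|\eta|^{\delta},$$
where $\zeta$ is the frequency of $f$ and $\eta$ that of $g$. The aim is to show that this multiplier defines an operator bounded $L^{\infty}\times L^{p} \to L^{p}$ with operator norm $\lesssim \|\partial_x f\|_{L^{\infty}}$. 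Decomposing $f = \sum_j P_j f$ and $g = \sum_k P_k g$, I would split the double sum into three standard regimes: low-high ($j \leq k - 2$), high-low ($j \geq k + 2$), and comparable ($|j - k| \leq 1$).

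In the low-high regime, $|\zeta| \sim 2^j \ll 2^k \sim |\eta|$, so I would Taylor expand
$$|\zeta + \eta|^{\gamma} - |\eta|^{\gamma} = \gamma |\eta|^{\gamma - 1}\sign(\eta)\, \zeta + O\bigl(|\zeta|^2 |\eta|^{\gamma - 2}\bigr).$$
The leading term factors $\zeta$ out of the bilinear operator, converting $P_j f$ into $\partial_x P_j f$ in real space, while the residual symbol has total homogeneity $\beta + (\gamma - 1) + \delta = 0$ and, restricted to the annuli $|\zeta| \sim 2^j$, $|\eta| \sim 2^k$, reduces to a bounded Mikhlin multiplier; this yields a block bound $\lesssim \|\partial_x P_j f\|_{L^{\infty}}\|P_k g\|_{L^p}$. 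Higher-order Taylor remainders carry geometric factors $(2^j / 2^k)^{n-1}$, summable in $j < k$. In the high-low regime I would apply Lemma \ref{lemmacomm1} to $[D^{\gamma}, P_j f]$, obtaining
$$\|D^{\beta}[D^{\gamma}, P_j f] D^{\delta} P_k g\|_{L^p} \lesssim 2^{j\beta}\|D^{\gamma} P_j f\|_{L^{\infty}}\|D^{\delta} P_k g\|_{L^p},$$
and then use Bernstein's inequality $\|D^{\gamma} P_j f\|_{L^{\infty}} \lesssim 2^{j(\gamma - 1)}\|\partial_x f\|_{L^{\infty}}$ to obtain a block bound $2^{(k - j)\delta}\|\partial_x f\|_{L^{\infty}}\|P_k g\|_{L^p}$, geometric in $j \geq k + 2$ whenever $\delta > 0$.

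The main obstacle is the endpoint $\delta = 0$ (equivalently $\beta + \gamma = 1$) together with the comparable regime $|j - k| \leq 1$, where the geometric sums degenerate and one must exploit cancellation between the two terms defining the commutator. I would handle this by combining Littlewood--Paley square-function estimates on the output frequency with the algebraic identity
$$D^{\beta}[D^{\gamma}, f] D^{\delta} g = [D^{\beta + \gamma}, f] D^{\delta} g - [D^{\beta}, f] D^{1 - \beta} g,$$
valid whenever $\beta + \gamma + \delta = 1$. At the endpoint $\delta = 0$ the first term becomes $[D, f] g$, controlled by Calder\'on's first commutator estimate via Proposition \ref{CalderonComGU}; the second term is of the same form as the original operator and can be iterated or treated directly by the paraproduct scheme above. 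Assembling the three regimes and summing over the output Littlewood--Paley blocks via standard square-function bounds then delivers the claimed inequality.
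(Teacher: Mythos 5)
The paper does not actually prove Lemma \ref{lemmacomm2}: it cites \cite[Proposition 3.10]{Dli} (see also \cite{DawsonMCPON}). So the comparison is between your plan and the cited proof, which is also a Littlewood--Paley/paraproduct argument; your overall strategy is the standard one and your preliminary steps are sound. Your symbol $\sigma(\zeta,\eta)=|\zeta+\eta|^{\beta}(|\zeta+\eta|^{\gamma}-|\eta|^{\gamma})|\eta|^{\delta}$ is correct, the low-high Taylor expansion and the high-low Bernstein computation are both right (modulo the routine point that in the low-high sum you should work with $P_{<k-2}f$ as a single block rather than summing $\|\partial_x P_j f\|_{L^\infty}$ over $j$), and the algebraic identity
\begin{equation*}
D^{\beta}[D^{\gamma},f]D^{\delta}g=[D^{\beta+\gamma},f]D^{\delta}g-[D^{\beta},f]D^{1-\beta}g
\end{equation*}
is correct and in fact stronger than you use it for: since $\delta=1-(\beta+\gamma)$, it reduces the \emph{entire} lemma, for all admissible $(\beta,\gamma)$, to the single-parameter family $[D^{s},f]D^{1-s}g$, $0<s\leq 1$ (the case $s=1$ being Calder\'on's commutator via Proposition \ref{CalderonComGU}, after writing $[D,f]g=\mathcal{H}(\partial_xf\,g)+[\mathcal{H},f]\partial_xg$), not merely the endpoint $\delta=0$.

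The genuine gap is in the regime you yourself flag as the main obstacle, the comparable frequencies $|j-k|\leq 1$, where your proposed resolution is named but not executed. The difficulty there is concrete: when $|\zeta|\sim|\eta|\sim 2^{j}$ the output frequency $\xi=\zeta+\eta$ ranges over all of $|\xi|\lesssim 2^{j}$, and the factors $|\zeta+\eta|^{\beta}$ and $|\zeta+\eta|^{\gamma}$ are not smooth at $\xi=0$, so after factoring out $\zeta$ the resulting symbol violates the Mikhlin/Coifman--Meyer derivative bounds $|\partial^{a}\sigma|\lesssim(|\zeta|+|\eta|)^{-|a|}$ near $\xi=0$ and the ``bounded Mikhlin multiplier'' shortcut that works in the other two regimes is unavailable. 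What is actually needed is a decomposition in the \emph{output} frequency $2^{m}\lesssim 2^{j}$: one splits off the smooth piece $-|\eta|^{\gamma}|\eta|^{\delta}/(i\zeta)$ (a genuine Coifman--Meyer symbol on $|\zeta|\sim|\eta|$), writes the rough piece as $D^{\beta+\gamma}$ (or $D^{\gamma}$, after the reduction) applied to a bilinear operator whose $j$-th block has $L^{p}$ norm $\lesssim 2^{-j(\beta+\gamma)}\|\partial_xf\|_{L^{\infty}}\|\widetilde P_jg\|_{L^{p}}$, and then pays the derivative on each output annulus as $2^{m(\beta+\gamma)}$, gaining $2^{-(j-m)(\beta+\gamma)}$ which is summable by Young's inequality in $\ell^{2}$ followed by Fefferman--Stein. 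This is exactly the bookkeeping the paper carries out for the analogous terms $\mathcal{L}_3+\mathcal{L}_4$ in its Appendix A proof of Proposition \ref{fractfirstcaldcomm}, and it is the step your proposal would need to supply to be complete; as written, ``combining square-function estimates on the output frequency with the algebraic identity'' asserts the conclusion of this analysis rather than performing it.
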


The proof of the Lemma \ref{lemmacomm2} can be consulted in \cite[Proposition 3.10]{Dli} (see also \cite{DawsonMCPON}). To apply energy estimated for the dispersions $-1<\alpha\leq 1/2$ in \eqref{BDBO}, we require the following commutator relation for projector operators.

\begin{lemma}\label{lemmacomm3}
Let $\phi\in C^{\infty}_c(\mathbb{R})$ with $\phi(\xi)=1$ for $|\xi|\leq 1$, and $P^{\phi}$ the projector defined in \eqref{projectorphi}. Then for any $\beta\geq 0$, $\gamma >0$, $1<p<\infty$, we have
\begin{equation}
    \|D^{\beta}[P^{\phi},f]D^{\gamma}g\|_{L^p}\lesssim \big(\|D^{\beta+\gamma}f\|_{L^{\infty}}+\|\partial_x f\|_{L^{\infty}}\big)\|g\|_{L^p}.
\end{equation}
Moreover, if $\beta+\gamma=1$, $\|D^{\beta+\gamma}f\|_{L^{\infty}}$ can be replaced by $\|\partial_xf\|_{L^{\infty}}$ in the above inequality.
\end{lemma}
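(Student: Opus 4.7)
The plan is to exploit that $P^\phi$ is convolution with the Schwartz kernel $K := \check\phi$, so the commutator admits the kernel representation
\[
[P^\phi, f] h(x) = \int K(x-y)\bigl(f(y) - f(x)\bigr) h(y)\, dy.
\]
First I would apply the fundamental theorem of calculus to $f(y) - f(x)$ and change variable $u = y - x$ to obtain
\[
[P^\phi, f] h(x) = -\int_0^1 \!\!\int \rho(u)\, \partial_x f(x + su)\, h(x + u)\, du\, ds,
\qquad \rho(u) := -u K(-u),
\]
where $\rho \in \mathcal{S}(\R)$ has a compactly supported Fourier transform (proportional to $\phi'$). This representation already isolates the factor $\partial_x f$ that will produce the $\|\partial_x f\|_{L^\infty}$ term in the bound.

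Setting $h = D^\gamma g$ and passing to the Fourier side, the operator $D^\beta [P^\phi, f] D^\gamma$ has bilinear symbol
\[
m(\xi, \eta) = |\xi|^\beta \bigl(\phi(\xi) - \phi(\eta)\bigr) |\eta|^\gamma
\]
acting on $\hat f(\xi - \eta)\hat g(\eta)$. Writing $\phi(\xi) - \phi(\eta) = (\xi - \eta) A(\xi, \eta)$ with $A(\xi, \eta) = \int_0^1 \phi'((1-t)\eta + t\xi)\, dt$ absorbs $(\xi - \eta)$ into $\hat f$ as $\widehat{\partial_x f}$. The key geometric fact is that $A$ is supported where the segment $\{(1-t)\eta + t\xi : t \in [0,1]\}$ meets $\supp\phi'$, which prevents simultaneous large $|\xi|, |\eta|$ except through a near-cancellation. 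In particular, when both $|\xi - \eta|$ and $|\eta|$ are large, the segment crosses $\supp\phi'$ in a $t$-window of length $\lesssim 1/|\xi - \eta|$, providing enough decay to offset $|\xi|^\beta |\eta|^\gamma$.

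I would then Littlewood-Paley decompose $f = \sum_j P_j f$, $g = \sum_k P_k g$ and estimate each dyadic block via the two Bernstein bounds
\[
\|P_j f\|_{L^\infty} \lesssim \min\bigl(2^{-j(\beta+\gamma)}\|D^{\beta+\gamma} f\|_{L^\infty},\; 2^{-j}\|\partial_x f\|_{L^\infty}\bigr),
\]
together with $L^p$-boundedness of the Fourier multiplier $|\xi|^\beta \phi(\xi)$ (a H\"ormander-Mikhlin symbol, since $|\xi|^k |\partial_\xi^k(|\xi|^\beta \phi(\xi))|$ remains bounded). The Littlewood-Paley square-function estimate then recombines the blocks into $\|g\|_{L^p}$. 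For the moreover clause, when $\beta + \gamma = 1$ the geometric factor $2^{-j(\beta+\gamma)}$ equals $2^{-j}$, which coincides with the bound from $\|\partial_x f\|_{L^\infty}$ alone, so $\|D^{\beta+\gamma} f\|_{L^\infty}$ need not be invoked.

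The main obstacle will be the resonant high/high frequency region, where both $|\xi - \eta|$ and $|\eta|$ are large while the output frequency $|\xi|$ is confined to $\supp \phi'$. The apparent growth of $|\xi|^\beta |\eta|^\gamma$ must be exactly balanced by the shrinking $t$-window in $A$ together with the trade $|\xi - \eta|^{\beta + \gamma - 1}$ from moving the derivative, and the summation of dyadic contributions requires care, especially for $1 < p < 2$, where the square-function estimate runs in only one direction and one must invoke $L^p$-boundedness of each projector rather than Minkowski.
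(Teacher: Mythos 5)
Your overall strategy --- passing to the bilinear symbol $|\xi|^{\beta}(\phi(\xi)-\phi(\eta))|\eta|^{\gamma}$, factoring $\phi(\xi)-\phi(\eta)=(\xi-\eta)\int_0^1\phi'((1-t)\eta+t\xi)\,dt$ to extract $\partial_xf$, decomposing $f$ and $g$ into Littlewood--Paley blocks, and trading $\beta+\gamma$ derivatives onto $f$ via Bernstein --- is essentially the paper's strategy, which organizes the same computation as a Bony paraproduct decomposition into low-high, high-high and high-low interactions (Appendix B). The low-high piece is treated in the paper by exactly your fundamental-theorem-of-calculus symbol (the paper's $\sigma_3$), and the $\beta+\gamma=1$ refinement is obtained the same way you describe.

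The genuine gap is the resonant high-high interaction, which you correctly identify as the main obstacle but do not resolve. There $|\xi-\eta|\sim|\eta|\sim 2^{j}$ while the output frequency is confined to $\supp\phi$, so after moving $\beta+\gamma$ derivatives onto $f$ the $j$-th block carries only the factor $2^{-\beta(j-k)}$ relative to the output frequency $2^{k}\lesssim 1$. For $\beta>0$ this is summable in $j\ge k-2$, and the paper closes the argument with the Littlewood--Paley square function in the output variable together with the Fefferman--Stein vector-valued maximal inequality (which is also what rescues the range $1<p<2$ that worries you). For $\beta=0$, however, the dyadic pieces are $O(1)$ uniformly in $j$ and all land in overlapping low frequencies, so there is neither absolute summability nor orthogonality to recombine them; the tools you propose (H\"ormander--Mikhlin for the one-variable multiplier $|\xi|^{\beta}\phi(\xi)$ plus a square function) cannot close this case, precisely for the reason you flag. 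The missing ingredient is a genuinely bilinear multiplier theorem: the paper verifies that the resonant symbol $\sum_j\sum_{|k-j|\le2}|\xi_1|^{-\gamma}|\xi_2|^{\gamma}\phi(\xi_2)\psi(2^{-j}\xi_1)\psi(2^{-k}\xi_2)$ satisfies the Coifman--Meyer derivative bounds $|\partial^{\gamma_1}_{\xi_1}\partial^{\gamma_2}_{\xi_2}\sigma|\lesssim(|\xi_1|+|\xi_2|)^{-|\gamma_1|-|\gamma_2|}$ and invokes the Coifman--Meyer theorem to obtain the $L^{\infty}\times L^{p}\to L^{p}$ bound. Since the lemma is claimed for all $\beta\ge0$, your plan as written does not prove the stated result without this (or an equivalent) device.
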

The proof of the above lemma is given in Appendix \ref{appendB} below.

\subsection{Weighted estimates and fractional derivatives}

In this part, we introduce smooth approximations for the weighted function $\langle x\rangle$. We also recall some equivalence for fractional derivatives presented in \cite{SteinThe}, as well as some useful consequences of this definition.  

Following the ideas in \cite{FLinaPioncedGBO} (see also \cite{FLinaPonceWeBO,FonPO}), for any $N\in \mathbb{Z}^{+}$ and $0< \theta \leq 1$, we define the truncated weights $\langle \cdot \rangle_{N}^{\theta} : \mathbb{R} \rightarrow \mathbb{R}$ according to
 \begin{equation*}
\langle x \rangle_{N}^{\theta} =\left\{\begin{aligned} 
 & (1+ |x|^2)^{\theta/2}, \text{ if } |x|\leq N, \\
 &(2N)^{\theta}, \hspace{1,1cm}\text{if } |x|\geq 3N
 \end{aligned}\right.
 \end{equation*}
in such a way that $\langle x \rangle_{N}$ is smooth and non-decreasing in $|x|$ with $\partial_x\langle x \rangle_{N} \leq 1$,  and there exist constants $c_{l}$ independent of $N$ such that $|\partial_x^l\langle x \rangle_{N}| \leq c\partial_x^l\langle x \rangle$, for each $l=2,3$. 

To treat the case $\alpha=-1$ in equation \eqref{BDBO}, we must assure that the computations involving the Hilbert transform and the approximations $\langle x\rangle_N^{\theta}$ are independent of the parameter $N$. In this direction, we have (see \cite{FonPO}):

\begin{prop}\label{propapcond}
For any $\theta \in (-1,1)$ and any $N\in \mathbb{Z}^{+}$, the Hilbert transform is bounded in $L^2(\langle x \rangle_N^{\theta} (x)\, dx)$ with a constant depending on $\theta$ but independent of $N$.
\end{prop}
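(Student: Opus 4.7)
The plan is to deduce this from the classical Hunt--Muckenhoupt--Wheeden theorem, which asserts that the Hilbert transform is bounded on $L^2(w\, dx)$ with a constant depending only on the Muckenhoupt $A_2$ characteristic $[w]_{A_2}$ of the weight $w$. Hence it is enough to prove that the family $\{\langle \cdot \rangle_N^{\theta}\}_{N \in \mathbb{Z}^+}$ belongs to $A_2(\mathbb{R})$ with a bound on the $A_2$ characteristic that is uniform in $N$.

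The starting point is the well-known fact that $\langle x \rangle^{\theta} \in A_2(\mathbb{R})$ for every $\theta \in (-1,1)$, which already covers the case $N = \infty$. From the construction of $\langle x \rangle_N$ one has the pointwise comparison
$$
\langle x \rangle_N^{\theta} \sim
\begin{cases}
\langle x \rangle^{\theta}, & |x| \le N, \\
N^{\theta}, & |x| \ge N,
\end{cases}
$$
with implicit constants depending on $\theta$ but not on $N$; the same comparison holds for the reciprocal weight $\langle x \rangle_N^{-\theta}$, and the admissible range $\theta \in (-1,1)$ is preserved under $\theta \mapsto -\theta$.

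I would then verify the uniform $A_2$ condition
$$
\sup_{I \subset \mathbb{R}} \Bigl(\tfrac{1}{|I|} \int_I \langle x \rangle_N^{\theta}\, dx\Bigr)\Bigl(\tfrac{1}{|I|}\int_I \langle x \rangle_N^{-\theta}\, dx\Bigr) \le C(\theta)
$$
by splitting an arbitrary bounded interval $I \subset \mathbb{R}$ into three regimes relative to the scale $N$: (a) if $I \subset \{|x| \le 2N\}$, the weight is comparable to $\langle x \rangle^{\theta}$ on $I$ and the bound follows from $[\langle x \rangle^{\theta}]_{A_2} < \infty$; (b) if $I \subset \{|x| \ge N/2\}$, then $\langle x \rangle_N^{\theta}$ is comparable to the constant $N^{\theta}$ on $I$ and the product of the two averages is $O(1)$; (c) the straddling case, in which $|I| \gtrsim N$ and $I$ meets both regions. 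The main technical obstacle is (c): one splits each average as
$$
\tfrac{1}{|I|}\int_I \langle x\rangle_N^{\pm\theta}\, dx = \tfrac{1}{|I|}\int_{I \cap \{|x|\le N\}} \langle x\rangle^{\pm\theta}\, dx + \tfrac{1}{|I|}\int_{I \cap \{|x|\ge N\}} N^{\pm\theta}\, dx + O(1),
$$
and the conclusion follows from the elementary identity
$$
\tfrac{1}{h}\int_0^h \langle x \rangle^{\mu}\, dx \sim \max(1,h)^{\mu}, \qquad \mu \in (-1,1),
$$
together with the observation that $|I| \gtrsim N$ makes both averages comparable to $N^{\pm\theta}$, so their product is controlled by a constant depending only on $\theta$.

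Once the uniform $A_2$ bound is established, Hunt--Muckenhoupt--Wheeden yields an operator norm of $\mathcal{H}$ on $L^2(\langle x\rangle_N^{\theta}\, dx)$ depending only on $[\langle x\rangle_N^{\theta}]_{A_2}$, hence only on $\theta$, completing the proof. I expect the casework in (c) to be the only genuinely delicate step; the remaining ingredients are entirely standard.
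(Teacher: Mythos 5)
Your argument is correct: the uniform-in-$N$ verification of the $A_2$ condition for $\langle x\rangle_N^{\pm\theta}$ (via the three regimes relative to the scale $N$ and the elementary average $\tfrac{1}{h}\int_0^h \langle x\rangle^{\mu}\,dx \sim \max(1,h)^{\mu}$) combined with the Hunt--Muckenhoupt--Wheeden theorem is exactly the standard route. The paper gives no proof of its own and simply cites \cite{FonPO}, where the result is established by precisely this uniform $A_2$ argument, so your proposal matches the intended proof.
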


We recall the following characterization of the spaces $L^p_s(\mathbb{R}^d)=J^{-s}L^p(\mathbb{R}^d)$.
\begin{theorem}(\cite{SteinThe})\label{TheoSteDer}
Let $b\in (0,1)$ and $2d/(d+2b)<p<\infty$. Then $f\in L_b^p(\mathbb{R}^d)$ if and only if
\begin{itemize}
\item[(i)]  $f\in L^p(\mathbb{R}^d)$, 
\item[(ii)]$\mathcal{D}^bf(x)=\left(\int_{\mathbb{R}^d}\frac{|f(x)-f(y)|^2}{|x-y|^{d+2b}}\, dy\right)^{1/2}\in L^{p}(\mathbb{R}^d),$
\end{itemize}
with 
\begin{equation*}
\left\|J^b f\right\|_{L^p}=\left\|(1-\Delta)^{b/2} f\right\|_{L^p} \sim \left\|f\right\|_{L^p}+\left\|\mathcal{D}^b f\right\|_{L^p} \sim \left\|f\right\|_{L^p}+\left\|D^b f\right\|_{L^p}.
\end{equation*} 
\end{theorem}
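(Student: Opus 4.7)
The plan is to follow Stein's original proof, which proceeds in three steps: an $L^2$ identity by Plancherel, an $L^p$ extension via vector-valued Calder\'on--Zygmund theory, and a comparison between the Bessel and Riesz potentials.

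First I would establish the $L^2$ case directly. By Fubini and the substitution $h=y-x$,
\[
\|\mathcal{D}^b f\|_{L^2}^2 = \int_{\mathbb{R}^d}\int_{\mathbb{R}^d}\frac{|f(x+h)-f(x)|^2}{|h|^{d+2b}}\,dx\,dh.
\]
Applying Plancherel in the $x$ variable and evaluating the radial integral $\int_{\mathbb{R}^d}|e^{ih\cdot\xi}-1|^2\,|h|^{-d-2b}\,dh = c_{b,d}\,|\xi|^{2b}$ (by a rescaling argument and rotation invariance) yields $\|\mathcal{D}^b f\|_{L^2}^2 = c_{b,d}\,\|D^b f\|_{L^2}^2$. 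Combined with the elementary estimate $\langle\xi\rangle^b \sim 1 + |\xi|^b$, this settles the case $p=2$.

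Next, to treat the range $2d/(d+2b)<p<\infty$, I would view $\mathcal{D}^b f(x)$ as the pointwise $L^2(\mathbb{R}^d_h)$-norm of the vector-valued operator $Tf(x,h) = |h|^{-d/2-b}(f(x+h)-f(x))$. A direct computation shows that $T$ is a Calder\'on--Zygmund operator of convolution type whose distributional kernel satisfies the H\"ormander condition with values in $L^2_h$; the vector-valued Calder\'on--Zygmund theorem of Benedek--Calder\'on--Panzone then extends the $L^2$ bound $\|\mathcal{D}^b f\|_{L^2} \lesssim \|D^b f\|_{L^2}$ to all $1<p<\infty$. For the reverse direction, one represents $D^b f$ through a principal-value difference-quotient formula and applies Cauchy--Schwarz in $h$ against the weight $|h|^{-d/2}$, converting an $L^2_h$-bound on $Tf(x,\cdot)$ into a pointwise bound on $D^b f(x)$ modulo a bounded operator. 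The restriction $p > 2d/(d+2b)$ enters precisely here to guarantee the integrability needed via the fractional Sobolev embedding.

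Finally, the equivalence $\|J^b f\|_{L^p} \sim \|f\|_{L^p}+\|D^b f\|_{L^p}$ follows because the multiplier $\langle\xi\rangle^b$ decomposes, up to a smooth cutoff, as a sum of $1$, $|\xi|^b$, and a remainder whose symbol satisfies the Mikhlin--H\"ormander condition and hence defines a bounded operator on $L^p$ for $1<p<\infty$. The main obstacle is the vector-valued $L^p$ extension in the second step: verifying the $L^2_h$-valued kernel conditions and, in particular, handling the sharp endpoint $p = 2d/(d+2b)$ through the fractional Sobolev embedding is the most delicate point of the argument.
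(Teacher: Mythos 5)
First, a point of order: the paper does not prove this statement at all --- it is imported verbatim from Stein's paper \cite{SteinThe} as a black box, so there is no internal proof to compare against. What can be assessed is whether your sketch would actually prove the theorem, and there is a genuine gap in your second step. You claim that the vector-valued operator $Tf(x,h)=|h|^{-d/2-b}(f(x+h)-f(x))$ is an $L^2_h$-valued Calder\'on--Zygmund operator and that Benedek--Calder\'on--Panzone therefore upgrades the $L^2$ bound $\|\mathcal{D}^bf\|_{L^2}\lesssim\|D^bf\|_{L^2}$ to all $1<p<\infty$. This cannot be right, because the resulting inequality is \emph{false} for $1<p\le 2d/(d+2b)$: finiteness of $\mathcal{D}^bf(x)$ at even one point forces $f\in L^2_{loc}$, and when $b<d/2$ one can take $f(y)\sim|y|^{-\gamma}$ near the origin with $d/2<\gamma<d/p-b$, which lies in $L^p_b$ but not in $L^2_{loc}$, so that $\mathcal{D}^bf\equiv+\infty$. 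Concretely, the kernel obstruction is visible if you compose with the Riesz potential and test the size condition: with $g=D^bf$ one has the $L^2_h$-valued kernel $\vec K(z)(h)=c\,|h|^{-d/2-b}\bigl(|z+h|^{b-d}-|z|^{b-d}\bigr)$, and
\begin{equation*}
\|\vec K(z)\|_{L^2_h}^2=\int |h|^{-d-2b}\,\bigl||z+h|^{b-d}-|z|^{b-d}\bigr|^2\,dh
\end{equation*}
diverges near $h=-z$ (the integrand is $\sim |z|^{-d-2b}|z+h|^{2b-2d}$ there) unless $b>d/2$. So the hypotheses of the vector-valued Calder\'on--Zygmund theorem simply fail in exactly the regime where the restriction $p>2d/(d+2b)$ is non-vacuous. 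You also place the restriction on the wrong implication: it is needed for $f\in L^p_b\Rightarrow \mathcal{D}^bf\in L^p$, not for the converse.

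The way Stein actually closes this step is through harmonic extensions and the Littlewood--Paley $g$- and $g^*_\lambda$-functions: one dominates $\mathcal{D}^bf$ by a $g^*_\lambda$-type square function of the Poisson integral with $\lambda=1+2b/d$, and the $L^p$ boundedness of $g^*_\lambda$ holds precisely for $p>2/\lambda=2d/(d+2b)$, which is where the threshold comes from. (Note also that a crude Minkowski/modulus-of-continuity argument fails even for $p\ge 2$, since $\int_{|h|<1}|h|^{-d-2b}\min(|h|^{2b},1)\,dh$ diverges; the $L^2_h$ cancellation must be exploited, not discarded.) Your first step (Plancherel identity, with $\int|e^{ih\cdot\xi}-1|^2|h|^{-d-2b}dh=c_{b,d}|\xi|^{2b}$) and your last step (the Mikhlin--H\"ormander comparison of $\langle\xi\rangle^b$ with $1+|\xi|^b$) are fine as stated; it is the middle of the argument that needs to be replaced.
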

Next, we present some useful consequences of Theorem \ref{TheoSteDer}. We have the following point-wise inequality   
\begin{equation}\label{prelimneq0}
\mathcal{D}^b(fg)(x) \lesssim \mathcal{D}^b(f)(x)|g(x)|+\|f\|_{L^{\infty}}\mathcal{D}^b(g)(x).
\end{equation}
Whenever $p=2$ and $b\in (0,1)$,
\begin{equation} \label{prelimneq}  
\left\|\mathcal{D}^b(fg)\right\|_{L^2} \lesssim \left\|f\mathcal{D}^b g\right\|_{L^2}+\left\|g\mathcal{D}^bf \right\|_{L^2},
\end{equation}
and it holds
\begin{equation} \label{prelimneq1} 
\left\|\mathcal{D}^{b} h\right\|_{L^{\infty}} \lesssim \big(\left\|h\right\|_{L^{\infty}}+\left\|\nabla h\right\|_{L^{\infty}} \big).
\end{equation} 

As a further application of Theorem \ref{TheoSteDer}, we have 
\begin{lemma}\label{derivexp}   
Let $b\in (0,1)$. Assume that $0<\alpha<1$ and $t>0$, then
\begin{equation}\label{prelimneq2} 
\begin{aligned}
\mathcal{D}^{b}\big(e^{i\xi|\xi|^{\alpha}t}\big)(x) \lesssim \big(|t|^{b/(1+\alpha)}+|t|^{b}|x|^{b\alpha}\big).
\end{aligned}
\end{equation}
For all $t>0$,
\begin{equation}\label{prelimneq2.0}
  \mathcal{D}^b (e^{i\sign(\xi)t})(x)\lesssim |x|^{-b}, \hspace{0.5cm} x\neq 0. 
\end{equation}
If $-1<\alpha<0$ and $t>0$, it follows 
\begin{equation}\label{prelimneq2.1} 
\begin{aligned}
\mathcal{D}^{b}\big(e^{i\xi|\xi|^{\alpha}t}\big)(x) \lesssim F_{b,\alpha,t}(x), \hspace{0.4cm} x \neq 0,
\end{aligned}
\end{equation}
where
\begin{equation*}
\begin{aligned}
F_{b,\alpha,t}(x)=&\left\{\begin{aligned}
&|t|^{b/(1+\alpha)}+t|x|^{1+\alpha-b}, \hspace{0.3cm} \text{if } 1+\alpha-b \neq 0, \text{ or } 1+\alpha-b = 0 \text{ and } |x|\gtrsim 1,   \\
&|t|^{b/(1+\alpha)}-|t|^{b/(1+\alpha)}(-\ln|t^{b/(1+\alpha)}x|\big)^{1/2},  \hspace{0.3cm} \text{if } 1+\alpha-b= 0 \text{ and } |x|\ll 1.
\end{aligned} \right.
\end{aligned}
\end{equation*}
\end{lemma}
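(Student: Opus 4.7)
The plan is to use characterization (ii) of Theorem \ref{TheoSteDer}, namely
$$
\mathcal{D}^b g(x)^2 \,=\, \int_{\mathbb{R}} \frac{|g(x)-g(y)|^2}{|x-y|^{1+2b}}\, dy,
$$
and to control the integrand for $g(\xi)=e^{i\phi(\xi)}$ by combining the trivial bound $|e^{i\phi(x)}-e^{i\phi(y)}|\le 2$ with the phase-difference bound $|e^{i\phi(x)}-e^{i\phi(y)}|\le |\phi(x)-\phi(y)|$. For each of the three estimates I will split the $y$-integral into a near region $|x-y|<R$, where the phase bound is used, and a far region $|x-y|\ge R$, where the trivial bound is used, and then select the cutoff $R$ to balance the resulting contributions.

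For \eqref{prelimneq2} with $\phi(\xi)=\xi|\xi|^{\alpha}t$ and $0<\alpha<1$, the mean value theorem applied to $\phi$ gives $|\phi(x)-\phi(y)|\lesssim |t|(|x|^{\alpha}+|x-y|^{\alpha})|x-y|$. Substituting and evaluating the resulting polynomial integrals in $r=|x-y|$ produces
$$
\mathcal{D}^bf(x)^2 \,\lesssim\, |t|^2|x|^{2\alpha}R^{2-2b} + |t|^2R^{2+2\alpha-2b} + R^{-2b}.
$$
The two choices $R=|t|^{-1/(1+\alpha)}$ (which balances the second and third terms) and $R=|t|^{-1}|x|^{-\alpha}$ (which balances the first and third) produce, respectively, the contributions $|t|^{2b/(1+\alpha)}$ and $|t|^{2b}|x|^{2b\alpha}$; passing to the square root via sub-additivity yields the stated bound. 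For \eqref{prelimneq2.0}, the function $e^{i\sign(\xi)t}$ only takes the two values $e^{\pm it}$, so $f(x)-f(y)=0$ whenever $xy>0$, and assuming $x>0$ the integral reduces to $\int_{-\infty}^0 (x-y)^{-1-2b}dy\lesssim |x|^{-2b}$.

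For \eqref{prelimneq2.1} with $-1<\alpha<0$, the phase $\phi(\xi)=t\sign(\xi)|\xi|^{1+\alpha}$ is globally $(1+\alpha)$-Hölder continuous with $|\phi(x)-\phi(y)|\lesssim |t||x-y|^{1+\alpha}$, while away from the origin the MVT provides the sharper bound $|\phi(x)-\phi(y)|\lesssim |t|\min(|x|,|y|)^{\alpha}|x-y|$ for $x,y$ of the same sign. Splitting at $|x-y|=R$ yields
$$
\mathcal{D}^bf(x)^2 \,\lesssim\, |t|^2\int_0^R r^{1+2\alpha-2b}\,dr + R^{-2b}.
$$
When $b<1+\alpha$ the first integral equals $|t|^2R^{2+2\alpha-2b}/(2+2\alpha-2b)$ and $R=|t|^{-1/(1+\alpha)}$ gives $|t|^{b/(1+\alpha)}$; the refined MVT estimate applied in the region $|y|$ comparable to $|x|$ then contributes the polynomial correction $t|x|^{1+\alpha-b}$. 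The main obstacle appears in the borderline case $b=1+\alpha$: the $r$-integral $\int_0^R r^{-1}dr$ diverges at the origin, so one must truncate it at the scale $|x|$ (splitting $y$ into $|y|\ll |x|$, $|y|\sim |x|$, $|y|\gg |x|$ and applying the MVT bound on the middle region), producing the logarithmic factor $(-\ln|t^{b/(1+\alpha)}x|)^{1/2}$ when $|x|\ll 1$ and reducing to the polynomial form $t|x|^{1+\alpha-b}=t$ when $|x|\gtrsim 1$. This delicate matching of the Hölder and MVT bounds across scales, and the separate treatment of the singular behaviour at $y=0$, is the technical heart of the proof.
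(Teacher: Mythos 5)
Your argument is correct and is essentially the paper's own approach: the paper does not prove this lemma in detail but refers to the definition of $\mathcal{D}^{b}$ and to the argument of Nahas--Ponce, which is precisely your near/far splitting of the Stein integral, with the phase-difference (MVT/H\"older) bound near the diagonal, the trivial bound $|e^{i\phi(x)}-e^{i\phi(y)}|\le 2$ far away, and optimization in the cutoff $R$. The only point to tighten is in \eqref{prelimneq2}: one cannot use two cutoffs simultaneously, so take $R=\min\{|t|^{-1/(1+\alpha)},|t|^{-1}|x|^{-\alpha}\}$ (equivalently, split into the regimes $|x|\lessgtr |t|^{-1/(1+\alpha)}$) and check that the remaining unbalanced term is also dominated in each regime --- which it is, since $R\le|x|$ in the second case forces $|t|^{2}R^{2+2\alpha-2b}\le |t|^{2}|x|^{2\alpha}R^{2-2b}$.
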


The proof of inequality \eqref{prelimneq2} in Lemma \ref{derivexp} is obtained following the arguments in \cite[Proposition 2]{NahPonc}. Similarly, we can adapt these ideas to control the power of the exponential term $\xi|\xi|^{\alpha}$ for $-1<\alpha<0$, deducing \eqref{prelimneq2.1}. The proof of \eqref{prelimneq2.0} is a direct consequence of the definition of the derivative $\mathcal{D}^{b}$.

We need the following interpolation inequality which is proved arguing as in \cite[Lemma 1]{FonPO}:

\begin{lemma}\label{complexinterpo}
Let $a,b>0$. Assume that $J^af=(1-\Delta)^{a/2}f \in L^{2}(\mathbb{R}^d)$ and $\langle x \rangle^b f=(1+|x|^2)^{b/2}f\in L^{2}(\mathbb{R}^d)$, $|x|=\sqrt{x_1^2+\dots x_d^2}$. Then for any $\theta_1 \in (0,1)$, 
\begin{equation}\label{prelimneq3}
\left\|J^{\theta_1 a}(\langle x \rangle^{(1-\theta_1)b}f)\right\|_{L^2}\lesssim \left\|\langle x \rangle^{b} f\right\|_{L^2}^{1-\theta_1}\left\|J^a f\right\|_{L^2}^{\theta_1}.
\end{equation}
Moreover, the inequality \eqref{prelimneq3} is still valid with $\langle x\rangle_N$ instead of $\langle x \rangle$ with a constant $c$ independent of $N$.
\end{lemma}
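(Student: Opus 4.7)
The plan is to deduce \eqref{prelimneq3} by complex interpolation, applying Stein's theorem (equivalently, Hadamard's three-lines lemma) to an analytic family of operators built from the Bessel potential and multiplication by a complex power of $\langle x\rangle$. By density of $S(\mathbb{R}^d)$ in $L^2(\langle x\rangle^{2b}\,dx)\cap J^{-a}L^2$, it suffices to establish the inequality for $f\in S(\mathbb{R}^d)$, and by duality it is enough to bound
$$F(z):=\int_{\mathbb{R}^d} J^{za}\bigl(\langle x\rangle^{(1-z)b}f\bigr)(x)\,\overline{g(x)}\,dx$$
for test functions $g\in S(\mathbb{R}^d)$ with $\|g\|_{L^2}\le 1$. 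One checks that $F$ is holomorphic on $\{0<\Re z<1\}$, continuous on the closed strip, with at most polynomial growth in $|\Im z|$; inserting a harmless factor $e^{\delta(z^2-\theta_1^2)}$ with small $\delta>0$ if needed, the hypotheses of the three-lines lemma are met.

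On the line $\Re z=0$ the Fourier multipliers $\langle\xi\rangle^{iya}$ and $\langle x\rangle^{-iyb}$ both have modulus one, so $J^{iya}$ and multiplication by $\langle x\rangle^{-iyb}$ are $L^2$-isometries. Plancherel and Cauchy--Schwarz then give
$$|F(iy)|\le \|J^{iya}(\langle x\rangle^{(1-iy)b}f)\|_{L^2}\|g\|_{L^2}=\|\langle x\rangle^b f\|_{L^2}\|g\|_{L^2}.$$
On the line $\Re z=1$ I would decompose
$$T_{1+iy}f=J^{iya}\bigl(\langle x\rangle^{-iyb}J^a f+[J^a,\langle x\rangle^{-iyb}]f\bigr).$$
The first piece is controlled by $\|J^a f\|_{L^2}$ using once more that $J^{iya}$ and $\langle x\rangle^{-iyb}$ are isometries; for the commutator, Kato--Ponce type estimates (Lemma \ref{leibnizhomog} and its variants in Lemma \ref{lemmacomm1}) give
$$\|[J^a,\langle x\rangle^{-iyb}]f\|_{L^2}\lesssim (1+|y|)^{M}\|J^a f\|_{L^2},$$
since $\|\partial_x^j\langle x\rangle^{-iyb}\|_{L^\infty}\lesssim (1+|y|)^j$ and $\|f\|_{L^2}\le \|J^a f\|_{L^2}$ for $a\ge0$. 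Thus $|F(1+iy)|\lesssim (1+|y|)^{M}\|J^a f\|_{L^2}\|g\|_{L^2}$.

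The three-lines lemma with these two bounds, applied at $z=\theta_1$, yields
$$|F(\theta_1)|\lesssim \|\langle x\rangle^b f\|_{L^2}^{1-\theta_1}\|J^a f\|_{L^2}^{\theta_1}\|g\|_{L^2},$$
and taking the supremum over admissible $g$ delivers \eqref{prelimneq3}. For the uniform statement with $\langle x\rangle_N$ in place of $\langle x\rangle$, the bound on $\Re z=0$ is literally the same ($|\langle x\rangle_N^{-iyb}|=1$), and on $\Re z=1$ the identity $|\partial_x\langle x\rangle_N|\le 1$ built into the truncated weight ensures $\|\nabla\langle x\rangle_N^{-iyb}\|_{L^\infty}\lesssim |y|$ uniformly in $N$, so the commutator estimate produces constants independent of $N$.

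The main obstacle is the commutator bound at $\Re z=1$: one must verify that $\|[J^a,\langle x\rangle^{-iyb}]f\|_{L^2}$ grows at most subexponentially (in fact polynomially) in $|y|$ so that the three-lines lemma is applicable. This is where the Kato--Ponce machinery enters, and it is exactly the place where the control of $\|\partial_x^j\langle x\rangle_N^{-iyb}\|_{L^\infty}$ uniformly in $N$ becomes decisive for the truncated version.
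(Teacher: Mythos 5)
Your proof is correct and follows essentially the same route as the argument the paper invokes: Lemma \ref{complexinterpo} is taken from \cite{FonPO} (Lemma 1 there), whose proof is precisely this three-lines/Stein interpolation applied to the analytic family $z\mapsto J^{za}\big(\langle x\rangle^{(1-z)b}f\big)$, with the isometry bound on $\Re z=0$ and a commutator bound of polynomial growth in $|\mathrm{Im}\, z|$ on $\Re z=1$, the factor $e^{\delta(z^2-\theta_1^2)}$ absorbing that growth. The only point to tighten is the citation for the commutator step: Lemma \ref{lemmacomm1} covers only orders $0<\beta\le 1$, so for general $a>0$ you should invoke the full Kato--Ponce commutator estimate $\|[J^a,h]f\|_{L^2}\lesssim \|\nabla h\|_{L^{\infty}}\|J^{a-1}f\|_{L^2}+\|J^a h\|_{L^{\infty}}\|f\|_{L^2}$, which still yields the required $(1+|y|)^{M}$ growth because every derivative of $\langle x\rangle^{-iyb}$ (and of $\langle x\rangle_N^{-iyb}$, uniformly in $N$ thanks to $|\partial_x\langle x\rangle_N|\le 1$ and $\langle x\rangle_N\ge 1$) is $O\big((1+|y|)^{k}\big)$.
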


We shall employ the following result established in \cite[Proposition 2.9]{FLinaPioncedGBO}

\begin{prop}\label{steinderiweighbet}
Let $\phi \in C^{\infty}_c(\mathbb{R})$ with $0\leq \phi \leq 1$, $\phi(\xi)=1$ for $|\xi|\leq 1$. For any $\theta \in (0,1)$ and $\beta>0$,
\begin{equation*}
\mathcal{D}^{\theta}(|\xi|^{\beta}\phi(\xi))(\eta)\sim \left\{\begin{aligned}
&c|\eta|^{\beta-\theta}+c_1 \hspace{0.5cm} \beta \neq \theta, \, |\eta|\ll 1, \\
&c(-\ln|\eta|)^{1/2},  \hspace{0.3cm} \beta = \theta, \, |\eta|\ll 1,\\
&c|\eta|^{-1/2-\theta},  \hspace{0.5cm} \, |\eta|\gg 1,
\end{aligned} 
\right.
\end{equation*}
with $\mathcal{D}^{\theta}(|\xi|^{\beta}\phi(\xi))(\cdot)$ continuous in $\eta \in \mathbb{R}\setminus\{0\}$.
In particular, one has that
\begin{equation*}
\mathcal{D}^{\theta}(|\xi|^{\beta}\phi(\xi))\in L^2(\mathbb{R})\, \text{ if and only if } \, \theta<\beta+1/2.
\end{equation*}
Similar conclusion holds for $\mathcal{D}^{\theta}(\sign(\xi)|\xi|^{\beta}\phi(\xi))(\eta)$.
\end{prop}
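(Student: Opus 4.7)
The plan is to compute $\mathcal{D}^{\theta}F(\eta)$ directly from the defining singular integral with $F(\xi)=|\xi|^{\beta}\phi(\xi)$, splitting the analysis into the three announced regimes. Throughout, let $R>0$ be such that $\supp\phi\subset[-R,R]$.

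\emph{The regime $|\eta|\gg1$.} Since $F(\eta)=0$ outside $[-R,R]$, for such $\eta$ we have
\begin{equation*}
\mathcal{D}^{\theta}F(\eta)^{2}=\int_{|y|\le R}\frac{|F(y)|^{2}}{|\eta-y|^{1+2\theta}}\,dy.
\end{equation*}
Since $|\eta-y|\sim|\eta|$ uniformly for $y\in[-R,R]$, this equals $|\eta|^{-1-2\theta}\|F\|_{L^{2}}^{2}$ up to a multiplicative constant, yielding $\mathcal{D}^{\theta}F(\eta)\sim|\eta|^{-1/2-\theta}$.

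\emph{The regime $|\eta|\ll1$.} Here I would split the integration in $y$ into three pieces: a local piece $|y|\le2|\eta|$, an intermediate piece $2|\eta|<|y|\le1/2$ (so $\phi\equiv1$), and a far piece $|y|>1/2$. On the local piece, the change of variables $y=\eta z$ yields
\begin{equation*}
\int_{|y|\le2|\eta|}\frac{\bigl||\eta|^{\beta}-|y|^{\beta}\bigr|^{2}}{|\eta-y|^{1+2\theta}}\,dy=|\eta|^{2\beta-2\theta}\int_{|z|\le2}\frac{\bigl|1-|z|^{\beta}\bigr|^{2}}{|1-z|^{1+2\theta}}\,dz,
\end{equation*}
where the remaining integral is a finite positive constant (since $\theta<1$). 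On the intermediate piece, $|\eta-y|\sim|y|$ and the numerator is comparable to $|y|^{2\beta}$, so the contribution is comparable to $\int_{2|\eta|}^{1/2}|y|^{2\beta-1-2\theta}\,dy$, which is bounded when $\beta>\theta$, behaves like $\log(1/|\eta|)$ when $\beta=\theta$, and like $|\eta|^{2\beta-2\theta}$ when $\beta<\theta$. On the far piece $|y|>1/2$, $F$ is smooth, bounded, and eventually zero, while $|\eta-y|\gtrsim1$, so the contribution is a bounded constant plus an $O(|\eta|^{2\beta})$ term coming from $|y|>R$ where $F(y)=0$. Combining, $\mathcal{D}^{\theta}F(\eta)^{2}\sim c|\eta|^{2\beta-2\theta}+c_{1}$ if $\beta\ne\theta$ and $\sim c\log(1/|\eta|)$ if $\beta=\theta$, which gives the stated asymptotics. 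Continuity of $\mathcal{D}^{\theta}F$ on $\mathbb{R}\setminus\{0\}$ follows from dominated convergence once the integrand is bounded above by a uniformly integrable function in a neighborhood of any $\eta_{0}\ne0$.

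\emph{The sign case.} For $G(\xi)=\sign(\xi)|\xi|^{\beta}\phi(\xi)$ the argument is identical with one modification: on the intermediate region, the integrand becomes $\bigl|\sign(\eta)|\eta|^{\beta}-\sign(y)|y|^{\beta}\bigr|^{2}$; restricting to $y$ on the same side as $\eta$ reduces to the previous analysis, while the opposite-sign piece contributes $\int\frac{(|\eta|^{\beta}+|y|^{\beta})^{2}}{(|\eta|+|y|)^{1+2\theta}}\,dy$, which has the same scaling structure and produces the same asymptotics after the substitution $y=\eta z$.

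\emph{Main obstacle.} The delicate point is obtaining the two-sided comparison $\sim$ rather than only an upper bound, particularly in the small-$|\eta|$ regime for $\beta\ne\theta$. The upper bound follows from the splitting above; for the matching lower bound I would restrict to a definite subregion (for instance $|z-2|\le1/2$ in the scaling variable for the local piece when $\beta<\theta$, or a fixed annulus $1/4\le|y|\le1/2$ for the intermediate piece when $\beta>\theta$) on which the integrand is pointwise bounded below by a positive quantity of the correct order, and then use positivity of the integral.
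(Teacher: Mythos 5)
Your argument is correct. Note first that the paper does not actually prove this proposition: it is quoted from \cite[Proposition 2.9]{FLinaPioncedGBO}, and the only in-paper proof of this type is the one for the negative-power companion, Proposition \ref{steinderiweighbet2}. That proof uses exactly your strategy — a direct estimate of the defining integral of $\mathcal{D}^{\theta}$ split by the relative size of $|y|$ and $|\eta|$ — except that it only needs the two regions $|y|\le 2|\eta|$ and $|y|\ge 2|\eta|$ because it is after upper bounds alone. Your refinement into three regions (local, intermediate where $\phi\equiv 1$, far) is precisely what is needed here to extract the two-sided asymptotics, to isolate the logarithmic borderline $\beta=\theta$ from the intermediate annulus $\int_{2|\eta|}^{1/2}|y|^{2\beta-1-2\theta}\,dy$, and to identify the source of the additive constant $c_1$ (the fixed annulus $1/4\le |y|\le 1/2$, where $F=|y|^{\beta}>0$, gives the lower bound on $c_1$). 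The scaling computation on the local piece is sound — the constant $\int_{|z|\le 2}|1-|z|^{\beta}|^2|1-z|^{-1-2\theta}\,dz$ is finite and positive because the only singularity of the denominator is at $z=1$, where the numerator vanishes to order $2>2\theta$ — and your lower-bound remarks in the final paragraph close the one genuinely delicate point. The sign case and the continuity claim (dominated convergence after translating $y=\eta+h$, using that $F$ is smooth near any $\eta_0\neq 0$ and that the singularity at $y=0$ stays a fixed distance from $\eta$) go through as you indicate.
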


We also require the following extension of Proposition \ref{steinderiweighbet} dealing with negative derivatives.
\begin{prop}\label{steinderiweighbet2} 
Let $\psi \in L^{\infty}(\mathbb{R})$ such that $\langle \xi \rangle \partial_{\xi}\psi\in L^{\infty}(\mathbb{R})$. Then for any $\theta \in (0,1)$ and $0<\beta<1/2$ 
\begin{equation}\label{eqsteinderiweig0}
\mathcal{D}^{\theta}(|\xi|^{-\beta}\psi(\xi))(\eta)\lesssim \big(\|\psi\|_{L^{\infty}}+\|\langle \xi \rangle \partial_{\xi} \psi \|_{L^{\infty}}\big)  |\eta|^{-\beta-\theta}, 
\end{equation}
for all $\eta \neq 0$. 

Moreover, let $\theta \in (0,1)$, $\theta<\gamma \leq 1$ and  $\psi \in C^{0,\gamma}(\mathbb{R})$ such that $\psi(0)=0$. Then for all $0<\beta <\gamma+1/2$
\begin{equation}\label{eqsteinderiweig1}
\mathcal{D}^{\theta}(|\xi|^{-\beta}\psi(\xi))(\eta)\lesssim \big(\|\psi\|_{L^{\infty}}+[\psi]_{C^{0,\gamma}})G_{\beta,\gamma,\theta}(\eta),
\end{equation}
for each $\eta \neq 0$, where
\begin{equation*}
G_{\beta,\gamma,\theta}(\eta)=\left\{\begin{aligned}
&1+|\eta|^{\gamma-\beta-\theta}, \hspace{0.3cm} \text{if }\gamma-\beta-\theta > 0, \\
&|\eta|^{\gamma-\beta-\theta}, \hspace{0.7cm} \text{if }\gamma-\beta-\theta <0, \text{ or } \gamma-\beta-\theta =0 \text{ and } |\eta|\gg 1, \\
&(-\ln|\eta|)^{1/2},  \hspace{0.3cm} \text{if }\gamma-\beta-\theta = 0, \, |\eta|\ll 1. 
\end{aligned} 
\right. 
\end{equation*}
Similar conclusions hold for $\mathcal{D}^{\theta}(\sign(\xi)|\xi|^{-\beta}\psi(\xi))$.
\end{prop}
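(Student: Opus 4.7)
The plan is to estimate $\mathcal{D}^\theta F(\eta)$ with $F(\xi) := |\xi|^{-\beta}\psi(\xi)$ directly from the defining integral of Theorem \ref{TheoSteDer},
\begin{equation*}
\mathcal{D}^\theta F(\eta)^2 = \int_{\mathbb{R}}\frac{|F(\eta)-F(\xi)|^2}{|\eta-\xi|^{1+2\theta}}\,d\xi,
\end{equation*}
using the decomposition $F(\eta)-F(\xi) = \psi(\eta)(|\eta|^{-\beta}-|\xi|^{-\beta}) + |\xi|^{-\beta}(\psi(\eta)-\psi(\xi))$ to separate the singular weight from the regularity of $\psi$. Setting $R := |\eta|$, I split the $\xi$-integral into the three natural regions $A = \{|\xi|\leq R/2\}$, $B = \{R/2 < |\xi| < 2R\}$, and $C = \{|\xi|\geq 2R\}$, on which $|\xi-\eta|\sim R$, $|\xi-\eta|\lesssim R$, and $|\xi-\eta|\sim|\xi|$ respectively.

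For \eqref{eqsteinderiweig0}, on $A$ and $C$ I use the coarse bound $|F(\eta)-F(\xi)|^2\lesssim |F(\eta)|^2+|F(\xi)|^2$, with the hypothesis $\beta < 1/2$ providing the local $L^2$-integrability of $|\xi|^{-\beta}$ near the origin; against the kernels $R^{-1-2\theta}$ and $|\xi|^{-1-2\theta}$, respectively, this yields contributions of order $R^{-2\beta-2\theta}$. On $B$ the mean value estimate $||\eta|^{-\beta}-|\xi|^{-\beta}|\lesssim R^{-\beta-1}|\xi-\eta|$ (for same-sign $\xi,\eta$; opposite signs force $|\xi-\eta|\gtrsim R$ and a direct bound suffices) handles the first piece of the decomposition, while the second piece uses the Lipschitz-type inequality $|\psi(\eta)-\psi(\xi)|\lesssim \|\langle\xi\rangle\partial_\xi\psi\|_\infty\langle R\rangle^{-1}|\xi-\eta|$, which is exactly where the hypothesis on $\langle\xi\rangle\partial_\xi\psi$ enters. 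Combining with $\int_B|\xi-\eta|^{1-2\theta}\,d\xi\lesssim R^{2-2\theta}$ (valid since $\theta<1$), the $B$-contribution is of order $R^{-2\beta-2\theta}$ for $R\geq 1$ and of order $R^{2-2\beta-2\theta}\leq R^{-2\beta-2\theta}$ for $R\leq 1$; summing the three regions and taking square roots gives \eqref{eqsteinderiweig0}.

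For \eqref{eqsteinderiweig1}, the assumptions $\psi(0)=0$ and $\psi\in C^{0,\gamma}$ yield the pointwise bound $|\psi(\xi)|\leq [\psi]_{C^{0,\gamma}}|\xi|^\gamma$, which together with the trivial $|\psi(\xi)|\leq\|\psi\|_\infty$ for $|\xi|\geq 1$ relaxes the integrability requirement to the wider range $\beta<\gamma+1/2$. Running the same three-region split with this improved bound, the contributions of $A$ and $B$ for $R\leq 1$ are of order $[\psi]_{C^{0,\gamma}}^2 R^{2\gamma-2\beta-2\theta}$, producing the $R^{\gamma-\beta-\theta}$ factor upon square-rooting; the H\"older estimate $|\psi(\eta)-\psi(\xi)|\leq[\psi]_{C^{0,\gamma}}|\xi-\eta|^\gamma$ and the hypothesis $\gamma>\theta$ ensure $\int_B|\xi-\eta|^{2\gamma-1-2\theta}\,d\xi\sim R^{2\gamma-2\theta}$ converges. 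The logarithmic factor arises from region $C$: for $R\leq 1$ the sub-annulus $\{2R\leq|\xi|\leq 1\}$ contributes $[\psi]_{C^{0,\gamma}}^2\int_{2R}^1 r^{2(\gamma-\beta-\theta)-1}\,dr$, which equals a finite constant when $\gamma-\beta-\theta>0$, equals $R^{2(\gamma-\beta-\theta)}$ when $\gamma-\beta-\theta<0$, and evaluates precisely to $-\ln(2R)$ in the critical case $\gamma-\beta-\theta=0$, accounting for the $(-\ln|\eta|)^{1/2}$ branch of $G_{\beta,\gamma,\theta}$. The large-$R$ regime yields an $\|\psi\|_\infty R^{-\beta-\theta}$ bound dominated by the corresponding branch of $G_{\beta,\gamma,\theta}$ precisely because $\beta<\gamma+1/2$. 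The signed variant $\sign(\xi)|\xi|^{-\beta}\psi(\xi)$ follows from the same argument, since $\sign(\cdot)$ is bounded and the jump at $0$ is absorbed by the decomposition.

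The main obstacle is the careful case-splitting in the second estimate across the three regimes $\gamma-\beta-\theta>0$, $<0$, $=0$, and identifying that the logarithmic divergence appears specifically in the sub-annulus of $C$ between $|\xi|\sim 2R$ and $|\xi|\sim 1$ at the critical exponent. A secondary subtlety is the opposite-signs sub-case of region $B$, which must be handled by the direct bound $|\xi-\eta|\gtrsim R$ rather than by the mean value argument, together with the transition $R\sim 1$ where the global bound on $\psi$ takes over from the local H\"older one.
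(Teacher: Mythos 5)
Your proposal is correct and follows essentially the same route as the paper's proof: both estimate the Stein derivative integral directly by splitting into regions near and far from $\eta$, use mean-value/H\"older bounds on the near-diagonal piece (where $\gamma>\theta$ enters), exploit $\psi(0)=0$ to get $|\psi(\xi)|\lesssim|\xi|^{\gamma}$ and hence integrability of $|\xi|^{-2\beta}|\psi(\xi)|^2$ near the origin under $\beta<\gamma+1/2$, and locate the logarithm in the sub-annulus $2|\eta|\le|\xi|\le1$ of the far region at the critical exponent. The only cosmetic difference is that the paper uses a two-region split ($|y|\le2|\eta|$ versus $|y|\ge2|\eta|$) together with a multiply-and-divide-by-$|\eta|^{m}$ decomposition where you use three regions and the additive splitting $F(\eta)-F(\xi)=\psi(\eta)(|\eta|^{-\beta}-|\xi|^{-\beta})+|\xi|^{-\beta}(\psi(\eta)-\psi(\xi))$; both work equally well.
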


\begin{proof}
To deduce \eqref{eqsteinderiweig0}, we divide our estimates according to
\begin{equation}\label{eqsteinderiweig1.1}
\begin{aligned}
\big(\mathcal{D}^{\theta}(|\xi|^{-\beta}\psi(\xi))(\eta)\big)^2=\int \frac{||\eta|^{-\beta}\psi(\eta)-|y|^{-\beta}\psi(y)|^2}{|\eta-y|^{1+2\theta}}\, dy &= \int_{|y|\leq 2|\eta|}(\cdots) \, dy+\int_{|y|\geq 2|\eta|}(\cdots) \, dy  \\
&=:\mathcal{I}_1+\mathcal{I}_2.
\end{aligned}
\end{equation}
To deal with $\mathcal{I}_1$, we consider a number $m>0$ to write
\begin{equation}\label{eqsteinderiweig2} 
\begin{aligned}
\Big|\frac{\psi(\eta)}{|\eta|^{\beta}}-\frac{\psi(y)}{|y|^{\beta}}\Big|=\Big|\frac{|\eta|^{m}\psi(\eta)}{|\eta|^{\beta+m}}-\frac{|y|^{m}\psi(y)}{|y|^{\beta+m}}\Big|\leq  & \Big|\frac{|\eta|^{m}\psi(\eta)-|y|^{m}\psi(y)}{|\eta|^{\beta+m}}\Big|\\
&+\Big|\frac{1}{|\eta|^{\beta+m}}-\frac{1}{|y|^{\beta+m}}\Big|||y|^{m}\psi(y)|.
\end{aligned}
\end{equation}
Then, by setting $m>1$ in the above display and employing the mean value inequality, it is seen that the right-hand side (r.h.s) of \eqref{eqsteinderiweig2} satisfies
\begin{equation*}
\begin{aligned}
\text{r.h.s}\,\eqref{eqsteinderiweig2}\lesssim &  \big(\|\psi\|_{L^{\infty}}+\|\langle \cdot \rangle\partial_{\eta}\psi\|_{L^{\infty}}\big)\max\{|\eta|^{m-1},|y|^{m-1}\}|\eta|^{-\beta-m}|\eta-y| \\
&+\|\psi\|_{L^{\infty}}\max\{|\eta|^{\beta+m-1},|y|^{\beta+m-1}\}|\eta-y||\eta|^{-\beta-m}|y|^{-\beta}.
\end{aligned}
\end{equation*}
Hence, the preceding inequality yields
\begin{equation*}
\begin{aligned}
\mathcal{I}_1 &\lesssim |\eta|^{-2(\beta+1)}\int_{|y|\leq 2|\eta|}|\eta-y|^{1-2\theta}\, dy+|\eta|^{-2}\int_{|y|\leq 2|\eta|} |\eta-y|^{1-2\theta}|y|^{-2\beta}\, dy\\
&\sim |\eta|^{-2(\beta+\theta)},
\end{aligned}
\end{equation*}
where to assure the integrability of the above expression, we have used that $\theta \in(0,1)$ and $\beta\in (0,1/2)$. On the other hand, since $|\eta-y|\sim |y|$ on the region $|y|\geq 2|\eta|$, we find
\begin{equation*}
\mathcal{I}_2 \lesssim |\eta|^{-2\beta}\|\psi\|_{L^{\infty}}^2\int_{|y|\geq 2|\eta|} |y|^{-1-2\theta}\, dy \sim |\eta|^{-2(\beta+\theta)}.
\end{equation*}
This completes the deduction of \eqref{eqsteinderiweig0}.

Next, we assume that $\psi \in C^{0,\gamma}(\mathbb{R})$ is such that $\psi(0)=0$.  To deduce \eqref{eqsteinderiweig1}, we proceed as in \eqref{eqsteinderiweig1.1}. To treat the first term, we choose $m=\gamma$ in \eqref{eqsteinderiweig2} to obtain
\begin{equation}
\begin{aligned}
\text{r.h.s}\,\eqref{eqsteinderiweig2} 
\lesssim & \, \big( |\eta|^{-\beta}|\eta-y|^{\gamma}+|\eta|^{-\beta-\gamma}|y|^{\gamma}|\eta-y|^{\gamma} \big)[\psi]_{C^{0,\gamma}}\\
&+|y|^{\gamma-\beta}|\eta|^{-\beta-\gamma}||\eta|^{\beta+\gamma}-|y|^{\beta+\gamma}|[\psi]_{C^{0,\gamma}},
\end{aligned}
\end{equation} 
where $[\cdot]_{C^{0,\gamma}}$ is given by \eqref{eqnot1}. Then, we compute
\begin{equation}
\begin{aligned}
\mathcal{I}_1\lesssim  & \, \, |\eta|^{-2\beta}\int_{|y|\leq 2|\eta|}|\eta-y|^{2\gamma-1-2\theta}\, dy+|\eta|^{-2(\beta+\gamma)}\int_{|y|\leq 2 |\eta|} \frac{||\eta|^{\beta+\gamma}-|y|^{\beta+\gamma}|^2|y|^{2(\gamma-\beta)}}{|\eta-y|^{1+2\theta}}\, dy\\
\lesssim & \, \, |\eta|^{-2\beta-2\theta+2\gamma},
\end{aligned}
\end{equation}
where we have used that $\gamma>\theta$ and $\beta<\gamma+1/2$. Now, since $|\eta-y| \sim |y|$ in the support of the integral in $\mathcal{I}_2$, we divide our arguments as follow
\begin{equation}
\begin{aligned}
\mathcal{I}_2 &\lesssim \int_{|y|\geq 2|\eta|} \frac{|\eta|^{-2\beta}|\psi(\eta)|^2}{|y|^{1+2\theta}} \, dy+\int_{|y|\geq 2|\eta|} \frac{|y|^{-2\beta}|\psi(y)|^2}{|y|^{1+2\theta}} \, dy \\
&=: \mathcal{I}_{2,1}+\mathcal{I}_{2,2}.
\end{aligned}
\end{equation}
At once, we get from the assumption $\psi(0)=0$,
\begin{equation}
\begin{aligned}
\mathcal{I}_{2,1}\lesssim |\eta|^{-2\beta+2\gamma}\big(\int_{|y|\geq 2|\eta|}|y|^{-1-2\theta}\, dy\big)[\psi]_{C^{0,\gamma}}^2 \sim |\eta|^{2\gamma-2\beta-2\theta}[\psi]_{C^{0,\gamma}}^2.
\end{aligned}
\end{equation}
We turn to $\mathcal{I}_{2,2}$. We divide this estimate according to the sign of $\gamma-\beta-\theta$. Indeed, if $\gamma-\beta-\theta<0$, by using $|\psi(y)|\leq |y|^{\gamma}[\psi]_{C^{0,\gamma}}$, we deduce
\begin{equation}
\begin{aligned}
\mathcal{I}_{2,2}\lesssim \int_{|y|\geq 2|\eta|} |y|^{2\gamma-2\beta-2\theta-1} \,dy \sim |\eta|^{2\gamma-2\beta-2\theta}.
\end{aligned}
\end{equation}
If $\gamma-\beta-\theta \geq 0$ and $|\eta| \gtrsim 1$,
\begin{equation}
\begin{aligned}
\mathcal{I}_{2,2}\lesssim \|\psi\|_{L^{\infty}}\int_{|y|\geq 2|\eta|}|y|^{-2\beta-2\theta-1} \, dy  \lesssim |\eta|^{2\gamma-2\beta-2\theta}.
\end{aligned}
\end{equation}
Finally, if $\gamma-\beta-\theta \geq 0$ and $0<|\eta| \ll 1$,
\begin{equation}
\begin{aligned}
\mathcal{I}_{2,2} &\lesssim \big(\int_{2|\eta|}^1 |y|^{2\gamma-2\beta-1-2\theta} dy \big)[\psi]_{C^{0,\gamma}}^2+\big(\int_{1}^{\infty} |y|^{-2\beta-2\theta-1} \, dy\big)\|\psi\|_{L^{\infty}}^2 \\
&\sim \int_{2|\eta|}^1 |y|^{2\gamma-2\beta-1-2\theta} dy+1.
\end{aligned}
\end{equation}
This completes the estimate for $\mathcal{I}_{2,2}$ and in turn the proof of the proposition.
\end{proof}

We conclude this section studying bounds for fractional derivatives of $\langle x \rangle_{N}^{\theta}$ and $\langle x \rangle^{\theta}$, whenever $0<\theta<1$.

\begin{prop} \label{propfracweighapp} 
Let $0<\theta<1$ and $\theta<\beta<3$, then $\|D^{\beta}\langle x \rangle_{N}^{\theta}\|_{L^{\infty}}\lesssim 1$ and $\|\mathcal{H}D^{\beta}\langle x \rangle_{N}^{\theta}\|_{L^{\infty}}\lesssim 1$, where the implicit constants are independent of $N$. Similar conclusion holds for $\langle x \rangle^{\theta}$ instead of $\langle x \rangle_{N}^{\theta}$.
\end{prop}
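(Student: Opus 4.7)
The plan is to prove the $L^{\infty}$ bound on $D^\beta \langle x\rangle_N^{\theta}$ by splitting the range $\theta<\beta<3$ into three subregimes $(\theta,1)$, $[1,2)$, and $[2,3)$, and applying a different pointwise integral representation of $D^\beta$ in each. Throughout, I will use the uniform derivative bounds $|\partial_x^l \langle x\rangle_N^{\theta}(x)| \lesssim_\theta \langle x\rangle^{\theta - l}$ for $l=1,2,3$ with constants independent of $N$, which follow from the hypotheses $|\partial_x \langle x\rangle_N| \leq 1$ and $|\partial_x^l \langle x\rangle_N| \lesssim \partial_x^l \langle x\rangle$ for $l=2,3$, together with the observation that $\langle x\rangle_N \sim \langle x\rangle$ on $|x|\leq 3N$ and $\langle x\rangle_N^{\theta}$ is constant on $|x|\geq 3N$.

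For $\theta<\beta<1$, I would use the pointwise formula $D^\beta f(x) = c_\beta\,\mathrm{p.v.}\int (f(x)-f(y))|x-y|^{-1-\beta}\,dy$. Fixing $x$, I split the integral into the regions $\{|y-x|<\langle x\rangle/2\}$ and $\{|y-x|\geq \langle x\rangle/2\}$. On the first region, the mean value inequality and the bound on $\partial_x \langle x\rangle_N^\theta$ yield $|\langle x\rangle_N^{\theta}-\langle y\rangle_N^{\theta}|\lesssim |y-x|\langle x\rangle^{\theta-1}$, and integration against $|y-x|^{-1-\beta}$ produces a contribution of size $\langle x\rangle^{\theta-\beta}$. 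On the second region I use $|\langle \cdot\rangle_N^{\theta}|\lesssim \langle \cdot\rangle^{\theta}$ and subdivide into $|y|<2|x|$ and $|y|\geq 2|x|$; the former piece contributes $\lesssim \langle x\rangle^{\theta-\beta}$ from the estimate $\beta>0$, and the latter piece contributes $\lesssim \langle x\rangle^{\theta-\beta}$ from the strict inequality $\beta>\theta$. Summing, $|D^\beta \langle x\rangle_N^{\theta}(x)|\lesssim \langle x\rangle^{\theta-\beta}\lesssim 1$ uniformly in $N$.

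For $\beta\in[1,2)$ I would use the second-order difference formula $D^\beta f(x) = c_\beta \int (2f(x)-f(x+h)-f(x-h))|h|^{-1-\beta}\,dh$, valid for $0<\beta<2$: the small-$|h|$ integrand is bounded by $h^2\langle x\rangle^{\theta-2}$ (via the uniform bound on $\partial_x^2 \langle x\rangle_N^{\theta}$) and the large-$|h|$ one by $\langle x\rangle^{\theta}+\langle x\pm h\rangle^{\theta}$, and the same scale-splitting yields $|D^\beta \langle x\rangle_N^{\theta}(x)|\lesssim \langle x\rangle^{\theta-\beta}$. For $\beta\in[2,3)$ I would invoke the identity $D^\beta=-D^{\beta-2}\partial_x^2$ and apply the Case 1 argument to $g:=-\partial_x^2 \langle x\rangle_N^{\theta}$, which satisfies $|g(x)|\lesssim \langle x\rangle^{\theta-2}$ and $|\partial_x g(x)|\lesssim \langle x\rangle^{\theta-3}$ uniformly in $N$; since $\beta-2\in[0,1)$ the first-order difference representation of $D^{\beta-2}$ applies directly to $g$ and produces $|D^\beta \langle x\rangle_N^{\theta}(x)|\lesssim \langle x\rangle^{\theta-\beta}$. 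In particular $D^\beta \langle x\rangle_N^{\theta}$ is smooth and decays like $\langle x\rangle^{\theta-\beta}$, and its derivative $\partial_x D^\beta \langle x\rangle_N^{\theta} = D^\beta \partial_x \langle x\rangle_N^{\theta}$ decays like $\langle x\rangle^{\theta-1-\beta}$ by running the same arguments with $\partial_x \langle x\rangle_N^{\theta}$ in place of $\langle x\rangle_N^{\theta}$.

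For the bound on $\mathcal{H}D^\beta \langle x\rangle_N^{\theta}$, I set $g_N := D^\beta \langle x\rangle_N^{\theta}$ and estimate $\mathcal{H}g_N(x)$ via its principal-value integral, splitting into $\{|y-x|\leq 1\}$ (where $|g_N(y)-g_N(x)|\leq |y-x|\|\partial g_N\|_{L^\infty}$ renders the integrand integrable uniformly in $N$) and $\{|y-x|>1\}$ (which I further subdivide into $|y|<|x|/2$, $|y|\sim|x|$, and $|y|>2|x|$, using the decay $|g_N(y)|\lesssim \langle y\rangle^{\theta-\beta}$ to obtain a total bound $\lesssim \langle x\rangle^{\theta-\beta}+\langle x\rangle^{\theta-\beta}\log\langle x\rangle$, which is $\lesssim 1$ since $\theta<\beta$). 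The main obstacle I anticipate is preserving uniformity in $N$ on the transition annulus $N\leq|x|\leq 3N$, where the truncation actively deforms $\langle x\rangle^{\theta}$; resolving this requires exploiting $\langle x\rangle_N\sim \langle x\rangle$ there, together with the Leibniz expansion of $\partial_x^l \langle x\rangle_N^{\theta}$ in terms of $\partial_x^j \langle x\rangle_N$, to push every derivative bound back to the untruncated weight with a constant depending only on $\theta$.
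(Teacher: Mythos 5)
Your argument is correct in its overall architecture but takes a genuinely different route from the paper. The paper never touches pointwise kernels: for $0<\beta<1$ it writes $\|D^{\beta}\langle x\rangle_N^{\theta}\|_{L^{\infty}}\lesssim \|D^{\beta}\langle x\rangle_N^{\theta}\|_{L^{q}}+\|D^{\beta+\delta}\langle x\rangle_N^{\theta}\|_{L^{q}}$ by Sobolev embedding, peels off one derivative via $D=\mathcal{H}\partial_x$ and the $L^q$-boundedness of $\mathcal{H}$, and then controls $D^{\beta-1+l\delta}\partial_x\langle x\rangle_N^{\theta}$ ($\beta-1+l\delta<0$) by Hardy--Littlewood--Sobolev against $\|\partial_x\langle x\rangle_N^{\theta}\|_{L^{p_l}}\lesssim 1$, the only $N$-uniform input being $|\partial_x\langle x\rangle_N^{\theta}|\lesssim\langle x\rangle^{\theta-1}$; for $k<\beta<k+1$ it pulls out $\partial_x^{k+1}$ and repeats, and $\mathcal{H}D^{\beta}$ is handled for free since $\mathcal{H}$ is bounded on $L^q$. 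Your proof is more elementary and self-contained (no HLS, no Sobolev embedding) and proves strictly more, namely the pointwise decay $|D^{\beta}\langle x\rangle_N^{\theta}(x)|\lesssim\langle x\rangle^{\theta-\beta}$, at the cost of a three-case analysis and of having to estimate $\mathcal{H}D^{\beta}\langle x\rangle_N^{\theta}$ by hand from the kernel of $\mathcal{H}$; the paper's proof is shorter, needs fewer derivative bounds on the weight, but only yields the $L^{\infty}$ bound.

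There is one step you should repair. In the near-field part of your Hilbert transform estimate you invoke $\|\partial_x g_N\|_{L^{\infty}}$ with $g_N=D^{\beta}\langle x\rangle_N^{\theta}$, justified by ``running the same arguments with $\partial_x\langle x\rangle_N^{\theta}$ in place of $\langle x\rangle_N^{\theta}$.'' For $\beta\in[2,3)$ that rerun applies the first-order difference formula for $D^{\beta-2}$ to $-\partial_x^3\langle x\rangle_N^{\theta}$, whose near-region estimate requires a uniform bound on $\partial_x^4\langle x\rangle_N^{\theta}$ — but the construction of the truncated weights in Section 2 only guarantees $N$-uniform control of $\partial_x^l\langle x\rangle_N$ for $l\le 3$. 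The cleanest fix is to avoid differentiating $g_N$ altogether: for $\beta-2=\gamma\in[0,1)$ treat $\mathcal{H}D^{\beta}\langle x\rangle_N^{\theta}=\mathcal{H}D^{\gamma}\bigl(-\partial_x^2\langle x\rangle_N^{\theta}\bigr)$ as a single convolution with the odd kernel $c_{\gamma}\sign(z)|z|^{-1-\gamma}$, so that the principal-value cancellation lets you subtract $h(x)$ and the near region only needs $\|\partial_x h\|_{L^{\infty}}=\|\partial_x^3\langle x\rangle_N^{\theta}\|_{L^{\infty}}\lesssim 1$, while the far region uses the decay $|h(y)|\lesssim\langle y\rangle^{\theta-2}$. (Alternatively, one may simply build $\langle x\rangle_N$ with four controlled derivatives, which is harmless.) With that adjustment your proof is complete.
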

\begin{proof}
We will only control the $L^{\infty}$-norm of $D^{\beta}\langle x \rangle_{N}^{\theta}$, while the  study of $\mathcal{H}D^{\beta}\langle x \rangle_{N}^{\theta}$ follows from the same reasoning. Let us first assume that $0<\beta<1$. Let $0<\delta<\min\{\beta-\theta,1-\beta\}$ and $0<\frac{1}{q}<\delta$ fixed. By Sobolev embedding
\begin{equation}\label{eqlowdisp0} 
\begin{aligned}
\|D^{\beta}\langle x \rangle^{\theta}_N\|_{L^{\infty}} \lesssim \|D^{\beta}\langle x \rangle^{\theta}_N\|_{L^{q}}+\|D^{\beta+\delta}\langle x \rangle^{\theta}_N\|_{L^{q}} \lesssim \|D^{\beta-1}\partial_x\langle x \rangle^{\theta}_N\|_{L^{q}}+\|D^{\beta-1+\delta}\partial_x \langle x \rangle^{\theta}_N\|_{L^{q}}, 
\end{aligned}
\end{equation}
where we have used the decomposition $D=\mathcal{H}\partial_x$, and the fact that $\mathcal{H}$ is a bounded operator on $L^q(\mathbb{R})$. Now, for each $l\in \{0,1\}$, Hardy-Littlewood-Sobolev inequality yields 
\begin{equation*}
\begin{aligned}
\|D^{\beta-1+l\delta}\partial_x\langle x \rangle^{\theta}_N\|_{L^{q}}\sim\|\frac{1}{|\cdot|^{1-(1-\beta-l\delta)}}\ast \partial_x\langle x \rangle^{\theta}_N\|_{L^q}\lesssim \|\partial_x\langle x \rangle^{\theta}_N\|_{L^{p_l}},
\end{aligned}
\end{equation*}
where
\begin{equation*}
\frac{1}{p_l}=\frac{1}{q}+1-\beta-l\delta, \hspace{0.5cm} l=0,1.
\end{equation*}
Consequently, we have $\|\partial_x\langle x \rangle^{\theta}_N\|_{L^{p_l}}\lesssim 1$ with implicit constant independent of $N$, whenever $0<1/p_l<1-\theta$ for each $l=0,1$. However, this last condition can be easily verified from our choice of $\delta$ and $q$.

On the other hand, if $k<\beta<k+1$ for some $k=1,2$, writing $D^{\beta}\langle x \rangle^{\theta}_N=D^{\beta-k-1}\mathcal{H}^{k+1}\partial_x^{k+1}\langle x \rangle^{\theta}_N$, we can adapt the previous argument to obtain the desired conclusion for these restrictions on $\beta$. Finally, for $\beta=1$ or $\beta=2$, setting $D^{\beta}=\mathcal{H}^{\beta}\partial^{\beta}_x$ and using Sobolev embedding $W^{1,q}(\mathbb{R})\hookrightarrow L^{\infty}(\mathbb{R})$ with $0<\frac{1}{q}< 1-\theta$, we complete the proof of Proposition \ref{propfracweighapp}. 
\end{proof}

\section{Proof of Theorem \ref{Theowellpos} (i)} \label{main1}

Let $u_0 \in Z_{s,r}(\mathbb{R})$ with decay $r>0$ and regularity $s \geq \max\{s_{\alpha}^{+},\alpha r\}$, where $s_{\alpha}=3/2$ for $-1\leq \alpha <0$, and $s_{\alpha}=3/2-3\alpha/8$ for $0<\alpha<1$.  In virtue of Theorem \ref{localtheo}, there exist $T>0$ and $u\in C([0,T];H^s(\mathbb{R}))\cap L^1([0,T];W^{1,\infty}(\mathbb{R}))$ solution of the IVP \eqref{BDBO} with initial data $u_0$. Accordingly, to prove Theorem \ref{Theowellpos}, we will only establish the persistence result
\begin{equation}\label{persisteq1}
u \in L^{\infty}([0,T];L^2(|x|^{2r}\, dx)).
\end{equation}
Once we have proved \eqref{persisteq1}, it is not difficult to obtain $u \in C([0,T];L^2(|x|^{2r}\, dx))$ and the continuous dependence on the initial data (see for instance \cite{FLinaPioncedGBO,FonPO,OscarWHBO}). Moreover, by approximating with smooth solutions and taking the limit in our estimates, we will assume that the above solution $u$ of the IVP \eqref{BDBO} is sufficiently regular to perform all the computations developed throughout the proof of Theorem \ref{Theowellpos} below. 

Since our arguments are determined by the size of the parameter $\alpha \in [-1,1)\setminus\{0\}$ in \eqref{BDBO}, we divide our considerations into four main cases: $\alpha=-1$, $-1<\alpha \leq -1/2$, $-1/2<\alpha<0$ and $0<\alpha<1$.

\subsection{Case \texorpdfstring{$\alpha=-1$}{}}

In this case, the equation in \eqref{BDBO} takes the form \eqref{BHeqution}. Then, multiplying \eqref{BHeqution} by $\langle x \rangle^{2r}_N u$ with $0<r<1/2$ and integrating in space we deduce
\begin{equation*}
\frac{1}{2}\frac{d}{dt}\int (\langle x \rangle^{r}_N u)^2\, dx=-\int\langle x \rangle^{r}_N\mathcal{H}u(\langle x \rangle^{r}_N u)\, dx-\int \langle x \rangle^{r}_N(uu_x)(\langle x \rangle^{r}_N u)\, dx.
\end{equation*} 
On one hand, the term provided by the nonlinearity is bounded as follows 
\begin{equation*}
\Big| \int \langle x \rangle^{r}_N(uu_x)(\langle x \rangle^{r}_N u)\, dx\Big| \lesssim \|u_x\|_{L^{\infty}}\|\langle x \rangle^{r}_N u\|^2_{L^2},
\end{equation*}
while on the other hand, in virtue of Proposition \ref{propapcond}, the factor concerning the dispersion is estimate in the following manner
\begin{equation*}
\begin{aligned}
\Big|\int\langle x \rangle^{r}_N\mathcal{H}u(\langle x \rangle^{r}_N u)\, dx \Big|\lesssim \|\langle x \rangle^{r}_N\mathcal{H}u\|_{L^2}\|\langle x \rangle^{r}_N u\|_{L^2}\lesssim \|\langle x \rangle^{r}_N u\|_{L^2}^2.
\end{aligned}
\end{equation*}
Gathering the previous estimates, after applying Gronwall's inequality and taking the limit $N\to \infty$, we get
\begin{equation*}
\sup_{t\in [0,T]}\|\langle x \rangle^{r} u(t)\|_{L^2} \leq c(\|\langle x \rangle^{r}u_0\|_{L^2},\|u\|_{L^1_T W^{1,\infty}}),
\end{equation*}
provided that $0<r<1/2$. This verifies the persistence property in $Z_{s,r}(\mathbb{R})=H^s(\mathbb{R})\cap L^2(|x|^{2r}\, dx)$, $r \in (0,1/2)$, $s>3/2$, for the case $\alpha=-1$.


\subsection*{Preliminary considerations cases \texorpdfstring{$\alpha \in (-1,1)\setminus \{0\}$}{}}

The proof of Theorem \ref{Theowellpos} is obtained from weighted energy estimates applied to the equation in \eqref{BDBO}. More precisely, we set $m\geq 0$ integer and $\theta \in (0,1]$, then after multiplying  equation \eqref{BDBO} by $x^{2m}\langle x\rangle^{2\theta}_Nu$, and integrating in space, we deduce the differential equation 
\begin{equation}\label{grondiffereequ}
\begin{aligned}
\frac{1}{2}\frac{d}{dt}\int \big(x^m \langle x \rangle_N^{\theta} u \big)^2\, dx &- \underbrace{\int (x^m\langle x \rangle_N^{\theta}\partial_xD^{\alpha} u) (x^m\langle x \rangle_N^{\theta} u )\, dx}_{\mathcal{A}_1}+\underbrace{\int uu_x(x^{2m}\langle x \rangle_N^{2\theta}u)\, dx}_{\mathcal{A}_2}=0.  
\end{aligned}
\end{equation}
Roughly speaking, the idea is to write the decay parameter $r=m+\theta$ and provide estimates for \eqref{grondiffereequ}, increasing the values of $m$ and $\theta$ until we cover all the admissible possibilities $0<r<3/2+\alpha$. Certainly, we have 
\begin{equation*}
|\mathcal{A}_2|\lesssim \|\partial_x u\|_{L^{\infty}}\|x^m\langle x \rangle^{\theta}_N u\|_{L^2}^2.
\end{equation*}
Hence, our efforts are reduced to estimate $\mathcal{A}_1$. Since the parameters $m$ and $\theta$ will be precisely stated in each context, for the sake of brevity, we will employ the same notation $\mathcal{A}_1$ for several different cases. Now, we proceed to present our estimates for this factor. 


\subsection{Case \texorpdfstring{$-1<\alpha\leq -1/2$}{}}

Under the present restrictions, we notice that the spatial decay contemplated in Theorem \ref{Theowellpos} satisfies $0<r<3/2+\alpha \leq 1$, then this case is obtained from \eqref{grondiffereequ} setting $m=0$ and $r=\theta$. Thus, we further divide the analysis of $\mathcal{A}_1$ into two additional subcases:
\begin{itemize}
\item[(I)] $0<\theta<1+\alpha$,
\item[(II)] $1+\alpha\leq \theta < 3/2+\alpha \leq 1$.
\end{itemize}


\subsubsection{Case (I): $0< \theta <1+\alpha$}\label{subs1}

To deal with $\mathcal{A}_1$, writing $\partial_xD^{\alpha}=-\mathcal{H}D^{1+\alpha}$, we have 
\begin{equation}\label{claimnegade} 
\begin{aligned}
\begin{aligned}
\langle x \rangle_N^{\theta}\mathcal{H}D^{1+\alpha} u &=[\langle x \rangle^{\theta}_N,\mathcal{H}]D^{1+\alpha}u+\mathcal{H}\big(\langle x \rangle_N^{\theta}D^{1+\alpha} u\big)\\
&=[\langle x \rangle^{\theta}_N,\mathcal{H}]D^{1+\alpha}u+\mathcal{H}\big([\langle x \rangle_N^{\theta},D^{1+\alpha}] u\big)+\mathcal{H}D^{1+\alpha}\big(\langle x \rangle_N^{\theta}u\big) \\
&=\mathcal{A}_{1,1}+\mathcal{A}_{1,2}+\mathcal{H}D^{1+\alpha}\big(\langle x \rangle_N^{\theta}u\big).
\end{aligned}
\end{aligned}
\end{equation}
Due to the fact that $\mathcal{H}D^{1+\alpha}$ determines a skew-symmetric operator, by going back to the integral defining $\mathcal{A}_1$, we have that the last factor on the r.h.s of the preceding equality does not contribute to the estimate. On the other hand, we employ Proposition \ref{fractfirstcaldcomm} to determine
\begin{equation*}
\begin{aligned}
\|\mathcal{A}_{1,1}\|_{L^2}=\|[\langle x \rangle^{\theta}_N,\mathcal{H}]D^{1+\alpha}u\|_{L^2}\lesssim \|D^{1+\alpha}\langle x \rangle^{\theta}_N\|_{L^{\infty}}\|u\|_{L^2},
\end{aligned}
\end{equation*}
and Lemma \ref{lemmacomm1} allows us to conclude
\begin{equation*}
\begin{aligned}
\|\mathcal{A}_{1,2}\|_{L^2}=\|[\langle x \rangle_N^{\theta},D^{1+\alpha}] u\|_{L^2}\lesssim \|D^{1+\alpha}\langle x \rangle^{\theta}_N\|_{L^{\infty}}\|u\|_{L^2}.
\end{aligned}
\end{equation*}
Hence, Proposition \ref{propfracweighapp}  and the preceding bounds complete the proof of Case (I).

\subsubsection{Case (II): $1+\alpha\leq \theta <3/2+\alpha$}\label{subsec(II)}\label{subcaseII}

In contrast with part (I) above, the estimate for $\mathcal{A}_1$ is more involved and requires several modifications. Mainly, we need to increase the number of derivatives in \eqref{BDBO} in a controlled fashion to compensate the fact that $\theta\geq 1+\alpha$. It is worth noticing that  if we were to argue as in Case (I) above, applying Lemma \ref{lemmacomm2} to handle the commutator $[\langle x \rangle_N^{\theta},D^{1+\alpha}] u$, we would need to assure $D^{\alpha}u \in L^2(\mathbb{R})$. However, since $|\xi|^{\alpha}\notin L^{2}_{loc}(\mathbb{R})$ for $-1<\alpha\leq -1/2$, $D^{\alpha}u \in L^2(\mathbb{R})$ is not expected to hold in general. 

Instead, we divide the analysis of $\mathcal{A}_1$ according to low and high frequencies as well as the magnitude of $1+\alpha$. For this purpose, let us consider $\phi \in C^{\infty}_c(\mathbb{R})$ with $\phi(\xi)=1$ if $|\xi|\leq 1$. We will employ the associated operator $P^{\phi}$ defined in \eqref{projectorphi}. We note
\begin{equation}\label{eqlowdisp1}
Id=P^{\phi}+P^{1-\phi},
\end{equation}
where $Id$ denotes the identity operator and $P^{1-\phi}$ is defined by the Fourier multiplier with symbol $1-\phi$. Writing $\partial_xD^{\alpha}=-\mathcal{H}D^{1+\alpha}$, we divide the estimate for $\mathcal{A}_1$ according to
\begin{equation*}
\begin{aligned}
\langle x \rangle_N^{\theta}\mathcal{H}D^{1+\alpha} u =&[\langle x \rangle_N^{\theta},\mathcal{H}]D^{1+\alpha} u +\mathcal{H}\big(\langle x \rangle_N^{\theta}D^{1+\alpha} P^{1-\phi} u \big)+\mathcal{H}\big(\langle x \rangle_N^{\theta}D^{1+\alpha} P^{\phi} u \big)\\
=&[\langle x \rangle_N^{\theta},\mathcal{H}]D^{1+\alpha} u +\mathcal{H}\big([\langle x \rangle_N^{\theta},D^{1+\alpha}] P^{1-\phi} u \big)+\mathcal{H}D^{1+\alpha}\big(\langle x \rangle_N^{\theta} P^{1-\phi} u \big)+\mathcal{H}\big(\langle x \rangle_N^{\theta}D^{1+\alpha} P^{\phi} u \big)\\
=:& \widetilde{\mathcal{A}}_{1,1}+\widetilde{\mathcal{A}}_{1,2}+\widetilde{\mathcal{A}}_{1,3}+\widetilde{\mathcal{A}}_{1,4}.
\end{aligned}
\end{equation*}
By Proposition \ref{fractfirstcaldcomm}, we find 
\begin{equation*}
\begin{aligned}
\|\widetilde{\mathcal{A}}_{1,1}\|_{L^2}=\|[\langle x \rangle_N^{\theta},\mathcal{H}]D^{\theta+\epsilon}D^{1+\alpha-\theta-\epsilon} u\|_{L^2}\lesssim \|D^{\theta+\epsilon}\langle x \rangle_N^{\theta}\|_{L^{\infty}}\|D^{1+\alpha-\theta-\epsilon} u\|_{L^2},
\end{aligned}
\end{equation*}
whenever $0<\epsilon<1-\theta$. Notice that under these restrictions, Proposition \ref{propfracweighapp} assures that $\|D^{\theta+\epsilon}\langle x \rangle_N^{\theta}\|_{L^{\infty}}$ is controlled by a constant independent of $N$. Consequently, to complete the bound of the above inequality, we require to estimate the $L^2(\mathbb{R})$-norm of $D^{1+\alpha-\theta-\epsilon} u$, whether $u$ solves \eqref{BDBO}.

\begin{lemma}\label{claimnegade0.1} 
Assume $-1<\alpha\leq -1/2$. Let $1+\alpha \leq \theta<3/2+\alpha$ and $s>1$. Then there exists $0<\epsilon \ll 1$ such that
\begin{equation}\label{eqlowdisp0.1} 
\sup_{t\in[0,T]}\|D^{1+\alpha-\theta-\epsilon}u(t)\|_{L^2}\leq c(\|\langle x \rangle^{\theta}u_0\|_{L^2},\|u\|_{L^{\infty}_T H^s},\|u\|_{L^{1}_T W^{1,\infty}}).
\end{equation}
\end{lemma}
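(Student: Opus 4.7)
My plan is to estimate $D^{1+\alpha-\theta-\epsilon}u$ by splitting into high and low Fourier frequencies via the projector $P^{\phi}$ from \eqref{projectorphi}, where $\phi \in C^{\infty}_c(\R)$ equals $1$ on $\{|\xi|\leq 1\}$ and $\supp(\phi)\subset [-2,2]$. Setting $\sigma := \theta+\epsilon-1-\alpha$, the hypothesis $1+\alpha \leq \theta < 3/2+\alpha$ yields $\sigma \geq \epsilon > 0$ and $\sigma < 1/2+\epsilon$. For the high-frequency piece $D^{1+\alpha-\theta-\epsilon}P^{1-\phi}u$, the exponent is negative and $1-\phi$ is supported in $\{|\xi|\geq 1\}$, so $|\xi|^{1+\alpha-\theta-\epsilon}(1-\phi(\xi)) \in L^{\infty}(\R)$ and Plancherel gives $\|D^{1+\alpha-\theta-\epsilon}P^{1-\phi}u\|_{L^2} \lesssim \|u\|_{L^2} \leq \|u\|_{H^s}$ with no need to use the equation.

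For the low-frequency piece, I would introduce $v := D^{-\sigma}P^{\phi}u$. Since $D^{-\sigma}P^{\phi}$ is a Fourier multiplier, it commutes with $\partial_x D^{\alpha}$, so equation \eqref{BDBO} becomes
\[
\partial_t v - \partial_x D^{\alpha} v = -D^{-\sigma}P^{\phi}(uu_x).
\]
Multiplying by $v$, integrating, and exploiting the skew-symmetry of $\partial_x D^{\alpha}$ produces $\tfrac{d}{dt}\|v\|_{L^2}^2 \leq 2\|v\|_{L^2}\|D^{-\sigma}P^{\phi}(uu_x)\|_{L^2}$. Writing $uu_x = \tfrac12\partial_x(u^2)$, the Fourier symbol of $D^{-\sigma}P^{\phi}\partial_x$ is $i\sign(\xi)|\xi|^{1-\sigma}\phi(\xi)$, which is in $L^{\infty}(\R)$ because $1-\sigma>1/2-\epsilon>0$ and $\phi$ has compact support. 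Hence
\[
\|D^{-\sigma}P^{\phi}(uu_x)\|_{L^2} \lesssim \|u^2\|_{L^2} \lesssim \|u\|_{L^{\infty}}\|u\|_{L^2} \lesssim \|u\|_{H^s}^2,
\]
valid since $s>1$. Time integration gives $\|v(t)\|_{L^2} \leq \|v(0)\|_{L^2} + C\int_0^t \|u(\tau)\|_{H^s}^2\,d\tau$.

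The main obstacle is bounding the initial datum $\|v(0)\|_{L^2}=\|D^{-\sigma}P^{\phi}u_0\|_{L^2}$ by $\|\langle x\rangle^{\theta}u_0\|_{L^2}$, since $\sigma<\theta$ is not automatic and $|\xi|^{-\sigma}\phi(\xi)$ is singular at the origin. By Plancherel $\|\langle x\rangle^{\theta}u_0\|_{L^2} \sim \|\widehat{u_0}\|_{H^{\theta}_{\xi}}$, and I would apply one-dimensional Sobolev embedding: if $\theta>1/2$ then $\widehat{u_0} \in L^{\infty}$ and the bound reduces to the $L^2$-integrability of $|\xi|^{-\sigma}\phi(\xi)$, requiring $\sigma<1/2$; if $\theta \leq 1/2$, then $\widehat{u_0}\in L^{q}$ with $q=2/(1-2\theta)$, and a H\"older decomposition with conjugate exponents $(1/(2\theta), 1/(1-2\theta))$ reduces the matter to $\int_{|\xi|\leq 2}|\xi|^{-\sigma/\theta}\,d\xi<\infty$, i.e. $\sigma<\theta$. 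Both conditions can be met simultaneously by choosing
\[
0<\epsilon<\min\{1+\alpha,\, 3/2+\alpha-\theta\},
\]
which is a valid positive range under the hypothesis $-1<\alpha\leq -1/2$ and $1+\alpha\leq\theta<3/2+\alpha$. Combining this data bound with the high- and low-frequency estimates above yields \eqref{eqlowdisp0.1}. The delicate step is the last one, where the near-critical regularity of $\widehat{u_0}$ near the origin forces the combined Sobolev/H\"older argument and the two-sided constraint on $\epsilon$.
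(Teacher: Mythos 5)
Your proof is correct and reaches the same conclusion with the same admissible range of $\epsilon$, but the evolution part is organized differently from the paper. The paper runs the energy estimate directly on $D^{1+\alpha-\theta-\epsilon}u$ (no frequency localization), and then must control the nonlinear contribution $\|D^{2+\alpha-\theta-\epsilon}(u^2)\|_{L^2}$ via the homogeneous fractional Leibniz rule (Lemma \ref{leibnizhomog}), using that $0<2+\alpha-\theta-\epsilon<1$ so this quantity is dominated by $\|u\|_{L^\infty}\|J^s u\|_{L^2}$. You instead split into $P^{\phi}$ and $P^{1-\phi}$ pieces: the high-frequency piece is trivially bounded without the equation, and for the low-frequency piece $v=D^{-\sigma}P^{\phi}u$ the nonlinearity is handled by the elementary observation that $i\sign(\xi)|\xi|^{1-\sigma}\phi(\xi)$ is a bounded multiplier, so no Leibniz rule is needed. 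This buys a genuinely more elementary treatment of the nonlinear term; what it costs is nothing of substance, since your initial-data bound is essentially identical to the paper's \eqref{eqlowdisp0.2} (H\"older in frequency against $|\xi|^{-\sigma}\phi\in L^{1/\beta}$ plus the Sobolev embedding $H^{\beta}_{\xi}\hookrightarrow L^{2/(1-2\beta)}$), and your constraint $0<\epsilon<\min\{1+\alpha,\,3/2+\alpha-\theta\}$ coincides with the paper's $\epsilon<1+\alpha-\theta+\beta$ after optimizing over $\beta<\min\{1/2,\theta\}$. One small wrinkle: your dichotomy places $\theta=1/2$ (attainable, e.g.\ for $\alpha=-1/2$) in the case where you invoke $\widehat{u_0}\in L^{2/(1-2\theta)}=L^{\infty}$, and $H^{1/2}(\mathbb{R})\not\hookrightarrow L^{\infty}(\mathbb{R})$; this is repaired in one line by using $H^{1/2}\hookrightarrow L^{q}$ for large finite $q$ together with the strict inequality $\sigma<1/2$ you already have, or by introducing an auxiliary exponent $\beta<\min\{1/2,\theta\}$ as the paper does.
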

\begin{proof}
Let $\theta-(1+\alpha)<\beta<\min\{1/2,\theta\}$ and $0<\epsilon<\min\{1-\theta,1+\alpha-\theta+\beta\}$. We emphasize that such $\beta$ exists since $1+\alpha \leq \theta <3/2+\alpha$. Fixing these parameters, we begin by verifying \eqref{eqlowdisp0.1} for $t=0$. By Plancherel's identity, H\"older's inequality and Sobolev embedding, we get
\begin{equation}\label{eqlowdisp0.2}
\begin{aligned}
\|D^{1+\alpha-\theta-\epsilon} u_0\|_{L^2}&\lesssim \||\xi|^{1+\alpha-\theta-\epsilon}\phi \widehat{u_0}\|_{L^2}+\||\xi|^{1+\alpha-\theta-\epsilon}(1-\phi) \widehat{u_0}\|_{L^2} \\
&\lesssim \||\xi|^{1+\alpha-\theta-\epsilon}\phi\|_{L^{1/\beta}}\|\widehat{u_0}\|_{L^{2/(1-2\beta)}}+\|\widehat{u_0}\|_{L^2}\\
&\lesssim \|J^{\beta}\widehat{u_0}\|_{L^{2}}+\|\widehat{u_0}\|_{L^2} \lesssim \|\langle x \rangle^{\theta}u_0\|_{L^2},
\end{aligned} 
\end{equation}
which holds due to the fact that $|\xi|^{1+\alpha-\theta-\epsilon}\phi\in L^{1/\beta}(\mathbb{R})$. Now,  to obtain \eqref{eqlowdisp0.1} for arbitrary time $t\in(0,T]$, we proceed to perform energy estimates. Indeed, by applying $D^{1+\alpha-\theta-\epsilon}$ to the equation in \eqref{BDBO}, multiplying by $D^{1+\alpha-\theta-\epsilon} u$ and integrating in space, we arrive at
\begin{equation}\label{eqlowdisp1.1}
\frac{1}{2}\frac{d}{dt}\int (D^{1+\alpha-\theta-\epsilon} u)^2\, dx =-\int D^{1+\alpha-\theta-\epsilon}(u_xu)D^{1+\alpha-\theta-\epsilon}u \, dx,
\end{equation}
where to cancel the factor provided by the dispersion, we have used that $\mathcal{H}D^{1+\alpha}$ is skew-symmetric. By writing $u \partial_x u=\frac{1}{2}\partial_x(u^2)$ and applying Lemma \ref{leibnizhomog}, we find
\begin{equation*}
\begin{aligned}
\Big|\int D^{1+\alpha-\theta-\epsilon}(uu_x)D^{1+\alpha-\theta-\epsilon}u \, dx\Big| &\lesssim \|D^{2+\alpha-\theta-\epsilon}(u^2)\|_{L^2}\|D^{1+\alpha-\theta-\epsilon}u \|_{L^2} \\
&\lesssim \|u\|_{L^{\infty}}\|D^{2+\alpha-\theta-\epsilon}u\|_{L^2}\|D^{1+\alpha-\theta-\epsilon}u \|_{L^2}.
\end{aligned}
\end{equation*}
We observe that $\|D^{2+\alpha-\theta-\epsilon}u\|_{L^2}\leq \|J^s u\|_{L^2}$, whenever $s>1$. Therefore, plugging the above estimate in \eqref{eqlowdisp1.1} and applying Gronwall's inequality the proof is complete.
\end{proof}
We turn to $\widetilde{\mathcal{A}}_{1,2}$. An application of Lemma \ref{lemmacomm2} reveals
\begin{equation*}
\begin{aligned}
\|\widetilde{\mathcal{A}}_{1,2}\|_{L^2}&\lesssim \|[\langle x \rangle_N^{\theta},D^{1+\alpha}] D^{-\alpha}D^{\alpha }P^{1-\phi} u \|_{L^2} \lesssim \|D^{\alpha}P^{1-\phi}u\|_{L^2} \lesssim \|u\|_{L^2},
\end{aligned}
\end{equation*}
where given that $\alpha<0$ and $1-\phi$ is supported outside of the origin, we have used that $\|D^{\alpha}P^{1-\phi}u\|_{L^2} \lesssim_{\alpha} \|u\|_{L^2}$. Now, we consider $\widetilde{\mathcal{A}}_{1,3}$. Returning to the integral defining $\mathcal{A}_2$, we employ the fact that $\mathcal{H}D^{1+\alpha}$ is skew-symmetric and \eqref{eqlowdisp1} to get
\begin{equation*}
\begin{aligned}
\int \widetilde{\mathcal{A}}_{1,3}\langle x \rangle_N^{\theta} u\, dx &= \int \mathcal{H}D^{1+\alpha}\big(\langle x \rangle_N^{\theta} P^{1-\phi} u \big) \langle x \rangle_N^{\theta} u \, dx\\
&=\int \mathcal{H}D^{1+\alpha}\big(\langle x \rangle_N^{\theta}  u \big) \langle x \rangle_N^{\theta} P^{\phi}u \, dx\\
&=\int \mathcal{H}\big(\langle x \rangle_N^{\theta}  u \big)D^{1+\alpha}[\langle x \rangle_N^{\theta}, P^{\phi}]u \, dx+\mathcal{H}\big(\langle x \rangle_N^{\theta}  u \big)D^{1+\alpha}P^{\phi}\big( \langle x \rangle_N^{\theta} u\big) \, dx.
\end{aligned}
\end{equation*}
An application of Lemma \ref{lemmacomm3} then shows
\begin{equation*}
\begin{aligned}
\Big|\int \widetilde{\mathcal{A}}_{1,3}\langle x \rangle_N^{\theta} u\, dx \Big|&\lesssim\big( \|D^{1+\alpha}[\langle x \rangle_N^{\theta}, P^{\phi}]u\|_{L^2}+\|D^{1+\alpha}P^{\phi}\big( \langle x \rangle_N^{\theta} u\big)\|_{L^2} \big)\|\langle x \rangle_N^{\theta} u\|_{L^2} \\
&\lesssim \|D^{1+\alpha}[\langle x \rangle_N^{\theta}, P^{\phi}]D^{\theta+\epsilon-1-\alpha}D^{1+\alpha-\theta-\epsilon}u\|_{L^2}\|\langle x \rangle_N^{\theta} u\|_{L^2}+\|\langle x \rangle_N^{\theta} u\|_{L^2}^{2}\\
&\lesssim \big(\|D^{\theta+\epsilon}\langle x \rangle^{\theta}\|_{L^{\infty}}+\|\partial_x \langle x \rangle^{\theta}\|_{L^{\infty}}\big)\|D^{1+\alpha-\theta-\epsilon}u\|_{L^2}\|\langle x \rangle_N^{\theta} u\|_{L^2}+\|\langle x \rangle_N^{\theta} u\|_{L^2}^{2},
\end{aligned}
\end{equation*}
where we have also used that $\|D^{\beta}P^{\phi}f\|_{L^2}\lesssim_{\beta}\|f\|_{L^2}$. Notice that the above estimate and Lemma \ref{claimnegade0.1} complete the analysis of $\widetilde{\mathcal{A}}_{1,3}$.

To estimate  $\widetilde{\mathcal{A}}_{1,4}$, we shall employ the equation \eqref{BDBO} to obtain new deferential inequalities which provide a closed differential form that ultimately control this factor. Roughly, the idea is to increase the number of derivatives considered in the equation until we can control the commutator $[\langle x \rangle_N^{\theta},D^{1+\alpha}]$. In doing so, given $\alpha \in (-1,1/2]$, we choose an integer $\kappa\geq 2$ fixed such that 
\begin{equation*}
0<\kappa(1+\alpha)+\alpha \,  \text{ and } \, \kappa(1+\alpha)\leq 3/2, \, \text{ in other words, } \, \alpha\in (-\kappa/(\kappa+1),-(\kappa-3/2)/\kappa].
\end{equation*}
 Thus, rewriting $\partial_x D^{\alpha}=-\mathcal{H}D^{1+\alpha}$, we apply $D^{l(1+\alpha)}P^{\phi}$ to the equation in \eqref{BDBO}, multiplying the resulting expression by $\langle x \rangle_N^{2\theta}D^{l(1+\alpha)}P^{\phi}u $, we get
\begin{equation}\label{diferenequalderivlocal}
\begin{aligned}
\frac{1}{2}\frac{d}{dt}\int \big(\langle x \rangle_N^{\theta}D^{l(1+\alpha)}P^{\phi} u \big)^2\, dx &+ \underbrace{\int (\langle x \rangle_N^{\theta}\mathcal{H}D^{1+\alpha}D^{l(1+\alpha)}P^{\phi} u) (\langle x \rangle_N^{\theta}D^{l(1+\alpha)}P^{\phi}u)\, dx}_{\mathcal{B}_{1,l}}\\
&+\underbrace{\int \langle x \rangle_N^{\theta} D^{l(1+\alpha)}P^{\phi}(uu_x)(\langle x \rangle_N^{\theta}D^{l(1+\alpha)}P^{\phi}u)\, dx}_{\mathcal{B}_{2,l}}=0,
\end{aligned}
\end{equation}
for each $l=1,\dots,\kappa$. Before analyzing $\mathcal{B}_{1,l}$ and $\mathcal{B}_{2,l}$, let us show that
\begin{equation}\label{eqlowdisp2}
\|\langle x \rangle^{\theta}D^{l(1+\alpha)}P^{\phi} u_0\|_{L^2}\lesssim \|\langle x \rangle^{\theta}u_0\|_{L^2},
\end{equation}
for all $l=1,\dots,\kappa$. This in turn verifies the required estimate in \eqref{diferenequalderivlocal} for the initial data. By Plancherel's identity and Theorem \ref{TheoSteDer}, it is seen that
\begin{equation*}
\begin{aligned}
\|\langle x \rangle^{\theta}D^{l(1+\alpha)}P^{\phi} u_0\|_{L^2}&=\|J^{\theta}_{\xi}\big(|\xi|^{l(1+\alpha)}\phi \widehat{u_0}\big)\|_{L^2}\\
& \lesssim \||\xi|^{l(1+\alpha)}\phi \widehat{u_0}\|_{L^2}+\|\mathcal{D}^{\theta}\big(|\xi|^{l(1+\alpha)}\phi\big) \widehat{u_0}\|_{L^2}+\||\xi|^{l(1+\alpha)}\phi\|_{L^{\infty}}\|\mathcal{D}^{\theta}\widehat{u_0}\|_{L^2} \\
&\lesssim \|\langle x \rangle^{\theta} u_0\|_{L^2}+\|\mathcal{D}^{\theta}\big(|\xi|^{l(1+\alpha)}\phi\big) \widehat{u_0}\|_{L^2}.
\end{aligned}
\end{equation*}
Now, if $l(1+\alpha)\geq 1$, \eqref{prelimneq1} yields
\begin{equation*}
\begin{aligned}
\|\mathcal{D}^{\theta}\big(|\xi|^{l(1+\alpha)}\phi\big) \widehat{u_0}\|_{L^2}&\leq \|\mathcal{D}^{\theta}\big(|\xi|^{l(1+\alpha)}\phi\big)\|_{L^{\infty}}\|u_0\|_{L^2} \\
&\lesssim \big(\||\xi|^{l(1+\alpha)}\phi\|_{L^{\infty}}+\|\partial_x \big(|\xi|^{l(1+\alpha)}\phi\big)\|_{L^{\infty}}\big)\|u_0\|_{L^2}\\
&\lesssim \|u_0\|_{L^2}.
\end{aligned}
\end{equation*}
We suppose that $l(1+\alpha)<1$, then we consider $\max\{0,\theta-l(1+\alpha)\}<\beta<\min\{\theta,1/2\}$, so that by H\"older inequality and Sobolev embedding,
\begin{equation*}
\begin{aligned}
\|\mathcal{D}^{\theta}\big(|\xi|^{l(1+\alpha)}\phi\big) \widehat{u_0}\|_{L^2}\leq \|\mathcal{D}^{\theta}\big(|\xi|^{l(1+\alpha)}\phi\big)\|_{L^{1/\beta}}\|\widehat{u_0}\|_{L^{2/(1-2\beta)}} &\leq \|\mathcal{D}^{\theta}\big(|\xi|^{l(1+\alpha)}\phi\big)\|_{L^{1/\beta}}\|J^{\beta}\widehat{u_0}\|_{L^2}\\
&\lesssim \|\langle x \rangle^{\theta}u_0\|_{L^2}. 
\end{aligned}
\end{equation*}
By our choice of $\beta$ and Proposition \ref{steinderiweighbet}, we employed above that $\mathcal{D}^{\theta}\big(|\xi|^{l(1+\alpha)}\phi\big)\in L^{1/\beta}(\mathbb{R})$. This completes the deduction of \eqref{eqlowdisp2}. Now, we turn to the estimates for $\mathcal{B}_{1,l}$ and $\mathcal{B}_{2,l}$ in \eqref{diferenequalderivlocal}. 

We first deal with the factors $\mathcal{B}_{2,l}$ for those integer numbers $l=1,\dots,\kappa$ for which $l(1+\alpha)<1$. We write
\begin{equation}\label{eqlowdisp2.1}
\begin{aligned}
\langle x \rangle^{\theta}_N D^{l(1+\alpha)}P^{\phi}(uu_x)=&[\langle x \rangle^{\theta}_N,D^{l(1+\alpha)}]P^{\phi}(uu_x)+D^{l(1+\alpha)}[\langle
x \rangle_N^{\theta},P^{\phi}](uu_x)\\
&+D^{l(1+\alpha)}P^{\phi}\big(\langle x \rangle^{\theta}_N(uu_x)\big).
\end{aligned}
\end{equation}
Hence, by Lemmas \ref{lemmacomm2} and \ref{lemmacomm3},
\begin{equation}\label{eqlowdisp3}
\begin{aligned}
\|\langle x \rangle^{\theta}_N D^{l(1+\alpha)}&P^{\phi}(uu_x)\|_{L^2}\\
\lesssim &\|[\langle x \rangle^{\theta}_N,D^{l(1+\alpha)}]D^{1-l(1+\alpha)}D^{l(1+\alpha)-1}P^{\phi}(uu_x)\|_{L^2}\\
&+\|D^{l(1+\alpha)}[\langle
x \rangle_N^{\theta},P^{\phi}]D^{1-l(1+\alpha)}D^{l(1+\alpha)-1}(uu_x)\|_{L^2}+\|D^{l(1+\alpha)}P^{\phi}\big(\langle x \rangle^{\theta}_N(uu_x)\big)\|_{L^2}\\
\lesssim & \|\partial_x\langle x \rangle^{\theta}_N \|_{L^{\infty}}\big(\|D^{l(1+\alpha)-1}\partial_xP^{\phi}(u^2)\|_{L^2}+\|D^{l(1+\alpha)-1}\partial_x(u^2)\|_{L^2}\big)+\|\langle x \rangle^{\theta}_N(uu_x)\|_{L^2}\\
\lesssim & \|u\|_{L^{\infty}}\big(\|u\|_{L^2}+\|D^{l(1+\alpha)}u\|_{L^2}\big)+\|u_x\|_{L^{\infty}}\|\langle x \rangle^{\theta}_Nu\|_{L^2},
\end{aligned}
\end{equation}
where we have used the Leibniz rule of Lemma \ref{leibnizhomog}. Notice that our choice of $\kappa$ yields $\|D^{l(1+\alpha)}u\|_{L^2} \leq \|u\|_{H^{(3/2)^{+}}}$, for all $l=1,\dots, \kappa$. We find that the last term on the r.h.s of the above inequality is the term to be estimated using \eqref{grondiffereequ}. Formally, this suggests that the estimates for \eqref{diferenequalderivlocal} for all $l=1,\dots,\kappa$ should be combined with those of \eqref{grondiffereequ} to obtain a closed Gronwall's inequality.

Now, we consider $\mathcal{B}_{2,l}$ for those integers $l=1,\dots,\kappa$ for which $l(1+\alpha)>1$. Notice that the definition of $\kappa$ implies $0<l(1+\alpha)-1<1$. Hence, since $D=\mathcal{H}\partial_x$, we have
\begin{equation}\label{eqlowdisp3.1}
\begin{aligned}
\langle x \rangle^{\theta}_N D^{l(1+\alpha)}P^{\phi}(uu_x)=&[\langle x \rangle^{\theta}_N,\mathcal{H}]D^{l(1+\alpha)-1}\partial_xP^{\phi}(uu_x)+\mathcal{H}\big([\langle x \rangle^{\theta}_N,D^{l(1+\alpha)-1}]\partial_xP^{\phi}(uu_x)\big) \\
&+\mathcal{H}D^{l(1+\alpha)-1}\big([\langle x \rangle^{\theta}_N,P^{\phi}]\partial_x(uu_x)\big)-\mathcal{H}D^{l(1+\alpha)-1}P^{\phi}\big(\partial_x\langle x \rangle^{\theta}_N(uu_x)\big)\\
&+\mathcal{H}D^{l(1+\alpha)-1}\partial_xP^{\phi}\big(\langle x \rangle^{\theta}_N(uu_x)\big).
\end{aligned}
\end{equation}
By similar reasoning as in \eqref{eqlowdisp3}, applying Proposition \ref{CalderonComGU}, Lemmas \ref{leibnizhomog}, \ref{lemmacomm2} and \ref{lemmacomm3}, we deduce
\begin{equation*}
\begin{aligned}
\|\langle x \rangle^{\theta}_N  D^{l(1+\alpha)}P^{\phi}(uu_x)\|_{L^2}\lesssim & \|\partial_x \langle x \rangle^{\theta}_N\|_{L^{\infty}}\big(\|uu_x\|_{L^2}+\|u^2\|_{L^2}+\|D^{l(1+\alpha)-1}(uu_x)\|_{L^2}\big)+\|\langle x \rangle^{\theta}_N(uu_x)\|_{L^2}\\
\lesssim & \big(\|u\|_{L^{\infty}}+\|u_x\|_{L^{\infty}}\big)\big(\|u\|_{L^{2}}+\|D^{l(1+\alpha)}u\|_{L^{2}}+\|\langle x \rangle^{\theta}_Nu\|_{L^2}\big).
\end{aligned}
\end{equation*}
The previous result completes the study of $\mathcal{B}_{2,l}$ for all $l=1,\dots,\kappa$. We proceed to control $\mathcal{B}_{1,l}$. We first treat the cases $l=1,\dots,\kappa-1$. We consider the following decomposition
\begin{equation}\label{eqlowdisp4.1} 
\begin{aligned}
 \langle x \rangle_N^{\theta}\mathcal{H}D^{(l+1)(1+\alpha)}P^{\phi} u  &= [\langle x \rangle_N^{\theta},\mathcal{H}]D^{(l+1)(1+\alpha)}P^{\phi} u+\mathcal{H}\big(\langle x \rangle_N^{\theta}D^{(l+1)(1+\alpha)}P^{\phi}u\big).
\end{aligned}
\end{equation}
The last term on the r.h.s of the above identity is the estimate to be controlled by \eqref{diferenequalderivlocal} with $l+1$, while the first term on the r.h.s is controlled by Proposition \ref{fractfirstcaldcomm} in the following manner
\begin{equation*}
\begin{aligned}
\|[\langle x \rangle_N^{\theta},\mathcal{H}]D^{(l+1)(1+\alpha)}P^{\phi} u\|_{L^2}&\lesssim \|D^{\theta+\epsilon}\langle x \rangle_N^{\theta}\|_{L^{\infty}}\|D^{l(1+\alpha)}P^{\phi}D^{1+\alpha-\theta-\epsilon}u\|_{L^2}\\
&\lesssim \|D^{1+\alpha-\theta-\epsilon}u\|_{L^2},
\end{aligned}
\end{equation*}
where $0<\epsilon \ll 1$ is taken according to Proposition \ref{propfracweighapp} and Lemma \ref{claimnegade0.1}. Therefore, gathering the previous estimates, it remains to study $\mathcal{B}_{1,\kappa}$. To control this factor, we employ the following decomposition
\begin{equation}\label{eqlowdisp5}
\begin{aligned}
 \langle x \rangle_N^{\theta}\mathcal{H}D^{1+\alpha}D^{\kappa(1+\alpha)}P^{\phi} u =& [\langle x \rangle_N^{\theta},\mathcal{H}]D^{(\kappa+1)(1+\alpha)}P^{\phi} u +\mathcal{H}\big([\langle x \rangle_N^{\theta},D^{1+\alpha}]D^{\kappa(1+\alpha)}P^{\phi}u\big)\\
&+\mathcal{H}D^{1+\alpha}\big(\langle x \rangle_N^{\theta} D^{\kappa(1+\alpha)}P^{\phi}u\big).
\end{aligned}
\end{equation}
Going back to $\mathcal{B}_{1,\kappa}$, we notice that the last term on the r.h.s of the above inequality does not contribute to the differential inequality. By applying Proposition \ref{fractfirstcaldcomm}, the first term on the r.h.s of \eqref{eqlowdisp5} is bounded as follows
\begin{equation*}
\begin{aligned}
\| [\langle x \rangle_N^{\theta},\mathcal{H}]D^{(\kappa+1)(1+\alpha)}P^{\phi} u \|_{L^2}&=\| [\langle x \rangle_N^{\theta},\mathcal{H}]D^{\theta+\epsilon}D^{(\kappa+1)(1+\alpha)-\theta-\epsilon}P^{\phi} u \|_{L^2}\\
&\lesssim \|D^{\theta+\epsilon}\langle x \rangle_N^{\theta}\|_{L^{\infty}}\|D^{\kappa(1+\alpha)}P^{\phi}D^{1+\alpha-\theta-\epsilon} u \|_{L^2} \\&
\lesssim\|D^{1+\alpha-\theta-\epsilon} u \|_{L^2} 
\end{aligned}
\end{equation*}
which is controlled provided that $0<\epsilon \ll 1$ is given by Proposition \ref{propfracweighapp} and Lemma \ref{claimnegade0.1}. Next, we apply Lemma \ref{lemmacomm2} and the fact that $\kappa(1+\alpha)+\alpha>0$ to get
\begin{equation*}
\begin{aligned}
\|\mathcal{H}\big([\langle x \rangle_N^{\theta},D^{1+\alpha}]D^{k(1+\alpha)}P^{\phi}u\big)\|_{L^2}\lesssim \|\partial_x \langle x \rangle_N^{\theta} \|_{L^{\infty}}\|D^{\kappa(1+\alpha)+\alpha}P^{\phi}u\|_{L^2} \lesssim \|u\|_{L^2}.
\end{aligned}
\end{equation*}
This completes the estimate for \eqref{eqlowdisp5} and in consequence the analysis of $\mathcal{B}_{1,k}$ for all $l=1,\dots,\kappa$.

Recalling the integer $\kappa\geq 2$  such that $0<\kappa(1+\alpha)+\alpha$ and $\kappa(1+\alpha)\leq 3/2$, i.e., $\alpha\in (-\kappa/(\kappa+1),-(\kappa-3/2)/\kappa]$, we let
\begin{equation*}
\mathfrak{g}(t)= \|\langle x \rangle^{\theta}_N u\|_{L^2}^2+\sum_{l=1}^{\kappa}\|\langle x \rangle^{\theta}_N D^{l(1+\alpha)}P^{\phi} u\|_{L^2}^2.
\end{equation*}
Therefore, by collecting all the preceding estimates obtained through the Case (II), we arrive at the differential inequality
\begin{equation}\label{resuldiff}
\begin{aligned}
\frac{1}{2}\frac{d}{dt}\mathfrak{g}(t) \lesssim &\big(\|D^{1+\alpha-\theta-\epsilon}u\|_{L^2}+(1+\|u\|_{L^{\infty}}+\|u_x\|_{L^{\infty}})\|u\|_{H^s}\big)\mathfrak{g}^{1/2}(t)\\
&+(1+\|u\|_{L^{\infty}}+\|u_x\|_{L^{\infty}})\mathfrak{g}(t).
\end{aligned}
\end{equation}
Since \eqref{eqlowdisp2} establishes that $\mathfrak{g}(0)$ makes sense, by applying Gronwall's inequality to the preceding equation, followed by letting $N\to \infty$, we complete the proof of Theorem \ref{Theowellpos} (i) whenever $-1<\alpha\leq-1/2$.


\subsection*{Case \texorpdfstring{$-1/2<\alpha<0$}{}}

In this case, the maximum decay rate satisfies $1<3/2+\alpha<2$, so to treat weights $0< r=\theta \leq 1$, we will employ the differential identity \eqref{grondiffereequ} first with $m=0$ and $\theta \in (0,1]$, and then with $m=1$ and $0<\theta<1/2+\alpha$, we cover the range $1<r=1+\theta<3/2+\alpha$.

We begin with the considerations for $m=0$, $\theta=r\in (0,1]$ in \eqref{grondiffereequ}. We split the analysis of $\mathcal{A}_1$ exactly as in \eqref{claimnegade}.  Noticing that $\theta<1+\alpha$, whenever $0< \theta \leq 1/2$, we have that the same arguments dealing with Case (I) in Subsection \ref{subs1} provide the desired conclusion for this restriction on $\theta$. Thereby, we shall assume that $1/2< \theta \leq 1$. We proceed to study $\mathcal{A}_{1,1}$ and $\mathcal{A}_{1,2}$ defined in \eqref{claimnegade}.

By Proposition \ref{CalderonComGU}, we observe
\begin{equation}\label{eqmainlow0}
\begin{aligned}
\|\mathcal{A}_{1,1}\|_{L^2}&=\|[\mathcal{H},\langle x \rangle^{\theta}_N]\partial_x \mathcal{H}D^{\alpha}u\|_{L^2}\lesssim \|\partial_x \langle x \rangle^{\theta}_N \|_{L^{\infty}}\|D^{\alpha}u\|_{L^2},
\end{aligned}
\end{equation}
and by Lemma \ref{lemmacomm2},
\begin{equation}\label{eqmainlow0.1}
\begin{aligned}
\|\mathcal{A}_{1,2}\|_{L^2}&=\|[D^{1+\alpha},\langle x \rangle^{\theta}_N]D^{-\alpha}D^{\alpha}u\|_{L^2}\lesssim \|\partial_x \langle x \rangle^{\theta}_N \|_{L^{\infty}}\|D^{\alpha}u\|_{L^2}. 
\end{aligned}
\end{equation}
By construction $|\partial_x \langle x \rangle^{\theta}_N| \lesssim 1$ with implicit constant independent of $N$. Then, it only remains to analyze $\|D^{\alpha}u\|_{L^2}$.  

\begin{lemma}\label{claimnegade1} 
Let $-1/2<\alpha<0$ and $s>1+\alpha$. For each $\theta>1/2$, it holds
\begin{equation*}
\sup_{t\in[0,T]}\|D^{\alpha}u(t)\|_{L^2} \leq c\big(\|\langle x \rangle^{\theta}u_0\|_{L^2},\|u\|_{L^{\infty}_TH^s},\|u\|_{L^{1}_TW^{1,\infty}}\big).
\end{equation*}
\end{lemma}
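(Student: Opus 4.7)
The plan is to mirror the two-step argument used for Lemma \ref{claimnegade0.1}. First I would verify the bound at the initial time, and then propagate it in time by an energy estimate on the equation in \eqref{BDBO} after applying $D^{\alpha}$.

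For the initial estimate, by Plancherel, $\|D^{\alpha}u_0\|_{L^2} = \||\xi|^{\alpha}\widehat{u_0}\|_{L^2}$. Using the cutoff $\phi$ from \eqref{projectorphi}, the high frequency piece $\||\xi|^{\alpha}(1-\phi)\widehat{u_0}\|_{L^2}$ is controlled by $\|u_0\|_{L^2}$ since $\alpha<0$. For the low frequency piece, I would choose $\beta$ with $-\alpha<\beta<\min\{\theta,1/2\}$, which is possible precisely because $\alpha>-1/2$ and $\theta>1/2$. Then H\"older's inequality gives
\begin{equation*}
\||\xi|^{\alpha}\phi\widehat{u_0}\|_{L^2} \lesssim \||\xi|^{\alpha}\phi\|_{L^{1/\beta}} \|\widehat{u_0}\|_{L^{2/(1-2\beta)}},
\end{equation*}
where $|\xi|^{\alpha}\phi\in L^{1/\beta}(\mathbb{R})$ follows from $\beta>-\alpha$, and Sobolev embedding together with the characterization of Theorem \ref{TheoSteDer} gives $\|\widehat{u_0}\|_{L^{2/(1-2\beta)}}\lesssim \|J^{\beta}\widehat{u_0}\|_{L^2}\lesssim \|\langle x\rangle^{\theta}u_0\|_{L^2}$, so that
\begin{equation*}
\|D^{\alpha}u_0\|_{L^2}\lesssim \|\langle x\rangle^{\theta}u_0\|_{L^2}+\|u_0\|_{L^2}.
\end{equation*}

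For arbitrary $t\in(0,T]$, I would apply $D^{\alpha}$ to \eqref{BDBO}, multiply by $D^{\alpha}u$, and integrate in space. Because $\mathcal{H}D^{1+\alpha}$ is skew-symmetric, the dispersive contribution vanishes, leaving
\begin{equation*}
\frac{1}{2}\frac{d}{dt}\|D^{\alpha}u\|_{L^2}^2 = -\int D^{\alpha}(uu_x)\,D^{\alpha}u\,dx.
\end{equation*}
Writing $uu_x=\tfrac{1}{2}\partial_x(u^2)$ and using Cauchy--Schwarz together with the homogeneous Leibniz rule in Lemma \ref{leibnizhomog},
\begin{equation*}
\Bigl|\int D^{\alpha}(uu_x)D^{\alpha}u\,dx\Bigr|\lesssim \|D^{1+\alpha}(u^2)\|_{L^2}\|D^{\alpha}u\|_{L^2}\lesssim \|u\|_{L^{\infty}}\|D^{1+\alpha}u\|_{L^2}\|D^{\alpha}u\|_{L^2},
\end{equation*}
and since $s>1+\alpha$ we have $\|D^{1+\alpha}u\|_{L^2}\lesssim \|u\|_{H^s}$. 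Gronwall's inequality then closes the estimate with a bound depending on $\|\langle x\rangle^{\theta}u_0\|_{L^2}$, $\|u\|_{L^{\infty}_TH^s}$, and $\|u\|_{L^1_TW^{1,\infty}}$.

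I do not foresee a serious obstacle in this argument: the analogous and substantially more delicate estimate for $\alpha\in(-1,-1/2]$ has already been carried out in Lemma \ref{claimnegade0.1}, and the present range is easier precisely because $-\alpha<1/2$ leaves comfortable room for the choice of $\beta<\theta$. The only point requiring mild care is ensuring that $|\xi|^{\alpha}\phi\in L^{1/\beta}$, which as noted is immediate from $\beta>-\alpha$.
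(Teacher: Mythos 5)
Your proposal is correct and follows essentially the same route as the paper: an $L^2$ bound on $D^{\alpha}u_0$ obtained by splitting frequencies with $\phi$, followed by the identical energy estimate (skew-symmetry of $\mathcal{H}D^{1+\alpha}$, the Leibniz rule of Lemma \ref{leibnizhomog}, and Gronwall). The only cosmetic difference is in the low-frequency piece of the initial-data bound, where the paper uses $\||\xi|^{\alpha}\phi\|_{L^2}\|\widehat{u_0}\|_{L^{\infty}}$ (exploiting $|\xi|^{\alpha}\in L^2_{loc}$ for $\alpha>-1/2$ and $H^{\theta}\hookrightarrow L^{\infty}$ for $\theta>1/2$) instead of your H\"older exponent $1/\beta$; both are valid.
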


\begin{proof}
The proof follows from energy estimates applied to the equation in \eqref{BDBO}. We first control the $L^2$-norm of $D^{\alpha}u_0$. Since $\theta>1/2$, by Sobolev embedding and Plancherel's identity we get
\begin{equation}\label{eqmainlow1}
|\widehat{u_0}(\xi)|\lesssim \|J_{\xi}^{\theta}\widehat{u_0}\|_{L^2}=\|\langle x \rangle^{\theta}u_0\|_{L^2}, \, \text{ for all } \, \xi \in \mathbb{R}.
\end{equation}
Let $\phi \in C^{\infty}_c(\mathbb{R})$ such that $0\leq \phi \leq 1$ and $\phi(\xi)=1$ for $|\xi|\leq 1$. Given that $-1/2<\alpha<0$, it follows that $|\xi|^{\alpha}\phi \in L^2_{loc}(\mathbb{R})$, then this fact and \eqref{eqmainlow1} show 
\begin{equation*}
\begin{aligned}
\|D^{\alpha}u_0\|_{L^2} 
&\lesssim \||\xi|^{\alpha}\phi\|_{L^2}\|\widehat{u_0}\|_{L^{\infty}}+\||\xi|^{\alpha}(1-\phi)\|_{L^{\infty}}\| \widehat{u_0}\|_{L^2} \\
&\lesssim \|\langle x \rangle^{\theta}u_0\|_{L^2}.
\end{aligned}
\end{equation*}
This proves $D^{\alpha}u_0\in L^2(\mathbb{R})$. We continue by applying $D^{\alpha}$ to the equation in \eqref{BDBO} and multiplying by $D^{\alpha}u$ to get 
\begin{equation}\label{eqmainlow2}
\begin{aligned}
\frac{1}{2}\frac{d}{dt}&\int \big(D^{\alpha}u \big)^2\, dx=-\int D^{\alpha}(uu_x)D^{\alpha}u\, dx.
\end{aligned}
\end{equation} 
Writing $u\partial_x u=\frac{1}{2}\partial_x(u^2)$, the factor involving the nonlinearity in \eqref{eqmainlow1} is controlled by Lemma \ref{leibnizhomog} as follows 
\begin{equation}\label{eqmainlow3}
\begin{aligned}
\big|\int D^{\alpha}(uu_x)D^{\alpha}u\, dx \big|\lesssim \|D^{1+\alpha}(u^2)\|_{L^2}\|D^{\alpha}u\|_{L^2} \lesssim \|u\|_{L^{\infty}}\|D^{1+\alpha}u\|_{L^2}\|D^{\alpha}u\|_{L^2}.
\end{aligned}
\end{equation}
Noticing that our local theory determines $\|D^{1+\alpha}u\|_{L^{\infty}_TL^2}\lesssim \|u\|_{L^{\infty}_TH^s}$, we gather \eqref{eqmainlow2}, \eqref{eqmainlow3} and Gronwall's inequality to find
\begin{equation*}
\|D^{\alpha}u(t)\|_{L^2}\leq \|D^{\alpha}u_0\|_{L^2}+c\|u\|_{L^{\infty}_TH^s}\|u\|_{L^{1}_T W^{1,\infty}},
\end{equation*}
for all $t\in (0,T]$. This concludes the proof of the lemma.
\end{proof}

In virtue of Lemma \ref{claimnegade1} and the preceding estimates, we complete the analysis of $\mathcal{A}_1$ in \eqref{grondiffereequ} with parameters $m=0$ and $1/2\leq \theta <1$. Consequently, we verify the persistent property in the space $Z_{s,r}(\mathbb{R})$ for all $0<r \leq 1$, $s>3/2$ and each $-1/2< \alpha <0$.

Next, we deal with weights satisfying $1<r<3/2+\alpha$, that is, we set $m=1$ and $0<\theta<1/2+\alpha$ in \eqref{grondiffereequ}. As discussed above, we are reduced to control $\mathcal{A}_1$ under the present restrictions. In doing so, we employ the identity
\begin{equation}\label{identcomm}
\begin{aligned}
\left[x^m,\partial_x D^{\alpha}\right]f=\sum_{\substack{1\leq k \leq m \\ k \text{ even }}} c_{\alpha,k} \mathcal{H}D^{\alpha+1-k}(x^{m-k}f)+\sum_{\substack{1\leq k \leq m \\ k \text{ odd }}} c_{\alpha,k}D^{\alpha+1-k}(x^{m-k}f),
\end{aligned}
\end{equation}
valid for $m\geq 1$ integer, some constants $c_{\alpha,k}$ depending on $\alpha$ and $f$ sufficiently regular with enough decay. In particular, 
\begin{equation}
\begin{aligned}
&\left[x,\partial_x D^{\alpha}\right]f=-(1+\alpha) D^{\alpha}f,\\
&\left[x^2,\partial_x D^{\alpha}\right]f=\alpha(1+\alpha)\mathcal{H}D^{\alpha-1}f-2(1+\alpha)D^{\alpha}(xf).
\end{aligned}
\end{equation}
Then, setting $\partial_xD^{\alpha}=-\mathcal{H}D^{1+\alpha}$, we write 
\begin{equation}\label{decomwithweighx}
\begin{aligned}
\langle x \rangle_N^{\theta}x\mathcal{H}D^{1+\alpha} u=&(1+\alpha)\langle x \rangle_N^{\theta}D^{\alpha}u+[\langle x \rangle_N^{\theta},\mathcal{H}]D^{1+\alpha}(xu)+\mathcal{H}\big([\langle x \rangle_N^{\theta},D^{1+\alpha}](xu)\big)\\
&+\mathcal{H}D^{1+\alpha}\big(\langle x \rangle_N^{\theta}xu \big).
\end{aligned}
\end{equation}
As before, the fact that $\mathcal{H}D^{1+\alpha}$ is skew-symmetric leads us to control the first, second and third terms on the r.h.s of the above decomposition. By Proposition \ref{fractfirstcaldcomm} and Lemma \ref{lemmacomm1}, we get 
\begin{equation}\label{eqmainlow3.1}
\begin{aligned}
\|[\langle x \rangle_N^{\theta},\mathcal{H}]D^{1+\alpha}(xu)\|_{L^2}+
\|\mathcal{H}\big([\langle x \rangle_N^{\theta},D^{1+\alpha}](xu)\big)\|_{L^2} \lesssim \|D^{1+\alpha}\langle x \rangle^{\theta}_N\|_{L^{\infty}}\|xu\|_{L^2}.
\end{aligned}
\end{equation}
Since $\theta<1+\alpha$ the above expression is bounded by Proposition \ref{propfracweighapp} and the persistence results in $L^{\infty}([0,T];L^2(|x|^2 \, dx))$. Therefore, it remains to prove $\langle x \rangle^{\theta}D^{\alpha}u\in L^{\infty}([0,T];L^{2}(\mathbb{R}))$. By employing Theorem \ref{TheoSteDer}, we have
\begin{equation}\label{eqlowdisp4}
\begin{aligned}
\|\langle x \rangle^{\theta}D^{\alpha}u\|_{L^2} \lesssim & \|J_{\xi}^{\theta}(|\xi|^{\alpha}\phi\widehat{u})\|_{L^2}+\|J_{\xi}^{\theta}(|\xi|^{\alpha}(1-\phi)\widehat{u})\|_{L^2} \\
\lesssim & \||\xi|^{\alpha}\phi\widehat{u}\|_{L^2}+\|\mathcal{D}^{\theta}_{\xi}(|\xi|^{\alpha}\phi)\widehat{u}\|_{L^2}+\||\xi|^{\alpha}\phi\mathcal{D}^{\theta}_{\xi}\widehat{u}\|_{L^2} \\
&+\||\xi|^{\alpha}(1-\phi)\widehat{u}\|_{L^2}+\|\mathcal{D}_{\xi}^{\theta}(|\xi|^{\alpha}(1-\phi))\widehat{u}\|_{L^2}+\|\xi|^{\alpha}(1-\phi)\mathcal{D}_{\xi}^{\theta}\widehat{u}\|_{L^2}.
\end{aligned}
\end{equation}
We proceed to bound each factor on the r.h.s \eqref{eqlowdisp4}. By \eqref{eqsteinderiweig0} in Proposition \ref{steinderiweighbet2} and \eqref{eqmainlow1}, we find
\begin{equation*}
\begin{aligned}
\||\xi|^{\alpha}\phi\widehat{u}\|_{L^2}+\|\mathcal{D}^{\theta}_{\xi}(|\xi|^{\alpha}\phi)\widehat{u}\|_{L^2} &\lesssim \big(\||\xi|^{\alpha}\phi\|_{L^2}+\||\xi|^{\alpha-\theta}\phi\|_{L^2}\big)\|\widehat{u}\|_{L^{\infty}}+\|\widehat{u}\|_{L^2} \\
&\lesssim \|J_{\xi}\widehat{u}\|_{L^2}\lesssim \|\langle x \rangle u\|_{L^2},
\end{aligned}
\end{equation*}
where we have used $|\xi|^{\alpha}\phi, |\xi|^{\alpha-\theta}\phi \in L^2(\mathbb{R})$ provided that $-1/2<\alpha<0$ and $\theta<1/2+\alpha$. Thus, the above inequality is controlled by the persistence $u \in L^{\infty}([0,T]; L^2(|x|^2\, dx))$. Next, we choose $-\alpha<\beta<1/2$ fixed, so that by applying H\"older inequality, Theorem \ref{TheoSteDer} and Sobolev embedding,
\begin{equation*}
\begin{aligned}
\||\xi|^{\alpha}\phi\mathcal{D}^{\theta}\widehat{u}\|_{L^2} \lesssim \||\xi|^{\alpha}\phi\|_{L^{1/\beta}}\|\mathcal{D}^{\theta}\widehat{u}\|_{L^{2/(1-2\beta)}}&\lesssim \||\xi|^{\alpha}\phi\|_{L^{1/\beta}}\|J^{\theta}\widehat{u}\|_{L^{2/(1-2\beta)}} \\
&\lesssim \|J^{\beta+\theta}_{\xi}\widehat{u}\|\lesssim \|\langle x \rangle u\|_{L^2}. 
\end{aligned}
\end{equation*}
Next, by applying \eqref{prelimneq1},
\begin{equation*}
\begin{aligned}
\||\xi|^{\alpha}(1-\phi)\widehat{u}\|_{L^2}+\|\mathcal{D}_{\xi}^{\theta}(|\xi|^{\alpha}(1-\phi))\widehat{u}&\|_{L^2}+\|\xi|^{\alpha}(1-\phi)\mathcal{D}_{\xi}^{\theta}\widehat{u}\|_{L^2} \\
&\lesssim \|\widehat{u}\|_{L^2}+\|\mathcal{D}_{\xi}^{\theta}\widehat{u}\|_{L^2} \lesssim \|\langle x \rangle^{\theta}u\|_{L^2}.
\end{aligned}
\end{equation*}
This completes the study of \eqref{eqlowdisp4}. Consequently, we conclude the analysis of $\mathcal{A}_1$. Gathering the above results, we deduce the persistent property in the space $Z_{s,r}(\mathbb{R})$ for $0<r <3/2+\alpha$, $s>3/2$ and $-1/2\leq \alpha <0$. The proof of Theorem \ref{Theowellpos} (i) for parameters $-1<\alpha<0$ is complete.


\subsection{Case \texorpdfstring{$0<\alpha<1$}{}}

Here the dispersive part involves the operator $D^{1+\alpha}$, where the power $1+\alpha>1$. Thus, formally, our analysis involves one additional derivative in contrast with the case $\alpha \in [-1,0)$.  Moreover, assuming that $0<\alpha<1$, we have that the weight limit $3/2+\alpha$ ranges between $2$ and $5/2$, so that to cover all the admissible cases, we require to examine additional energy estimates. 

Let us first obtain some general considerations dealing with the estimate for $\mathcal{A}_1$ in \eqref{grondiffereequ}. We write $\partial_x=-\mathcal{H}D$ to perform the following decomposition 
\begin{equation}\label{eqgreatdisp0}
\begin{aligned}
x^m \langle x  \rangle^{\theta}_N \partial_x D^{\alpha}u=&\langle x \rangle^{\theta}_N[x^m,\partial_x D^{\alpha}]u-\langle x \rangle_N^{\theta}\mathcal{H}D^{1+\alpha}(x^m u)\\
=& \langle x \rangle^{\theta}_N[x^m,\partial_x D^{\alpha}]u-[\langle x \rangle_N^{\theta},\mathcal{H}]D^{1+\alpha}(x^m u)-\mathcal{H}([\langle x \rangle_N^{\theta},D^{1+\alpha}](x^m u))\\
&+\partial_x D^{\alpha}( x^m \langle x \rangle_N^{\theta}u)\\
=& \mathpzc{A}_{1,1}+\mathpzc{A}_{1,2}+\mathpzc{A}_{1,3}+\partial_x D^{\alpha}(x^{m}\langle x \rangle^{\theta}_{N} u).
\end{aligned}
\end{equation}
Therefore, to handle with $\mathcal{A}_1$, we only need to bound $\mathpzc{A}_{1,j}$, $j=1,2,3$. Indeed, by identity \eqref{identcomm}, we bound the first term above in the following manner
\begin{equation}\label{eqgreatdisp1}
\begin{aligned}
\|\mathpzc{A}_{1,1}\|_{L^{2}}\lesssim  \sum_{\substack{1\leq k \leq m \\ k \text{ even }}} \|\langle x \rangle_N^{\theta} \mathcal{H}D^{\alpha+1-k}(x^{m-k}u)\|_{L^2}+\sum_{\substack{1\leq k \leq m \\ k \text{ odd }}} \|\langle x \rangle_N^{\theta} D^{\alpha+1-k}(x^{m-k}u)\|_{L^2}.
\end{aligned}
\end{equation}
Now, we employ Proposition \ref{fractfirstcaldcomm} to get
\begin{equation}
\begin{aligned}
\|\mathpzc{A}_{1,2}\|_{L^2}\lesssim \|D^{1+\alpha}\langle x \rangle^{\theta}_{N}\|_{L^{\infty}}\|x^{m}u\|_{L^{2}}.
\end{aligned}
\end{equation}
Recalling Proposition \ref{propfracweighapp}, we have $ \|D^{1+\alpha}\langle x \rangle^{\theta}_{N}\|_{L^{\infty}} \lesssim 1$ with implicit constant independent of $N$. Thus, assuming from previous steps that $x^{m}u \in L^{\infty}([0,T];L^2(\mathbb{R}))$ (with $m=0$ provided by the local theory), we complete the analysis of $\mathpzc{A}_{1,2}$.   

To estimate $\mathpzc{A}_{1,3}$, we write
\begin{equation}\label{eqgreatdisp2} 
\begin{aligned}
\left[\langle x \rangle_N^{\theta},D^{1+\alpha}\right](x^m u) =&[\langle x \rangle_N^{\theta},D^{1+\alpha}](x^m u)+(1+\alpha)\partial_x\langle x \rangle_N^{\theta} \mathcal{H}D^{\alpha}(x^m u)\\
&-(1+\alpha)\partial_x\langle x \rangle_N^{\theta} \mathcal{H}D^{\alpha}(x^m u)\\
=& [\langle x \rangle_N^{\theta},D^{1+\alpha}](x^m u)+(1+\alpha)\partial_x\langle x \rangle_N^{\theta} \mathcal{H}D^{\alpha}(x^m u) \\
&   -(1+\alpha)[\partial_x\langle x \rangle^{\theta}_N,\mathcal{H}]D^{\alpha}(x^m u)-(1+\alpha)\mathcal{H}[\partial_x \langle x \rangle_N^{\theta},D^{\alpha}](x^m u)\\
& -(1+\alpha)\mathcal{H}D^{\alpha}(\partial_x \langle x \rangle_N^{\theta}x^m u).
\end{aligned}
\end{equation} 
We go ahead with the study of each term on the r.h.s of the above identity. Firstly, Lemma \ref{extendenlemma} yields
\begin{equation}\label{eqgreatdisp2.1} 
\begin{aligned}
\|[\langle x \rangle_N^{\theta},D^{1+\alpha}](x^m u)+(1+\alpha)\partial_x\langle x \rangle_N^{\theta} \mathcal{H}D^{\alpha}(x^m u) \|_{L^2} \lesssim \|D^{1+\alpha}\langle x \rangle_N^{\theta}\|_{L^{\infty}}\|x^m u\|_{L^2}.
\end{aligned}
\end{equation}
Proposition \ref{fractfirstcaldcomm} and Lemma \ref{lemmacomm1} show 
\begin{equation}\label{eqgreatdisp2.2}
\begin{aligned}
\|[\partial_x\langle x \rangle^{\theta}_N,\mathcal{H}]D^{\alpha}(x^m u)\|_{L^2}+\|[\partial_x \langle x \rangle_N^{\theta},D^{\alpha}](x^m u)\|_{L^2}\lesssim \|\partial_xD^{\alpha}\langle x \rangle_{N}^{\theta}\|_{L^{\infty}}\|x^m u\|_{L^2}.
\end{aligned}
\end{equation}
Notice that $\partial_xD^{\alpha}=-\mathcal{H}D^{1+\alpha}$, Proposition \ref{propfracweighapp} establishes that $\|\partial_xD^{\alpha}\langle x \rangle_{N}^{\theta}\|_{L^{\infty}} \lesssim 1$ with a constant independent of $N$.  We follow by applying Theorem \ref{TheoSteDer} and the properties \eqref{prelimneq} and \eqref{prelimneq1} to find 
\begin{equation}\label{eqgreatdisp3}
\begin{aligned}
\|D^{\alpha}(\partial_x \langle x \rangle_N^{\theta} x^m u)\|_{L^2} =&\big\|D^{\alpha}\big(\frac{ \partial_x \langle x \rangle^{\theta}_N x^m}{\langle x \rangle^{m+\theta-1}_N}\langle x \rangle^{m+\theta-1}_N u\big)\big\|_{L^2} \\
 \lesssim & \big\|\frac{ \partial_x \langle x \rangle^{\theta}_N x^m}{\langle x \rangle^{m+\theta-1}_N}\langle x \rangle^{m+\theta-1}_N u \big\|_{L^2}+\big\|\mathcal{D}^{\alpha}\big(\frac{ \partial_x \langle x \rangle^{\theta}_N x^m}{\langle x \rangle^{m+\theta-1}_N}\big)\langle x \rangle^{m+\theta-1}_N u\big\|_{L^2} \\
 & +\big\|\frac{ \partial_x \langle x \rangle^{\theta}_N x^m}{\langle x \rangle^{m+\theta-1}_N}\mathcal{D}^{\alpha}(\langle x \rangle^{m+\theta-1}_N u )\big\|_{L^2}\\
\lesssim & \|\langle x \rangle^{m+\theta-1}_N u\|_{L^2}+\|J^{\alpha}\big(\langle x \rangle^{m+\theta-1}_N u \big)\|_{L^2}.
\end{aligned}
\end{equation}
The above inequality is controlled by complex interpolation Lemma \ref{complexinterpo} as follows
\begin{equation}\label{eqgreatdisp4}  
\begin{aligned}
\|J^{\alpha}\big(\langle x \rangle^{m+\theta-1}_N u \big)\|_{L^2} \lesssim \|J^{(m+\theta)\alpha}u\|_{L^2}^{\frac{m+\theta-1}{m+\theta}}\|\langle x \rangle_N^{m+\theta}u\|_{L^2}^{\frac{1}{m+\theta}}.
\end{aligned}
\end{equation}
Since we will impose on each step that $s\geq (m+\theta)\alpha$, and by recurrence $\langle x \rangle^{m+\theta-1} u \in L^{\infty}([0,T];L^2(\mathbb{R}))$, the above inequality is sufficient to deal with \eqref{eqgreatdisp3}. We remark that setting $r=m+\theta$, the above comments precisely impose the condition $s\geq \max\{s_{\alpha}^{+},\alpha r\}$ stated in Theorem \ref{Theowellpos}. Collecting \eqref{eqgreatdisp2}-\eqref{eqgreatdisp4}, we complete the analysis of $\mathpzc{A}_{1,3}$. Thus, matters are reduced to handle $\mathpzc{A}_{1,1}$, namely, we will justify the validity of \eqref{eqgreatdisp1}. We divide this analyze according to the size of $r=m+\theta \in (0,3/2+\alpha)$.

\subsubsection*{\bf Case weight $0<r \leq 1$}  This case is obtained by setting $m=0$ and $r=\theta \in (0,1]$ in the differential equation \eqref{grondiffereequ}. Recalling the decomposition \eqref{eqgreatdisp1}, we observe that $\mathpzc{A}_{1,3}=0$. Thus according to our previous considerations, we conclude the persistence property in $Z_{s,r}(\mathbb{R})$, $0<r\leq 1$ and $s\geq \max\{s_\alpha^{+},\alpha r\}$, where $s_\alpha=3/2-3\alpha/8$.

\subsubsection*{\bf Case weight $1<r <3/2+\alpha$ if $\alpha \in (0,1/2]$, and $1<r \leq 2$ if $\alpha \in (1/2,1)$.}

Letting $r=1+\theta$, this case follows from \eqref{grondiffereequ} with $m=1$, $\theta \in (0,1)$ if $\alpha \in (0,1/2]$, and  including $\theta=1$ if $\alpha>1/2$. Then, in line with \eqref{eqgreatdisp1},  it is enough to establish the following lemma.
\begin{lemma}\label{lemmaClaim1} 
Assume that $0<\theta < 1/2+\alpha$ if $\alpha \in (0,1/2]$, or $\theta \in [0,1]$ if $1/2<\alpha<1$. If $s\geq \max\{s_{\alpha}^{+},\alpha (1+\theta)\}$, then it follows
\begin{equation}\label{eqmain2.0.1}
\sup_{t\in [0,T]}\|\langle x\rangle^{\theta} D^{\alpha}u(t)\|_{L^2} \leq c\big(T,\|u\|_{L^{\infty}_TH^s},\|\langle x \rangle^{1+\theta}u_0\|_{L^2},\|u\|_{L^{1}_TW^{1,\infty}}\big).
\end{equation}
\end{lemma}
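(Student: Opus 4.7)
The argument parallels Lemma \ref{claimnegade1} (proved for $\alpha<0$) but now carries the additional weight $\langle x\rangle^\theta$ alongside $D^\alpha$. My strategy is to perform weighted energy estimates on the equation satisfied by $D^\alpha u$, namely
\[\partial_t(D^\alpha u)-\mathcal{H}D^{1+\alpha}(D^\alpha u)+D^\alpha(uu_x)=0.\]

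First I would verify the bound at $t=0$. By Theorem \ref{TheoSteDer} one has $\|\langle x\rangle^\theta D^\alpha u_0\|_{L^2}\sim \||\xi|^\alpha\widehat{u_0}\|_{L^2}+\|\mathcal{D}^\theta_\xi(|\xi|^\alpha\widehat{u_0})\|_{L^2}$, which I split via the cutoff $\phi$ into low and high frequency pieces. For the low-frequency piece, the pointwise estimate \eqref{prelimneq0} together with Proposition \ref{steinderiweighbet} yields sharp control on $\mathcal{D}^\theta_\xi(|\xi|^\alpha\phi)$; the standing hypothesis $\theta<1/2+\alpha$ is precisely what guarantees the needed integrability. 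Pointwise bounds on $\widehat{u_0}$ (and on $\mathcal{D}^\theta\widehat{u_0}$) come from Sobolev embedding using $\langle x\rangle^{1+\theta}u_0\in L^2$. For the high-frequency piece $|\xi|^\alpha(1-\phi)$, which is smooth with bounded fractional derivatives, \eqref{prelimneq1} reduces the contribution to $\|\langle x\rangle^\theta u_0\|_{L^2}$.

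For arbitrary $t\in(0,T]$, I would multiply the evolution equation by $\langle x\rangle^{2\theta}_N D^\alpha u$ and integrate:
\[\tfrac{1}{2}\tfrac{d}{dt}\|\langle x\rangle^\theta_N D^\alpha u\|_{L^2}^2=\int \langle x\rangle^{2\theta}_N D^\alpha u\cdot\mathcal{H}D^{1+\alpha}(D^\alpha u)\,dx-\int \langle x\rangle^{2\theta}_N D^\alpha u\cdot D^\alpha(uu_x)\,dx.\]
The dispersive term is handled by the commutator decomposition used in Case (I) of Subsection \ref{subs1}, but applied to $D^\alpha u$: the skew-symmetric factor $\mathcal{H}D^{1+\alpha}(\langle x\rangle^\theta_N D^\alpha u)$ vanishes after pairing, while $[\langle x\rangle^\theta_N,\mathcal{H}]D^{1+\alpha}(D^\alpha u)$ is controlled via Proposition \ref{fractfirstcaldcomm} together with $\|D^{1+\alpha}\langle x\rangle^\theta_N\|_{L^\infty}\lesssim 1$ from Proposition \ref{propfracweighapp} (valid since $\theta<1+\alpha$). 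The remaining piece $\mathcal{H}[\langle x\rangle^\theta_N,D^{1+\alpha}](D^\alpha u)$ is expanded through Lemma \ref{extendenlemma}, producing an acceptable remainder plus a lower-order term $\partial_x\langle x\rangle^\theta_N\mathcal{H}D^{2\alpha}u$, whose $L^2$-norm is dominated by $\|u\|_{H^s}$; that $s\geq 2\alpha$ throughout the stated parameter range follows from $s\geq\max\{s_\alpha^+,\alpha(1+\theta)\}$.

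The nonlinear term is treated by commutators redistributing the weight onto $D^\alpha$, after which Kato--Ponce (Lemma \ref{leibnizhomog}) splits $D^\alpha(\langle x\rangle^\theta_N u\cdot u_x)$. The delicate factor $\|D^\alpha(\langle x\rangle^\theta_N u)\|_{L^2}$ is bounded via the complex interpolation Lemma \ref{complexinterpo} with $b=1+\theta$, $\theta_1=1/(1+\theta)$, $a=\alpha(1+\theta)$, giving
\[\|J^\alpha(\langle x\rangle^\theta_N u)\|_{L^2}\lesssim \|\langle x\rangle^{1+\theta}_N u\|_{L^2}^{\theta/(1+\theta)}\|J^{\alpha(1+\theta)}u\|_{L^2}^{1/(1+\theta)},\]
which is precisely at the regularity threshold $s\geq\alpha(1+\theta)$ prescribed by the hypothesis. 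Combining the bounds produces a Gronwall-type inequality; after applying Gronwall and sending $N\to\infty$, the conclusion follows.

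The main obstacle I anticipate is the fine-tuning of parameters in the nonlinear term: controlling $D^\alpha(\langle x\rangle^\theta_N u)$ requires the interpolation step above, and it is precisely this that determines (and saturates) the relation $s\geq\alpha(1+\theta)$. A secondary subtlety is that the Leibniz expansion for $[\langle x\rangle^\theta_N,D^{1+\alpha}]$ leaves a tail involving $D^{2\alpha}u$, whose $L^2$-control is not automatic across the whole range and must be verified by checking, within the stated parameter range, that the combined hypothesis $s\geq\max\{s_\alpha^+,\alpha(1+\theta)\}$ already forces $s\geq 2\alpha$.
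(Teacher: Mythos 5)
Your skeleton (weighted energy estimate on the equation for $D^{\alpha}u$, verification at $t=0$ via Theorem \ref{TheoSteDer} and Proposition \ref{steinderiweighbet}, Gronwall, $N\to\infty$) matches the paper's, and your identification of the interpolation step that saturates $s\geq \alpha(1+\theta)$ is correct. However, there are two concrete gaps. First, in the dispersive term you let the Leibniz expansion of $[\langle x\rangle_N^{\theta},D^{1+\alpha}]$ act on $D^{\alpha}u$ and isolate $\partial_x\langle x\rangle_N^{\theta}\,\mathcal{H}D^{2\alpha}u$, which you bound by $\|u\|_{H^s}$ on the grounds that the hypotheses force $s\geq 2\alpha$. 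That arithmetic is false: $s_{\alpha}=3/2-3\alpha/8\geq 2\alpha$ only for $\alpha\leq 12/19$, and $\alpha(1+\theta)\geq 2\alpha$ only for $\theta\geq 1$; e.g.\ for $\alpha=4/5$, $\theta=2/5$ the hypothesis allows $s$ just above $1.2$ while $2\alpha=1.6$. The term is in fact controllable, but only by \emph{not} letting the extra $D^{\alpha}$ land nakedly on $D^{\alpha}u$: one must keep the structure $\mathcal{H}D^{\alpha}\bigl(\partial_x\langle x\rangle_N^{\theta}\,D^{\alpha}u\bigr)$, factor out the bounded ratio $\partial_x\langle x\rangle_N^{\theta}/\langle x\rangle_N^{\theta-1}$, and trade the residual weight against regularity via the Stein-derivative product rule and Lemma \ref{complexinterpo}, exactly as in \eqref{eqgreatdisp3}--\eqref{eqgreatdisp4}; this is what produces the bound by $\|J^{(1+\theta)\alpha}u\|_{L^2}$ in \eqref{eqmain2.1} rather than $\|J^{2\alpha}u\|_{L^2}$.

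Second, your treatment of the nonlinearity is too coarse. A Kato--Ponce splitting of $D^{\alpha}\bigl(\langle x\rangle_N^{\theta}u\cdot u_x\bigr)$ produces, besides the term $\|u_x\|_{L^{\infty}}\|D^{\alpha}(\langle x\rangle_N^{\theta}u)\|_{L^2}$ you discuss, a term carrying $D^{\alpha}u_x=D^{1+\alpha}u$; neither placement closes under the stated regularity ($\|D^{1+\alpha}u\|_{L^2}$ needs $s\geq 1+\alpha$, which fails for $\alpha>4/11$ when $s$ is near $s_{\alpha}^{+}$, and $\|D^{1+\alpha}u\|_{L^{\infty}}$ needs even more). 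The paper avoids this by writing $u_x=-\mathcal{H}Du$, splitting $D=D^{1-\alpha}D^{\alpha}$, and commuting $\langle x\rangle_N^{\theta}$ through $\mathcal{H}$ and $D^{1-\alpha}$ (Proposition \ref{CalderonComGU}, Lemmas \ref{lemmacomm1} and \ref{lemmacomm2}) so that the only top-order contribution is the transport term $u\,\partial_x\bigl(\langle x\rangle_N^{\theta}D^{\alpha}u\bigr)$, which is killed by integration by parts at the cost of $\|\partial_xu\|_{L^{\infty}}\|\langle x\rangle_N^{\theta}D^{\alpha}u\|_{L^2}^{2}$. Without this cancellation (the decomposition $\mathpzc{B}_{2,1},\dots,\mathpzc{B}_{2,4}$ together with the bound $\|D^{\alpha}u\|_{L^{\infty}}\lesssim\|u\|_{L^2}+\|\partial_xu\|_{L^{\infty}}$), the energy inequality does not close.
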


\begin{proof}

We begin by showing that $\langle x\rangle^{\theta} D^{\alpha}u_0\in L^2(\mathbb{R})$. Let $\phi\in C^{\infty}_c(\mathbb{R})$ with $0\leq \phi\leq 1$ and $\phi(\xi)=1$ if $|\xi|\leq 1$. We first  deal with the case $0<\theta<\min\{1,\frac{1}{2}+\alpha\}$, whenever $0<\alpha<1$. Combining Plancherel's identity, Theorem \ref{TheoSteDer}, \eqref{prelimneq} and \eqref{prelimneq1}, we deduce
\begin{equation}\label{eqmain2.3}
\begin{aligned}
\|\langle x\rangle^{\theta} D^{\alpha}u_0\|_{L^2}&\leq \|J^{\theta}(|\xi|^{\alpha}\phi\widehat{u_0})\|_{L^2}+\|J^{\theta}\big((|\xi|^{\alpha}(1-\phi))\langle \xi \rangle^{-\alpha}\langle \xi \rangle^{\alpha}\widehat{u_0}\big)\|_{L^2}\\
&\lesssim (1+\|\mathcal{D}^{\theta}\big(|\xi|^{\alpha}(1-\phi)\langle \xi \rangle^{-\alpha}\big)\|_{L^{\infty}}) \|\langle\xi \rangle^{\alpha}\widehat{u_0}\|_{L^2}+\|\mathcal{D}^{\theta}\widehat{u_0}\|_{L^2}\\
&\hspace{1cm}+\|\mathcal{D}^{\theta}(|\xi|^{\alpha}\phi)\|_{L^2}\|\widehat{u_0}\|_{L^{\infty}}+\|\mathcal{D}^{\theta}(\langle\xi \rangle^{\alpha}\widehat{u_0}\big)\|_{L^2} \\
&\lesssim \|J^{\alpha}u_0\|_{L^2}+ \|\langle x \rangle^{\theta}u_0\|_{L^2}+\|\langle x \rangle^{1/2^{+}}u_0\|_{L^2}+\|J^{\theta}_{\xi}(\langle\xi \rangle^{\alpha}\widehat{u_0}\big)\|_{L^2},
\end{aligned}
\end{equation}
where given that $\theta<\alpha+1/2$, Proposition \ref{steinderiweighbet} assures that $\|\mathcal{D}^{\theta}(|\xi|^{\alpha}\phi)\|_{L^2}<\infty$. We employ Lemma \ref{complexinterpo} with $\theta_1=\theta/(1+\theta)$ and $b=\alpha (1+\theta)$ to get
\begin{equation*}
\begin{aligned}
\|J^{\theta}_{\xi}(\langle\xi \rangle^{\alpha}\widehat{u_0}\big)\|_{L^2}\lesssim \|\langle\xi \rangle^{\alpha (1+\theta)}\widehat{u_0}\|_{L^2}^{1/(1+\theta)}\|J^{1+\theta}_{\xi}\widehat{u_0}\|_{L^{2}}^{\theta/(1+\theta)} = \|J^{\alpha(1+\theta)}u_0\|_{L^2}^{1/(1+\theta)}\|\langle x \rangle^{1+\theta}u_0 \|_{L^{2}}^{\theta/(1+\theta)}.
\end{aligned} 
\end{equation*}
The above inequality and \eqref{eqmain2.3} allow us to conclude that  $\langle x\rangle^{\theta} D^{\alpha}u_0\in L^2(\mathbb{R})$ whenever $0<\theta < \min\{1,1/2+\alpha\}$, $\alpha \in (0,1)$. The remaining case $\theta=1$ is obtained arguing as above, employing the local derivative $\partial_{\xi}$ instead of the fractional derivative, and noticing that $|\xi|^{\alpha-1}\phi \in L^2(\mathbb{R})$ for $\alpha>1/2$. We omit these details.

We proceed to deduce \eqref{eqmain2.0.1} for arbitrary time. We apply $D^{\alpha}$ to the equation in \eqref{BDBO}, multiplying the resulting expression by $\langle x \rangle_N^{2\theta}D^{\alpha}u $ and integrating in space show 
\begin{equation}\label{grondiffereequ3}
\begin{aligned}
\frac{1}{2}\frac{d}{dt}\int \big( \langle x \rangle_N^{\theta} D^{\alpha}u \big)^2\, dx- \underbrace{\int \big(\langle x \rangle_N^{\theta} D^{\alpha}\partial_x D^{\alpha}u \big)(\langle x \rangle_N^{\theta} D^{\alpha}u )\, dx}_{\mathpzc{B}_1} +\underbrace{\int \langle x \rangle_N^{\theta} D^{\alpha}(uu_x)(\langle x \rangle_N^{\theta} D^{\alpha}u)\, dx}_{\mathpzc{B}_2}=0. 
\end{aligned}
\end{equation}
By replacing $u$ by $D^{\alpha}u$ in the estimate dealing with $\mathcal{A}_1$ in \eqref{grondiffereequ} for weights in the range $0<r\leq 1$ (i.e., with $m=0$ following the arguments \eqref{eqgreatdisp2}-\eqref{eqgreatdisp4}), the term involving the dispersion is estimated as follows
\begin{equation}\label{eqmain2.1}
|\mathpzc{B}_1| \lesssim \big( \|J^{(1+\theta )\alpha}u\|_{L^2}+\|\langle x \rangle_N^{\theta}D^{\alpha}u\|_{L^2}\big)\|\langle x \rangle_N^{\theta}D^{\alpha}u\|_{L^2}.  
\end{equation}
We move to the analysis of the most difficult factor $\mathpzc{B}_2$. We remark that this part requires extra considerations due to the presence of the fractional derivative $D^{\alpha}$ for the values $0<\alpha<1$ (cf. \cite[Theorem 1.1]{FLinaPioncedGBO} for the case $1< \alpha<2$). Thus, writing $u_x=-\mathcal{H}Du$ yields
\begin{equation*}
\begin{aligned}
\langle x \rangle_N^{\theta}  D^{\alpha}(uu_x)=&\frac{1}{2}[\langle x \rangle_N^{\theta}, D^{\alpha}]\partial_x(u^2)+D^{\alpha}\big(\langle x \rangle_N^{\theta} uu_x\big)\\
=&\frac{1}{2}[\langle x \rangle_N^{\theta}, D^{\alpha}]\partial_x(u^2)-D^{\alpha}\big( u[\langle x \rangle_N^{\theta},\mathcal{H}]Du\big)-D^{\alpha}\big( u\mathcal{H}(\langle x \rangle_N^{\theta} Du)\big)\\
=&\frac{1}{2}[\langle x \rangle_N^{\theta}, D^{\alpha}]\partial_x(u^2)-D^{\alpha}\big( u[\langle x \rangle_N^{\theta},\mathcal{H}]Du\big)-D^{\alpha}\big( u\mathcal{H}([\langle x \rangle_N^{\theta}, D^{1-\alpha}]D^{\alpha}u)\big)\\
&-D^{\alpha}\big( u\mathcal{H}D^{1-\alpha}(\langle x \rangle_N^{\theta}D^{\alpha}u)\big) \\
=:& \mathpzc{B}_{2,1}+\mathpzc{B}_{2,2}+\mathpzc{B}_{2,3}+\mathpzc{B}_{2,4}.
\end{aligned}
\end{equation*}
By Lemmas \ref{leibnizhomog} and \ref{lemmacomm2}, the estimate for $\mathpzc{B}_{2,1}$ follows by previous considerations. Now, since
\begin{equation*}
\mathpzc{B}_{2,2}=-[D^{\alpha},u]\big([\langle x \rangle_N^{\theta},\mathcal{H}]Du\big)-uD^{\alpha}[\langle x \rangle_N^{\theta},\mathcal{H}]Du,
\end{equation*}
an application of Lemma \ref{lemmacomm1} and Proposition \ref{CalderonComGU} allow us to deduce
\begin{equation*}
\begin{aligned}
\|\mathpzc{B}_{2,2}\|_{L^2} &\lesssim \|[D^{\alpha},u]\big([\langle x \rangle_N^{\theta},\mathcal{H}]Du\big)\|_{L^2}+\|uD^{\alpha}[\langle x \rangle_N^{\theta},\mathcal{H}]Du\|_{L^2} \\
&\lesssim \big( \|D^{\alpha}u\|_{L^{\infty}}+\|u\|_{L^{\infty}}\big)\|[\langle x \rangle_N^{\theta},\mathcal{H}]\partial_x\mathcal{H}u\|_{H^1} \\
&\lesssim \big( \|D^{\alpha}u\|_{L^{\infty}}+\|u\|_{L^{\infty}}\big)\big(\|\partial_x \langle x \rangle_N^{\theta}\|_{L^{\infty}}+\|\partial_x^2 \langle x \rangle_N^{\theta}\|_{L^{\infty}}\big)\|u\|_{L^2}.
\end{aligned}
\end{equation*}
The subsequent estimate is obtained by writing
\begin{equation*}
\mathpzc{B}_{2,3}=-[D^{\alpha},u]\big(\mathcal{H}[\langle x \rangle_N^{\theta}, D^{1-\alpha}]D^{\alpha}u\big)-u\mathcal{H}D^{\alpha}[\langle x \rangle_N^{\theta}, D^{1-\alpha}]D^{\alpha}u ,
\end{equation*}
so that Lemmas \ref{lemmacomm1} and \ref{lemmacomm2} yield
\begin{equation}\label{eqmain3}
\begin{aligned}
\|\mathpzc{B}_{2,3}\|_{L^2} &\lesssim\big( \|u\|_{L^{\infty}}+\|D^{\alpha}u\|_{L^{\infty}}\big)\big(\|[\langle x \rangle_N^{\theta}, D^{1-\alpha}]D^{\alpha}u\|_{L^2}+\|D^{\alpha}[\langle x \rangle_N^{\theta}, D^{1-\alpha}]D^{\alpha}u\|_{L^2}\big)\\
&\lesssim\big( \|u\|_{L^{\infty}}+\|D^{\alpha}u\|_{L^{\infty}}\big)\|\partial_x\langle x \rangle_N^{\theta}\|_{L^{\infty}}\big(\|u\|_{L^2}+\|D^{\alpha}u\|_{L^2} \big).
\end{aligned}
\end{equation}
To control the norm $\|D^{\alpha}u\|_{L^{\infty}}$, we employ that
\begin{equation}\label{inecontralpde}
\begin{aligned}
\|D^{\alpha}u\|_{L^{\infty}}\lesssim \|u\|_{L^2}+\|\partial_x u\|_{L^{\infty}}.
\end{aligned}
\end{equation}
Actually, recalling the notation \eqref{projectors}, \eqref{inecontralpde} can be deduced as follows
\begin{equation}\label{inecontralpde1}
\begin{aligned}
\|D^{\alpha}u\|_{L^{\infty}} &\leq \|P_{\leq 0}(D^{\alpha}u)\|_{L^{\infty}}+ \sum_{j\geq 1} \|P_j(D^{\alpha}u)\|_{L^{\infty}} \\
&\lesssim \||\xi|^{\alpha}\psi_{\leq 0}\|_{L^2}\|u\|_{L^2}+\sum_{j\geq 1} 2^{-j(1-\alpha)}\|\widetilde{P}_j(\partial_x u)\|_{L^{\infty}}\\
&\lesssim \|u\|_{L^2}+\|\partial_x u\|_{L^{\infty}},
\end{aligned}
\end{equation}  
where $\widetilde{P}_j$ denotes a modified projector supported in frequency in $|\xi|\sim 2^{j}$, $j\geq 1$. Finally,  we have
\begin{equation}\label{eqmain4}
\begin{aligned}
\mathpzc{B}_{2,4}=-[D^{\alpha},u]D^{1-\alpha}\mathcal{H}\big(\langle x \rangle_N^{\theta} D^{\alpha}u\big)+u\partial_x(\langle x \rangle_N^{\theta} D^{\alpha}u).
\end{aligned}
\end{equation}
The first term on the r.h.s of the above identity is bounded as in \eqref{eqmain3}, i.e., applying Lemma \ref{lemmacomm2}. Going back to the integral defining $\mathpzc{B}_2$ and integrating by parts, we estimate the second term on the r.h.s of \eqref{eqmain4} in the following manner
\begin{equation*}
\begin{aligned}
\Big|\int u\partial_x(\langle x \rangle_N^{\theta} D^{\alpha}u)(\langle x \rangle_N^{\theta} D^{\alpha}u) \, dx \Big|\lesssim \|\partial_x u\|_{L^{\infty}}\|\langle x \rangle_N^{\theta} D^{\alpha}u\|_{L^2}^2.
\end{aligned}
\end{equation*}
Consequently, gathering together the estimates for $\mathpzc{B}_{2,1},\dots, \mathpzc{B}_{2,4}$, we find
\begin{equation}\label{eqmain5}
\begin{aligned}
|\mathpzc{B}_2|\lesssim & \big((\|u\|_{L^{\infty}}+\|\partial_x u\|_{L^{\infty}})\|u\|_{H^{\alpha}}+\|u\|_{H^{\alpha}}^2\big)\|\langle x\rangle_N^{\theta} D^{\alpha}u\|_{L^2}+(1+\|\partial_x u\|_{L^{\infty}})\|\langle x \rangle_N^{\theta} D^{\alpha}u\|_{L^2}^2.
\end{aligned}
\end{equation}
Plugging \eqref{eqmain2.1} and \eqref{eqmain5} in \eqref{grondiffereequ3}, we obtain a differential inequality which after applying Gronwall's lemma and letting $N \to \infty$ implies \eqref{eqmain2.0.1}. 
\end{proof}

\subsubsection*{\bf Case weight $2<r < 3/2+\alpha$, $1/2<\alpha<1$.} 

We consider $m=2$ and $0<\theta<\alpha-1/2$ in \eqref{grondiffereequ}, so that $r=2+\theta$. As showed in \eqref{eqgreatdisp1}, we just need to prove the following lemma.
\begin{lemma}\label{lemmaClaim2} 
Assume $1/2<\alpha<1$. Let $r=2+\theta$ with $0<\theta<\alpha-1/2$, and $s\geq \max\{s_{\alpha}^{+},\alpha r\}$. It holds
\begin{equation}\label{eqlemmaClaim2} 
\begin{aligned}
\sup_{t\in[0,T]}\|\langle x \rangle^{\theta}&D^{\alpha}(x u(t))\|_{L^2} \leq c\big(T,\|u\|_{L^{\infty}_TH^s},\|\langle x \rangle^ru_0\|_{L^2},\|u\|_{L^{1}_TW^{1,\infty}}\big),
\end{aligned}
\end{equation}
and 
\begin{equation}\label{eqlemmaClaim3} 
\sup_{t\in[0,T]}\|\langle x \rangle^{\theta}\mathcal{H}D^{\alpha-1}u(t)\|_{L^2} \lesssim \sup_{t\in[0,T]}\|\langle x \rangle^{(3/2)^{+}}u(t)\|_{L^2}.
\end{equation}
\end{lemma}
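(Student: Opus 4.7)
The plan is to prove \eqref{eqlemmaClaim3} first by direct Fourier-side analysis, and then \eqref{eqlemmaClaim2} by a weighted energy argument that uses \eqref{eqlemmaClaim3} to handle the negative-order pieces that appear after commutator expansions. The key obstacle is that the exponent $\alpha-1<0$ produces a low-frequency singularity $|\xi|^{\alpha-1}$, which forces us to invest all available decay of $u$ to restore $L^2$-integrability near $\xi=0$; this dictates the tight restriction $\theta<\alpha-1/2$.

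For \eqref{eqlemmaClaim3}, Plancherel's identity and Theorem \ref{TheoSteDer} reduce the estimate to
\begin{equation*}
\|\langle x\rangle^\theta \mathcal{H}D^{\alpha-1}u\|_{L^2}\sim \|\sign(\xi)|\xi|^{\alpha-1}\widehat{u}\|_{L^2}+\|\mathcal{D}^\theta\big(\sign(\xi)|\xi|^{\alpha-1}\widehat{u}\big)\|_{L^2}.
\end{equation*}
Splitting the symbol with $\phi+(1-\phi)$ and using the product rule \eqref{prelimneq}, the high-frequency contributions are controlled by \eqref{prelimneq1} since $\sign(\xi)|\xi|^{\alpha-1}(1-\phi)$ is smooth and bounded with bounded derivatives. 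For the low-frequency contribution, Proposition \ref{steinderiweighbet2} with $\beta=1-\alpha\in(0,1/2)$ (valid because $\alpha>1/2$) yields the pointwise bound $|\mathcal{D}^\theta(\sign(\xi)|\xi|^{\alpha-1}\phi)(\eta)|\lesssim |\eta|^{-(1-\alpha)-\theta}$; combined with $\widehat{u}\in L^\infty$ (a consequence of $u\in L^1$ obtained from $\langle x\rangle^{(3/2)^+}u\in L^2$ by Cauchy--Schwarz), the resulting $L^2$-norm near the origin is finite exactly when $2(1-\alpha+\theta)<1$, matching the hypothesis. The remaining cross terms $\||\xi|^{\alpha-1}\phi\,\mathcal{D}^\theta\widehat{u}\|_{L^2}$ and $\||\xi|^{\alpha-1}\phi\,\widehat{u}\|_{L^2}$ are handled by H\"older with exponent $1/\delta\in(2,(1-\alpha)^{-1})$ and Sobolev embedding in the frequency variable.

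For \eqref{eqlemmaClaim2}, I rewrite \eqref{BDBO} as an evolution equation for $xu$ using \eqref{identcomm} with $m=1$ (i.e.\ $[x,\partial_xD^\alpha]=-(1+\alpha)D^\alpha$), obtaining
\begin{equation*}
\partial_t(xu)+(1+\alpha)D^\alpha u-\partial_xD^\alpha(xu)+x u u_x=0.
\end{equation*}
Applying $D^\alpha$, multiplying by $\langle x\rangle_N^{2\theta}D^\alpha(xu)$ and integrating produces a differential inequality for $\|\langle x\rangle_N^\theta D^\alpha(xu)\|_{L^2}^2$ with three main contributions. The dispersive term is treated exactly as in the decomposition \eqref{eqgreatdisp0}--\eqref{eqgreatdisp4} with $xu$ in place of $u$, reducing it to the persistence bound $\|\langle x\rangle^{1+\theta}u\|_{L^2}$ established in the previous case together with $\|J^{\alpha(2+\theta)}u\|_{L^2}\leq \|J^s u\|_{L^2}$. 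The cross term $(1+\alpha)\int \langle x\rangle_N^{2\theta}D^{2\alpha}u\,D^\alpha(xu)\,dx$ is bounded by Cauchy--Schwarz together with a Fourier-side control of $\|\langle x\rangle^\theta D^{2\alpha}u\|_{L^2}$ via Theorem \ref{TheoSteDer}, Propositions \ref{steinderiweighbet}--\ref{steinderiweighbet2}, and Lemma \ref{complexinterpo}. The nonlinear term is expanded through the commutator estimates of Lemmas \ref{lemmacomm1}--\ref{lemmacomm2} and Proposition \ref{CalderonComGU}, mirroring the treatment of $\mathpzc{B}_2$ in Lemma \ref{lemmaClaim1}, absorbing the extra factor $x$ either into $\langle x\rangle^{1+\theta}u$ bounds or via integration by parts using $u\in L^1_TW^{1,\infty}$. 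The initial-data bound $\|\langle x\rangle^\theta D^\alpha(xu_0)\|_{L^2}<\infty$ follows from the identity $D^\alpha(xu_0)=xD^\alpha u_0+\alpha\,\mathcal{H}D^{\alpha-1}u_0$ (derived on the Fourier side from $[D^\alpha,x]=\alpha\mathcal{H}D^{\alpha-1}$): the second summand is controlled by \eqref{eqlemmaClaim3} applied to $u_0$, and the first, dominated by $\|\langle x\rangle^{1+\theta}D^\alpha u_0\|_{L^2}$, is estimated by Plancherel, Theorem \ref{TheoSteDer}, Proposition \ref{steinderiweighbet} (integrable since $1+\theta<\alpha+1/2$) and Lemma \ref{complexinterpo} with $\theta_1=(1+\theta)/(2+\theta)$, $b=\alpha(2+\theta)$, using the hypothesis $s\geq \alpha r=\alpha(2+\theta)$. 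A Gronwall step and the limit $N\to\infty$ then complete the proof.
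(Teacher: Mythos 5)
Your proposal is correct and follows essentially the same route as the paper: \eqref{eqlemmaClaim3} is obtained by the identical Fourier-side splitting (Plancherel, Theorem \ref{TheoSteDer}, the cutoff $\phi$, and Proposition \ref{steinderiweighbet2}, with the hypothesis $\theta<\alpha-1/2$ entering exactly through $|\xi|^{\alpha-1-\theta}\in L^2_{loc}$), and \eqref{eqlemmaClaim2} by a weighted energy estimate for $\langle x\rangle_N^{\theta}D^{\alpha}(xu)$ using the same commutator toolbox (Proposition \ref{CalderonComGU}, Lemmas \ref{lemmacomm1}--\ref{lemmacomm2}, interpolation via Lemma \ref{complexinterpo} under $s\ge \alpha(2+\theta)$, and integration by parts for the transport piece). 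The only cosmetic deviations are that you prove \eqref{eqlemmaClaim3} first and feed it into the initial-data bound through $[D^{\alpha},x]=\alpha\mathcal{H}D^{\alpha-1}$ (which is just the Fourier-side Leibniz computation of \eqref{eqlemmaClaim3.1} written operator-theoretically), and that you display the evolution equation for $xu$ and hence the cross term $(1+\alpha)D^{2\alpha}u$ explicitly, a term the paper leaves implicit in its energy identity.
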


\begin{proof}
By recurrent arguments, employing Theorem \ref{TheoSteDer} and Lemma \ref{complexinterpo}, we observe
\begin{equation}\label{eqlemmaClaim3.1} 
\begin{aligned}
\|\langle x \rangle^{\theta}D^{\alpha}(x u_0) \|_{L^2}\lesssim &\|J^{\theta}_{\xi}(|\xi|^{\alpha}\phi \partial_{\xi}\widehat{u_0})\|_{L^2}+\|J^{\theta}_{\xi}(|\xi|^{\alpha}(1-\phi)\partial_{\xi}\widehat{u_0})\|_{L^2}\\
\lesssim & \|J^{1+\theta}\widehat{u_0}\|_{L^2}+\|\mathcal{D}^{\theta}_{\xi}(|\xi|^{\alpha}\phi)\partial_{\xi}\widehat{u_0}\|_{L^2}+\|J^{1+\theta}_{\xi}(\langle \xi\rangle^{\alpha}\widehat{u_0})\|_{L^2} \\
\lesssim & \|\langle x \rangle^{2}u_0\|_{L^2}+\|J^{\alpha(2+\theta)}u_0\|_{L^2}^{\frac{1}{2+\theta}}\|\langle x \rangle^{2+\theta}u_0\|_{L^2}^{\frac{1+\theta}{2+\theta}},
\end{aligned}
\end{equation}
where we have also used Proposition \ref{steinderiweighbet} and Sobolev embedding, $\|\partial_{\xi}\widehat{u_0}\|_{L^{\infty}}\lesssim \|\langle x \rangle^{(3/2)^{+}}u_0\|_{L^2}$. The deduction of \eqref{eqlemmaClaim2} for arbitrary time follows from weighted energy estimates in the equation in \eqref{BDBO}, for the sake of brevity we omit these computations. Finally, \eqref{eqlemmaClaim3} can be obtained by Plancherel's identity, Theorem \ref{TheoSteDer}, \eqref{prelimneq} and using \eqref{eqsteinderiweig0} in Proposition \ref{steinderiweighbet2}. This estimate follows by rather similar consideration as in \eqref{eqlowdisp4}. The proof of the lemma is complete. 

\end{proof}

Consequently, we have completed the analysis of all the admissible cases for the dispersion parameter $\alpha \in [-1,1)\setminus \{0\}$ in the equation in \eqref{BDBO}. The proof of Theorem \ref{Theowellpos} (i) is complete.

\section{Proof of Theorem \ref{Theowellpos} (ii) and (iii)} \label{main2}

This part aims to establish the persistence property in the space $\dot{Z}_{s,r}(\mathbb{R})$. Our arguments are based on weighted energy estimates as in the proof of Theorem \ref{Theowellpos} (i) above. In particular, for the case $\alpha \in (-1,1) \setminus \{0\}$, we will employ the differential equation \eqref{grondiffereequ} to reach our conclusions, that is, we shall bound the term $\mathcal{A}_1$. We divide our considerations according to the dispersive parameter $\alpha$.

\subsection{Proof of Theorem \ref{Theowellpos} (ii): Case \texorpdfstring{$\alpha=-1$}{}}

By applying $\mathcal{H}$ to \eqref{BHeqution}, we get the equation
\begin{equation}\label{burghilbe}
\partial_t \mathcal{H}u-u+\mathcal{H}(uu_x)=0.
\end{equation}
Setting $m=0,1$, we multiply \eqref{BHeqution} by $x^{2m}\langle x \rangle_{N}^{2\theta}u$ and \eqref{burghilbe} by $x^{2m}\langle x \rangle_{N}^{2\theta}\mathcal{H}u$ to obtain, after summing these equations and integrating in space
\begin{equation}\label{eqburghilbe}
\begin{aligned}
\frac{d}{dt} \big(\int (x^{m}\langle x \rangle_{N}^{\theta}u)^2 +\int (x^{m}\langle x \rangle_{N}^{\theta}\mathcal{H}u)^2 \, dx \big)=&-\int (x^{m}\langle x \rangle_{N}^{\theta}u \partial_x u)(x^{m}\langle x \rangle_{N}^{\theta}u)\, dx \\
&-\int (x^{m}\langle x \rangle_{N}^{\theta}\mathcal{H}(u \partial_x u))(x^{m}\langle x \rangle_{N}^{\theta}\mathcal{H}u)\, dx.
\end{aligned}
\end{equation} 
Let us  perform some estimates on the above differential equation. Firstly, to control the right-hand side of \eqref{eqburghilbe} when $m=0$, we write
\begin{equation*}
\begin{aligned}
\langle x  \rangle_{N}^{\theta}\mathcal{H}(uu_x)=&\frac{1}{2}[\langle x \rangle_{N}^{\theta},\mathcal{H}]\partial_x(u^2)-\mathcal{H}(\langle x\rangle_{N}^{\theta} u D \mathcal{H}u)\\
=&\frac{1}{2}[\langle x \rangle_{N}^{\theta},\mathcal{H}]\partial_x(u^2)-\mathcal{H}(u[\langle x\rangle_{N}^{\theta},D^{1/2}]D^{1/2}\mathcal{H}u)\\
&-\mathcal{H}(u D^{1/2}[\langle x \rangle_N^{\theta}, D^{1/2}]\mathcal{H}u)-[\mathcal{H},u] D(\langle x \rangle_N^{\theta}\mathcal{H}u))+u\partial_x(\langle x \rangle_N^{\theta}\mathcal{H}u).
\end{aligned}
\end{equation*}
As we have done before, the last term of the above expression is controlled by going back to the integral involving the nonlinearity and integrating by parts, while Proposition \ref{CalderonComGU} and Lemma \ref{lemmacomm2} yield
\begin{equation*}
\begin{aligned}
\|\langle x  \rangle_{N}^{\theta}\mathcal{H}(uu_x)-u\partial_x(\langle x \rangle_N^{\theta}\mathcal{H}u)\|_{L^2} \lesssim \|\partial_x \langle x \rangle_N^{\theta}\|_{L^{\infty}}\|u\|_{L^{\infty}}\|u\|_{L^2}+\|\partial_x u\|_{L^{\infty}}\|\langle x \rangle_N^{\theta}\mathcal{H}u\|_{L^2}.
\end{aligned}
\end{equation*}
Now, when $m=1$, in virtue of the identity
\begin{equation}\label{identHilcom}
    [H,x]f=0 \text{ if and only if } \int_{\mathbb{R}} f(x)\, dx=0, 
\end{equation}
we may write
\begin{equation*}
\begin{aligned}
x\langle x \rangle_N^{\theta} \mathcal{H}(u \partial_x u)=&\frac{1}{2}\langle x \rangle_N^{\theta}\mathcal{H}(\partial_x(x u^2))-\frac{1}{2}\langle x \rangle_N^{\theta}\mathcal{H}(u^2) \\
=&\frac{1}{2}[\langle x \rangle_N^{\theta},\mathcal{H}]\partial_x(xu^2)-\frac{1}{2}\langle x \rangle_N^{\theta}\mathcal{H}(u^2)+\frac{1}{2}\mathcal{H}(\langle x \rangle_N^{\theta} u^2)+\mathcal{H}(\langle x\rangle_N^{\theta} x u \partial_x u)\\
=&\frac{1}{2}[\langle x \rangle_N^{\theta},\mathcal{H}]\partial_x(xu^2)-\frac{1}{2}\langle x \rangle_N^{\theta}\mathcal{H}(u^2)+\frac{1}{2}\mathcal{H}(\langle x \rangle_N^{\theta} u^2)-\mathcal{H}(\langle x\rangle_N^{\theta} u [x,D] \mathcal{H}u)\\
&-\mathcal{H}(u[\langle x\rangle_N^{\theta},D^{1/2}]D^{1/2}(x\mathcal{H}u))-\mathcal{H}(uD^{1/2}[\langle x\rangle_N^{\theta},D^{1/2}](x\mathcal{H}u))\\
&-[\mathcal{H},u]D(x\langle x \rangle_N^{\theta}\mathcal{H}u)+u\partial_x(x \langle x \rangle_N^{\theta} \mathcal{H}u)\\
=:& \sum_{j=1}^{7} \mathcal{G}_{j}+u\partial_x(x \langle x \rangle_N^{\theta} \mathcal{H}u),
\end{aligned}
\end{equation*}
where we have used the decomposition $\partial_x=-\mathcal{H}D$. We proceed with the study of each component $\mathcal{G}_{j}$. By Proposition \ref{CalderonComGU},
\begin{equation*}
\begin{aligned}
\|\mathcal{G}_{1}\|_{L^2}+\|\mathcal{G}_{7}\|_{L^2} \lesssim \|\partial_x \langle x \rangle_N^{\theta}\|_{L^{\infty}}\|u\|_{L^{\infty}}\|u\|_{L^2}+\|\partial_x u\|_{L^{\infty}}\|x\langle x \rangle_N^{\theta}\mathcal{H}u\|_{L^{2}},
\end{aligned}
\end{equation*}
where the second term on the r.h.s of the above expression is the quantity to be estimated using Gronwall's inequality. Now, if $0<\theta<1/2$, from the identity $[D,x]=\mathcal{H}$ and Proposition \ref{propapcond}, we obtain
\begin{equation*}
\begin{aligned}
\|\mathcal{G}_{2}\|_{L^2}+\|\mathcal{G}_{3}\|_{L^2}+\|\mathcal{G}_{4}\|_{L^2} \lesssim \|u\|_{L^{\infty}}\|\langle x \rangle_N^{\theta} u\|_{L^2},
\end{aligned}
\end{equation*}
which is controlled by the persistence result $u\in L^{\infty}([0,T];L^{2}(|x|^{(1/2)^{-}}\, dx))$ obtained in the proof of Theorem \ref{Theowellpos} (i). Finally, Lemma \ref{lemmacomm2} provides 
\begin{equation*}
\begin{aligned}
\|\mathcal{G}_{5}\|_{L^2}+\|\mathcal{G}_{6}\|_{L^2}\lesssim \|\partial_x \langle x\rangle_N^{\theta}\|_{L^{\infty}}\|u\|_{L^{\infty}}\|\langle x\rangle_N^{\theta} x\mathcal{H}u\|_{L^2}.
\end{aligned}
\end{equation*}
This completes the estimate of the r.h.s of \eqref{eqburghilbe}.

The previous estimates almost complete the proof of the persistence result in the present case, however, since the equation \eqref{BHeqution} does not preserve the property $\widehat{u}_0(0)=0$, we require the following claim
\begin{claim}\label{claimmeanzero}
Let $u_0 \in \dot{Z}_{s,r}(\mathbb{R})$ with $s >3/2$, $r>1/2$ and $u \in C([0,T];Z_{s,(1/2)^{-}}(\mathbb{R}))$ solution of \eqref{BDBO} for $\alpha=-1$ with initial data $u_0$ provided by Theorem \ref{Theowellpos} (i). Then $\widehat{u}(0,t)=0$ for all $t\in [0,T]$.
\end{claim}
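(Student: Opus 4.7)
My plan is to trace the zero-mean condition through the Fourier-side Duhamel formula for the Burgers--Hilbert equation, exploiting the cancellation produced by $\widehat{u_0}(0)=0$. First, I would observe that $u_0\in\dot Z_{s,r}(\mathbb{R})$ with $r>1/2$ implies $u_0\in L^1(\mathbb{R})$ via Cauchy--Schwarz,
\[
\int |u_0|\,dx \leq \|\langle x\rangle^r u_0\|_{L^2}\|\langle x\rangle^{-r}\|_{L^2}<\infty,
\]
so $\widehat{u_0}$ is a continuous function on $\mathbb{R}$ and the hypothesis $\widehat{u_0}(0)=0$ literally reads $\int u_0\,dx=0$.

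Next, I would rewrite \eqref{BHeqution} as $\partial_t u+\mathcal{H}u=-\tfrac12\partial_x(u^2)$ and apply Duhamel's formula in Fourier variables. Using the symbol $\widehat{e^{-\tau\mathcal{H}}f}(\xi)=e^{i\tau\sign(\xi)}\widehat{f}(\xi)$, this yields, for a.e.~$\xi\in\mathbb{R}$ and every $t\in[0,T]$,
\[
\widehat{u}(\xi,t)=e^{it\sign(\xi)}\widehat{u_0}(\xi)-\frac{i\xi}{2}\int_0^t e^{i(t-t')\sign(\xi)}\widehat{u^2}(\xi,t')\,dt'.
\]
Since $u\in C([0,T];H^s)$ with $s>3/2$ embeds into $C([0,T];L^{\infty}\cap L^2)$, the product $u^2\in C([0,T];L^1(\mathbb{R}))$, and consequently $\widehat{u^2}(\cdot,t)$ is continuous and bounded on $\mathbb{R}$, uniformly for $t\in[0,T]$.

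Finally, passing to the one-sided limits $\xi\to 0^\pm$ on the right-hand side, the first term tends to $e^{\pm it}\widehat{u_0}(0)=0$ by continuity of $\widehat{u_0}$, and the factor $\xi$ together with the boundedness of $\widehat{u^2}$ forces the integral term to vanish. Hence the Duhamel identity furnishes a representative of the $L^2$-class $\widehat{u}(\cdot,t)$ that is continuous at the origin with value zero, which is the meaning of $\widehat{u}(0,t)=0$ in this framework.

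The principal subtlety is precisely the pointwise interpretation of $\widehat{u}(0,t)$: since $u(\cdot,t)$ is only known to lie in $Z_{s,(1/2)^-}(\mathbb{R})$, it need not belong to $L^1(\mathbb{R})$ for $t>0$, so $\widehat{u}(\cdot,t)$ is a priori only an $L^2$-function without intrinsic meaning at a single point. The Duhamel representation circumvents this difficulty by extracting a canonical representative that is continuous at $\xi=0$; the proof reduces to checking that this representative vanishes there, which is the cancellation displayed above.
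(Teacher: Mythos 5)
Your argument is correct and follows essentially the same route as the paper: both evaluate the Fourier-side integral equation as $\xi\to 0^{\pm}$ and use $\widehat{u_0}(0)=0$ together with the explicit factor $\xi$ in $\widehat{uu_x}=\tfrac{i\xi}{2}\widehat{u^2}$ to make both terms vanish. The only difference is that the paper controls the nonlinear term via the complex-interpolation bound $u^2\in L^{\infty}([0,T];L^2(|x|^{1^{+}}\,dx))$, whereas you use the more elementary bound $\|\widehat{u^2}(\cdot,t)\|_{L^{\infty}}\le \|u(t)\|_{L^2}^2$, which indeed suffices here.
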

The proof of Claim \ref{claimmeanzero} follows from the fact that $u$ solves the integral equation associated to \eqref{BDBO} for $\alpha=-1$, and using complex interpolation Lemma \ref{complexinterpo} to assure that $u^2 \in L^{\infty}([0,T];L^{2}(|x|^{1^{+}}dx))$. We omit these details.

Now, to justify the validity of the differential identity \eqref{eqburghilbe}, we require to prove
\begin{equation}\label{eqburghilbe1}
\mathcal{H}u_0 \in L^2([0,T];L^2(|x|^{2r}\, dx)), 
\end{equation}
whenever $1/2\leq r <3/2$. Once we have proved \eqref{eqburghilbe1}, by setting $m=0$ and $1/2\leq \theta \leq 1$ in \eqref{burghilbe}, the previous estimates lead to the desired persistence result for weights in the range $1/2\leq r \leq 1$. The remaining cases, i.e., $1<r<3/2$ are obtained by letting $m=1$ and $0<\theta<1/2$ in \eqref{eqburghilbe1}, and employing previous consideration. Thus, it remains to prove \eqref{eqburghilbe1}. Indeed, we have:
\begin{itemize}
\item[(i)]  If $r=1/2$, \eqref{eqburghilbe1} does not hold in general, so we address the persistence result in the space
$$HZ_{s,1/2}(\mathbb{R}):=\{f\in Z_{s,1/2}(\mathbb{R}): \|f\|_{Z_{s,r}}+ \||x|^{1/2}\mathcal{H}f\|_{L^2}<\infty\}.$$
\item[(ii)] If $1/2<r\leq 1$, assuming $u_0 \in \dot{Z}_{s,r}(\mathbb{R})$, \eqref{eqburghilbe1} is consequence of the following result:

\begin{lemma}\label{interpo}
Let $1/2<s\leq 1$ and $f\in H^s(\mathbb{R})$ such that $f(0)=0$. Then, $\|\sign(\xi) f\|_{H^s}\lesssim \|f\|_{H^s}$. 
\end{lemma}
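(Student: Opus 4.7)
The $L^2$ bound is trivial since $\sign$ has absolute value $1$: $\|\sign(\xi)f(\xi)\|_{L^2}=\|f\|_{L^2}$. Note also that $f(0)$ is well-defined because $s>1/2$ gives the embedding $H^s(\mathbb{R})\hookrightarrow C^{0,(s-1/2)}(\mathbb{R})$. The plan is then to bound the fractional-derivative piece using the characterization of Theorem \ref{TheoSteDer}, which reduces the problem to estimating $\|\mathcal{D}^s(\sign f)\|_{L^2}$. Writing
\begin{equation*}
\sign(\xi)f(\xi)-\sign(\eta)f(\eta)=\sign(\xi)(f(\xi)-f(\eta))+(\sign(\xi)-\sign(\eta))f(\eta),
\end{equation*}
and applying the triangle inequality pointwise (in the spirit of \eqref{prelimneq0}), one obtains
\begin{equation*}
\mathcal{D}^s(\sign f)(\xi)\lesssim \mathcal{D}^s(f)(\xi)+\mathcal{D}^s(\sign)(\xi)\,|f(\xi)|.
\end{equation*}
The first term yields $\|\mathcal{D}^s f\|_{L^2}\sim \|D^sf\|_{L^2}\le \|f\|_{H^s}$ directly.

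For the second term, I would perform the direct computation
\begin{equation*}
(\mathcal{D}^s\sign)^2(\xi)=\int_{\mathbb{R}}\frac{|\sign(\xi)-\sign(\eta)|^2}{|\xi-\eta|^{1+2s}}\,d\eta=4\int_{0}^{\infty}\frac{d u}{(|\xi|+u)^{1+2s}}=\frac{2}{s}|\xi|^{-2s},
\end{equation*}
so that the task reduces to proving the one-dimensional Hardy-type estimate
\begin{equation*}
\big\||\xi|^{-s}f(\xi)\big\|_{L^2}\lesssim \|f\|_{H^s},
\end{equation*}
under the condition $f(0)=0$, which is the crucial point where the vanishing hypothesis is used. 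This is the main obstacle, since for $s>1/2$ the classical Hardy--Sobolev inequality fails without a boundary condition at the origin.

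For $s=1$ this is just the classical Hardy inequality applied to $f$ with $f(0)=0$, for which one writes $f(\xi)=\int_0^{\xi}f'(t)\,dt$ and uses Cauchy--Schwarz. For $1/2<s<1$, I would either invoke the fractional Hardy inequality in dimension one (which holds in precisely this range under the vanishing condition $f(0)=0$), or prove it by splitting $f=\phi f+(1-\phi)f$ with $\phi\in C_c^\infty$ a cutoff near the origin: on the support of $1-\phi$, the weight $|\xi|^{-s}$ is bounded, so the estimate is immediate; on the support of $\phi$, write $f(\xi)=f(\xi)-f(0)$ and use the symmetric integral
\begin{equation*}
\int\int\frac{|f(\xi)-f(\eta)|^2}{|\xi-\eta|^{1+2s}}\,d\xi\,d\eta\sim \|D^sf\|_{L^2}^2
\end{equation*}
with $\eta=0$ as reference point to dominate $\int_{|\xi|\le 1}|\xi|^{-2s}|f(\xi)|^2\,d\xi$.

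Combining the two estimates yields $\|\mathcal{D}^s(\sign f)\|_{L^2}\lesssim \|f\|_{H^s}$, and together with the trivial $L^2$ bound this gives $\|\sign(\xi)f\|_{H^s}\lesssim \|f\|_{H^s}$, as required. The hardest step is unambiguously the fractional Hardy inequality with the vanishing condition; everything else is structural manipulation of the Stein characterization of $H^s$.
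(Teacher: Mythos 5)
The paper does not actually prove Lemma \ref{interpo}; it quotes it from \cite[Lemma 5.1]{OscarCapil}, where it is derived from a result of Yafaev. So your argument is necessarily a different, self-contained route, and its skeleton is sound: the splitting in the spirit of \eqref{prelimneq0}, the exact computation $(\mathcal{D}^s\sign)(\xi)=(2/s)^{1/2}|\xi|^{-s}$ (consistent with \eqref{prelimneq2.0}), and the resulting reduction of the lemma to the weighted bound $\||\xi|^{-s}f\|_{L^2}\lesssim\|f\|_{H^s}$ under the condition $f(0)=0$ are all correct, and you have put your finger on exactly the right crux.

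Two caveats, the first of which is a genuine gap. The elementary proof you sketch for the Hardy step does not close as written: the H\"older bound $|f(\xi)|=|f(\xi)-f(0)|\lesssim \|f\|_{H^s}|\xi|^{s-1/2}$ only yields $|\xi|^{-2s}|f(\xi)|^2\lesssim|\xi|^{-1}$, which just fails to be integrable at the origin, and the ``reference point $\eta=0$'' comparison, once made rigorous by averaging $\eta$ over an annulus $|\eta|\sim|\xi|$, returns
\begin{equation*}
\int\frac{|f(\xi)|^2}{|\xi|^{2s}}\,d\xi\lesssim \|\mathcal{D}^sf\|_{L^2}^2+C\int\frac{|f(\eta)|^2}{|\eta|^{2s}}\,d\eta
\end{equation*}
with a constant $C$ that is not visibly less than $1$ --- i.e.\ it is circular. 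One genuinely needs the one-dimensional fractional Hardy inequality in the regime $2s>1$ (Frank--Seiringer), which is proved for $f\in C_c^{\infty}(\mathbb{R}\setminus\{0\})$; to apply it assuming only $f(0)=0$ you must additionally invoke the density of $C_c^{\infty}(\mathbb{R}\setminus\{0\})$ in $\{f\in H^s:\,f(0)=0\}$ (valid for $1/2<s<3/2$) together with Fatou on the left-hand side. All of this is true and standard, but it is the entire content of the lemma, so it should be cited precisely or proved, not gestured at. Separately, at the endpoint $s=1$ Theorem \ref{TheoSteDer} is unavailable ($\mathcal{D}^1f\notin L^2$ for nonconstant smooth $f$), so your framework does not literally cover that case; there, however, the hypothesis $f(0)=0$ kills the singular term in $(\sign\cdot f)'=2f(0)\delta_0+\sign\cdot f'$, so $\|\sign f\|_{H^1}=\|f\|_{H^1}$ exactly and no Hardy inequality is needed at all.
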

The previous lemma is proved in \cite[Lemma 5.1]{OscarCapil}, in fact, it is a consequence of \cite[Proposition 3.2]{Yafaev}.
\item[(iii)] If $1<r<3/2$, \eqref{eqburghilbe1} is obtained from identity \eqref{identHilcom} and Proposition \ref{propapcond}. 
\end{itemize}   

This completes the proof of Theorem \ref{Theowellpos} (ii).


\subsection{Case \texorpdfstring{$-1<\alpha\leq-1/2$}{}}

In contrast with the previous case, we remark that for $\alpha \in (-1,1)\setminus\{0\}$, suitable solutions of the equation in \eqref{BDBO} (for instance, with the hypothesis stated in Theorem \ref{Theowellpos} (ii) is enough) preserve the property $\widehat{u}(0,t)=0$ for all $t\in [0,T]$. We further divide our efforts into parts determined by the range $3/2+\alpha \leq r<5/4+\alpha$.

\subsubsection*{\bf Case weight $3/2+\alpha \leq r \leq 1$} This case is obtained from \eqref{grondiffereequ} with $m=0$ and $3/2+\alpha \leq r=\theta \leq 1$. By the preceding discussions, we are only concerned with the estimate for $\mathcal{A}_1$. We first state the following lemma.
\begin{lemma}\label{lemmazeromean1}
Assume $-1<\alpha \leq -1/2$ and $s>3/2$. If $-\alpha<\theta\leq 1$, then
\begin{equation}\label{eqzeromean1}
\sup_{t\in[0,T]}\|D^{\alpha}u(t)\|_{L^2}\leq c\big(\|\langle x \rangle^{\theta}u_0\|_{L^2},\|u\|_{L^{\infty}_TH^s},\|u\|_{L^{1}_TW^{1,\infty}}\big).
\end{equation}
If $3/2+\alpha \leq \theta \leq 1$, then there exists $0<\epsilon\ll 1$ such that
\begin{equation}\label{eqzeromean2}
\begin{aligned}
\sup_{t\in[0,T]} \|D^{1+\alpha-\theta-\epsilon}u(t)\|_{L^2} \leq c(\|\langle x \rangle^{\theta}u_0\|_{L^2},\|u\|_{L^{\infty}_T H^s},\|u\|_{L^{1}_T W^{1,\infty}}).
\end{aligned}
\end{equation}
Moreover, let $\kappa \geq 2$ integer, such that $\alpha \in (-\kappa/(\kappa+1),-(\kappa-3/2)/\kappa]$, then
\begin{equation}\label{eqzeromean3}
\begin{aligned}
\|\langle x \rangle^{\theta}D^{l(1+\alpha)}P^{\phi}u_0\|_{L^2} \lesssim \|\langle x \rangle^{\theta} u_0\|_{L^2},
\end{aligned}
\end{equation}
for each $l=1,\dots,\kappa$.
\end{lemma}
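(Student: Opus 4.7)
The strategy is to adapt the scheme of Lemmas \ref{claimnegade1} and \ref{claimnegade0.1}, exploiting the zero-mean hypothesis $\widehat{u_0}(0)=0$ built into $\dot{Z}_{s,r}(\mathbb{R})$. Since \eqref{BDBO} with $\alpha\neq-1$ is of the form $\partial_t u=\partial_x(\,\cdot\,)$, the quantity $\widehat{u}(0,t)=\int u(x,t)\,dx$ is preserved in time. Combined with $\widehat{u}(\cdot,t)\in H^{\theta}_{\xi}$ for $\theta>1/2$, Sobolev embedding produces the key pointwise Hölder bound
\[
|\widehat{u}(\xi,t)|\lesssim |\xi|^{\theta-1/2}\|\langle x\rangle^{\theta}u(t)\|_{L^2},\qquad |\xi|\leq 1,
\]
which is the sharp quantitative form of the zero-mean condition.

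For \eqref{eqzeromean1}, I would first control $\|D^{\alpha}u_0\|_{L^2}$ by splitting $\widehat{u_0}=\phi\widehat{u_0}+(1-\phi)\widehat{u_0}$ with $\phi$ as in \eqref{projectorphi}. The high-frequency piece is bounded by $\|u_0\|_{L^2}$ since $\alpha<0$, and the low-frequency piece by the Hölder bound is majorized by $\||\xi|^{\alpha+\theta-1/2}\phi\|_{L^2}\|\langle x\rangle^{\theta}u_0\|_{L^2}$, finite precisely when $\theta>-\alpha$. Arbitrary $t\in(0,T]$ is reached by applying $D^{\alpha}$ to \eqref{BDBO}: skew-symmetry of $\partial_xD^{2\alpha}$ kills the dispersive contribution, and Lemma \ref{leibnizhomog} handles the nonlinearity exactly as in \eqref{eqmainlow3}; Gronwall closes the estimate since $\|D^{1+\alpha}u\|_{L^2}\lesssim \|u\|_{H^s}$. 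Part \eqref{eqzeromean2} is analogous: choose $0<\epsilon<1+\alpha$ (admissible because $\alpha>-1$). The high-frequency part of $\|D^{1+\alpha-\theta-\epsilon}u_0\|_{L^2}$ is trivial since $1+\alpha-\theta-\epsilon\leq-1/2-\epsilon<0$, while the low-frequency part, via the Hölder bound, reduces to $\||\xi|^{1/2+\alpha-\epsilon}\phi\|_{L^2}$, square-integrable near zero by the choice of $\epsilon$. Propagation in time is identical to Lemma \ref{claimnegade0.1}, the regularity requirement $s>2+\alpha-\theta-\epsilon$ being implied by $s>3/2$.

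For \eqref{eqzeromean3} I follow the pattern used in the derivation of \eqref{eqlowdisp2}. Theorem \ref{TheoSteDer} together with \eqref{prelimneq0} reduces matters to bounding $\|\mathcal{D}^{\theta}(|\xi|^{l(1+\alpha)}\phi)\widehat{u_0}\|_{L^2}$ (the remaining pieces are controlled by $\||\xi|^{l(1+\alpha)}\phi\|_{L^{\infty}}\lesssim 1$ and $\|\mathcal{D}^{\theta}\widehat{u_0}\|_{L^2}\lesssim\|\langle x\rangle^{\theta}u_0\|_{L^2}$). Proposition \ref{steinderiweighbet} tells us that $\mathcal{D}^{\theta}(|\xi|^{l(1+\alpha)}\phi)(\eta)\sim |\eta|^{l(1+\alpha)-\theta}$ near $0$ (with an additive constant when $l(1+\alpha)>\theta$) and $\sim|\eta|^{-1/2-\theta}$ at infinity. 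The decay at infinity is absorbed by $\widehat{u_0}\in L^2$; near the origin, either $l(1+\alpha)\geq\theta$ so that the symbol is bounded and $\|\widehat{u_0}\|_{L^2}$ suffices, or $l(1+\alpha)<\theta$, in which case pairing the singularity with the Hölder bound yields an integrand of size $|\eta|^{l(1+\alpha)-1/2}$, square-integrable near zero since $l(1+\alpha)>0$ by the choice of $\kappa$.

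The main obstacle is precisely this low-frequency analysis in \eqref{eqzeromean3}: for weights $\theta\geq 3/2+\alpha$, the direct $L^{1/\beta}$ or $L^{2}$ bounds on $\mathcal{D}^{\theta}(|\xi|^{l(1+\alpha)}\phi)$ that worked in \eqref{eqlowdisp2} no longer apply, and only the vanishing of $\widehat{u_0}$ at the origin, with its quantitative Hölder rate supplied by the zero-mean condition, compensates the singularity of the symbol. Tracking this cancellation uniformly across the regimes $l(1+\alpha)\gtrless\theta$ is the technical heart of the argument.
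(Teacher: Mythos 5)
Your proposal is correct and follows essentially the same route as the paper: the zero-mean hypothesis plus the embedding $H^{\theta}_{\xi}(\mathbb{R})\hookrightarrow C^{0,\gamma}(\mathbb{R})$, $\gamma\leq\theta-1/2$, yields the pointwise bound $|\widehat{u_0}(\xi)|\lesssim|\xi|^{\gamma}\|\langle x\rangle^{\theta}u_0\|_{L^2}$, which compensates the non-square-integrable symbols $|\xi|^{\alpha}$, $|\xi|^{1+\alpha-\theta-\epsilon}$ and $\mathcal{D}^{\theta}(|\xi|^{l(1+\alpha)}\phi)$ near the origin, exactly as in \eqref{eqzeromean3.1} and the paper's modification of \eqref{eqlowdisp2}; the time propagation via the energy arguments of Lemmas \ref{claimnegade1} and \ref{claimnegade0.1} is likewise identical. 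The only cosmetic difference is that you take the endpoint exponent $\gamma=\theta-1/2$ throughout while the paper sometimes picks $\gamma$ strictly inside $(-\alpha-1/2,\theta-1/2]$, and in the borderline case $l(1+\alpha)=\theta$ of \eqref{eqzeromean3} the symbol is logarithmic rather than bounded, which is still harmless since $(-\ln|\eta|)^{1/2}\in L^2_{loc}$.
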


Before proving Lemma \ref{lemmazeromean1}, let us states its consequences leading to the required persistence result in weighted spaces. Indeed, by \eqref{eqzeromean1}, we can decompose the estimate for $\mathcal{A}_1$ as in \eqref{claimnegade} to employ \eqref{eqmainlow0} and \eqref{eqmainlow0.1}, concluding the subcase $-\alpha<\theta\leq 1$. On the other hand, the inequalities \eqref{eqzeromean2} and \eqref{eqzeromean3} enable us to repeat the same arguments in the Subsection \ref{subsec(II)} to extend the persistence for those cases where $3/2+\alpha \leq \theta \leq -\alpha$. This cover all the considerations in Theorem \ref{Theowellpos} (ii) for $-1<\alpha\leq 1/2$ and $3/2+\alpha\leq r \leq 1$. 

\begin{proof}[Proof of Lemma \ref{lemmazeromean1}]
We first suppose that $-\alpha<\theta \leq 1$. The deduction of \eqref{eqzeromean1} follows from the same reasoning in the proof of Lemma \ref{claimnegade1}. Thus, we shall only verify the validity of  this inequality for the initial data. In the first place, since $\theta>1/2$, by Sobolev embedding, $\widehat{u_0}\in C^{0,\gamma}(\mathbb{R})$ the space of H\"older continuous functions of order $\gamma\in(0,\theta-1/2]$, it holds then
\begin{equation}\label{Holdercond}
|\widehat{u_0}(\xi)-\widehat{u_0}(\eta)|\lesssim \|\widehat{u_0}\|_{H^{\theta}_{\xi}}|\xi-\eta|^{\gamma}, \, \, \text{ for all } \, \, \xi, \eta \in \mathbb{R}.
\end{equation}
Therefore, the zero mean assumption and \eqref{Holdercond} allow us to conclude
\begin{equation}\label{eqzeromean3.1}
\begin{aligned}
\|D^{\alpha}u_0\|_{L^2}=\||\xi|^{\alpha}\widehat{u_0}\|_{L^2}&\lesssim \||\xi|^{\alpha+\gamma}\phi\|_{L^2}\|\widehat{u_0}\|_{H^{\theta}_{\xi}}+\|(1-\phi)\widehat{u_0}\|_{L^2}\lesssim \|\langle x \rangle^{\theta}u_0\|_{L^2},
\end{aligned}
\end{equation}
where we have chosen $\gamma>-\alpha-1/2$. This completes the analysis of \eqref{eqzeromean1}.

Next, we set $3/2+\alpha \leq \theta \leq 1$. Once we have verified \eqref{eqzeromean2} for the initial data, the deduction of this result for arbitrary time follows the same reasoning in the proof of Lemma \ref{claimnegade0.1}.  Indeed, an application of \eqref{Holdercond} reveals
\begin{equation}
\begin{aligned}
\|D^{1+\alpha-\theta-\epsilon}u_0\|_{L^2}=\||\xi|^{1+\alpha-\theta-\epsilon}\widehat{u_0}\|_{L^2}&\lesssim \||\xi|^{1/2+\alpha-\epsilon}\phi\|_{L^2}\|\widehat{u_0}\|_{H^{\theta}_{\xi}}+\|\widehat{u_0}\|_{L^2} \\
&\lesssim \|\langle x \rangle^{\theta}u_0\|_{L^2},
\end{aligned}
\end{equation} 
where we employed that $|\xi|^{1/2+\alpha-\epsilon}\in L^2_{loc}(\mathbb{R})$, whenever $0<\epsilon<1+\alpha$. Finally, by using \eqref{Holdercond} to increase the magnitude of the weight considered, by simple modification to the deduction of \eqref{eqlowdisp2}, we obtain \eqref{eqzeromean3}. This completes the proof of the lemma.
\end{proof}

\subsubsection*{\bf Case weight $1<r<5/2+\alpha$} Here we employ \eqref{grondiffereequ} with $m=1$ and $0<\theta<3/2+\alpha$. Our arguments follow the strategy used in Case (II) \ref{subcaseII}. However, due to the incorporation of the weight $x$ in the differential identity, we need to introduce several modifications. We start by decomposing the estimate for $\mathcal{A}_1$ as in \eqref{decomwithweighx}, that is
\begin{equation}\label{eqzeromean3.2}
\begin{aligned}
\langle x \rangle_N^{\theta}x\mathcal{H}D^{1+\alpha} u=&(1+\alpha)\langle x \rangle_N^{\theta}D^{\alpha}u+[\langle x \rangle_N^{\theta},\mathcal{H}]D^{1+\alpha}(xu)+\mathcal{H}\big([\langle x \rangle_N^{\theta},D^{1+\alpha}](xu)\big)\\
&+\mathcal{H}D^{1+\alpha}\big(\langle x \rangle_N^{\theta}xu \big) \\
=& \widetilde{\mathcal{A}}_{1,1}+\widetilde{\mathcal{A}}_{1,2}+\widetilde{\mathcal{A}}_{1,3}+\mathcal{H}D^{1+\alpha}\big(\langle x \rangle_N^{\theta}xu \big),
\end{aligned}
\end{equation}
being the last term the quantity to be controlled. In this case, Proposition \ref{fractfirstcaldcomm} yields
\begin{equation}\label{eqzeromena3.3}
\begin{aligned}
\widetilde{\mathcal{A}}_{1,2}=\|[\langle x \rangle_N^{\theta},\mathcal{H}]D^{\theta+\epsilon}D^{1+\alpha-\theta-\epsilon}(xu)\|_{L^2} \lesssim \|D^{\theta+\epsilon}\langle x \rangle_N^{\theta}\|_{L^{\infty}}\|D^{1+\alpha-\theta-\epsilon}(xu)\|_{L^2},
\end{aligned}
\end{equation}
where $0<\epsilon \ll 1$ to be chosen later, and the $L^{\infty}(\mathbb{R})$ norm of $D^{\theta+\epsilon}\langle x \rangle_N^{\theta}$ is controlled by Proposition \ref{propfracweighapp}. On the other hand, to estimate $\widetilde{A}_{1,3}$, we write 
\begin{equation*}
\begin{aligned}
\left[\langle x \rangle_N^{\theta},D^{1+\alpha}\right](xu)=&[\langle x \rangle_N^{\theta},D^{1+\alpha}]P^{\phi}(xu)+[\langle x \rangle_N^{\theta},D^{1+\alpha}]P^{1-\phi}(xu).
\end{aligned}
\end{equation*}
We have from Lemma \ref{lemmacomm2} that
\begin{equation*}
\begin{aligned}
\|[\langle x \rangle_N^{\theta},D^{1+\alpha}]P^{1-\phi}(xu)\|_{L^2}=\|[\langle x \rangle_N^{\theta},D^{1+\alpha}]D^{-\alpha}D^{\alpha}P^{1-\phi}(xu)\|_{L^2}\lesssim \|\partial_x\langle x \rangle_N^{\theta}\|_{L^{\infty}}\|xu\|_{L^2},
\end{aligned}
\end{equation*}
the above estimate is justified by the persistence result $u\in L^{\infty}([0,T];L^2(|x|^2)\, dx)$  previously determined. Now, if $0<\theta<1+\alpha$, by Lemma \ref{lemmacomm1} we find
\begin{equation*}
\begin{aligned}
\|[\langle x \rangle_N^{\theta},D^{1+\alpha}]P^{\phi}(xu)\|_{L^2}\lesssim \|D^{1+\alpha}\langle x \rangle_N^{\theta}\|_{L^{\infty}}\|xu\|_{L^{2}}.
\end{aligned}
\end{equation*}
If $1+\alpha\leq \theta< 3/2+\alpha$, we open up the commutator to write
\begin{equation}\label{eqzeromean3.4}
\begin{aligned}
\left[\langle x \rangle_N^{\theta},D^{1+\alpha}\right]P^{\phi}(xu)=&\langle x \rangle_N^{\theta}D^{1+\alpha}P^{\phi}(xu)-D^{1+\alpha}(\langle x \rangle_N^{\theta}P^{\phi}(xu))\\
=& \langle x \rangle_N^{\theta}D^{1+\alpha}P^{\phi}(xu)-D^{1+\alpha}([\langle x \rangle_N^{\theta},P^{\phi}](xu))-D^{1+\alpha}P^{\phi}(\langle x \rangle_N^{\theta}xu).
\end{aligned}
\end{equation}
The last two terms of the above identity can be estimates by Lemma \ref{lemmacomm3} and the properties of the projector $P^{\phi}$ in the following manner
\begin{equation*}
\begin{aligned}
\|D^{1+\alpha}[\langle x \rangle_N^{\theta},P^{\phi}]D^{\theta+\epsilon-(1+\alpha)}D^{1+\alpha-\theta-\epsilon}(xu)\|_{L^2}&+\|D^{1+\alpha}P^{\phi}(\langle x \rangle_N^{\theta}xu)\|_{L^2}\\ &\lesssim \|D^{\theta+\epsilon}\langle x \rangle_N^{\theta}\|_{L^{\infty}}\|D^{1+\alpha-\theta-\epsilon}(xu)\|_{L^2}+\|\langle x \rangle_N^{\theta}xu\|_{L^2}.
\end{aligned}
\end{equation*}
Since the above inequality has the factor to be estimated by the energy estimate for this case, it is sufficient to control the r.h.s of \eqref{eqzeromena3.3} to bound the above expression.

Now, as we did in Case (II) \ref{subcaseII}, the estimate for $ \langle x \rangle_N^{\theta}D^{1+\alpha}P^{\phi}(xu)$ in \eqref{eqzeromean3.4} requires introducing new differential inequalities. To this end, we choose a fixed integer $\kappa\geq 2$ such that $\alpha\in (-\kappa/(\kappa+1),-(\kappa-3/2)/\kappa]$. Thus, the equation in \eqref{BDBO} provides the following identities
\begin{equation}\label{differenzerome}
\begin{aligned}
\frac{1}{2}\frac{d}{dt}\int \big(\langle x \rangle_N^{\theta}D^{l(1+\alpha)}P^{\phi} (xu) \big)^2\, dx &+ \underbrace{\int \langle x \rangle_N^{\theta}D^{l(1+\alpha)}P^{\phi}(x D^{\alpha}\partial_x u )(\langle x \rangle_N^{\theta}D^{l(1+\alpha)}P^{\phi}(xu))\, dx}_{\mathcal{B}_{1,l}}\\
&+\underbrace{\int \langle x \rangle_N^{\theta} D^{l(1+\alpha)}P^{\phi}(x uu_x)(\langle x \rangle_N^{\theta}D^{l(1+\alpha)}P^{\phi}u)\, dx}_{\mathcal{B}_{2,l}}=0,
\end{aligned}
\end{equation}
for each $l=1,\dots, \kappa$ and $1+\alpha \leq \theta<3/2+\alpha$. . We first split the study of $\mathcal{B}_{1,l}$ as follows
\begin{equation}\label{eqzeromean3.5}
\begin{aligned}
\langle x \rangle_N^{\theta}D^{l(1+\alpha)}P^{\phi}(xD^{\alpha}\partial_x u)=-(1+\alpha)\langle x \rangle_N^{\theta}D^{l(1+\alpha)}P^{\phi}D^{\alpha}u+\langle x \rangle_N^{\theta}D^{l(1+\alpha)}P^{\phi}D^{\alpha}\partial_x(xu).
\end{aligned}
\end{equation}
The first term of the above identity can be estimated employing energy estimates as we did for the differential equation \eqref{diferenequalderivlocal}. Thus, we obtain 
\begin{equation}\label{eqzeromean3.6}
\begin{aligned}
\sup_{t\in[0,T]}\|\langle x \rangle^{\theta}D^{l(1+\alpha)}P^{\phi}D^{\alpha}u(t)\|_{L^2}\lesssim c(\|\langle x \rangle^{1+\theta}u_0\|_{L^2},\|u\|_{L^{\infty}_TH^s},\|u\|_{L^1_TW^{1,\infty}}),
\end{aligned}
\end{equation}
for each $l=1,\dots, \kappa$, where we remark that the validity of the above expression for the initial data follows from Theorem \ref{TheoSteDer}, \eqref{Holdercond}, and the embedding $H^{1+\theta}_{\xi}(\mathbb{R})\hookrightarrow C^{0,\gamma}(\mathbb{R})$ with $\gamma < 1/2+\theta$ for $1+\theta\leq 3/2$, and $\gamma=1$ for $1+\theta>3/2$.

Following the same analysis in the study of \eqref{eqlowdisp4.1} and \eqref{eqlowdisp5}, replacing $u$ bu $xu$, we arrive at
\begin{equation}\label{eqzeromean3.6.1}
\begin{aligned}
\|\langle x \rangle_N^{\theta}D^{l(1+\alpha)}P^{\phi}D^{\alpha}\partial_x(xu)\|_{L^2}\lesssim \|D^{1+\alpha-\theta-\epsilon}(xu)\|_{L^2}+\|xu\|_{L^2},
\end{aligned}
\end{equation}
where $0<\epsilon \ll 1$. The study of $\mathcal{B}_{1,l}$ is complete. We divide the estimate for $\mathcal{B}_{2,l}$ as follows
\begin{equation*}
\begin{aligned}
\langle x \rangle_N^{\theta}D^{l(1+\alpha)}P^{\phi}(xu \partial_x u)=\frac{1}{2}\langle x \rangle_N^{\theta}D^{l(1+\alpha)}\partial_xP^{\phi}(xu^2)-\frac{1}{2}\langle x \rangle_N^{\theta}D^{l(1+\alpha)}P^{\phi}(u^2).
\end{aligned}
\end{equation*}
The persistence property in $L^2(|x|^2\, dx)$ and the fact that $u\in L^1([0,T];W^{1,\infty}(\mathbb{R}))$ allow us to apply Theorem \ref{TheoSteDer} and property \eqref{prelimneq} to bound the last term on the r.h.s of the above identity. We focus now on the first term on the r.h.s of the above display. By letting $\partial_x=-\mathcal{H}D$, we write
\begin{equation}\label{eqzeromean3.7}
\begin{aligned}
\langle x \rangle_N^{\theta}D^{l(1+\alpha)}\partial_xP^{\phi}(xu^2)=&-[\langle x \rangle_N^{\theta},\mathcal{H}]D^{l(1+\alpha)+1}P^{\phi}(xu^2)-\mathcal{H}\big([\langle x \rangle_N^{\theta},P^{\phi}]D^{l(1+\alpha)+1}(xu^2)\big)\\
&-\mathcal{H}P^{\phi}\big(\langle x \rangle_N^{\theta}D^{l(1+\alpha)+1}(xu^2)\big).
\end{aligned}
\end{equation}
From Proposition \ref{fractfirstcaldcomm} and Lemma \ref{lemmacomm3}, we deduce 
\begin{equation*}
\begin{aligned}
\|[\langle x \rangle_N^{\theta},\mathcal{H}]D^{l(1+\alpha)+1}P^{\phi}(xu^2)\|_{L^2}&+\|\mathcal{H}\big([\langle x \rangle_N^{\theta},P^{\phi}]D^{l(1+\alpha)+1}(xu^2)\big)\|_{L^2} \\
&\lesssim \big(\|D^{1+l(1+\alpha)}\langle x \rangle_N^{\theta}\|_{L^{\infty}}+\|\partial_x\langle x \rangle_N^{\theta}\|_{L^{\infty}}\big)\|u\|_{L^{\infty}}\|\langle x \rangle u\|_{L^2}.
\end{aligned}
\end{equation*}
By Proposition \ref{propfracweighapp}, we have that $\|D^{1+l(1+\alpha)}\langle x \rangle_N^{\theta}\|_{L^{\infty}}$ is bounded by a uniform constant independent of $N$. Next, we divide the study of the remaining factor on the r.h.s of \eqref{eqzeromean3.7} according to the value of $0<l(1+\alpha)+1 \leq 5/2$, $l=0,,\dots,\kappa$. Thus, if $l(1+\alpha)+1<2$, we write 
\begin{equation}\label{eqzeromean3.8}
\begin{aligned}
\langle x \rangle_N^{\theta}D^{l(1+\alpha)+1}(xu^2)=&D^{l(1+\alpha)+1}(\langle x \rangle_N^{\theta}xu^2)-(l(1+\alpha)+1)\partial_x\langle x \rangle_N^{\theta} \mathcal{H}D^{l(1+\alpha)}(xu^2)\\
&+R(\langle x \rangle_N^{\theta},xu^{2}) \\
=&D^{l(1+\alpha)+1}(\langle x \rangle_N^{\theta}xu^2)-(l(1+\alpha)+1)[\partial_x\langle x \rangle_N^{\theta} ,\mathcal{H}]D^{l(1+\alpha)}(xu^2)\\
&-(l(1+\alpha)+1)\mathcal{H}[\partial_x\langle x \rangle_N^{\theta},D^{l(1+\alpha)}](xu^2)\\
&-(l(1+\alpha)+1)\mathcal{H}D^{l(1+\alpha)}\big(\partial_x\langle x \rangle_N^{\theta}(xu^2)\big)+R(\langle x \rangle_N^{\theta},xu^{2}),
\end{aligned}
\end{equation}
where in virtue of Lemma \ref{extendenlemma},
\begin{equation}\label{eqzeromean3.9}
\|R(\langle x \rangle_N^{\theta},xu^{2})\|_{L^2} \lesssim \|D^{l(1+\alpha)+1}\langle x \rangle_N^{\theta}\|_{L^{\infty}}\|u\|_{L^{\infty}}\|\langle x \rangle u\|_{L^2}.
\end{equation}
Therefore, from \eqref{eqzeromean3.8}, \eqref{eqzeromean3.9}, Proposition \ref{fractfirstcaldcomm}, Lemma \ref{lemmacomm1} and the properties of the projector $P^{\phi}$, we get
\begin{equation*}
\begin{aligned}
\|\mathcal{H}P^{\phi}\big(\langle x \rangle_N^{\theta}D^{l(1+\alpha)+1}(xu^2)\big)\|_{L^2}\lesssim & \|u\|_{L^{\infty}}\|\langle x \rangle_N^{\theta}xu\|_{L^2}\\
&+\big(\|\partial_x\langle x\rangle_N^{\theta}\|_{L^{\infty}}+\|D^{l(1+\alpha)}\partial_x\langle x \rangle_N^{\theta}\|_{L^{\infty}} + \|D^{l(1+\alpha)+1}\langle x \rangle_N^{\theta}\|_{L^{\infty}}\big)\\
&\qquad \qquad \times\|u\|_{L^{\infty}}\|\langle x \rangle u\|_{L^2},
\end{aligned}
\end{equation*}
being $\|\langle x \rangle_N^{\theta}xu\|_{L^2}$ the term to be estimated after combining the differential identity \eqref{grondiffereequ} with \eqref{differenzerome} for each $l=1,\dots, \kappa$. We recall that Proposition \ref{propfracweighapp} implies that the $L^{\infty}$-norm of $D^{l(1+\alpha)}\partial_x\langle x \rangle_N^{\theta}$ and $D^{l(1+\alpha)+1}\langle x \rangle_N^{\theta}$ are bounded by a constant independent of $N$. Now, if $2\leq l(1+\alpha)+1<5/2$, by following a similar decomposition to \eqref{eqzeromean3.8} and applying \eqref{eqextenden2} in Lemma \ref{extendenlemma}, it is not difficult to deduce
\begin{equation*}
\begin{aligned}
\|\mathcal{H}P^{\phi}&\big(\langle x \rangle_N^{\theta}D^{l(1+\alpha)+1}(xu^2)\big)\|_{L^2} \\\lesssim & \|u\|_{L^{\infty}}\|\langle x \rangle_N^{\theta}xu\|_{L^2}+\big(\|\partial_x\langle x\rangle_N^{\theta}\|_{L^{\infty}}+\|\partial_x^2\langle x\rangle_N^{\theta}\|_{L^{\infty}}+\|D^{l(1+\alpha)-1}\partial_x^2\langle x \rangle_N^{\theta}\|_{L^{\infty}}\\
&\hspace{3cm}+\|D^{l(1+\alpha)}\partial_x\langle x \rangle_N^{\theta}\|_{L^{\infty}} + \|D^{l(1+\alpha)+1}\langle x \rangle_N^{\theta}\|_{L^{\infty}}\big)\times\|u\|_{L^{\infty}}\|\langle x \rangle u\|_{L^2},
\end{aligned}
\end{equation*}
which is controlled by Proposition \ref{propfracweighapp}. This completes the analysis of the nonlinear parts $\mathcal{B}_{2,l}$, $l=1,\dots, \kappa$ in \eqref{differenzerome}. Gathering the previous estimates, we get a closed differential inequality similar to that of \eqref{resuldiff} that yields the desired conclusion. Consequently, collecting the missing estimates between \eqref{eqzeromean3.2}-\eqref{eqzeromean3.6.1}, we infer that the deduction of the present case will be completed as soon as we have proved the following lemma.
\begin{lemma}\label{lemmazeromean2}
Consider $s>3/2$. Let $0<\theta<1/2$,
\begin{equation}\label{eqzeromean4}
\begin{aligned}
\sup_{t\in[0,T]}\|\langle x \rangle^{\theta} D^{\alpha}u(t)\|_{L^2} \leq c(\|\langle x\rangle^{1+\theta}u_0\|,\|u\|_{L^{\infty}_TH^s},\|u\|_{L^1_T W^{1,\infty}}).
\end{aligned}
\end{equation}
If $1/2 \leq \theta<3/2+\alpha$, then it follows
\begin{equation}\label{eqzeromean4.0}
\begin{aligned}
\sup_{t\in[0,T]}\|\langle x \rangle^{\theta} D^{\alpha}u(t)\|_{L^2} \lesssim \sup_{t\in[0,T]}\|\langle x \rangle^{(3/2)^{-}} D^{\alpha}u(t)\|_{L^2}.
\end{aligned}
\end{equation}
Moreover, assume $1+\alpha \leq \theta < 3/2+\alpha$. Then, there exists $0<\epsilon \ll 1$ such that
\begin{equation}\label{eqzeromean4.1}
\sup_{t \in [0,T]}\|D^{1+\alpha-\theta-\epsilon}(xu)(t)\|_{L^2}\leq c(\|\langle x\rangle^{1+\theta}u_0\|,\|u\|_{L^{\infty}_TH^s},\|u\|_{L^1_T W^{1,\infty}}).
\end{equation}
Additionally, let $\kappa\geq 2$ such that $\alpha\in (-\kappa/(\kappa+1),-(\kappa-3/2)/\kappa]$. Then it follows,
\begin{equation}\label{eqzeromean4.2}
\|\langle x \rangle^{\theta}D^{l(1+\alpha)}P^{\phi}(xu_0)\|_{L^2} \lesssim \|\langle x \rangle^{1+\theta} u_0\|_{L^2},
\end{equation}
for each $l=1,\dots, \kappa$.
\end{lemma}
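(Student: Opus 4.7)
The plan is to mirror the spectral and energy arguments of Lemmas \ref{claimnegade0.1}, \ref{claimnegade1} and \ref{lemmazeromean1}, now crucially exploiting the zero-mean hypothesis $\widehat{u_0}(0)=0$, which persists along the flow since $\alpha>-1$. For (1), at $t=0$ Plancherel and Theorem \ref{TheoSteDer} reduce the estimate to bounding $\|\mathcal D^{\theta}_\xi(|\xi|^{\alpha}\widehat{u_0})\|_{L^2}$; after cutting with the multiplier $\phi$ from \eqref{projectorphi}, the low-frequency piece is rewritten as $|\xi|^{-(-\alpha)}\psi(\xi)$ with $\psi(\xi):=\phi(\xi)\widehat{u_0}(\xi)$ satisfying $\psi(0)=0$ and $\psi\in C^{0,\gamma}$ (via $\widehat{u_0}\in H^{1+\theta}_\xi\hookrightarrow C^{0,\gamma}$ for $\gamma$ slightly below $1/2+\theta$), placing us squarely in the scope of the negative-derivative bound \eqref{eqsteinderiweig1} of Proposition \ref{steinderiweighbet2}, while the high-frequency piece $|\xi|^{\alpha}(1-\phi)$ is smooth and bounded. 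For $t>0$ I would apply $D^{\alpha}$ to \eqref{BDBO}, multiply by $\langle x\rangle_N^{2\theta}D^{\alpha}u$ and integrate; the dispersive commutator is handled by Propositions \ref{CalderonComGU}, \ref{fractfirstcaldcomm} and Lemma \ref{lemmacomm2} (as in \eqref{claimnegade}--\eqref{eqmainlow0.1}), and the nonlinearity $D^{\alpha}(uu_x)=\tfrac12 D^{1+\alpha}(u^{2})$ by Lemma \ref{leibnizhomog} combined with the Sobolev-type embedding \eqref{inecontralpde}; the lower-weight norm $\|D^{\alpha}u\|_{L^2}$ appearing in the coefficients has already been controlled in Lemma \ref{lemmazeromean1}. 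Part (2) follows immediately from the pointwise monotonicity $\langle x\rangle^{\theta}\le\langle x\rangle^{(3/2)^{-}}$, valid because $\theta<3/2+\alpha\le 1<(3/2)^{-}$.

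For (3), the identity $[x,\partial_xD^{\alpha}]=-(1+\alpha)D^{\alpha}$ (a special case of \eqref{identcomm}) gives the auxiliary equation
\[
\partial_t(xu)-\partial_xD^{\alpha}(xu)+(1+\alpha)D^{\alpha}u+xuu_x=0.
\]
Apply $D^{1+\alpha-\theta-\epsilon}$, multiply by $D^{1+\alpha-\theta-\epsilon}(xu)$, and integrate: the dispersive term cancels by skew-symmetry, the linear coupling $(1+\alpha)D^{\alpha}u$ is absorbed using (1) applied at each time $t$ (which is legitimate because $\widehat{u}(0,t)=0$), and the nonlinearity, rewritten as $xuu_x=\tfrac12\partial_x(xu^{2})-\tfrac12u^{2}$, is handled by Leibniz after noting that $1+\alpha-\theta-\epsilon+1\ge 0$ for $\epsilon>0$ small and $\theta<3/2+\alpha$, so that the net derivative order is non-negative. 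For the datum, Plancherel and $\widehat{xu_0}=i\partial_\xi\widehat{u_0}$ reduce the bound to
\[
\|D^{1+\alpha-\theta-\epsilon}(xu_0)\|_{L^2}\lesssim \||\xi|^{1+\alpha-\theta-\epsilon}\phi\|_{L^{1/\beta}}\|\partial_\xi\widehat{u_0}\|_{L^{2/(1-2\beta)}}+\|\partial_\xi\widehat{u_0}\|_{L^2}
\]
for small $\beta>0$, which closes by Sobolev embedding in $\xi$ and $\|\langle x\rangle^{1+\theta}u_0\|_{L^2}$, as in \eqref{eqlowdisp0.2}. Part (4) parallels \eqref{eqlowdisp2} and \eqref{eqzeromean3}: by Plancherel $\|\langle x\rangle^{\theta}D^{l(1+\alpha)}P^{\phi}(xu_0)\|_{L^2}=\|J^{\theta}_\xi\bigl(|\xi|^{l(1+\alpha)}\phi\,\partial_\xi\widehat{u_0}\bigr)\|_{L^2}$, and Theorem \ref{TheoSteDer} together with \eqref{prelimneq0} reduces the expression to three pieces handled either via Proposition \ref{steinderiweighbet} (when $\mathcal D^{\theta}(|\xi|^{l(1+\alpha)}\phi)\in L^{\infty}$, i.e.\ $l(1+\alpha)\ge 1$, invoking \eqref{prelimneq1}) or via H\"older's inequality plus Sobolev embedding on $\partial_\xi\widehat{u_0}$ (when $l(1+\alpha)<1$), closing at the level of $\|\langle x\rangle^{1+\theta}u_0\|_{L^2}$.

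The main obstacle is part (3): the derivative in $D^{1+\alpha-\theta-\epsilon}(xu)$ has negative order (up to roughly $-1/2$ for $\theta$ close to $3/2+\alpha$), so closing the energy inequality requires simultaneously (a) propagating the zero-mean condition through the flow in order to re-apply the low-frequency Proposition \ref{steinderiweighbet2} bound to $D^{\alpha}u(t)$ at each time, and (b) absorbing the nonlinearity $\partial_x(xu^{2})$ in a negative Sobolev norm by rewriting $\partial_x=-\mathcal HD$, which lifts the derivative count back to a non-negative value amenable to Leibniz. All other steps reduce to the commutator and interpolation toolbox already deployed in Section \ref{main1}.
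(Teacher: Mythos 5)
Your frequency-side treatment of the data in parts \eqref{eqzeromean4} and \eqref{eqzeromean4.2} and the overall energy scheme for \eqref{eqzeromean4.1} do match the paper, but there are three concrete gaps. First, for \eqref{eqzeromean4} the dispersive commutator estimates you invoke ("as in \eqref{claimnegade}--\eqref{eqmainlow0.1}") only close when $\theta<1+\alpha$: Proposition \ref{propfracweighapp} gives $\|D^{1+\alpha}\langle x\rangle_N^{\theta}\|_{L^{\infty}}\lesssim 1$ only for $\theta<1+\alpha$, and the alternative of writing $[\langle x\rangle_N^{\theta},D^{1+\alpha}]D^{\alpha}u=[\langle x\rangle_N^{\theta},D^{1+\alpha}]D^{-\alpha}(D^{2\alpha}u)$ requires $D^{2\alpha}u\in L^{2}$, which fails for $\alpha$ near $-1$ because $|\xi|^{2\alpha}\widehat{u}$ is not square-integrable at the origin even with $\widehat{u}(0,t)=0$ (one would need $\widehat{u}(\xi)=O(|\xi|^{\gamma})$ with $\gamma>-2\alpha-1/2$, which exceeds $1$ when $\alpha<-3/4$). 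Since here $\alpha\le -1/2$, the sub-range $1+\alpha\le\theta<1/2$ is non-empty for $\alpha<-1/2$, and it is precisely there that the paper must rerun the low-frequency-projector machinery of Case (II) on $D^{\alpha}u$ (the auxiliary quantities $\langle x\rangle_N^{\theta}D^{l(1+\alpha)}P^{\phi}D^{\alpha}u$, Lemma \ref{lemmacomm3}, the bound \eqref{eqzeromean3.6}) together with a new ingredient, Claim \ref{claimlowdisp1}, which controls $\|D^{1+2\alpha-\theta-\epsilon}u(t)\|_{L^2}$. Incidentally, for $0<\theta<1+\alpha$ no energy estimate is needed at all: the frequency-side bound applies directly to $u(t)$, since the zero mean persists and $u\in L^{\infty}_TL^2(|x|^2\,dx)$ is already known.

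Second, in \eqref{eqzeromean4.1} the coupling term $(1+\alpha)D^{\alpha}u$ contributes $\|D^{1+\alpha-\theta-\epsilon}D^{\alpha}u\|_{L^2}=\|D^{1+2\alpha-\theta-\epsilon}u\|_{L^2}$ to the energy inequality, and this is \emph{not} controlled by \eqref{eqzeromean4}, which bounds $\langle x\rangle^{\theta}D^{\alpha}u$, a different object; the paper proves it separately (again Claim \ref{claimlowdisp1}) via the zero-mean/H\"older bound on the datum and its own energy estimate. Your claim that this term is "absorbed using (1)" is the missing step. Third, regarding \eqref{eqzeromean4.0}: read literally the display is indeed trivially true by weight monotonicity, but it is then vacuous, since its right-hand side is never shown to be finite (the weight on $D^{\alpha}u$ is limited to $\theta<3/2+\alpha$ throughout). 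What the paper actually proves, and what the subsequent argument uses, is $\sup_t\|\langle x\rangle^{\theta}D^{\alpha}u(t)\|_{L^2}\lesssim\sup_t\|\langle x\rangle^{(3/2)^{-}}u(t)\|_{L^2}$ (the $D^{\alpha}$ on the right is a typo), obtained from Proposition \ref{steinderiweighbet2} with H\"older exponent $\gamma$ arbitrarily close to $1$; your monotonicity remark does not supply that estimate. Part \eqref{eqzeromean4.2} is fine and coincides with the paper's reduction to \eqref{eqlowdisp2} with $xu_0$ in place of $u_0$.
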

There is a subtle difference between \eqref{eqzeromean4} and \eqref{eqzeromean4.0}. The former inequality can be obtained by energy estimates in the equation in \eqref{BDBO}, while the latter can be deduced directly from properties of fractional weights and derivatives, and the persistence result $u\in L^{\infty}([0,T];L^2(|x|^{3^{-}}))$. This imposes that for a given $\alpha \in (-1,-1/2]$, we first  establish $u\in L^{\infty}([0,T];L^2(|x|^{2r}))$ for weights $r=1+\theta$, $0<\theta<1/2$. Once we have proved this results, we employ \eqref{eqzeromean4.0} to complete the cases $1/2\leq \theta <3/2+\alpha$.

\begin{proof}[Proof of Lemma \ref{lemmazeromean2}]
We first verify \eqref{eqzeromean4} for the initial data. By recurrent arguments using Theorem \ref{TheoSteDer}, we find
\begin{equation}\label{eqzeromean5}
\begin{aligned}
\|\langle x \rangle_N^{\theta}D^{\alpha}u_0\|_{L^2}\lesssim & \||\xi|^{\alpha}\widehat{u_0}\|_{L^2}+\|\mathcal{D}^{\theta}_{\xi}(|\xi|^{\alpha}\widehat{u_0})\phi\|_{L^2}+\||\xi|^{\alpha}\widehat{u_0}\mathcal{D}_{\xi}^{\theta}\phi\|_{L^2} \\
&+\|\mathcal{D}_{\xi}^{\theta}(|\xi|^{\alpha}(1-\phi))\widehat{u_0}\|_{L^2}+\||\xi|^{\alpha}(1-\phi)\mathcal{D}^{\theta}_{\xi}\widehat{u_0}\|_{L^2}.
\end{aligned}
\end{equation}
To estimate the above inequality, following the ideas in \eqref{eqzeromean3.1}, we are left to prove $\mathcal{D}^{\theta}_{\xi}(|\xi|^{\alpha}\widehat{u_0})\phi \in L^2(\mathbb{R})$. We choose $\theta-\alpha-1/2<\gamma<\min\{1/2+\theta,1\}$, so that by applying \eqref{eqsteinderiweig1} in Proposition \ref{steinderiweighbet2}, and the embedding $H^{1/2+\gamma}(\mathbb{R})\hookrightarrow C^{0,\gamma}(\mathbb{R})$, we deduce
\begin{equation}\label{eqzeromean6}
\begin{aligned}
\|\mathcal{D}^{\theta}_{\xi}(|\xi|^{\alpha}\widehat{u_0})\phi\|_{L^2} \lesssim \|\widehat{u_0}\|_{C^{0,\gamma}}\lesssim \|\widehat{u_0}\|_{H^{1/2+\gamma}_{\xi}} \sim \|\langle x \rangle^{1/2+\gamma}u_0\|_{L^2},
\end{aligned}
\end{equation}
where we implicitly used that $|\xi|^{\gamma+\alpha-\theta}\in L^2_{loc}(\mathbb{R})$. In particular, if $\theta-\alpha-1/2<\gamma \leq 1/2$, \eqref{eqzeromean6} is controlled by $\langle x \rangle u_0\in L^2(\mathbb{R})$. Then, from the persistence result $u\in L^{2}(|x|^2 \, dx)$, and the assumption $\widehat{u}(0,t)=0$ for $t\in[0,T]$, we can replace $u_0$ by $u$ in \eqref{eqzeromean5} and \eqref{eqzeromean6}, obtaining \eqref{eqzeromean4} for all $0<\theta<1+\alpha$. Next, assuming the persistence result $D^{\alpha}u\in L^{\infty}([0,T];L^2(|x|^{3^{-}}\, dx))$ and replacing $u_0$ by $u$ in the previous considerations, we can take $\gamma$ arbitrarily close to $1^{-}$ to deduce \eqref{eqzeromean4.0}.

Hence, to complete the considerations for \eqref{eqzeromean4}, we shall assume that $1+\alpha \leq \theta <1/2$. This case is deduced by weighted energy estimates in the equation \eqref{grondiffereequ3} with the current restriction on $\theta$ and $\alpha$. In fact, the nonlinear part can be estimated by using the fractional derivative $\mathcal{D}_{\xi}^{\theta}$ in the frequency domain, together with its properties and the persistence result $u\in L^{\infty}([0,T];L^2(|x|^2\, dx))$. We omit these computations. The dispersive term in \eqref{grondiffereequ3} is estimated by replacing $u$ by $D^{\alpha}u$ in the arguments of Case (II) \ref{subcaseII}. Accordingly, recalling the validity of \eqref{eqzeromean3.6}, we just need to verify:
\begin{claim}\label{claimlowdisp1}
Let $0<\theta<3/2+\alpha$ and $s>3/2$. Then there exists some $0<\epsilon\ll 1$ such that
\begin{equation}\label{eqclaimlowdisp1}
\begin{aligned}
\sup_{t\in [0,T]}\|D^{1+2\alpha-\theta-\epsilon}u(t)\|_{L^2} \leq c(\|\langle x\rangle^{1+\theta}u_0\|,\|u\|_{L^{\infty}_TH^s},\|u\|_{L^1_T W^{1,\infty}}).
\end{aligned}
\end{equation}
\end{claim}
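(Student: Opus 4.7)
The plan is to mirror the proof of Lemma \ref{claimnegade0.1}, but now the exponent $\beta:=1+2\alpha-\theta-\epsilon$ is genuinely negative (and for $\alpha$ close to $-1$ it may descend nearly to $-3/2$). What rescues us is the zero-mean property $\widehat{u}(0,t)=0$, which is propagated by the flow whenever $\alpha\in(-1,1)\setminus\{0\}$, together with the Hölder regularity of $\widehat{u}$ near the origin; these are the tools that make $D^{\beta}u$ and its time evolution meaningful.

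For the bound at $t=0$, since $u_0\in \dot{Z}_{s,1+\theta}(\mathbb{R})$ we have $\widehat{u_0}\in H^{1+\theta}_{\xi}$ with $\widehat{u_0}(0)=0$, so Sobolev embedding gives $\widehat{u_0}\in C^{0,\gamma}$ for any $\gamma<\min\{1,1/2+\theta\}$, with norm controlled by $\|\langle x\rangle^{1+\theta}u_0\|_{L^2}$. Hence $|\widehat{u_0}(\xi)|\lesssim |\xi|^{\gamma}\|\langle x\rangle^{1+\theta}u_0\|_{L^2}$, and splitting $\|D^{\beta}u_0\|_{L^2}^2$ at $|\xi|=1$ one sees that the low-frequency piece is integrable precisely when $2\beta+2\gamma>-1$, i.e.\ $\gamma>\theta-2\alpha-3/2+\epsilon$. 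This last inequality is compatible with the Sobolev ceiling $\gamma<\min\{1,1/2+\theta\}$ because $\alpha>-1$, provided $\epsilon$ is chosen small enough (depending on how close $\alpha$ is to $-1$); the high-frequency piece is trivially bounded by $\|u_0\|_{L^2}^2$.

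For arbitrary $t\in[0,T]$, apply $D^{\beta}$ to \eqref{BDBO} and pair in $L^2$ with $D^{\beta}u$; the dispersive contribution cancels by skew-symmetry of $\mathcal{H}D^{1+\alpha}$. Writing $uu_x=-\tfrac{1}{2}\mathcal{H}D(u^2)$ and using Cauchy-Schwarz, the nonlinear term obeys
\[
\Big|\int D^{\beta}(uu_x)\,D^{\beta}u\,dx\Big|\leq \tfrac{1}{2}\|D^{\beta+1}(u^2)\|_{L^2}\|D^{\beta}u\|_{L^2}.
\]
Now $\beta+1=2+2\alpha-\theta-\epsilon>1/2+\alpha-\epsilon>-1/2$, so the remaining norm splits in frequency: on $|\xi|\leq 1$ we bound $|\widehat{u^2}(\xi)|\leq \|u\|_{L^2}^2$ against the locally integrable weight $|\xi|^{2(\beta+1)}$, and on $|\xi|\geq 1$ we apply the Kato-Ponce inequality (Lemma \ref{leibnizhomog}), valid since $\beta+1<2+2\alpha\leq 1<s$, to obtain $\|u\|_{L^{\infty}}\|u\|_{H^s}$. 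This gives $\|D^{\beta+1}(u^2)\|_{L^2}\lesssim \|u\|_{L^2}^2+\|u\|_{L^{\infty}}\|u\|_{H^s}$, and Gronwall's inequality then closes the estimate.

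The delicate point, and the main difference from Lemma \ref{claimnegade0.1}, is the simultaneous calibration of $\gamma$ and $\epsilon$ in the initial bound together with the low-frequency control in the nonlinear term: both hinge on the zero-mean constraint and on a Hölder exponent inherited from the Sobolev regularity of $\widehat{u}$. Once $\epsilon$ is chosen appropriately small (depending on $1+\alpha>0$), the remaining computations are a routine extension of the energy-estimate machinery used earlier in the paper.
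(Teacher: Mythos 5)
Your proposal is correct and follows essentially the same route as the paper: for the initial data the paper likewise uses the zero-mean property together with the H\"older continuity of $\widehat{u_0}$ inherited from $H^{1+\theta}_{\xi}(\mathbb{R})\hookrightarrow C^{0,\gamma}(\mathbb{R})$ (split into the two cases $\theta\leq 1/2$ and $\theta>1/2$, which your condition $\gamma<\min\{1,1/2+\theta\}$ unifies), and for arbitrary times it simply refers back to the energy argument of Lemma \ref{claimnegade0.1} and omits the details. The one place where you go beyond the paper is worth noting: since $2+2\alpha-\theta-\epsilon$ can be negative when $\alpha<-1/2$ and $\theta>2+2\alpha$, the Leibniz rule of Lemma \ref{leibnizhomog} cannot be applied to $\|D^{2+2\alpha-\theta-\epsilon}(u^2)\|_{L^2}$ as it stands, and your low/high frequency splitting (bounding $\|\widehat{u^2}\|_{L^\infty}\leq\|u\|_{L^2}^2$ against the locally square-integrable weight $|\xi|^{2(\beta+1)}$ near the origin, which uses $\beta+1>-1/2$ from $\alpha>-1$) is exactly the repair needed to make the omitted step rigorous.
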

The proof of the previous claim follows by similar ideas as in the deduction of Lemma \ref{claimnegade0.1}. This completes the analysis for \eqref{eqzeromean4}. The deduction for \eqref{eqzeromean4.1} follows from weighted energy estimates and recurrent argument by now. Finally, replacing the role of $u_0$ by $xu_0$ in \eqref{eqlowdisp2}, we deduce \eqref{eqzeromean4.2}.

\end{proof} 


\subsection{Case \texorpdfstring{$-1/2<\alpha<0$}{}}

We first consider weights $3/2+\alpha \leq r \leq 2$, and then we turn to decay $2 \leq r <5/2+\alpha$. The former case is obtained from the differential equation \eqref{grondiffereequ} with $m=1$ and $1/2+\alpha \leq \theta \leq 1$.  Thus, since the estimate for $\mathcal{A}_1$ involves the commutator $[x,\mathcal{H}D^{1+\alpha}]$ as in \eqref{decomwithweighx}, we require to deduce the following lemma:
\begin{lemma}\label{lemmahomlowd}
Assume that $1/2+\alpha<\theta  \leq 1$ and $s>3/2$. Then it follows,
\begin{equation}
\sup_{t\in [0,T]}\|\langle x\rangle^{\theta} D^{\alpha}u(t)\|_{L^2} \leq c\big(T,\|u\|_{L^{\infty}_TH^s},\|\langle x \rangle^{1+\theta}u_0\|_{L^2},\|u\|_{L^{1}_TW^{1,\infty}}\big).
\end{equation}
\end{lemma}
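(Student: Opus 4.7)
The strategy mirrors that of Lemma \ref{lemmaClaim1}, with modifications dictated by the negative dispersion $\alpha \in (-1/2, 0)$ and by the zero-mean hypothesis $\widehat{u_0}(0) = 0$ implicit in $u_0 \in \dot{Z}_{s, 1+\theta}(\mathbb{R})$ (which is preserved by the flow for $\alpha \neq -1$). As usual, once the a priori bound is obtained one recovers the conclusion \eqref{eqmain2.0.1} by approximation.

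First, I would verify the bound at $t=0$. By Plancherel and Theorem \ref{TheoSteDer},
\begin{equation*}
\|\langle x \rangle^\theta D^\alpha u_0\|_{L^2} \sim \|J^\theta_\xi(|\xi|^\alpha \widehat{u_0})\|_{L^2}.
\end{equation*}
Splitting $|\xi|^\alpha = |\xi|^\alpha\phi + |\xi|^\alpha(1-\phi)$, the high-frequency piece is controlled by \eqref{prelimneq} and \eqref{prelimneq1} since $|\xi|^\alpha(1-\phi)$ is smooth and bounded. For the low-frequency piece, the function $\phi\widehat{u_0}$ vanishes at the origin thanks to $\widehat{u_0}(0)=0$, and belongs to $C^{0,\gamma}(\mathbb{R})$ for any $\gamma < 1/2 + \theta$ via the Sobolev embedding $H^{1+\theta}_\xi(\mathbb{R}) \hookrightarrow C^{0,\gamma}(\mathbb{R})$. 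Hence estimate \eqref{eqsteinderiweig1} of Proposition \ref{steinderiweighbet2} applies with $\beta = -\alpha \in (0, 1/2)$ and $\psi = \phi \widehat{u_0}$, provided we choose $\gamma$ in the non-empty interval $(\max\{\theta,\, \theta - \alpha - 1/2\},\, \min\{1,\, 1/2 + \theta\})$; non-emptiness is guaranteed precisely by $\alpha > -1/2$. This shows $\|\mathcal{D}^\theta(|\xi|^\alpha \phi\widehat{u_0})\|_{L^2} \lesssim \|\langle x \rangle^{1+\theta}u_0\|_{L^2}$.

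Next, for arbitrary $t \in (0, T]$, I would apply $D^\alpha$ to \eqref{BDBO}, multiply by $\langle x \rangle_N^{2\theta} D^\alpha u$, and integrate in $x$ to obtain a differential identity analogous to \eqref{grondiffereequ3}. The dispersive term is analyzed by substituting $u \mapsto D^\alpha u$ in the decomposition \eqref{claimnegade}: for $0 < \theta \leq 1/2$ one proceeds as in Case (I) of Subsection \ref{subs1} via Proposition \ref{fractfirstcaldcomm}, Lemma \ref{lemmacomm1} and Proposition \ref{propfracweighapp}; for $1/2 < \theta \leq 1$ the bounds \eqref{eqmainlow0}-\eqref{eqmainlow0.1} reduce the problem to controlling $\|D^{2\alpha} u\|_{L^\infty_T L^2}$, which I would establish by an auxiliary Fourier-side estimate identical in spirit to the initial data step: the zero-mean condition $\widehat{u}(0,t) = 0$ and the H\"older continuity of $\widehat{u}(\cdot,t)$ (coming from the previously-established persistence in $L^2(|x|^{2(1+\theta)}\,dx)$) combine with Proposition \ref{steinderiweighbet2} \eqref{eqsteinderiweig1} to yield $\sup_{t\in[0,T]} \|D^{2\alpha}u(t)\|_{L^2} \lesssim c(\|\langle x\rangle^{1+\theta} u\|_{L^\infty_T L^2})$.

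The nonlinear factor $\int \langle x\rangle_N^{\theta} D^\alpha(uu_x)(\langle x\rangle_N^{\theta} D^\alpha u)\,dx$ is handled by the same decomposition as the analysis of $\mathpzc{B}_2$ in the proof of Lemma \ref{lemmaClaim1}: writing $u_x = -\mathcal{H}Du$ one splits
\begin{equation*}
\langle x \rangle_N^\theta D^\alpha(uu_x) = \tfrac{1}{2}[\langle x \rangle_N^\theta, D^\alpha]\partial_x(u^2) - D^\alpha\bigl(u[\langle x \rangle_N^\theta, \mathcal{H}]Du\bigr) - D^\alpha\bigl(u\mathcal{H}[\langle x \rangle_N^\theta, D^{1-\alpha}]D^\alpha u\bigr) - D^\alpha\bigl(u\mathcal{H}D^{1-\alpha}(\langle x \rangle_N^\theta D^\alpha u)\bigr),
\end{equation*}
and each piece is bounded by Proposition \ref{CalderonComGU} and Lemmas \ref{lemmacomm1}-\ref{lemmacomm2} together with the interpolation \eqref{inecontralpde} for $\|D^\alpha u\|_{L^\infty}$ (alternatively Sobolev embedding since $\alpha<0$); the last piece is absorbed by integration by parts against $\langle x\rangle_N^\theta D^\alpha u$, as in \eqref{eqmain4}. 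Combining these ingredients one obtains a closed differential inequality of the form
\begin{equation*}
\tfrac{d}{dt}\|\langle x\rangle_N^\theta D^\alpha u\|_{L^2}^2 \lesssim A(t)\|\langle x\rangle_N^\theta D^\alpha u\|_{L^2}^2 + B(t)\|\langle x\rangle_N^\theta D^\alpha u\|_{L^2},
\end{equation*}
with $A,B$ depending on $\|u\|_{L^\infty_T H^s}$, $\|u\|_{L^1_T W^{1,\infty}}$ and $\|D^{2\alpha}u\|_{L^\infty_T L^2}$. Gronwall's lemma and the passage $N \to \infty$ conclude the proof.

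The main obstacle is the low-frequency singularity of $|\xi|^\alpha$ for $\alpha < 0$: in the regime $\theta > 1/2 + \alpha$ the factorization $\|\mathcal{D}^\theta(|\xi|^\alpha\phi)\|_{L^2}\|\widehat{u}\|_{L^\infty}$ used in Lemma \ref{lemmaClaim1} fails because $|\xi|^{\alpha - \theta}\phi \notin L^2$ near the origin. Overcoming this requires systematically exploiting $\widehat{u}(0,t)=0$ together with the refined estimate \eqref{eqsteinderiweig1} of Proposition \ref{steinderiweighbet2}, with the H\"older exponent $\gamma$ carefully balanced so that both the Sobolev embedding (tied to the decay $1+\theta$) and the local $L^2$-integrability of $G_{-\alpha,\gamma,\theta}$ hold simultaneously.
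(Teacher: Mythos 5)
The paper itself does not write out a proof of this lemma: it states only that ``it is not difficult to adapt previous arguments'' and omits the details, so your proposal is measured against the intended adaptation of Lemmas \ref{claimnegade1}, \ref{lemmaClaim1} and \ref{lemmazeromean2} rather than against an explicit argument. Your skeleton --- bound the data using $\widehat{u_0}(0)=0$, the H\"older continuity of $\widehat{u_0}$ and Proposition \ref{steinderiweighbet2}, then run a weighted energy estimate on $D^{\alpha}u$ and close with Gronwall --- is the right one, and your diagnosis of the obstruction (for $\theta>1/2+\alpha$ the factor $|\xi|^{\alpha-\theta}\phi$ is no longer in $L^2$, so the zero-mean condition must be exploited) is exactly the point.

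Two steps, however, do not go through as written. First, your treatment of the nonlinearity copies the $\mathpzc{B}_2$ decomposition of Lemma \ref{lemmaClaim1} verbatim and claims each piece is controlled by Lemmas \ref{lemmacomm1}--\ref{lemmacomm2}; but those commutator estimates are stated only for orders in $(0,1]$, respectively for triples $\beta,\gamma,1-\beta-\gamma\in[0,1]$. For $\alpha<0$ the pieces $[D^{\alpha},u](\cdot)$ (negative order) and $[\langle x\rangle_N^{\theta},D^{1-\alpha}]$ (order $1-\alpha>1$) fall outside both ranges, so the cited toolbox simply does not apply. The adaptation the paper has in mind (see the corresponding step in Lemma \ref{lemmazeromean2}) is to write $D^{\alpha}(uu_x)=-\tfrac12\mathcal{H}D^{1+\alpha}(u^2)$ so that only the positive order $1+\alpha\in(1/2,1)$ appears, and to estimate $\|\langle x\rangle^{\theta}D^{\alpha}(uu_x)\|_{L^2}\sim\|J^{\theta}_{\xi}(\sign(\xi)|\xi|^{1+\alpha}\widehat{u^2})\|_{L^2}$ on the Fourier side via Theorem \ref{TheoSteDer}, \eqref{prelimneq} and Proposition \ref{steinderiweighbet} (which gives $\mathcal{D}^{\theta}(\sign(\xi)|\xi|^{1+\alpha}\phi)\in L^2$ since $\theta<3/2+(1+\alpha)$), together with the already known weighted bounds on $u^2$. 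Second, your control of $\|D^{2\alpha}u\|_{L^{\infty}_TL^2}$ is circular as stated: you derive the H\"older continuity of $\widehat{u}(\cdot,t)$ from ``the previously-established persistence in $L^2(|x|^{2(1+\theta)}\,dx)$'', but persistence at decay $1+\theta\geq 3/2+\alpha$ is precisely what this lemma is an ingredient for. The gap is fixable: Theorem \ref{Theowellpos} (i) already gives decay $r'<3/2+\alpha$, hence $\widehat{u}(\cdot,t)\in C^{0,\gamma}$ for every $\gamma<1+\alpha$, and since $-2\alpha-1/2<1+\alpha$ exactly when $\alpha>-1/2$ this suffices to place $|\xi|^{2\alpha}\widehat{u}$ in $L^2$ near the origin; alternatively, run an energy estimate on $D^{2\alpha}u$ itself, whose nonlinear term involves $D^{1+2\alpha}(u^2)$ with $1+2\alpha>0$ so that Lemma \ref{leibnizhomog} applies. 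Two smaller points: the majorant $G_{\beta,\gamma,\theta}$ of Proposition \ref{steinderiweighbet2} is not square-integrable at infinity, so the cutoff $\phi$ must sit outside the derivative as in \eqref{eqzeromean5} (or the region $|\eta|\gg1$ must be treated separately); and the endpoint $\theta=1$ forces $\gamma>1$, which is inadmissible, so there one replaces $\mathcal{D}^{\theta}_{\xi}$ by $\partial_{\xi}$ as the paper does in the analogous places.
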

It is not difficult to adapt previous consideration to prove Lemma \ref{lemmahomlowd}, for the sake of brevity we omit these details. Now, we can follow the same decomposition and arguments dealing with \eqref{decomwithweighx} to extend the persistence result for the cases $1/2+\alpha \leq \theta<1+\alpha$, i.e., it follows that $u\in L^{\infty}([0,T];L^2(|x|^{2r}\, dx))$ as long as $3/2+\alpha\leq r=1+\theta < 2+\alpha$. Consequently, we shall assume that $1+\alpha \leq \theta \leq 1$. This case is deduce by  applying the same decomposition \eqref{decomwithweighx}, \eqref{Comwellprel1} in Proposition \ref{CalderonComGU}, and Lemma \ref{lemmacomm2}, we omit this analysis.

Finally, we deal with the case $2<r<5/2+\alpha$, that is, we set $m=2$ and $0<\theta<1/2+\alpha$ in \eqref{grondiffereequ}. In virtue of the identity 
\begin{equation*}
\begin{aligned}
\left[x^2,\mathcal{H}D^{1+\alpha}\right]f=-(1+\alpha)\alpha \mathcal{H}D^{\alpha-1}f+2(1+\alpha)D^{\alpha}(xf),
\end{aligned}
\end{equation*}
we decompose the estimate for the factor $\mathcal{A}_1$ in \eqref{grondiffereequ} as follows 
\begin{equation}\label{eqlemmahomlowd}
\begin{aligned}
\langle x \rangle_{N}^{\theta}x^2 \mathcal{H}D^{1+\alpha}u=&\langle x\rangle_N^{\theta}[x^{2},\mathcal{H}D^{1+\alpha}]u+\langle x \rangle_N^{\theta}\mathcal{H}D^{1+\alpha}(x^2 u) \\
=& -(1+\alpha)\alpha \langle x \rangle_{N}^{\theta} \mathcal{H}D^{\alpha-1}u+2(1+\alpha)\langle x \rangle_{N}^{\theta}D^{\alpha}(xu)\\
&+[\langle x\rangle^{\theta}_N,\mathcal{H}]D^{1+\alpha}(x^2 u)+\mathcal{H}([\langle x\rangle^{\theta}_N,D^{1+\alpha}](x^2u))+\mathcal{H}D^{1+\alpha}(\langle x \rangle_N^{\theta}x^2 u).
\end{aligned}
\end{equation}
Going back to the integral defining $\mathcal{A}_1$, we find that the last term on the r.h.s of the above identity yields null contribution to the estimate. The second term is estimated in Lemma \ref{lemmahomlowd}. Whereas, by employing the persistence result $u\in L^{\infty}([0,T];L^2(|x|^{4}\, dx))$, and following similar considerations to \eqref{eqmainlow3.1}, we control the third and fourth term on the r.h.s of \eqref{eqlemmahomlowd}. On the other hand, by Plancherel's identity, Theorem \ref{TheoSteDer}, \eqref{prelimneq0}, \eqref{prelimneq} and \eqref{eqsteinderiweig1} in Proposition \ref{steinderiweighbet2} with $\gamma=1$, it is seen 
\begin{equation}\label{eqlemmahomlowd1}
\begin{aligned}
\|\langle x \rangle^{\theta}D^{\alpha-1}u\|_{L^2} 
\lesssim & \|\langle x \rangle^{(3/2)^{+}}u\|_{L^2},
\end{aligned}
\end{equation}
which completes the estimate for the remaining factor on the r.h.s of \eqref{eqlemmahomlowd}. The proof of Theorem \ref{Theowellpos} (ii) for the case $-1/2<\alpha<0$ is complete.


\subsection{Case \texorpdfstring{$0<\alpha<1$}{}}

We divide our arguments according to the size of the decay variable $r\geq 3/2+\alpha$. 

\subsubsection*{\bf Case weight $3/2+\alpha \leq r<5/2+\alpha$ for $0<\alpha \leq 1/2$, and $3/2+\alpha \leq r \leq 3$ for $1/2<\alpha \leq 1$}

The same arguments presented in the deduction of the persistence property in $Z_{s,r}(\mathbb{R})$ with $1<r \leq 2$ for  $1/2<\alpha<1$ can be extended to the space $\dot{Z}_{s,r}(\mathbb{R})$ assuming that $3/2+\alpha\leq r \leq 2$ for $0<\alpha \leq 1/2$. Thus, according to Lemma \ref{lemmaClaim1}, we only need to prove
\begin{equation}
\|\langle x \rangle^{\theta} D^{\alpha}u_0\|_{L^2} \lesssim \|u_0\|_{Z_{s,r}}, 
\end{equation}
where $1/2+\alpha \leq \theta \leq 1$, $\alpha\in(0,1/2]$. In fact, the hypothesis $\widehat{u}(0)=0$, the mean value inequality and Sobolev embedding yield
\begin{equation}\label{eqmain12}
|\widehat{u_0}(\xi)|\lesssim \|\partial_{\xi}\widehat{u_0}\|_{L^{\infty}}|\xi|\lesssim \|\langle x \rangle^{(3/2)^{+}}u_0\|_{L^2}|\xi|.
\end{equation}
Therefore, by employing \eqref{eqmain12} to increase the magnitude of the weight in the $\xi$-variable by one, we can repeat the argument in the deduction of \eqref{eqmain2.3} to obtain \eqref{eqmain12}. This provides the persistence property in  $\dot{Z}_{s,r}(\mathbb{R})$ with $3/2+\alpha\leq r \leq 2$, for each $0<\alpha \leq 1/2$.

Next, we treat the cases $2<r < 5/2+\alpha$ with $\alpha\in (0,1/2]$, and $3/2+\alpha \leq r\leq 3$ for $1/2< \alpha <1$. These conclusions are obtained by the same energy estimate employed in the analysis for the space $Z_{s,r}(\mathbb{R})$ with $2<r<3/2+\alpha$, $\alpha \in (1/2,1]$. Thus, from the arguments in the deduction of Lemma \ref{lemmaClaim2}, we notice that it is enough to show
\begin{equation}\label{eqmain9}
\|\langle x \rangle^{\theta}D^{\alpha}(xu_0)\|_{L^2}\lesssim \|u_0\|_{Z_{s,r}},
\end{equation}
and  
\begin{equation}\label{eqmain9.1}
\sup_{t\in [0,T]}\|\langle x \rangle^{\theta}\mathcal{H}D^{\alpha-1}u(t)\|_{L^2} \lesssim \sup_{t\in[0,T]}\|\langle x \rangle^{(3/2)^{+}}u(t)\|_{L^{2}},
\end{equation}
for all $0<\theta<1/2+\alpha$ with $0<\alpha\leq 1/2$, and each $\alpha-1/2\leq \theta \leq 1$ for the cases  $1/2<\alpha<1$. Anyway, as long as $\theta<1$, the estimate in \eqref{eqlemmaClaim3.1} provides the required bound for the first term on l.h.s of \eqref{eqmain9}, and the case $\theta=1$ is obtained by simple modifications. Now, bearing in mind \eqref{eqmain12}, we can modify the arguments in \eqref{eqlemmahomlowd1} to obtain \eqref{eqmain9.1}. 

Consequently, we have completed the deduction of the persistence property in the space $\dot{Z}_{s,r}(\mathbb{R})$ for $3/2+\alpha \leq r<5/2+\alpha$ if $0< \alpha\leq 1/2$. Additionally, we have proved persistence in $\dot{Z}_{s,r}(\mathbb{R})$ for $3/2+\alpha \leq r \leq 3$, if $1/2< \alpha<1$.

\subsubsection*{\bf Case weight $3<r<5/2+\alpha$ for $1/2<\alpha<1$}

Here we prove the persistence property in the space $\dot{Z}_{s,r}(\mathbb{R})$ with $3<r<5/2+\alpha$ and $1/2<\alpha<1$. Writing $r=3+\theta$ where $0<\theta<\alpha-1/2$, our considerations are obtained from \eqref{grondiffereequ} with $m=3$ and $0<\theta<\alpha-1/2$. Thus, by the estimate \eqref{eqgreatdisp1}, we are reduced to prove. 

\begin{lemma}
Let $r=3+\theta$ with $0<\theta<\alpha-1/2$, and $s\geq \max\{s_{\alpha}^{+},\alpha r\}$. Then it holds that
\begin{equation}\label{eqmain10}
\begin{aligned}
\sup_{t\in[0,T]}\|\langle x \rangle^{\theta}&D^{\alpha}(x^2 u(t))\|_{L^2} \leq c\big(T,\|u\|_{L^{\infty}_TH^s},\|\langle x \rangle^ru_0\|_{L^2},\|u\|_{L^{1}_TW^{1,\infty}}\big),
\end{aligned}
\end{equation}
and 
\begin{equation}\label{eqmain11} 
\begin{aligned}
\sup_{t\in[0,T]}\big(\|\langle x \rangle^{\theta}\mathcal{H}D^{\alpha-1}(xu(t))\|_{L^2}+\|\langle x \rangle^{\theta}D^{\alpha-2}u(t)\|_{L^2} \big)\lesssim \sup_{t\in[0,T]}\|\langle x \rangle^{2}u(t)\|_{L^2}.
\end{aligned}
\end{equation}
\end{lemma}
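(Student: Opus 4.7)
The plan is to derive \eqref{eqmain10} by weighted energy estimates applied to the equation satisfied by $x^2 u$, while \eqref{eqmain11} is obtained from direct Fourier-side computations exploiting the zero-mean condition $\widehat{u}(0,t)=0$.

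For \eqref{eqmain10}, I would first verify the bound for the initial data. Writing $\widehat{x^2 u_0}=-\partial_\xi^2\widehat{u_0}$, Theorem \ref{TheoSteDer} reduces the task to controlling $\||\xi|^{\alpha}\partial_\xi^2\widehat{u_0}\|_{L^2}$ together with $\|\mathcal{D}^\theta_\xi(|\xi|^{\alpha}\partial_\xi^2\widehat{u_0})\|_{L^2}$. Splitting by a cutoff $\phi$ around the origin, the low-frequency part is treated with \eqref{prelimneq} and Proposition \ref{steinderiweighbet} (since $|\xi|^{\alpha-\theta}\phi\in L^{1/\beta}$ for suitable $\beta$ because $\theta<\alpha-1/2<\alpha+1/2$), while the high-frequency part is bounded by $\|J^{1+\theta}_\xi(\langle\xi\rangle^\alpha\widehat{u_0})\|_{L^2}$ and the complex interpolation Lemma \ref{complexinterpo} applied with $\theta_1=(2+\theta)/r$, $b=\alpha r$, exploiting the standing hypothesis $s\geq\alpha r$. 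Then, using the commutator identity \eqref{identcomm} with $m=2$, the equation for $x^2 u$ reads
\begin{equation*}
\partial_t(x^2u)-\partial_xD^\alpha(x^2u)-\alpha(1+\alpha)\mathcal{H}D^{\alpha-1}u+2(1+\alpha)D^\alpha(xu)+x^2 uu_x=0.
\end{equation*}
Applying $D^\alpha$, multiplying by $\langle x\rangle_N^{2\theta}D^\alpha(x^2u)$ and integrating leads to a closed differential inequality.

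The dispersive main term $\int\langle x\rangle_N^\theta D^\alpha\partial_xD^\alpha(x^2u)\langle x\rangle_N^\theta D^\alpha(x^2u)\,dx$ is handled by the commutator decomposition already used in the proof of Lemma \ref{lemmaClaim1} (with $u$ replaced by $D^\alpha u$ and weighted by $x^2$): Proposition \ref{fractfirstcaldcomm}, Lemmas \ref{extendenlemma}, \ref{lemmacomm2}, together with Proposition \ref{propfracweighapp} and Lemma \ref{complexinterpo}, yield a bound involving $\|J^{(2+\theta)\alpha}u\|_{L^2}$ (which is controlled by $s\geq\alpha r$) plus the quantity being estimated. The two lower-order dispersive terms $\langle x\rangle_N^\theta \mathcal{H}D^{2\alpha-1}u$ and $\langle x\rangle_N^\theta D^{2\alpha}(xu)$ are positive-derivative weighted norms: since $2\alpha-1>0$ and $2\alpha<2$, they can be controlled by standard interpolation between $\|\langle x\rangle^{r'} u\|_{L^2}$ (available from inductive persistence in $\dot{Z}_{s,r'}$ for $r'<r$) and $\|J^s u\|_{L^2}$. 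The nonlinear term $\langle x\rangle_N^\theta D^\alpha(x^2 uu_x)$ is decomposed, mimicking $\widetilde{\mathpzc{B}}_2$ in Lemma \ref{lemmaClaim2}, as $\tfrac{1}{2}[\langle x\rangle_N^\theta,D^\alpha]\partial_x(x^2 u^2)$ plus $D^\alpha(\langle x\rangle_N^\theta x^2 uu_x)-\tfrac{1}{2}D^\alpha(\langle x\rangle_N^\theta u^2)\pm\text{l.o.t.}$; the commutators $[\langle x\rangle_N^\theta,D^\alpha]$, $[\langle x\rangle_N^\theta,\mathcal{H}]$, $[\langle x\rangle_N^\theta, D^{1-\alpha}]$ and $[D^\alpha,u]$ are treated exactly as in Lemma \ref{lemmaClaim2}, and after integration by parts in the transport-type term $u\partial_x(\langle x\rangle_N^\theta D^\alpha(x^2u))$ one obtains a factor $\|\partial_x u\|_{L^\infty}\|\langle x\rangle_N^\theta D^\alpha(x^2u)\|_{L^2}^2$. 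Gronwall and $N\to\infty$ then conclude \eqref{eqmain10}.

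For \eqref{eqmain11}, both estimates reduce to Fourier-side analysis. By Plancherel and Theorem \ref{TheoSteDer},
\begin{equation*}
\|\langle x\rangle^\theta D^{\alpha-2}u\|_{L^2}\lesssim \||\xi|^{\alpha-2}\widehat{u}\|_{L^2}+\|\mathcal{D}^\theta_\xi(|\xi|^{\alpha-2}\widehat{u})\|_{L^2},
\end{equation*}
and analogously for $\langle x\rangle^\theta\mathcal{H}D^{\alpha-1}(xu)$ with $\partial_\xi\widehat{u}$ and $\sign(\xi)|\xi|^{\alpha-1}$ in place of $\widehat{u}$ and $|\xi|^{\alpha-2}$. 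The assumption $\widehat{u}(0,t)=0$ together with the Sobolev embedding $H^2_\xi\hookrightarrow C^{0,1}$ (valid because $\langle x\rangle^2 u\in L^2$) gives $|\widehat{u}(\xi)|\lesssim|\xi|\|\langle x\rangle^{(3/2)^+}u\|_{L^2}$, which makes $|\xi|^{\alpha-2}\phi\widehat{u}\in L^2$ precisely when $\alpha>1/2$, and Proposition \ref{steinderiweighbet2} \eqref{eqsteinderiweig1} with $\psi=\widehat{u}$, $\gamma=1$, $\beta=2-\alpha$ produces the bound $G_{2-\alpha,1,\theta}(\eta)\sim|\eta|^{\alpha-1-\theta}$ near the origin, which is $L^2_{loc}$ exactly under $\theta<\alpha-1/2$. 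The estimate on $\mathcal{H}D^{\alpha-1}(xu)$ is handled by splitting $\partial_\xi\widehat{u}(\xi)=\partial_\xi\widehat{u}(0)+(\partial_\xi\widehat{u}(\xi)-\partial_\xi\widehat{u}(0))$: the constant part multiplied by $|\xi|^{\alpha-1}\phi$ is controlled via Proposition \ref{steinderiweighbet} with $\beta=1-\alpha<1/2$ (using $\theta<\alpha-1/2$), and the vanishing-at-origin remainder, which is Hölder $C^{0,\gamma}$ for any $\gamma<1/2$, is handled by Proposition \ref{steinderiweighbet2} \eqref{eqsteinderiweig1}. High-frequency contributions $|\xi|\gtrsim 1$ are routine.

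The main obstacle I anticipate is the nonlinear estimate in \eqref{eqmain10}: after the $x^2$-multiplication one has to distribute $D^\alpha$ across a product carrying two weights, and each commutator between $\langle x\rangle_N^\theta$ or $D^\alpha$ and $x^2 u$ produces lower-order terms that must be closed back against $\|\langle x\rangle^\theta D^\alpha(x^2u)\|_{L^2}$ modulo quantities bounded through the already established persistence in $\dot{Z}_{s,r'}$ for $r'<r$ (in particular, to handle $\|\langle x\rangle^\theta D^{2\alpha}(xu)\|_{L^2}$ one needs the analogue of Lemma \ref{lemmaClaim1} with an extra factor of $x$, which in turn relies on \eqref{eqmain11}). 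Organizing this bootstrap so that the constants are uniform in $N$ and the regularity requirement $s\geq\alpha r$ is not violated is the delicate bookkeeping step.
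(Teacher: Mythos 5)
Your proposal is correct and follows essentially the same route as the paper: \eqref{eqmain10} by the weighted energy scheme of Lemma \ref{lemmaClaim2} (which the paper itself only invokes by reference, omitting the computations), and \eqref{eqmain11} by Fourier-side splitting via Theorem \ref{TheoSteDer}, with the zero-mean hypothesis and Proposition \ref{steinderiweighbet2} applied with $\gamma=1$ for the $D^{\alpha-2}u$ term, exactly as in \eqref{eqlemmahomlowd1}. The only cosmetic difference is in the term $\mathcal{H}D^{\alpha-1}(xu)$: the paper does not split off $\partial_{\xi}\widehat{u}(0)$, but instead bounds $\||\xi|^{\alpha-1}\phi\,\partial_{\xi}\widehat{u}\|_{L^2}$ directly by $\||\xi|^{\alpha-1}\phi\|_{L^2}\|\partial_{\xi}\widehat{u}\|_{L^{\infty}}$ (using $\alpha>1/2$) and applies \eqref{eqsteinderiweig0} of Proposition \ref{steinderiweighbet2} to the symbol $\sign(\xi)|\xi|^{\alpha-1}\phi$ --- note that your citation of Proposition \ref{steinderiweighbet} for this negative power should be Proposition \ref{steinderiweighbet2}, since the former is stated only for $|\xi|^{\beta}\phi$ with $\beta>0$ --- and both organizations produce the same constraint $\theta<\alpha-1/2$.
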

At this point, we have developed enough arguments to deduce the above lemma. We omit its deduction for the sake of brevity.


\section{Proof of Theorem \ref{Theotwotimcondi}} \label{main3}

We will prove Theorem \ref{Theotwotimcondi} only for two groups of dispersion, $-1\leq \alpha \leq -1/2$ and $0<\alpha \leq 1/2$. The remaining cases, $-1/2<\alpha<0$ and $1/2<\alpha<1$ are treated by simple modifications to our considerations below. We have chosen the previous restrictions to show our technique for negative and positive values of $\alpha$, as well as to exemplify those cases where the maximum decay rate $3/2+\alpha$ is at most one, and where it is at most of order two.

Let $s>0$ provided by the hypothesis of Theorem \ref{Theotwotimcondi} according to the values of $r$ and $\alpha \in [-1,1) \setminus\{0\}$. Without loss of generality, we may assume that $t_1=0$, that is,
\begin{equation}\label{twoeq1}
u_0, u(t_2)\in Z_{s,3/2+\alpha}(\mathbb{R}),
\end{equation}
with the extra assumption that $u_0 \in Z_{s,(1/2)^{+}}(\mathbb{R})$ for $\alpha=-1$. Our conclusions are obtained by estimating the integral equation associated to \eqref{BDBO} in the frequency domain, namely, 
\begin{equation}\label{twoeq2}
\widehat{u}(\xi,t)=e^{i\xi|\xi|^{\alpha}t}\widehat{u_0}(\xi)-\int_0^t e^{i\xi|\xi|^{\alpha}(t-\tau)}\widehat{uu_x}(\xi,\tau)\, d\tau.
\end{equation}
We proceed to study \eqref{twoeq2} according to the cases $-1\leq \alpha\leq 1/2$ and $0<\alpha \leq 1/2$.


\subsection{Case \texorpdfstring{$-1 \leq \alpha \leq -1/2$}{}}

We first decompose the homogeneous term in \eqref{twoeq2} as follows
\begin{equation}\label{twoeq3}
\begin{aligned}
e^{i\xi|\xi|^{\alpha}t}\widehat{u_0}(\xi)=&e^{i\xi|\xi|^{\alpha}t}\widehat{u_0}(0)\phi+e^{i\xi|\xi|^{\alpha}t}(\widehat{u_0}(\xi)-\widehat{u_0}(0))\phi+e^{i\xi|\xi|^{\alpha}t}\widehat{u_0}(\xi)(1-\phi).
\end{aligned}
\end{equation}
Through the above decomposition we claim.
\begin{claim}\label{twotimclaim1}
Let $s>3/2$, consider $3/10<r<1/2$ for $\alpha=-1$, and  $\max\{1/2,9/10+3\alpha/5\}<r<3/2+\alpha$ for $-1<\alpha \leq -1/2$. If $u \in C([0,T],Z_{s,r}(\mathbb{R}))$ with $u_0\in Z_{s,(3/2)^{+}}(\mathbb{R})$ for $\alpha=-1$, and $u_0 \in Z_{s,3/2+\alpha}(\mathbb{R})$ when $-1<\alpha\leq -1/2$, then it holds
\begin{equation}\label{twoeq3.1}
\begin{aligned}
\langle \xi \rangle^{-2}\Big(e^{i\xi|\xi|^{\alpha}t}(\widehat{u_0}(\xi)-\widehat{u_0}(0))\phi&+e^{i\xi|\xi|^{\alpha}t}\widehat{u_0}(\xi)(1-\phi)\\
&-\int_0^t e^{i\xi|\xi|^{\alpha}(t-\tau)}\widehat{uu_x}(\xi,\tau)\, d\tau\Big) \in L^{\infty}([0,T];H^{3/2+\alpha}_{\xi}(\mathbb{R})).
\end{aligned}
\end{equation}
\end{claim}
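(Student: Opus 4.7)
My plan is to verify each of the three summands in \eqref{twoeq3.1} separately. By Theorem \ref{TheoSteDer}, it suffices to control, for each piece, its $L^2_\xi$-norm and the $L^2_\xi$-norm of its Stein derivative $\mathcal{D}^{3/2+\alpha}_\xi$. The weight $\langle\xi\rangle^{-2}$ plays a decisive role throughout: it absorbs the growth at infinity of the factor $|\xi|^{-\alpha}$ (which grows at most linearly since $-1\le\alpha\le -1/2$) that the integration by parts in the Duhamel term will produce.

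For the homogeneous pieces, the high-frequency contribution $\langle\xi\rangle^{-2}e^{i\xi|\xi|^\alpha t}\widehat{u_0}(\xi)(1-\phi)$ reduces, via the product rule \eqref{prelimneq}, to the estimate $\widehat{u_0}(1-\phi)\in H^{3/2+\alpha}_\xi$, which follows from Plancherel and the hypothesis on $u_0$, since the phase $\xi|\xi|^\alpha$ is smooth on $\supp(1-\phi)$. For the low-frequency contribution $\langle\xi\rangle^{-2}e^{i\xi|\xi|^\alpha t}(\widehat{u_0}(\xi)-\widehat{u_0}(0))\phi$, Lemma \ref{derivexp} (estimate \eqref{prelimneq2.0} when $\alpha=-1$, \eqref{prelimneq2.1} when $-1<\alpha\le -1/2$) provides a $|\xi|^{-(3/2+\alpha)}$-controlled singularity at the origin for $\mathcal{D}^{3/2+\alpha}_\xi e^{i\xi|\xi|^\alpha t}$. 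This singularity is matched precisely by the H\"older modulus at zero of $\widehat{u_0}(\xi)-\widehat{u_0}(0)$, inherited from the hypothesis on $u_0$ through Sobolev embedding. The Stein derivative of $(\widehat{u_0}-\widehat{u_0}(0))\phi$ is then bounded by Proposition \ref{steinderiweighbet2}.

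For the Duhamel piece, I would write $\widehat{uu_x}=\tfrac{i\xi}{2}\widehat{u^2}$ and integrate by parts in $\tau$ using $e^{i\xi|\xi|^\alpha(t-\tau)}=-(i\xi|\xi|^\alpha)^{-1}\partial_\tau e^{i\xi|\xi|^\alpha(t-\tau)}$, producing
\begin{equation*}
-\frac{1}{2|\xi|^{\alpha}}\Big[\widehat{u^2}(\xi,t)-e^{i\xi|\xi|^{\alpha}t}\widehat{u_0^2}(\xi) -\int_0^t e^{i\xi|\xi|^{\alpha}(t-\tau)}\partial_\tau\widehat{u^2}(\xi,\tau)\, d\tau\Big].
\end{equation*}
Since $\alpha<0$, the prefactor $\langle\xi\rangle^{-2}|\xi|^{-\alpha}$ is bounded and vanishes at the origin, which is the essential gain from this integration by parts. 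The boundary terms at $\tau=0$ and $\tau=t$ are then estimated in $H^{3/2+\alpha}_\xi$ using Plancherel together with $\|\langle x\rangle^{3/2+\alpha}u^2\|_{L^2}\le\|u\|_{L^\infty}\|\langle x\rangle^{3/2+\alpha}u\|_{L^2}$ and the decay hypotheses at $t=0$ and $t=t_2$; inside the integral I would use the equation in \eqref{BDBO} to replace $\partial_\tau u$ by $-\mathcal{H}D^{1+\alpha}u-uu_x$, trading the time derivative for spatial operators already handled by Theorem \ref{Theowellpos}.

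The main obstacle is bounding the interior-time contribution inside the Duhamel integral, where only the weaker persistence $u(\tau)\in Z_{s,r}$ with $r<3/2+\alpha$ from Theorem \ref{Theowellpos} is available. I would bridge this gap by decomposing $\widehat{u^2}(\xi,\tau)$ into its value at the origin, a H\"older-vanishing piece supported near zero, and a tail supported away from zero, invoking Proposition \ref{steinderiweighbet2} with H\"older exponent governed by the decay lower bound $r>3/10$ (for $\alpha=-1$) or $r>9/10+3\alpha/5$ (for $-1<\alpha\le -1/2$), and complex interpolation via Lemma \ref{complexinterpo} to transfer $H^s$-regularity into $\xi$-decay. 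The quantitative lower bounds on $r$ in the hypothesis enter precisely at this point, making them sharp for the argument.
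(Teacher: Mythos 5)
Your treatment of the two homogeneous pieces is essentially the paper's: peel off the weight via \eqref{prelimneq}, use Lemma \ref{derivexp} for the Stein derivative of the phase, and compensate the resulting singularity at the origin with the H\"older modulus of $\widehat{u_0}(\xi)-\widehat{u_0}(0)$ coming from $C^{0,1+\alpha}(\mathbb{R})\hookrightarrow H^{3/2+\alpha}(\mathbb{R})$ (one small correction: for $-1<\alpha\leq -1/2$ the singularity produced by \eqref{prelimneq2.1} with $b=3/2+\alpha$ is $|\xi|^{1+\alpha-b}=|\xi|^{-1/2}$, not $|\xi|^{-(3/2+\alpha)}$; the conclusion is unaffected).

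The Duhamel term is where you depart from the paper, and your route has a genuine gap. The integration by parts in $\tau$ produces the interior term $\int_0^t e^{i\xi|\xi|^{\alpha}(t-\tau)}|\xi|^{-\alpha}\partial_\tau\widehat{u^2}\,d\tau$, and replacing $\partial_\tau u$ by the equation yields $\partial_\tau(u^2)=-2u\mathcal{H}D^{1+\alpha}u-2u^2u_x$. Placing this in $H^{3/2+\alpha}_{\xi}$ requires $\langle x\rangle^{3/2+\alpha}\,u\,\mathcal{H}D^{1+\alpha}u\in L^2(\mathbb{R})$ at every $\tau$, i.e.\ a weighted bound on $D^{1+\alpha}u$ at the critical decay. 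This is \emph{not} ``already handled by Theorem \ref{Theowellpos}'': the hypothesis only gives $u(\tau)\in Z_{s,r}$ with $r<3/2+\alpha$, the product-doubling interpolation that rescues $\|\langle x\rangle^{3/2+\alpha}u^2\|_{L^2}$ does not apply to $u\cdot\mathcal{H}D^{1+\alpha}u$ without a weighted $L^4$ bound on $\mathcal{H}D^{1+\alpha}u$, and deriving such bounds is exactly the kind of commutator analysis the paper works hard to avoid (cf.\ the discussion around \eqref{eqcondi}). The same issue infects your boundary term at $\tau=t$: you invoke ``decay hypotheses at $t=0$ and $t=t_2$'', but the claim must hold for \emph{all} $t\in[0,T]$ where only $r<3/2+\alpha$ is known, so $\|\langle x\rangle^{3/2+\alpha}u^2(t)\|_{L^2}$ must come from the interpolation $\|\langle x\rangle^{3/2+\alpha}u^2\|_{L^2}\lesssim\|\langle x\rangle^{9/10+3\alpha/5}u\|_{L^2}^{5/3}\|J^{3/2}u\|_{L^2}^{1/3}$, not from a pointwise-in-time hypothesis. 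The fix is to drop the integration by parts entirely: since $\widehat{uu_x}=\tfrac{i\xi}{2}\widehat{u^2}$, the multiplier $\xi\langle\xi\rangle^{-2}$ and its Stein derivative are already harmless, the $|\xi|^{-1/2}$ singularity from $\mathcal{D}^{3/2+\alpha}_{\xi}(e^{i\xi|\xi|^{\alpha}(t-\tau)})$ is absorbed by that same factor of $\xi$, and the only substantive estimate left is $\mathcal{D}^{3/2+\alpha}_{\xi}\widehat{u^2}\in L^2$, which is precisely the interpolation step you correctly identify in your last paragraph and which is where the lower bound on $r$ enters. That is the paper's argument, and it needs nothing more.
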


Assuming for the moment the validity of Claim \ref{twotimclaim1}, from \eqref{twoeq1} and \eqref{twoeq2}, we find
\begin{equation}\label{twoeq3.2}
\begin{aligned}
\langle \xi \rangle^{-2}\widehat{u}(\xi,t)\in H^{3/2+\alpha}_{\xi}(\mathbb{R})\, \text{ if and only if } \, \langle \xi \rangle^{-2}(e^{i\xi|\xi|^{\alpha}t}\phi)\widehat{u_0}(0) \in H^{3/2+\alpha}_{\xi}(\mathbb{R}).
\end{aligned}
\end{equation}
Now, if $0<3/2+\alpha<1$, by writing
\begin{equation}\label{twoeq3.3}
\begin{aligned}
D^{3/2+\alpha}\big(\langle \xi \rangle^{-2}\widehat{u}(\xi,t)\big)=[D^{3/2+\alpha},\langle \xi \rangle^{-2}]\widehat{u}(\xi,t)+\langle \xi \rangle^{-2}(D^{3/2+\alpha}\widehat{u})(\xi,t),
\end{aligned}
\end{equation}
we employ Lemma \ref{lemmacomm1} and the ideas in \eqref{inecontralpde1} to obtain 
\begin{equation*}
\begin{aligned}
\|[D^{3/2+\alpha},\langle \xi \rangle^{-2}]\widehat{u}\|_{L^2}\lesssim \|D^{3/2+\alpha}(\langle \xi \rangle^{-2})\|_{L^{\infty}}\|u\|_{L^2} \lesssim \big(\|\langle \xi \rangle^{-2}\|_{L^2}+\|\partial_{\xi}\langle \xi \rangle^{-2}\|_{L^{\infty}}\big)\|u\|_{L^2}.
\end{aligned}
\end{equation*}
Notice that if $3/2+\alpha=1$ (i.e., $\alpha=-1/2$), we can argue as above, replacing the fractional derivative by the local operator $\partial_{\xi}$. We omit these computations. Hence, we find
\begin{equation}\label{twoeq4}
\begin{aligned}
\langle \xi \rangle^{-2}(D^{3/2+\alpha}\widehat{u})(\xi,t)\in L^{2}(\mathbb{R})\, \text{ if and only if } \, D^{3/2+\alpha}(\langle \xi \rangle^{-2} e^{i\xi|\xi|^{\alpha}t}\phi)\widehat{u_0}(0) \in L^{2}(\mathbb{R}).
\end{aligned}
\end{equation}
When $\alpha=-1/2$, we replace $D$ by $\partial_{\xi}$ in \eqref{twoeq4}. We will prove below that for all $t>0$, 
\begin{equation}\label{twoeq4.0}
D^{3/2+\alpha}(\langle \xi \rangle^{-2} e^{i\xi|\xi|^{\alpha}t}\phi), \, \, \partial_{\xi}(\langle \xi \rangle^{-2}e^{i\xi|\xi|^{-1/2}t}\phi) \notin L^{2}(\mathbb{R}),
\end{equation}
whenever $-1\leq \alpha<1/2$. Consequently, since \eqref{twoeq1} implies that \eqref{twoeq4} is true at $t=t_2>0$, it must follow that $\widehat{u_0}(0)=0$ as required. This in turn determines the conclusion of Theorem \ref{Theotwotimcondi} (ii) for $-1<\alpha \leq 1/2$, and in virtue of Claim \ref{claimmeanzero}, we prove Theorem \ref{Theotwotimcondi} (i) case $\alpha=-1$. 

Now, we prove \eqref{twoeq4.0}. The case $\alpha=-1/2$ is deduced noticing that $\partial_{\xi}(e^{i\xi|\xi|^{-1/2}t}\phi)\notin L^2(\mathbb{R})$ as $|x|^{-1/2}$ is not square-integrable at the origin. If $\alpha \in (-1,-1/2)$, we let $A(t)=\{z:0<z<\frac{1}{2}\min\{1,t^{-\frac{1}{1+\alpha}}\}\}$ for fixed $t>0$. Employing the definition (ii) in Theorem \ref{TheoSteDer}, we deduce
\begin{equation*}
\begin{aligned}
\big(\mathcal{D}_{\xi}^{3/2+\alpha}(\langle \xi \rangle^{-2}&e^{i\xi|\xi|^{\alpha}t}\phi)\big)^2(x)\chi_{A(t)}(x)\\
& \hspace{0.2cm}\gtrsim \int_{\big\{0<y< \big(\frac{2}{3}\big)^{\frac{1}{(1+\alpha)}}x \big\}} \frac{|\langle x \rangle^{-2}e^{ix|x|^{\alpha}t}\phi(x)-\langle y\rangle^{-2}e^{iy|y|^{\alpha}t}\phi(y)|^2}{|x-y|^{1+2(3/2+\alpha)}}\, dy \, \chi_{A(t)}(x).
\end{aligned}
\end{equation*}
Let $x \in A(t)$ and $0<y<\big(\frac{2}{3}\big)^{\frac{1}{(1+\alpha)}}x$, since $\phi(x')= 1$ and $\langle x' \rangle \sim 1$ for  $|x'|\leq 1$, we have
\begin{equation*}
\begin{aligned}
|\langle x\rangle^{-2}e^{ix|x|^{\alpha}t}\phi(x)-\langle y\rangle^{-2}e^{iy|y|^{\alpha}t}\phi(y)|&=|\langle x\rangle^{-2}(e^{(ix|x|^{\alpha}t-iy|y|^{\alpha}t)}-1)+(\langle x\rangle^{-2}-\langle y\rangle^{-2})|\\
& \gtrsim |\sin(x|x|^{\alpha}t-y|y|^{\alpha}t)|\\
& \gtrsim |x|^{1+\alpha}t,
\end{aligned}
\end{equation*}
where given that $\sin(x)/x\geq 2/\pi$ for $|x|\leq \pi/2$, we have used that $|\sin(x|x|^{\alpha}t-y|y|^{\alpha}t)| \gtrsim |x|^{1+\alpha} t$. Gathering the previous estimates, it is seen that
\begin{equation}\label{derivnotL2.1}
\begin{aligned}
\big(\mathcal{D}_{\xi}^{3/2+\alpha}(\langle \xi \rangle^{-2}e^{i\xi|\xi|^{\alpha}t}\phi)\big)^2(x)\chi_{A(t)}(x) &  \gtrsim \int_{\big\{0<y< \big(\frac{2}{3}\big)^{\frac{1}{(1+\alpha)}}x \big\}} \frac{|x|^{2(1+\alpha)}t^2}{|x|^{1+2(3/2+\alpha)}}\, dy \, \chi_{A(t)}(x) \\
&  \sim (|x|^{-1}t^2)\chi_{A(t)}(x).
\end{aligned}
\end{equation}
Therefore, since $ |x|^{-1/2} \chi_{A(t)} \notin L^{2}(\mathbb{R})$, by Theorem \ref{TheoSteDer}, $D^{3/2+\alpha}(\langle \xi \rangle^{-2}e^{i\xi|\xi|^{\alpha}t}\phi) \notin L^{2}(\mathbb{R})$ as long as $t>0$. When $\alpha=-1$, the estimate for $\mathcal{D}^{1/2}_{\xi}(e^{i\sign(\xi)t}\phi)$ is obtained by similar considerations as above, so we omit this analysis.

\begin{proof}[Proof of Claim \ref{twotimclaim1}]

We will only prove the case $3/2+\alpha \in (0,1)$, i.e., $\alpha \in [-1,1/2)$. Whereas the case $\alpha=-1/2$ follows by replacing the nonlocal derivative by the local derivative in our analysis below. We first notice that as a consequence of property \eqref{prelimneq}, 
\begin{equation}\label{propeweigh}
\|\langle x \rangle^{-2} f\|_{H^{3/2+\alpha}}\lesssim \|f\|_{H^{3/2+\alpha}},
\end{equation}
for any $f\in H^{3/2+\alpha}(\mathbb{R})$. Thus, excluding the weight $\langle \xi \rangle^{-2}$, it is enough to prove that apart of the integral factor, all the terms on the right-hand side of \eqref{twoeq3.1} belong to $H^{3/2+\alpha}_{\xi}(\mathbb{R})$. If $-1<\alpha<-1/2$, in virtue of Lemma \ref{derivexp} and \eqref{prelimneq}, it is seen that
\begin{equation}\label{twoeq4.1}
\begin{aligned}
\|\mathcal{D}^{3/2+\alpha}_{\xi}& \big(e^{i\xi|\xi|^{\alpha}t}(\widehat{u_0}(\xi)-\widehat{u_0}(0))\phi\big)\|_{L^2} \\
&\lesssim \|\mathcal{D}^{3/2+\alpha}_{\xi}\big(e^{i\xi|\xi|^{\alpha}t}\big)(\widehat{u_0}(\xi)-\widehat{u_0}(0))\phi\|_{L^2}+\|e^{i\xi|\xi|^{\alpha}t}\phi \mathcal{D}_{\xi}^{3/2+\alpha}\widehat{u_0}\|_{L^2}\\
&\lesssim _T \|(\widehat{u}(\xi)-\widehat{u_0}(0))\phi\|_{L^2}+\||\xi|^{-1/2}(\widehat{u_0}(\xi)-\widehat{u_0}(0))\phi\|_{L^2}+\|D_{\xi}^{3/2+\alpha}\widehat{u_0}\|_{L^2} \\
&\lesssim_T  \|\widehat{u_0}\|_{H^{3/2+\alpha}_{\xi}}(\|\phi\|_{L^2}+\||\xi|^{1/2+\alpha}\phi\|_{L^2})+\|\widehat{u_0}\|_{H^{3/2+\alpha}},
\end{aligned}
\end{equation}
where we have employed \eqref{Holdercond} with $\gamma=1+\alpha$, i.e., the embedding $C^{0,1+\alpha}(\mathbb{R})\hookrightarrow H^{3/2+\alpha}(\mathbb{R})$, and that $|\xi|^{1/2+\alpha} \in L^2_{loc}(\mathbb{R})$. Now, if $\alpha=-1$, by hypothesis, $\widehat{u_0}\in H^{1/2+\epsilon}(\mathbb{R})$ for some $0<\epsilon \ll 1$, then we can follow the same arguments in \eqref{twoeq4.1} employing \eqref{prelimneq2.0} and the embedding $C^{0,\epsilon}(\mathbb{R})\hookrightarrow H^{1/2+\epsilon}(\mathbb{R})$ to conclude the desired estimate. Notice that in this part we also needed to use $|\xi|^{-1/2+\epsilon}\in L^2_{loc}(\mathbb{R})$.

Now, by using that $\|\widehat{u_0}\|_{L^{\infty}}\lesssim \|J^{(1/2)^{+}}\widehat{u_0}\|_{L^2}$, and the above ideas, we control $\|D^{3/2+\alpha}\big(e^{i\xi|\xi|^{\alpha}t}\widehat{u_0}(\xi)(1-\phi)\big)\|_{L^2}$. Thereby, we got left to estimate the integral part in \eqref{twoeq3.1}.  Applying Theorem \ref{TheoSteDer}, Lemma \ref{derivexp} and \eqref{prelimneq}, we get
\begin{equation}\label{twoeq5}
\begin{aligned}
\|D^{3/2+\alpha}\Big(\int_0^t  e^{\xi|\xi|^{\alpha}(t-t')}& \frac{\xi}{\langle \xi \rangle^2}\widehat{u^2}(\xi,\tau)\, d\tau \Big)\|_{L^2}\\
\lesssim & \int_0^T \Big(\|\frac{(|\xi|^{1/2}+|\xi|)}{\langle \xi \rangle^{2}}\widehat{u^2}(\xi,\tau)\|_{L^2}+\|\mathcal{D}_{\xi}^{3/2+\alpha}\big(\xi\langle \xi \rangle^{-2}\big)\widehat{u^2}(\xi,\tau)\|_{L^2}\\
& +\|\big(\mathcal{D}_{\xi}^{3/2+\alpha}\widehat{u^2}\big)(\xi,\tau)\|_{L^2}\Big)\, d\tau.
\end{aligned}
\end{equation}
The first two terms on the r.h.s of the above inequality can be estimated by employing \eqref{prelimneq1} and the fact that $u\in C([0,T];H^s(\mathbb{R}))$, $s>3/2$. On the other hand, by Sobolev embedding $L^{4}(\mathbb{R})\hookrightarrow H^{1/4}(\mathbb{R})$ and complex interpolation Lemma \ref{complexinterpo}, we find
\begin{equation}\label{twoeq5.1}
\begin{aligned}
\|\big(\mathcal{D}_{\xi}^{3/2+\alpha}\widehat{u^2}\big)(\xi)\|_{L^2} \lesssim \|\langle x \rangle^{3/2+\alpha} u^2\|_{L^2} &\lesssim  \|\langle x \rangle^{3/4+\alpha/2} u\|_{L^4}^2 \\
&\lesssim \|J^{1/4}\big(\langle x \rangle^{3/4+\alpha/2} u\big)\|_{L^2}^2 \\
&\lesssim \|\langle x \rangle^{9/10+3\alpha/5}u\|_{L^2}^{5/3}\|J^{3/2}u\|_{L^2}^{1/3}.
\end{aligned}
\end{equation}
Since $9/10+3\alpha/5<r<3/2+\alpha$ and $s>3/2$, we have that the above expression is controlled by the assumption $u\in C([0,T];Z_{s,r}(\mathbb{R}))$. This completes the study of \eqref{twoeq5} and in turn the proof of the claim. 
\end{proof}


\subsection{Case \texorpdfstring{$0 < \alpha \leq 1/2$}{}}

Since the weight limit $3/2+\alpha \in (3/2,2]$, we require to differentiate \eqref{twoeq2} to proceed with our estimates. But before, let us introduce some notation to simplify the exposition of our results. For each integer $j\geq 0$, we define
\begin{equation}\label{Fnotation}
\begin{aligned}
\mathfrak{F}_{j}(\alpha,t,\xi,f)=\partial_{\xi}^j\big(e^{it\xi|\xi|^{\alpha}}f(\xi)\big).
\end{aligned}
\end{equation}
For instance, we have
\begin{equation}\label{Fnotationex1}
\begin{aligned}
\mathfrak{F}_1(t,\xi,f)=\partial_{\xi}(it\xi|\xi|^{\alpha})\mathfrak{F}_0(t,\xi,f)+\mathfrak{F}_0(t,\xi,\partial_{\xi}f).
\end{aligned}
\end{equation}
Differentiating the integral equation \eqref{twoeq2} produces
\begin{equation}\label{inteequonede} 
\begin{aligned}
\partial_{\xi}\widehat{u}(\xi,t)=\mathfrak{F}_1(t,\xi,\widehat{u_0})-\int_0^t \mathfrak{F}_1(t-t',\xi,\widehat{u \partial_x u})\, dt'. 
\end{aligned}
\end{equation}
We write 
\begin{equation}\label{twoeq6}
\begin{aligned}
\partial_{\xi}(it\xi|\xi|^{\alpha})\mathfrak{F}_0(t,\xi,\widehat{u_0})=&\partial_{\xi}(it\xi|\xi|^{\alpha})\Big(\widehat{u_0}(0)\phi(\xi)+(e^{it\xi|\xi|^{\alpha}}-1)\widehat{u_0}(\xi)\phi(\xi)\\
&+(\widehat{u_0}(\xi)-\widehat{u_0}(0))\phi(\xi)+e^{it\xi|\xi|^{\alpha}}\widehat{u_0}(\xi)(1-\phi(\xi)) \Big)\\
=:&\partial_{\xi}(it\xi|\xi|^{\alpha})\widehat{u_0}(0)\phi(\xi)+\sum_{j=1}^3 \mathfrak{F}_{0,j}(t,\xi,\widehat{u_0}).
\end{aligned}
\end{equation}
We state some spatial decay properties for all the terms in the integral equation \eqref{inteequonede} excepting the first factor on the r.h.s of \eqref{twoeq6}.
\begin{claim}\label{twotimclaim2}
Let $((3+2\alpha)(12-3\alpha))/(4(10-3\alpha))<r<3/2+\alpha$, and $s\geq \{s_{\alpha}^{+},\alpha r\}$. If $u\in C([0,T];Z_{s,r}(\mathbb{R}))$ with $u_0 \in Z_{s,3/2+\alpha}(\mathbb{R})$, then it holds
\begin{equation}
\begin{aligned}
\langle \xi \rangle^{-2}\Big(\sum_{j=1}^3 \mathfrak{F}_{0,j}(t,\xi,\widehat{u_0})+\mathfrak{F}_0(t,\xi,\partial_{\xi}\widehat{u_0})-\int_0^t \mathfrak{F}_1(t-\tau,\xi,\widehat{uu_x})\, d\tau\Big) \in L^{\infty}([0,T];H^{1/2+\alpha}_{\xi}(\mathbb{R})).
\end{aligned}
\end{equation}
\end{claim}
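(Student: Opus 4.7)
The strategy mirrors that of Claim \ref{twotimclaim1} but now we must control one extra derivative: the target Sobolev exponent is $1/2+\alpha$ rather than $3/2+\alpha$. As before, the weight $\langle\xi\rangle^{-2}$ absorbs the polynomial $\xi$-factors appearing when one differentiates $\widehat{uu_x} = \tfrac{i\xi}{2}\widehat{u^2}$, and the analogue of property \eqref{propeweigh} for $H^{1/2+\alpha}$ (via \eqref{prelimneq}) lets us essentially ignore the weight. For each of the five terms I will apply Theorem \ref{TheoSteDer} to split the $H^{1/2+\alpha}_\xi$-norm into an $L^2$-piece and a piece $\mathcal{D}^{1/2+\alpha}_\xi(\cdot)$ in $L^2$, then use \eqref{prelimneq} to distribute the fractional derivative. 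Derivatives of the oscillatory phase $e^{it\xi|\xi|^\alpha}$ are handled by Lemma \ref{derivexp}; derivatives of the polynomial prefactors $|\xi|^\alpha\phi$ or $\langle\xi\rangle^{-2}$ are handled by Proposition \ref{steinderiweighbet} together with \eqref{prelimneq1}.

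For the four ``homogeneous'' terms the arguments proceed as follows. For $\mathfrak{F}_{0,1}$, the crude bound $|e^{it\xi|\xi|^\alpha}-1|\lesssim t|\xi|^{1+\alpha}$ combined with $\partial_\xi(\xi|\xi|^\alpha) = (1+\alpha)|\xi|^\alpha$ turns the apparent singularity into $|\xi|^{1+2\alpha}\phi$, which is $L^2$ and admits a $\mathcal{D}^{1/2+\alpha}_\xi$-bound by Proposition \ref{steinderiweighbet} since $1+2\alpha > -1/2 + (1/2+\alpha)$. For $\mathfrak{F}_{0,2}$ I use the embedding $H^{3/2+\alpha}_\xi\hookrightarrow C^{0,\gamma}$ with $\gamma$ close to $1+\alpha$, yielding $|\widehat{u_0}(\xi)-\widehat{u_0}(0)|\lesssim |\xi|^\gamma\|\widehat{u_0}\|_{H^{3/2+\alpha}_\xi}$, so the product with $|\xi|^\alpha\phi$ is of size $|\xi|^{\alpha+\gamma}\phi$, again amenable to Proposition \ref{steinderiweighbet}. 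The term $\mathfrak{F}_{0,3}$ is routine since $1-\phi$ is supported away from the origin, so no fractional-weight issue occurs and $\|\widehat{u_0}\|_{H^{3/2+\alpha}_\xi}\sim \|u_0\|_{Z_{s,3/2+\alpha}}$ suffices. For $\mathfrak{F}_0(t,\xi,\partial_\xi\widehat{u_0}) = e^{it\xi|\xi|^\alpha}\partial_\xi\widehat{u_0}$, Plancherel gives $\partial_\xi\widehat{u_0}\in H^{1/2+\alpha}_\xi \iff \langle x\rangle^{1/2+\alpha}xu_0\in L^2$, which is implied by $u_0\in Z_{s,3/2+\alpha}$; combined with Lemma \ref{derivexp} this completes the estimate.

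The main obstacle is the integral term. Using $\widehat{uu_x} = \tfrac{i\xi}{2}\widehat{u^2}$ and $\partial_\xi\widehat{uu_x} = \tfrac{i}{2}\widehat{u^2} + \tfrac{\xi}{2}\widehat{xu^2}$, one expands $\mathfrak{F}_1(t-\tau,\xi,\widehat{uu_x})$ into three terms of the form $e^{i(t-\tau)\xi|\xi|^\alpha}$ times a polynomial prefactor (at worst of order $\xi^2$ near infinity, which the weight $\langle\xi\rangle^{-2}$ kills, and with a $|\xi|^{1+\alpha}$ factor near the origin) times either $\widehat{u^2}(\xi,\tau)$ or $\widehat{xu^2}(\xi,\tau)$. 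Distributing $\mathcal{D}^{1/2+\alpha}_\xi$ and applying Lemma \ref{derivexp} and Theorem \ref{TheoSteDer} reduces the analysis to bounding, uniformly in $\tau\in[0,T]$, the quantities $\|\langle x\rangle^{1/2+\alpha}u^2\|_{L^2}$ and $\|\langle x\rangle^{1/2+\alpha}xu^2\|_{L^2}$. Imitating \eqref{twoeq5.1} via Sobolev embedding $L^4\hookrightarrow H^{1/4}$ and then Lemma \ref{complexinterpo} between $\langle x\rangle^r u\in L^2$ and $J^s u\in L^2$, one arrives at bounds of the form $\|u\|_{L^\infty}\|J^{1/4}(\langle x\rangle^{3/2+\alpha}u)\|_{L^2}$, which by Lemma \ref{complexinterpo} is dominated by $\|\langle x\rangle^{r_*} u\|_{L^2}^{\theta_*}\|J^{s_*}u\|_{L^2}^{1-\theta_*}$.

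The hard step is the bookkeeping in this last interpolation: the exponents $(r_*,s_*,\theta_*)$ must simultaneously satisfy $r_*\le r$ and $s_*\le s$. Balancing these constraints (with the extra $x$-factor in $\widehat{xu^2}$ giving the worst case) is exactly what forces the lower threshold $r > ((3+2\alpha)(12-3\alpha))/(4(10-3\alpha))$ together with $s\ge\alpha r$. Once this optimization is carried out the claim follows by a standard $L^\infty_T$-integration in $\tau$, since $T$ is finite and all implicit constants depend only on $T$, $\|u\|_{L^\infty_T Z_{s,r}}$, and $\|u_0\|_{Z_{s,3/2+\alpha}}$.
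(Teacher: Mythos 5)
Your proposal follows essentially the same route as the paper's proof: the same term-by-term treatment via Theorem \ref{TheoSteDer}, \eqref{prelimneq}, Lemma \ref{derivexp} and Proposition \ref{steinderiweighbet}, the same reduction of the integral term to $\|\langle x\rangle^{1/2+\alpha}u^2\|_{L^2}$ and $\|\langle x\rangle^{3/2+\alpha}u^2\|_{L^2}$, and the same $H^{1/4}\hookrightarrow L^4$ plus Lemma \ref{complexinterpo} interpolation against $J^{3/2-3\alpha/8}u$ that produces the threshold on $r$. The only cosmetic slips are that for $\mathfrak{F}_{0,2}$ the paper uses the mean-value bound $|\widehat{u_0}(\xi)-\widehat{u_0}(0)|\lesssim|\xi|\,\|J^{3/2+\alpha}_{\xi}\widehat{u_0}\|_{L^2}$ (a H\"older exponent $\gamma$ close to $1+\alpha>1$ is not a $C^{0,\gamma}$ statement, though $\gamma=1$ suffices), and the interpolated quantity is $\|J^{1/4}(\langle x\rangle^{3/4+\alpha/2}u)\|_{L^2}^2$ rather than $\|u\|_{L^{\infty}}\|J^{1/4}(\langle x\rangle^{3/2+\alpha}u)\|_{L^2}$; neither affects the argument.
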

Assuming for the moment the conclusion of Claim \ref{twotimclaim2}, and arguing as in \eqref{twoeq3.3}, it follows from \eqref{inteequonede} and \eqref{twoeq6} that
\begin{equation}\label{twoeq7}
\begin{aligned}
\langle \xi \rangle^{-2}D^{1/2+\alpha}\partial_{\xi}\widehat{u}(\xi,t) \in L^2(\mathbb{R}) \text{ if and only if }  D^{1/2+\alpha}\big(\langle \xi \rangle^{-2}\partial_{\xi}(it\xi|\xi|^{\alpha})\phi\big)\widehat{u_0}(0) \in L^2(\mathbb{R}),
\end{aligned}
\end{equation} 
when $\alpha=-1/2$, we replace $D^{1/2+\alpha}$ by $\partial_x$ in the above statement. Hence, once we have proved that 
\begin{equation}\label{twoeq8}
 D^{1/2+\alpha}\big(\langle \xi \rangle^{-2}\partial_{\xi}(it\xi|\xi|^{\alpha})\phi\big), \, \, \partial_{\xi}\big(\langle \xi \rangle^{-2}\partial_{\xi}(it\xi|\xi|^{1/2})\phi\big)\notin L^2(\mathbb{R}),
\end{equation}
by the assumption \eqref{twoeq1} and \eqref{twoeq7} at $t=t_2$, it must follow $\widehat{u_0}(0)=0$. This completes the proof of Theorem \ref{Theotwotimcondi} (iii) for $0<\alpha \leq 1/2$.

We turn to \eqref{twoeq8}. Since $|\partial_{\xi}^{2}(it\xi|\xi|^{1/2})|\sim |\xi|^{-1/2}$ outside of the origin, we verify \eqref{twoeq8} for $\alpha=-1/2$, i.e., $\partial_{\xi}\big(\langle \xi \rangle^{-2}\partial_{\xi}(it\xi|\xi|^{1/2})\phi\big)\notin L^2(\mathbb{R})$. On the other hand, setting $0<\alpha<1/2$, we obtain
\begin{equation}\label{twoeq8.1} 
\begin{aligned}
\Big(\mathcal{D}^{1/2+\alpha}\big(\langle \xi\rangle^{-2} |\xi |^{\alpha}\phi) \Big)^2(x)\chi_{\{0<|x|\leq 1\}}&\gtrsim \int_{\{0<|y|<|x|/(2\langle 2 \rangle^2)^{1/\alpha} \}} \frac{|\langle x\rangle^{-2} |x |^{\alpha}-\langle y\rangle^{-2} |y |^{\alpha}|^2}{|x-y|^{2+2\alpha}} \, dy \, \chi_{\{0<|x|\leq 1\}} \\
&\gtrsim \int_{\{0<|y|<|x|/(2\langle 2 \rangle^2)^{1/\alpha} \}} \frac{|x|^{2\alpha}}{|x|^{2+2\alpha}} \, dy \, \chi_{\{0<|x|<1\}}\\
&\sim |x|^{-1}\chi_{\{0<|x|<1\}}. 
\end{aligned}
\end{equation}
From the above estimate and Theorem \ref{TheoSteDer}, we get $ D^{1/2+\alpha}\big(\langle \xi \rangle^{-2}\partial_{\xi}(it\xi|\xi|^{\alpha})\phi\big)\notin L^2(\mathbb{R})$. This completes the proof of \eqref{twoeq8}.

\begin{proof}[Proof of Claim \ref{twotimclaim2}]

We focus on the parameters $\alpha\in(0,1/2)$, since the case $\alpha=-1$ follows by replacing the fractional derivative by the local one in our reasoning below. 
Writing $\phi=\phi \widetilde{\phi}$, with $\widetilde{\phi} \in C^{\infty}_c(\mathbb{R})$, \eqref{prelimneq}, Proposition \ref{steinderiweighbet} and Lemma \ref{derivexp} yield
\begin{equation*}
\begin{aligned}
\|&\mathcal{D}_{\xi}^{1/2+\alpha}\big(\mathfrak{F}_{0,1}(t,\xi,\widehat{u_0})\big)\|_{L^2} \\
&\lesssim \||\xi|^{1+\alpha}\mathcal{D}_{\xi}^{1/2+\alpha}(|\xi|^{\alpha}\widetilde{\phi})\phi\|_{L^2}\|\widehat{u_0}\|_{L^{\infty}}+\|\mathcal{D}_{\xi}^{1/2+\alpha}(e^{it\xi|\xi|^{\alpha}})\widehat{u_0}\phi\|_{L^2}+\|\mathcal{D}^{1/2+\alpha}_{\xi}(\widehat{u_0}\phi)\|_{L^2}\\
&\lesssim \|J^{(1/2)^{+}}_{\xi}\widehat{u_0}\|_{L^2}+\||\xi|^{\alpha(1/2+\alpha)}\widehat{u}_0 \phi\|_{L^2}+\|J_{\xi}^{1/2+\alpha}\widehat{u_0}\|_{L^2} \\
& \lesssim \|\langle x\rangle^{1/2+\alpha}u_0\|_{L^2},
\end{aligned}
\end{equation*}
where we have also applied the embedding $H^{(1/2)^{+}}(\mathbb{R})\hookrightarrow L^{\infty}(\mathbb{R})$. Next, by using the mean-value inequality and Sobolev embedding, we get $|\widehat{u_0}(\xi)-\widehat{u_0}(0)|\leq |\xi|\|J^{3/2+\alpha}\widehat{u_0}\|_{L^2}$, thus we can adapt the previous ideas to get
\begin{equation}
\|\mathcal{D}_{\xi}^{1/2+\alpha}\big(\mathfrak{F}_{0,2}(t,\xi,\widehat{u_0})\big)\|_{L^2}\lesssim \|J^{3/2+\alpha}\widehat{u_0}\|_{L^2}\lesssim \|\langle x \rangle^{3/2+\alpha}u_0\|_{L^2}.
\end{equation}
Since $\|\langle \xi \rangle^{-2}f\|_{H^{1/2+\alpha}}\lesssim \|f\|_{H^{1/2+\alpha}}$, whenever $f\in H^{1/2}(\mathbb{R})$, we have that the above estimates yield the desires conclusion for $\mathfrak{F}_{0,j}(t,\xi,\widehat{u_0})$, $j=1,2$. We follow by using properties \eqref{prelimneq} and \eqref{prelimneq1} to deduce
\begin{equation}\label{twoeq9}
\begin{aligned}
\|\mathcal{D}_{\xi}^{1/2+\alpha}\big(\mathfrak{F}_{0,3}(t,\xi,\widehat{u_0})\big)\|_{L^2} \lesssim & \|\mathcal{D}^{1/2+\alpha}_{\xi}\big( \langle \xi \rangle^{-2}|\xi|^{\alpha}(1-\phi)e^{it\xi|\xi|^{\alpha}}\big)\widehat{u_0}\|_{L^2}\\
&+\|\langle \xi \rangle^{-2}|\xi|^{\alpha}(1-\phi)e^{it\xi|\xi|^{\alpha}}\mathcal{D}^{1/2+\alpha}_{\xi}\widehat{u_0}\|_{L^2}\\
\lesssim & \|\mathcal{D}^{1/2+\alpha}_{\xi}\big( \langle \xi \rangle^{-2}|\xi|^{\alpha}(1-\phi)e^{it\xi|\xi|^{\alpha}}\big)\|_{L^{\infty}}\|\widehat{u_0}\|_{L^2}\\
&+\|\langle \xi \rangle^{-2}|\xi|^{\alpha}(1-\phi)e^{it\xi|\xi|^{\alpha}}\|_{L^{\infty}}\|\mathcal{D}^{1/2+\alpha}_{\xi}\widehat{u_0}\|_{L^2}\\
 \lesssim &\|\langle x \rangle^{1/2+\alpha} u_0\|_{L^2}.
\end{aligned}
\end{equation}
By simple modification to the above argument, 
\begin{equation}
\begin{aligned}
\|\mathcal{D}_{\xi}^{1/2+\alpha}\big(\mathfrak{F}_0(t,\xi,\partial_{\xi}\widehat{u_0})\big)\|_{L^2}\lesssim \|\langle x \rangle^{3/2+\alpha}u_0\|_{L^2}.
\end{aligned}
\end{equation}
Thus, it remains to study the integral term. Writing $\widehat{uu_x}=i\xi \widehat{u^2}$, and recalling \eqref{Fnotationex1}, we get
\begin{equation}
\begin{aligned}
\int_0^T\| \mathcal{D}_{\xi}^{1/2+\alpha}\big(\langle \xi \rangle^{-2} \mathfrak{F}_1(t-\tau,\xi,\widehat{uu_x})\big)\|_{L^2}\, d\tau \lesssim &\int_0^T\| \mathcal{D}_{\xi}^{1/2+\alpha}\big(\langle \xi \rangle^{-2}\partial_{\xi}(\xi|\xi|^{\alpha})\xi \, \mathfrak{F}_0(t-\tau,\xi,\widehat{u^2})\big)\|_{L^2}\, d\tau \\
&+\int_0^T\| \mathcal{D}_{\xi}^{1/2+\alpha}\big(\langle \xi \rangle^{-2}\mathfrak{F}_0(t-\tau,\xi,i\widehat{u^2}+i\xi \partial_{\xi}\widehat{u^2})\big)\|_{L^2}\, d\tau \\
=:&\mathcal{I}+\mathcal{II}.
\end{aligned}
\end{equation}
Since the function $\xi \mapsto \langle \xi \rangle^{-2}\partial_{\xi}(\xi|\xi|^{\alpha})\xi e^{it\xi|\xi|^{\alpha}}$, and its derivative belong to $L^{\infty}(\mathbb{R})$, the same argument as in \eqref{twoeq9} shows
\begin{equation}
\mathcal{I} \lesssim \int_0^T\|\langle x \rangle^{1/2+\alpha}u^2(\tau)\|_{L^2}\, d\tau \leq \big(\int_0^T\|u(\tau)\|_{L^{\infty}}\, d\tau\big) \sup_{t\in[0,T]}\|\langle x \rangle^{1/2+\alpha}u(t)\|_{L^2},
\end{equation}
and
\begin{equation}\label{twoeq10}
\mathcal{II} \lesssim \int_0^T\|\langle x \rangle^{1/2+\alpha}u^2(\tau)\|_{L^2}\, d\tau + \int_0^T\|\langle x \rangle^{3/2+\alpha}u^2(\tau)\|_{L^2}\, d\tau.
\end{equation}
The first factor on the r.h.s of \eqref{twoeq10} is estimated as we did with $\mathcal{I}$. On the other hand, complex interpolation Lemma \ref{complexinterpo} yields
\begin{equation}
\begin{aligned}
\|\langle x \rangle^{3/2+\alpha}u^2\|_{L^2}
\lesssim \|J^{1/4}(\langle x \rangle^{3/4+\alpha/2}u)\|_{L^2}^2 \lesssim \|\langle x \rangle^{\frac{(3+2\alpha)(12-3\alpha)}{4(10-3\alpha)}}u\|_{L^2}^{2(\frac{10-3\alpha}{12-3\alpha})}\|J^{3/2-3\alpha/8}u\|_{L^2}^{\frac{4}{12-3\alpha}}.
\end{aligned}
\end{equation}
Recalling that $u\in C([0,T];Z_{s,r}(\mathbb{R}))$ with $\frac{(3+2\alpha)(12-3\alpha)}{4(10-3\alpha)}<r<3/2+\alpha$, and $s\geq \{s_{\alpha}^{+},\alpha r\}$, the above inequality completes the estimate for $\mathcal{II}$, and at the same time the proof of the claim.
\end{proof}
 

\section{Proof of Theorem \ref{Theothretimcondi}} \label{main4}

We prove Theorem \ref{Theothretimcondi} only for the restrictions $-1 \leq \alpha\leq -1/2$ and $0<\alpha\leq 1/2$. We have made these choices to illustrate the main difficulties and arguments required to  cover all the cases where $\alpha$ is positive and negative. We remark that the remaining considerations on $\alpha$ are obtained by simple modifications to the arguments illustrated below. We omit these computations to prevent falling into repetitions. However, we will indicate the general strategy involving the proof of Theorem \ref{Theothretimcondi} whenever $-\alpha \in (-1,1)\setminus \{0\}$.

\subsection{\texorpdfstring{Case $\alpha=-1$}{}}

Bearing in mind that we require to deal with spatial decay greater than one, we consider the integral equation associated to \eqref{BDBO} obtained after taking one derivative in the frequency domain, that is to say,
\begin{equation}\label{integralonedera1}
\begin{aligned}
\partial_{\xi}\widehat{u}(\xi,t)=e^{it\sign(\xi)}\partial_{\xi}\widehat{u_0}(\xi)-\int_0^t e^{i(t-\tau)\sign(\xi)}\partial_{\xi}\widehat{uu_x}(\xi,\tau)\, d\tau,
\end{aligned}
\end{equation}
where we have used $\widehat{u_0}(0)=\widehat{uu_x}(0,\tau)=0$, $\tau \in (0,T)$, and the identity
\begin{equation}
\partial_{\xi}(e^{it\sign(\xi)})\phi(\xi)=2i\sin(t)\phi(0).
\end{equation}
Without loss of generality, we let $t_1 = 0 < t_2 <T$, that is,
\begin{equation}\label{assumpthree0}
u_0 \in \dot{Z}_{s, (3/2)^{+}}(\mathbb{R}), \, u(t_2)  \in \dot{Z}_{s,3/2}(\mathbb{R}),
\end{equation} 
where $s>3/2$. We proceed to decompose the integral equation \eqref{integralonedera1} into parts that ultimately lead to some restrictions arising from the hypothesis at $t=t_2$. We note
\begin{equation}
\begin{aligned}
\text{r.h.s } \eqref{integralonedera1}=& \, \,
e^{it\sign(\xi)}(\partial_{\xi}\widehat{u_0}(\xi)-\partial_{\xi}\widehat{u_0}(0))\phi+e^{it\sign(\xi)}\partial_{\xi}\widehat{u_0}(0)\phi+e^{it\sign(\xi)}\partial_{\xi}\widehat{u_0}(\xi)(1-\phi)\\
&-\int_0^t e^{i(t-\tau)\sign(\xi)}\Big((\partial_{\xi}\widehat{u \partial_x u}(\xi,\tau)-\partial_{\xi}\widehat{u \partial_x u}(0,\tau))\phi+\partial_{\xi}\widehat{u \partial_x u}(0,\tau)\phi+\widehat{u \partial_x u}(\xi,\tau)(1-\phi)\Big) \, d\tau. \\
=:&\sum_{j=1}^6 A_j(\xi,t).
\end{aligned}
\end{equation}
\begin{claim}\label{claimthreet1}
Let $s>3/2$, $9/10<r<3/2$ and $u\in C([0,T];\dot{Z}_{s,r}(\mathbb{R}))$. Assume that $\widehat{u_0} \in \dot{Z}_{s,(3/2)^{+}}(\mathbb{R})$, then 
\begin{equation}\label{eqclaimthreet1}
\begin{aligned}
&\langle \xi \rangle^{-1/2}\sum_{\substack{j=1, \\ j\neq 2, 5}}^{6} A_j(\xi,t) \in L^{\infty}([0,T];H^{1/2}(\mathbb{R})).
\end{aligned}
\end{equation}
\end{claim}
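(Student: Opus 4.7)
The plan is to verify, term by term, that each admissible piece $A_j$, $j\in\{1,3,4,6\}$, multiplied by $\langle\xi\rangle^{-1/2}$, lies in $L^\infty([0,T];H^{1/2}(\mathbb{R}))$. By Theorem \ref{TheoSteDer}, this amounts to showing $A_j \in L^2_\xi$ and $\mathcal{D}^{1/2}_\xi A_j \in L^2_\xi$ uniformly in $t$. The weight $\langle\xi\rangle^{-1/2}$ is absorbed once and for all by \eqref{prelimneq}, noting that $\langle\xi\rangle^{-1/2}$ and $\mathcal{D}^{1/2}_\xi \langle\xi\rangle^{-1/2}$ are bounded. The structural reason the argument works is that in every $A_j$ with $j\neq 2,5$ the discontinuity of $e^{it\sign(\xi)}$ at $\xi=0$ is tamed either by a factor that vanishes at the origin (for $A_1,A_4$) or by the cutoff $1-\phi$ which kills a neighborhood of $\xi=0$ (for $A_3,A_6$).

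First I would handle $A_3$ and $A_6$. Since $(1-\phi)e^{it\sign(\xi)}$ is smooth with all derivatives bounded, these terms reduce to estimating $\partial_\xi\widehat{u_0}(1-\phi)$ and $\int_0^t \partial_\xi\widehat{uu_x}(\cdot,\tau)(1-\phi)\,d\tau$ in $H^{1/2}_\xi$. The first is $\lesssim \|\widehat{u_0}\|_{H^{(3/2)^+}_\xi} = \|\langle x\rangle^{(3/2)^+}u_0\|_{L^2}$. The second, after Minkowski in time and using $\widehat{uu_x}=\tfrac{i}{2}\xi\widehat{u^2}$, requires $\|\langle x\rangle^{(3/2)^+}u^2\|_{L^2_x}$; by Sobolev embedding $L^4\hookrightarrow H^{1/4}$ and Lemma \ref{complexinterpo} with $a=s>3/2$, this is bounded by $\|\langle x\rangle^r u\|_{L^2}^{2(1-1/(4s))}\|J^s u\|_{L^2}^{1/(2s)}$, which is exactly where the threshold $r>9/10$ enters.

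Next I would treat $A_1$. By \eqref{prelimneq} and \eqref{prelimneq2.0},
\begin{equation*}
\|\mathcal{D}_\xi^{1/2} A_1\|_{L^2} \lesssim \big\| (\partial_\xi\widehat{u_0}(\xi)-\partial_\xi\widehat{u_0}(0))\phi\cdot |\xi|^{-1/2}\big\|_{L^2} + \big\|\mathcal{D}_\xi^{1/2}\big((\partial_\xi\widehat{u_0}-\partial_\xi\widehat{u_0}(0))\phi\big)\big\|_{L^2}.
\end{equation*}
The second summand is controlled directly by $\|\widehat{u_0}\|_{H^{(3/2)^+}_\xi}$. For the first summand, since $\partial_\xi\widehat{u_0}\in H^{(1/2)^+}_\xi \hookrightarrow C^{0,\epsilon}$ for some $\epsilon>0$, one has $|\partial_\xi\widehat{u_0}(\xi)-\partial_\xi\widehat{u_0}(0)|\lesssim|\xi|^\epsilon \|\widehat{u_0}\|_{H^{(3/2)^+}_\xi}$ on $\supp\phi$, so the integrand decays like $|\xi|^{\epsilon-1/2}\mathbf{1}_{|\xi|\leq 2} \in L^2$. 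The $L^2_\xi$-norm of $A_1$ is trivial. Then $A_4$ is treated identically, with $\partial_\xi\widehat{u_0}(\xi)-\partial_\xi\widehat{u_0}(0)$ replaced by $\partial_\xi\widehat{uu_x}(\xi,\tau)-\partial_\xi\widehat{uu_x}(0,\tau)=\tfrac{i}{2}(\widehat{u^2}(\xi,\tau)-\widehat{u^2}(0,\tau)) + \tfrac{i\xi}{2}\partial_\xi\widehat{u^2}(\xi,\tau)$; both pieces vanish at $\xi=0$ with at least H\"older-$\epsilon$ rate, again thanks to $\langle x\rangle^{(3/2)^+}u^2 \in L^\infty_T L^2$ (via Lemma \ref{complexinterpo} and the hypothesis $r>9/10$), after which the same $|\xi|^{\epsilon-1/2}\mathbf{1}_{|\xi|\leq 2}$ argument closes the estimate once Minkowski in time is applied.

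The main obstacle is the pairing of the borderline singularity $|\xi|^{-1/2}$ coming from $\mathcal{D}^{1/2}(e^{it\sign(\xi)})$ against the vanishing rate of the H\"older factor at the origin: one needs strictly positive H\"older regularity of $\partial_\xi\widehat{u_0}$ and $\partial_\xi\widehat{u^2}$ at $\xi=0$, which is why the slightly stronger hypothesis $u_0\in\dot{Z}_{s,(3/2)^+}$ (as opposed to $\dot{Z}_{s,3/2}$) is imposed at $t_1$, and why the nonlinear interpolation forces $r>9/10$.
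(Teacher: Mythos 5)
Your proposal is correct and follows essentially the same route as the paper: the same term-by-term treatment via Theorem \ref{TheoSteDer}, absorbing the weight with \eqref{prelimneq}, using smoothness of $e^{it\sign(\xi)}(1-\phi)$ for $A_3,A_6$, pairing the $|\xi|^{-1/2}$ singularity from \eqref{prelimneq2.0} against the H\"older vanishing of $\partial_\xi\widehat{u_0}-\partial_\xi\widehat{u_0}(0)$ (resp.\ of $\widehat{u^2}-\widehat{u^2}(0)$ and $\xi\partial_\xi\widehat{u^2}$) for $A_1,A_4$, and closing the nonlinear term with $L^4\hookrightarrow H^{1/4}$ plus Lemma \ref{complexinterpo}, which is exactly where $r>9/10$ enters in the paper as well. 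The only cosmetic difference is that you interpolate with $a=s$ rather than the paper's fixed $a=3/2$ (which yields the threshold $9/10$ exactly); this changes nothing of substance.
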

Assuming for the moment Claim \ref{claimthreet1}, as a consequence of \eqref{twoeq3.3}, we arrive at 
\begin{equation}\label{eqintegralonedera1}
\begin{aligned}
\langle \xi \rangle^{-2} D^{1/2}(\partial_{\xi}\widehat{u}(\xi,t))\in L^{2}(\mathbb{R}), \, \, \text{ if and only if } D^{1/2}\big( \langle \xi \rangle^{-2}(A_2(\xi,t)+A_5(\xi,t))\big) \in L^2(\mathbb{R}).
\end{aligned}
\end{equation}
Now, we find
\begin{equation}\label{eqintegralonedera2}
\begin{aligned}
A_{2}(\xi,t)+A_{5}(\xi,t)=\Big(e^{it\sign(\xi)}\partial_{\xi}\widehat{u_0}(0)-\frac{i}{2}\int_0^t e^{i(t-\tau)\sign(\xi)}\widehat{u^2}(0,\tau)\, d\tau\Big)\phi.
\end{aligned}
\end{equation}
We recall the following result which is a direct consequence of the definition (ii) in Theorem \ref{TheoSteDer}.
\begin{prop}\label{optm1}
Let $p\in (1, \infty)$. If $f\in L^{p}(\mathbb{R})$ such that there exists $x_0\in \mathbb{R}$ for which $f(x_0^{+})$, $f(x_0^{-})$ are defined and $f(x_0^{+})\neq f(x_0^{-})$, then for any $\delta>0 $, $\mathcal{D}^{1/p}f \notin L^p_{loc}(B(x_0,\delta))$ and consequently $f\notin L^p_{1/p}(\mathbb{R})$.
\end{prop}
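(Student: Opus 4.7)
\textbf{Proof proposal for Proposition \ref{optm1}.} The plan is to exploit the jump discontinuity at $x_0$ to force a polynomial blow-up of $\mathcal{D}^{1/p}f$ near $x_0$ that is incompatible with local $L^p$-integrability, and then to use the equivalence in Theorem \ref{TheoSteDer} to derive a contradiction with $f\in L^p_{1/p}(\mathbb{R})$. Without loss of generality I translate so that $x_0=0$ and write $a=f(0^{+})$, $b=f(0^{-})$ with $|a-b|>0$. The first step is a quantitative rewriting of the existence of the one-sided limits: there exists $\delta_0>0$ such that $|f(x)-a|<|a-b|/4$ for $0<x<\delta_0$ and $|f(y)-b|<|a-b|/4$ for $-\delta_0<y<0$. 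The triangle inequality then yields
\begin{equation*}
|f(x)-f(y)|\;\geq\; \tfrac{1}{2}|a-b|,\qquad 0<x<\delta_0,\ -\delta_0<y<0.
\end{equation*}

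Using the definition $\mathcal{D}^{b}f(x)=\bigl(\int|f(x)-f(y)|^{2}|x-y|^{-1-2b}dy\bigr)^{1/2}$ from Theorem \ref{TheoSteDer} with $b=1/p$, I would restrict the defining integral to $y\in(-\delta_0,0)$ and invoke the one-sided bound just displayed to obtain, for every $x\in(0,\delta_0/2)$,
\begin{equation*}
\bigl(\mathcal{D}^{1/p}f(x)\bigr)^{2}\;\geq\;\frac{|a-b|^{2}}{4}\int_{-\delta_0}^{0}\frac{dy}{(x-y)^{1+2/p}}\;=\;\frac{p|a-b|^{2}}{16}\Bigl[x^{-2/p}-(x+\delta_0)^{-2/p}\Bigr]\;\gtrsim\; x^{-2/p},
\end{equation*}
where the implicit constant depends only on $p$, $|a-b|$, and $\delta_0$. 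Raising to the $p/2$-power gives $\mathcal{D}^{1/p}f(x)\gtrsim x^{-1/p}$ for $0<x<\delta_0/2$, so $|\mathcal{D}^{1/p}f(x)|^{p}\gtrsim x^{-1}$ on this interval, which fails to be integrable at $0$. Since this lower bound persists on every right neighborhood of $0$, it follows that $\mathcal{D}^{1/p}f\notin L^{p}_{loc}(B(0,\delta))$ for any $\delta>0$.

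For the second assertion, observe that the admissibility condition $2/(1+2/p)<p$ in Theorem \ref{TheoSteDer} holds for every $p\in(1,\infty)$ (it reduces to $p^{2}+2p>2p$). Consequently, if $f$ belonged to $L^{p}_{1/p}(\mathbb{R})=J^{-1/p}L^{p}(\mathbb{R})$, the theorem would force $\mathcal{D}^{1/p}f\in L^{p}(\mathbb{R})\subset L^{p}_{loc}(\mathbb{R})$, contradicting the local non-integrability established in the previous paragraph. The argument has essentially no hidden obstacle: the only delicate point is recording a genuinely uniform lower bound on $|f(x)-f(y)|$ from the sole hypothesis that the one-sided limits exist and differ, which is a routine $\varepsilon$-$\delta$ step; the rest is a one-line integral computation followed by invoking Theorem \ref{TheoSteDer}.
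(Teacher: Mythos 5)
Your argument is correct and is precisely the ``direct consequence of definition (ii) in Theorem \ref{TheoSteDer}'' that the paper invokes without writing out: restrict the integral defining $\mathcal{D}^{1/p}f(x)$ to $y$ on the opposite side of the jump, use the uniform lower bound $|f(x)-f(y)|\ge |a-b|/2$ to get $\mathcal{D}^{1/p}f(x)\gtrsim x^{-1/p}$ near $x_0$, and conclude via the norm equivalence (this is the same device the paper uses in its displays \eqref{derivnotL2.1} and \eqref{twoeq8.1}). The only blemish is the harmless arithmetic slip $\tfrac{|a-b|^2}{4}\cdot\tfrac{p}{2}=\tfrac{p|a-b|^2}{8}$, not $\tfrac{p|a-b|^2}{16}$, which is absorbed by the $\gtrsim$.
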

Therefore, since \eqref{eqintegralonedera1} holds at $t=t_2$, Proposition \ref{optm1} imposes
\begin{equation*}
A_{2}(0^{+},t_2)+A_{5}(0^{+},t_2)=A_{2}(0^{-},t_2)+A_{5}(0^{-},t_2),
\end{equation*}
in other words, in virtue of \eqref{eqintegralonedera2} and the $L^2$ conservation law,
\begin{equation}
\begin{aligned}
2i\sin(t_2)\partial_{\xi}\widehat{u_0}(0)=(\cos(t_2)-1)\|u_0\|_{L^2}^2.
\end{aligned}
\end{equation}
Reversing the Fourier variable, the above equality is \eqref{identnegatvalp}. Once we have proved Claim \ref{claimthreet1}, the proof of Theorem \ref{Theothretimcondi} (i) will be complete. 
\begin{proof}[Proof of Claim \ref{claimthreet1}]
Since the $L^2$-norm of the l.h.s of \eqref{eqclaimthreet1} is controlled directly by the hypothesis over $u_0$ and the solutions $u$, we will only compute the derivative component of the $H^{1/2}(\mathbb{R})$-norm, or equivalently, by Theorem \ref{TheoSteDer}, the $L^2$-norm of $\mathcal{D}_{\xi}^{1/2}$. Since $u_0\in L^{2}(|x|^{3^{+}}\, dx)$, there exists $0<\epsilon \ll 1$ such that $\widehat{u_0} \in H^{3/2+\epsilon}_{\xi}(\mathbb{R})$, then by the embedding $C^{1,\epsilon}(\mathbb{R})\hookrightarrow H^{3/2+\epsilon}(\mathbb{R})$, we find
\begin{equation}\label{eqintegralonedera0}
|\partial_{\xi}\widehat{u_0}(\xi)-\partial_{\xi}\widehat{u_0}(0)|\lesssim |\xi|^{\epsilon}\|\widehat{u_0}\|_{H^{3/2+\epsilon}}.
\end{equation}
Let $\widetilde{\phi}\in C^{\infty}_c(\mathbb{R})$ such that $\widetilde{\phi}\phi=\phi$. By applying property \eqref{prelimneq}, the above inequality and \eqref{prelimneq2.0} in Lemma \ref{derivexp}, we deduce
\begin{equation}
\begin{aligned}
\|\mathcal{D}^{1/2}_{\xi}(A_1(\xi))\|_{L^2} &\lesssim \|\widehat{u_0}\|_{H^{3/2+\epsilon}}\|\mathcal{D}^{1/2}_{\xi}(e^{it\sign(\xi)}\widetilde{\phi})|\xi|^{\epsilon}\phi\|_{L^2}+\|\mathcal{D}^{1/2}_{\xi}(\partial_{\xi}\widehat{u_0})\phi\|_{L^2}+\|\partial_{\xi}\widehat{u_0}\mathcal{D}^{1/2}_{\xi}\phi\|_{L^2}\\
&\lesssim \|\widehat{u_0}\|_{H^{3/2+\epsilon}} \||\xi|^{-1/2+\epsilon}\phi\|_{L^2}+\|\widehat{u_0}\|_{H^{3/2}} \\
&\lesssim \|\langle x \rangle^{3/2+\epsilon}u_0\|_{L^2}. 
\end{aligned}
\end{equation}
Now, since the function $e^{it\sign(\xi)}(1-\phi)$ is smooth with all its derivatives bounded, the estimate for $A_3$ is a consequence of \eqref{prelimneq} and \eqref{prelimneq1}. Next, we employ the identity
\begin{equation}\label{eqthreetim1.4}
\partial_{\xi}\widehat{uu_x}(\xi,\tau)=i\widehat{u^2}(\xi,\tau)+i\xi \partial_{\xi}\widehat{u^2}(\xi,\tau),
\end{equation}
and the embedding $C^{0,\epsilon}(\mathbb{R})\hookrightarrow H^{1/2+\epsilon}(\mathbb{R})$ to get
\begin{equation}\label{eqthreetim1.4.1}
\begin{aligned}
\|\mathcal{D}^{1/2}_{\xi}&(e^{i(t-\tau)\sign(\xi)}\big(\langle \xi \rangle^{-2}(\partial_{\xi}\widehat{u \partial_x u}(\xi)-\partial_{\xi}\widehat{u \partial_x u}(0))\phi \big)\|_{L^2} \\
\lesssim & \|\mathcal{D}_{\xi}^{1/2}\big(e^{i(t-\tau)\sign(\xi)}\widetilde{\phi}\big)(\widehat{u^2}(\xi)-\widehat{u^2}(0))\langle \xi \rangle^{-2}\phi\|_{L^2}+ \|\mathcal{D}_{\xi}^{1/2}(\widehat{u^2})\langle \xi \rangle^{-2}\phi\|_{L^2}\\
&+ \|\widehat{u^2}\mathcal{D}_{\xi}^{1/2}(\langle \xi \rangle^{-2}\phi)\|_{L^2} +\|\mathcal{D}_{\xi}^{1/2}\big(e^{i(t-\tau)\sign(\xi)}\widetilde{\phi}\big)\partial_{\xi}\widehat{u^2}(\xi)\xi\langle \xi \rangle^{-2}\phi\|_{L^2}+\|\mathcal{D}_{\xi}^{1/2}(\partial_{\xi}\widehat{u^2})\xi\langle \xi \rangle^{-2}\phi\|_{L^2}\\
&+ \|\partial_{\xi}\widehat{u^2}\mathcal{D}_{\xi}^{1/2}(\xi \langle \xi \rangle^{-2}\phi)\|_{L^2}\\
\lesssim & \big(\||\xi|^{-1/2+\epsilon}\langle \xi \rangle^{-2}\phi\|_{L^2}+\|\langle \xi \rangle^{-2}\phi\|_{L^{\infty}}+\|\mathcal{D}_{\xi}^{1/2}(\langle \xi \rangle^{-2}\phi)\|_{L^2}\big)\|\widehat{u}\|_{H^{1/2+\epsilon}}\\
&+\big(\||\xi|^{1/2}\langle \xi \rangle^{-2}\phi\|_{L^{\infty}}+\|\xi\langle \xi \rangle^{-2}\phi\|_{L^{\infty}}+\|\mathcal{D}^{1/2}_{\xi}(\xi\langle \xi \rangle^{-2}\phi)\|_{L^{\infty}}\big)\|\widehat{u}\|_{H^{3/2}_{\xi}} \\
\lesssim & \|\langle x\rangle^{3/2} u^2\|_{L^2}.
\end{aligned}
\end{equation}
To complete the estimate for the r.h.s of the above inequality, we use Sobolev embedding and complex interpolation to get
\begin{equation}
\begin{aligned}
\|\langle x \rangle^{3/2}u^2(t)\|_{L^2} \lesssim \|J^{1/4}(\langle x \rangle^{3/4}u(t))\|_{L^2}^2 \lesssim \|\langle x\rangle^{9/10}u(t)\|_{L^2}^{5/3}\|J^{3/2}u(t)\|_{L^2}^{1/3}.
\end{aligned}
\end{equation}
Thus, taking the supremum on time $t \in [0,T]$ to the above expression and integrating in time, we complete the estimate for $A_4(\xi)$. Next, by writing $\widehat{uu_x}(\xi)=\frac{i}{2}\xi\widehat{u^2}(\xi)$, and using that $\xi \langle \xi \rangle^{-2}(1-\phi)$ is bounded with bounded derivatives, the estimate for $A_6(\xi)$ is a consequence of \eqref{prelimneq} and \eqref{prelimneq1}, we omit this analysis.
\end{proof}


\subsection{\texorpdfstring{General considerations $-1<\alpha <1$, $\alpha \neq 0$}{}}

Let us discuss the main ideas leading to the proof Theorem \ref{Theothretimcondi} under the present restrictions. By assumption there exist three different times $t_1,t_2,t_3 \in [0,T]$ such that
\begin{equation}
u(t_1), \, u(t_2), \, u(t_3) \in \dot{Z}_{s,5/2+\alpha}(\mathbb{R}).
\end{equation} 
The equation in \eqref{BDBO} and its corresponding $L^2$-conservation law provide the following identity
\begin{equation}\label{threetimeide1}
\frac{d}{dt}\int_{\mathbb{R}} xu(x,t)\, dx=\frac{1}{2}\|u_0\|_{L^2}^2,
\end{equation}
where since $u \in C([0,T];\dot{Z}_{s,r}(\mathbb{R}))$ provided that $3/2<r<5/2+\alpha$, we have $D^{\alpha}u\in C([0,T];L^2(|x|^{1^{+}}\, dx))$, thus $D^{\alpha}u\in C([0,T];L^1(\mathbb{R}))$, and $D^{\alpha}u(x,t)$ has null-integral in space. Notice that this last conclusion does not hold when $\alpha=-1$. Thus, we find
\begin{equation}\label{threetimeide2}
\int_{\mathbb{R}} xu(x,t)\, dx=\int_{\mathbb{R}} xu_0(x)\, dx+\frac{t}{2}\|u_0\|_{L^2}^2.
\end{equation}
If we prove that there exist $\widetilde{t}_1 \in (t_1,t_2)$ and $\widetilde{t}_2 \in (t_1,t_2)$ such that
\begin{equation}\label{eqthreetim1.1}
\int_{\mathbb{R}} xu(x,\widetilde{t}_j)\, dx=0 \, \, \text{ for each } \, \,  j=1,2,
\end{equation}
then in virtue of \eqref{threetimeide1}, it must be the case $u\equiv 0$, which is the desired conclusion. By the symmetry of the argument, we will verify the above equality only for $\widetilde{t}_1 \in (t_1,t_2)$. Additionally, without loss of generality, we let $t_1 = 0 < t_2 < t_3$, that is,
\begin{equation}\label{assumpthree}
u(0), \, u(t_j)  \in \dot{Z}_{s,5/2+\alpha}(\mathbb{R}), \, \, j=2,3.
\end{equation} 
Now we are in condition to prove Theorem \ref{Theothretimcondi} for $-1<\alpha \leq -1/2$ and $0<\alpha\leq 1/2$.

\subsection{\texorpdfstring{Case $-1<\alpha\leq -1/2$}{}}

Since the maximal spatial decay allowed satisfies $3/2<5/2+\alpha \leq 2$, our argument follow from estimating the integral equation \eqref{inteequonede}.  Let us introduce some further notation. For an appropriated function $f$ with enough decay and regularity, we define
\begin{equation}
\begin{aligned}
\widetilde{\mathfrak{F}}_{0,1}(t,\xi,f)=&\partial_{\xi}(it\xi |\xi|^{\alpha})(e^{it\xi|\xi|^{\alpha}}-1)f(\xi)\phi, \, \, \, \widetilde{\mathfrak{F}}_{0,2}(t,\xi,f)=\partial_{\xi}(it\xi |\xi|^{\alpha}) \xi \big(\int_0^1 ( \partial_{\xi}f(\sigma\xi)-\partial_{\xi}f(0) )\, d\sigma\big)\phi, \\
\widetilde{\mathfrak{F}}_{0,3}(t,\xi,f)=&\partial_{\xi}(it\xi |\xi|^{\alpha}) \xi \partial_{\xi}f(0)\phi,\, \, \, \widetilde{\mathfrak{F}}_{0,4}(t,\xi,f)=\partial_{\xi}(it\xi |\xi|^{\alpha})e^{it\xi|\xi|^{\alpha}}f(\xi)(1-\phi), \\
\widetilde{\mathfrak{F}}_{0,5}(t,\xi,f)=&e^{it\xi|\xi|^{\alpha}}\big(\partial_{\xi}f(\xi)-\partial_{\xi}f(0)\big)\phi, \hspace{0.4cm} \widetilde{\mathfrak{F}}_{0,6}(t,\xi,f)=e^{it\xi|\xi|^{\alpha}}\partial_{\xi}f(0)\phi,\\
\widetilde{\mathfrak{F}}_{0,7}(t,\xi,f)=&e^{it\xi|\xi|^{\alpha}}\partial_{\xi}f(\xi)(1-\phi).
\end{aligned}
\end{equation}
Then the following identity holds true
\begin{equation}\label{factoriden}
\begin{aligned}
\partial_{\xi}\widehat{u}(\xi,t)=\sum_{j=1}^7 \big( \widetilde{\mathfrak{F}}_{0,j}(t,\xi,\widehat{u}_0)-\int_0^t \widetilde{\mathfrak{F}}_{0,j}(t-\tau,\xi,\widehat{uu_x})\, d\tau \big).
\end{aligned}
\end{equation}

\begin{claim}\label{claimthreetimes1}
Let $-1<\alpha \leq -1/2$, $s>3/2$, $3/2+3\alpha/5<r<5/2+\alpha$ and $u\in C([0,T];\dot{Z}_{s,r}(\mathbb{R}))$. Furthermore, assume $u_0\in \dot{Z}_{s,5/2+\alpha}(\mathbb{R})$, then
\begin{equation}\label{eqclaimthreetimes1}
\langle \xi \rangle^{-2}\sum_{\substack{j=1 \\ j\neq 3, 6}}^7 \big( \widetilde{\mathfrak{F}}_{0,j}(t,\xi,\widehat{u}_0)-\int_0^t \widetilde{\mathfrak{F}}_{0,j}(t-\tau,\xi,\widehat{uu_x})\, d\tau \big) \in L^{\infty}([0,T];H^{3/2+\alpha}_{\xi}(\mathbb{R})).
\end{equation}
\end{claim}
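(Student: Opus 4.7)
The plan is to follow the template of the proof of Claim \ref{twotimclaim2}, adapted to the higher spatial decay $5/2+\alpha$ and to the specific decomposition $\widetilde{\mathfrak{F}}_{0,j}$. Concretely, for each $j\in\{1,2,4,5,7\}$ and for $f\in\{\widehat{u_0},\widehat{uu_x}(\cdot,\tau)\}$, I would bound uniformly in $t\in[0,T]$ the quantity $\|\mathcal{D}^{3/2+\alpha}_\xi(\langle\xi\rangle^{-2}\widetilde{\mathfrak{F}}_{0,j}(t,\xi,f))\|_{L^2_\xi}$, since the $L^2_\xi$ part of the $H^{3/2+\alpha}$-norm is harmless under our persistence hypotheses. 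The derivative is distributed by \eqref{prelimneq} onto three kinds of factors: the symbol $|\xi|^\alpha\phi$ (controlled by Proposition \ref{steinderiweighbet}), the oscillatory phase $e^{it\xi|\xi|^\alpha}$ (controlled by \eqref{prelimneq2.1} of Lemma \ref{derivexp}), and the data $f$ or $\partial_\xi f$ (controlled by Theorem \ref{TheoSteDer} and the hypothesis $u_0\in \dot{Z}_{s,5/2+\alpha}(\mathbb{R})$).

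For the homogeneous contributions, the key is that each surviving term ($j\neq3,6$) carries a \emph{subtracted} factor that produces an $O(|\xi|^{1+\alpha})$ gain near $\xi=0$ to counteract the $|\xi|^\alpha$ singularity of the phase derivative: $j=1$ uses $|e^{it\xi|\xi|^\alpha}-1|\lesssim t|\xi|^{1+\alpha}$; $j=2$ uses the embedding $H^{3/2+\alpha}_\xi\hookrightarrow C^{0,1+\alpha}$ (valid since $3/2+\alpha>1/2$ for $\alpha>-1$) applied to $\partial_\xi\widehat{u_0}$, yielding $|\partial_\xi\widehat{u_0}(\sigma\xi)-\partial_\xi\widehat{u_0}(0)|\lesssim|\sigma\xi|^{1+\alpha}$; $j=5$ is treated by Proposition \ref{steinderiweighbet2} applied to $\psi(\xi)=\partial_\xi\widehat{u_0}(\xi)-\partial_\xi\widehat{u_0}(0)$, which satisfies $\psi(0)=0$ and belongs to $C^{0,1+\alpha}$. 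For $j=4$ and $j=7$, the cutoff $1-\phi$ restricts to $|\xi|\geq1$ where $|\xi|^\alpha$ is bounded; the factor $\langle\xi\rangle^{-2}$ is essential here to absorb the $t|\xi|^{1+\alpha-(3/2+\alpha)}=t|\xi|^{-1/2}$ growth that \eqref{prelimneq2.1} produces for the fractional derivative of the phase on high frequencies.

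For the integral (nonlinear) terms, I would repeat the same decomposition with $f=\widehat{uu_x}(\cdot,\tau)=\tfrac{i}{2}\xi\widehat{u^2}(\cdot,\tau)$, using $\partial_\xi\widehat{uu_x}(\xi,\tau)=\tfrac{i}{2}\widehat{u^2}(\xi,\tau)+\tfrac{i}{2}\xi\partial_\xi\widehat{u^2}(\xi,\tau)$ and, crucially, the $L^2$ conservation law $\partial_\xi\widehat{uu_x}(0,\tau)=\tfrac{i}{2}\|u(\tau)\|_{L^2}^2=\tfrac{i}{2}\|u_0\|_{L^2}^2$. The only new analytic ingredient is the control of $\|\langle x\rangle^{5/2+\alpha}u^2(\tau)\|_{L^2}$ needed for the $j=2$ and $j=7$ nonlinear contributions. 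By Sobolev embedding $L^4(\mathbb{R})\hookrightarrow H^{1/4}(\mathbb{R})$ and Lemma \ref{complexinterpo} with $a=3/2$, $b=3/2+3\alpha/5$, $\theta_1=1/6$,
\begin{equation*}
\|\langle x\rangle^{5/2+\alpha}u^2\|_{L^2}\lesssim\|J^{1/4}(\langle x\rangle^{5/4+\alpha/2}u)\|_{L^2}^2\lesssim\|\langle x\rangle^{3/2+3\alpha/5}u\|_{L^2}^{5/3}\|J^{3/2}u\|_{L^2}^{1/3},
\end{equation*}
which is precisely what the hypothesis $r>3/2+3\alpha/5$, $s>3/2$ is designed to ensure; integrating in $\tau\in[0,T]$ then closes the estimate.

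The hardest step will be the bookkeeping in the second paragraph: verifying that each of the five distributed pieces for each $j$ is in $L^2_\xi$ near the origin, where $|\xi|^\alpha$ (with $-1<\alpha\leq-1/2$) is most singular. The borderline nature of the estimates is not accidental. By Proposition \ref{steinderiweighbet}, $\mathcal{D}^{3/2+\alpha}(\sign(\xi)|\xi|^{1+\alpha}\phi)\sim|\xi|^{-1/2}$ near $0$, so it fails to lie in $L^2$; this is exactly why the $\partial_\xi f(0)$-proportional terms $\widetilde{\mathfrak{F}}_{0,3}$ and $\widetilde{\mathfrak{F}}_{0,6}$ must be \emph{excluded} from the claim. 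The subtractions that define $\widetilde{\mathfrak{F}}_{0,1},\widetilde{\mathfrak{F}}_{0,2},\widetilde{\mathfrak{F}}_{0,5}$ are therefore not cosmetic: they remove precisely the non-$L^2$ piece, leaving a remainder whose $\mathcal{D}^{3/2+\alpha}_\xi$ is integrable, and whose isolation from $j=3,6$ will subsequently produce the constraint that, together with \eqref{threetimeide2}, forces $u\equiv0$.
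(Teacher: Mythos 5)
Your proposal reproduces the paper's own proof essentially step for step: the same distribution of $\mathcal{D}^{3/2+\alpha}_\xi$ over symbol, phase and data via \eqref{prelimneq}, the same use of the zero-mean and H\"older subtractions to gain a factor $|\xi|^{1+\alpha}$ near the origin against the $|\xi|^{-1/2}$ singularity, the identical interpolation $\|\langle x\rangle^{5/2+\alpha}u^2\|_{L^2}\lesssim\|\langle x\rangle^{3/2+3\alpha/5}u\|_{L^2}^{5/3}\|J^{3/2}u\|_{L^2}^{1/3}$ with exactly the paper's parameters $a=3/2$, $b=3/2+3\alpha/5$, $\theta_1=1/6$, and the same explanation (via Proposition \ref{steinderiweighbet}) for why $j=3,6$ must be excluded.

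Two small corrections, neither of which changes the architecture. First, for $j=5$ you cannot invoke Proposition \ref{steinderiweighbet2}: its second part requires $\theta<\gamma\le 1$, whereas here $\theta=3/2+\alpha$ exceeds the H\"older exponent $\gamma=1+\alpha$ available for $\partial_\xi\widehat{u_0}\in H^{3/2+\alpha}_\xi$, and $\widetilde{\mathfrak{F}}_{0,5}$ carries no $|\xi|^{-\beta}$ factor to which that proposition applies. What the paper does instead is elementary: by \eqref{prelimneq}, the piece where $\mathcal{D}^{3/2+\alpha}_\xi$ lands on $\partial_\xi\widehat{u_0}$ is in $L^2$ outright since $\widehat{u_0}\in H^{5/2+\alpha}_\xi$, and the piece where it lands on the phase is controlled by pairing $\mathcal{D}^{3/2+\alpha}_\xi(e^{it\xi|\xi|^{\alpha}})\lesssim t^{(3/2+\alpha)/(1+\alpha)}+t|\xi|^{-1/2}$ from \eqref{prelimneq2.1} with the pointwise bound $|\partial_\xi\widehat{u_0}(\xi)-\partial_\xi\widehat{u_0}(0)|\lesssim|\xi|^{1+\alpha}\|\widehat{u_0}\|_{H^{5/2+\alpha}}$. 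Second, your rationale for the weight $\langle\xi\rangle^{-2}$ in $j=4,7$ is backwards: $t|\xi|^{-1/2}$ decays at high frequency, so the homogeneous terms need no weight there (the paper handles them simply because $|\xi|^{\alpha}e^{it\xi|\xi|^{\alpha}}(1-\phi)$ is bounded with bounded derivatives); the weight is present to absorb the extra factor of $\xi$ coming from $\widehat{uu_x}=\tfrac{i}{2}\xi\widehat{u^2}$ in the Duhamel terms. With these repairs your argument coincides with the paper's.
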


Let us assume for the moment the results in Claim \ref{claimthreetimes1}. We notice that the terms not contemplated in Claim \ref{claimthreetimes1} in \eqref{factoriden} satisfy the identity
\begin{equation}\label{eqthreetim2}
\begin{aligned}
\sum_{j=3,6}&\big( \widetilde{\mathfrak{F}}_{0,j}(t,\xi,\widehat{u}_0)-\int_0^t \widetilde{\mathfrak{F}}_{0,j}(t-\tau,\xi,\widehat{uu_x})\, d\tau \big)\\
=&\big(\partial_{\xi}(it\xi |\xi|^{\alpha})\xi+e^{it\xi|\xi|^{\alpha}}\big)\partial_{\xi}\widehat{u_0}(0)\phi-\int_0^t \big( \partial_{\xi}(i(t-\tau)\xi |\xi|^{\alpha})\xi +e^{i(t-\tau)\xi|\xi|^{\alpha}}\big)\partial_{\xi}\widehat{u \partial_x u}(0,\tau)\, d\tau \phi.
\end{aligned}
\end{equation}
Let us obtain a more convenient expression for the above equality. Integrating by parts and applying \eqref{threetimeide1}, we deduce
\begin{equation}
\partial_{\xi}\widehat{uu_x}(0,\tau)=-i\int x u \partial_x u(x,\tau)\, d\tau=i \frac{d}{d\tau}\int xu(x,\tau)\, dx.
\end{equation}
Hence, integrating by parts, we write the integral term on the r.h.s of \eqref{eqthreetim2}  as follows
\begin{equation}\label{eqthreetim3}
\begin{aligned}
-i\int_0^t &\big( \partial_{\xi}(i(t-\tau)\xi |\xi|^{\alpha})\xi+e^{i(t-\tau)\xi|\xi|^{\alpha}}\big) \frac{d}{d\tau}\int xu(x,\tau)\, dx\, d\tau \\
=&\partial_{\xi}\widehat{u}(0,t)-\big(\partial_{\xi}(it\xi |\xi|^{\alpha})\xi+e^{it\xi |\xi|^{\alpha}}\big)\partial_{\xi}\widehat{u_0}(0)-i\big(\partial_{\xi}(i\xi|\xi|^{\alpha})\xi+i\xi|\xi|^{\alpha}\big)\int_0^t\int xu(x,\tau)\, dx\, d\tau\\
&+(i\xi|\xi|^{\alpha})\int_0^t (e^{i(t-\tau)\xi|\xi|^{\alpha}}-1)\partial_{\xi}\widehat{u}(0,\tau)\, d\tau.
\end{aligned}
\end{equation}
Plugging the above equality into \eqref{eqthreetim2} yields
\begin{equation}\label{eqthreetim3.1}
\begin{aligned}
\sum_{j=3,6}\big( \widetilde{\mathfrak{F}}_{0,j}&(t,\xi,\widehat{u}_0)-\int_0^t \widetilde{\mathfrak{F}}_{0,j}(t-\tau,\xi,\widehat{uu_x})\, d\tau \big)\\
=&\partial_{\xi}\widehat{u}(0,t)\phi-i(\partial_{\xi}(i\xi|\xi|^{\alpha})\xi+i\xi|\xi|^{\alpha})\int_0^t\int xu(x,\tau)\, dx\, d\tau \phi\\
&+(i\xi|\xi|^{\alpha})\int_0^t (e^{i(t-\tau)\xi|\xi|^{\alpha}}-1)\partial_{\xi}\widehat{u}(0,\tau)\, d\tau \phi.
\end{aligned}
\end{equation}
Adapting the ideas in the proof of Claim \ref{claimthreetimes1} below, it is not difficult to see that
\begin{equation}\label{eqthreetim3.2}
\partial_{\xi}\widehat{u}(0,t)\phi+(i\xi|\xi|^{\alpha})\int_0^t (e^{i(t-\tau)\xi|\xi|^{\alpha}}-1)\partial_{\xi}\widehat{u}(0,\tau)\, d\tau \phi \in H^{3/2+\alpha}(\mathbb{R}).
\end{equation}
Therefore, gathering Claim \ref{claimthreetimes1}, \eqref{eqthreetim3.1}, \eqref{eqthreetim3.2}  and using the ideas around \eqref{twoeq3.3}, we find
\begin{equation}\label{eqthreetim4}
\begin{aligned}
\langle \xi \rangle^{-2}(D^{3/2+\alpha}\partial_{\xi}\widehat{u})&(\xi,t)\in L^{2}(\mathbb{R}) \\
 &\text{ if and only if } \, D^{3/2+\alpha}\big(\langle \xi \rangle^{-2}\xi|\xi|^{\alpha} \phi\big)\int_0^t\int_{\mathbb{R}} xu(x,\tau)\, dx\, d\tau \in L^{2}(\mathbb{R}),
\end{aligned}
\end{equation}
where we have written $(\partial_{\xi}(\xi|\xi|^{\alpha})\xi+\xi|\xi|^{\alpha}=(2+\alpha)\xi|\xi|^{\alpha}$. For $\alpha=-1/2$, we employ \eqref{eqthreetim4} with the partial derivative $\partial_{\xi}$ instead of the nonlocal one $D$. By employing similar arguments as above (see, \eqref{derivnotL2.1} and \eqref{twoeq8.1}), it is not difficult to deduce $D^{3/2+\alpha}\big(\langle \xi \rangle^{-2}\xi|\xi|^{\alpha} \phi\big) \notin  L^{2}(\mathbb{R})$, whenever $-1<\alpha<0$. A similar conclusion holds true replacing the operator $D$ by the partial derivative in the case $\alpha=-1/2$. Thus, since \eqref{assumpthree} assures \eqref{eqthreetim4} at $t=t_2$, \eqref{eqthreetim4} forces us  to have
\begin{equation}\label{eqthreetim5.1} 
\int_0^{t_2}\int_{\mathbb{R}} xu(x,\tau)\, dxd\tau=0.
\end{equation}
However, the above equality and  the continuity of the map $\tau \mapsto \int_{\mathbb{R}} xu(x,\tau)$ assures that there exists a time $\widetilde{t}\in(0,t_2)$ for which \eqref{eqthreetim1.1} is true. According to previous discussions, this completes the proof of Theorem \ref{Theothretimcondi}, whenever $-1<\alpha\leq 1/2$.

\begin{proof}[Proof of Claim \ref{claimthreetimes1}]
We only focus on the estimate for the homogeneous derivative in the $H^{3/2+\alpha}$-norm, that is, according to Theorem \ref{TheoSteDer}, we deal with the derivative $\mathcal{D}^{3/2+\alpha}_{\xi}$. Additionally, we will consider the restrictions $\alpha \in (-1,-1/2)$, while the case $\alpha=-1/2$ follows from the same analysis replacing the fractional derivative by the local operator $\partial_x$. 

We begin by giving a better estimate without incorporating the weight $\langle \xi \rangle^{-2}$ for the terms provided by the homogeneous part of the integral equation in \eqref{eqclaimthreetimes1}. Let $\widetilde{\phi}\in C^{\infty}_c(\mathbb{R})$ be such that $\phi\widetilde{\phi}=\phi$. We employ \eqref{prelimneq0}, \eqref{prelimneq}, \eqref{eqsteinderiweig1} in Proposition \ref{steinderiweighbet2} with $\gamma=1$, and \eqref{prelimneq2.1} in Lemma \ref{derivexp} to get
\begin{equation}\label{eqthreetim5.1.1}
\begin{aligned}
\|\mathcal{D}^{3/2+\alpha}_\xi &\big(\widetilde{\mathfrak{F}}_{0,1}(t,\xi,\widehat{u}_0)\big)\|_{L^2}\\
\lesssim & \|\mathcal{D}^{3/2+\alpha}(|\xi|^{\alpha}\widehat{u_0}\phi)(e^{it\xi|\xi|^{\alpha}}-1)\widetilde{\phi}\|_{L^2}+\||\xi|^{\alpha}\widehat{u_0}\phi\mathcal{D}^{3/2+\alpha}\big((e^{it\xi|\xi|^{\alpha}}-1)\widetilde{\phi}\big)\|_{L^2} \\
\lesssim &\big(\|\widehat{u_0}\phi\|_{L^{\infty}}+\|\partial_{\xi}(\widehat{u_0}\phi)\|_{L^{\infty}}+(1+\|\mathcal{D}_{\xi}^{3/2+\alpha}\widetilde{\phi}\|_{L^{\infty}})\|\partial_{\xi}\widehat{u_0}\|_{L^{\infty}}\big) \big(\||\xi|^{1/2+\alpha}\widetilde{\phi}\|_{L^2}+\||\xi|^{1+\alpha}\phi\|_{L^2}\big) \\
\lesssim & \|\langle x \rangle^{5/2+\alpha} u_0\|_{L^2},
\end{aligned}
\end{equation}
where we have also used $\|\partial_{\xi}\widehat{u_0}\|_{L^{\infty}} \lesssim \|J_{\xi}^{5/2+\alpha}\widehat{u_0}\|_{L^{2}}$ as $3/2<5/2+\alpha$. Next, by \eqref{prelimneq}, 
\begin{equation*}
\begin{aligned}
\|\mathcal{D}^{3/2+\alpha}_\xi \big(\widetilde{\mathfrak{F}}_{0,2}(t,\xi,\widehat{u}_0)\big)\|_{L^2}\lesssim & \|\mathcal{D}_{\xi}^{3/2+\alpha}(\sign(\xi)|\xi|^{1+\alpha}\widetilde{\phi})\int_0^1\big(\partial_{\xi}\widehat{u_0}(\sigma \xi)-\partial_{\xi}\widehat{u_0}(0)\big)\, d \sigma \, \phi\|_{L^2}\\
&+\||\xi|^{1+\alpha}\widetilde{\phi}\|_{L^{\infty}}\bigg(\|\int_0^1\big(\partial_{\xi}\widehat{u_0}(\sigma \xi)-\partial_{\xi}\widehat{u_0}(0)\big)\, d \sigma \,  \phi\|_{L^2}\\
&\hspace{0.2cm}+\|D_{\xi}^{3/2+\alpha}\Big(\int_0^1\big(\partial_{\xi}\widehat{u_0}(\sigma \xi)-\partial_{\xi}\widehat{u_0}(0)\big)\, d \sigma \, \phi \Big) \|_{L^2}\bigg) \\
=:& \mathcal{I}_1+\mathcal{I}_2+\mathcal{I}_3.
\end{aligned}
\end{equation*}
By Sobolev embedding $C^{1,\gamma}(\mathbb{R})\hookrightarrow H^{5/2+\alpha}(\mathbb{R})$ for any $0<\gamma \leq 1+\alpha$, we find
\begin{equation}\label{eqthreetim1.2}
\begin{aligned}
\Big|\int_0^1\big(\partial_{\xi}\widehat{u_0}(\sigma \xi)-\partial_{\xi}\widehat{u_0}(0)\big)\, d \sigma \Big| \lesssim \|\widehat{u_0}\|_{H^{5/2+\alpha}}|\xi|^{1+\alpha}.
\end{aligned}
\end{equation}
The above inequality and Proposition \ref{steinderiweighbet} show
\begin{equation}
\begin{aligned}
\mathcal{I}_1+\mathcal{I}_2 &\lesssim  \|\widehat{u_0}\|_{H^{5/2+\alpha}}\big(\|\mathcal{D}_{\xi}^{3/2+\alpha}(\sign(\xi)|\xi|^{1+\alpha}\widetilde{\phi})|\xi|^{1+\alpha}\phi\|_{L^2}+\||\xi|^{1+\alpha}\phi\|_{L^2}\big)\\
&\sim \|\langle x \rangle^{5/2+\alpha}u_0\|_{L^2}.
\end{aligned}
\end{equation}
Applying the embedding $L^{\infty}(\mathbb{R})\hookrightarrow H^{(1/2)^{+}}(\mathbb{R})$ and Lemma \ref{lemmacomm1}, we get
\begin{equation}\label{eqthreetim1.3}
\begin{aligned}
\mathcal{I}_3 \lesssim & \int_0^1 \|D_{\xi}^{3/2+\alpha}(\partial_{\xi}\widehat{u_0}(\sigma\xi)\phi)\|_{L^2}\, d\sigma +\|J^{(3/2)^{+}}\widehat{u_0}\|_{L^2}\|D_{\xi}^{3/2+\alpha}\phi\|_{L^2}\\
\lesssim & \int_0^1 \big(\|[D_{\xi}^{3/2+\alpha},\phi](\partial_{\xi}\widehat{u_0}(\sigma\xi))\|_{L^2}+\sigma^{3/2+\alpha}\|\phi (D_{\xi}^{3/2+\alpha}\partial_{\xi}\widehat{u_0})(\sigma\xi))\|_{L^2}\big)\, d\sigma \\
&+\|J^{(3/2)^{+}}\widehat{u_0}\|_{L^2}\|D_{\xi}^{3/2+\alpha}\phi\|_{L^2} \\
\lesssim & \|D_{\xi}^{3/2+\alpha}\phi\|_{L^{2}}\|\partial_{\xi}\widehat{u_0}\|_{L^{\infty}}+\big(\int_0^1 \sigma^{1+\alpha}\, d\sigma \big) \|\phi\|_{L^{\infty}}\|D_{\xi}^{3/2+\alpha}\partial_{\xi}\widehat{u_0}\|_{L^2}  +\|J^{(3/2)^{+}}\widehat{u_0}\|_{L^2} \\
\lesssim & \|\langle x \rangle^{5/2+\alpha}u_0\|_{L^2}.
\end{aligned}
\end{equation}
This completes the study of $\widetilde{\mathfrak{F}}_{0,2}(t,\xi,\widehat{u_0})$. The estimate for $\widetilde{\mathfrak{F}}_{0,4}(t,\xi,\widehat{u_0})$ is obtained by properties \eqref{prelimneq} and \eqref{prelimneq1}, and the fact that $|\xi|^{\alpha}e^{it\xi|\xi|^{\alpha}}(1-\phi)$ is bounded with bounded derivatives. Since this is also true for $|\xi|^{\alpha}e^{it\xi|\xi|^{\alpha}}(1-\phi)$, the estimate for $\widetilde{\mathfrak{F}}_{0,7}(t,\xi,\widehat{u_0})$ follows in a similar fashion. 

Now, as we have used in  \eqref{eqthreetim1.2}, the embedding $C^{1,\gamma}(\mathbb{R})\hookrightarrow H^{5/2+\alpha}(\mathbb{R})$, $0<\gamma\leq 1+\alpha$ allows us to deduce
\begin{equation}
\begin{aligned}
\big|\partial_{\xi}\widehat{u_0}(\xi)-\partial_{\xi}\widehat{u_0}(0)\big| \lesssim \|\widehat{u_0}\|_{H^{5/2+\alpha}}|\xi|^{1+\alpha}.
\end{aligned}
\end{equation}
Then, we can adapt previous arguments, employing the above inequality and \eqref{prelimneq2.1} in Lemma \ref{derivexp} to obtain the desired estimate for $\widetilde{\mathfrak{F}}_{0,5}(t,\xi,\widehat{u_0})$.

The estimate for the factors arising from the integral term are obtained by writing $\widehat{uu_x}(\xi)=\frac{i\xi}{2}\widehat{u^{2}}$, and incorporating the negative weight $\langle \xi \rangle^{-2}$. Since these estimates follow by repeated arguments such as in \eqref{eqthreetim1.4.1}, we omit these computations. 
\end{proof}


\subsection{\texorpdfstring{Case $0<\alpha\leq 1/2$}{}}

We notice that when $0<\alpha \leq 1/2$, the maximum decay parameter satisfies $2<5/2+\alpha \leq 3$, so we consider the integral equation obtained after applying two derivatives to \eqref{twoeq2}. This motives us to introduce the following notation:
\begin{equation}
\begin{aligned}
\mathfrak{F}_{1,1}(t,\xi,f)=&\partial_{\xi}^2(it\xi|\xi|^{\alpha})(e^{it\xi|\xi|^{\alpha}}-1)f(\xi)\phi, \hspace{0.1cm} \mathfrak{F}_{1,2}(t,\xi,f)=\partial_{\xi}^2(it\xi|\xi|^{\alpha}) \xi \big(\int_0^1 (\partial_{\xi}f(\sigma \xi)-\partial_{\xi}f(0))\, d \sigma\big)\phi, \\
\mathfrak{F}_{1,3}(t,\xi,f)=&\big(\partial_{\xi}^2(it\xi|\xi|^{\alpha})\xi+ 2\partial_{\xi}(it\xi|\xi|^{\alpha})\big) \partial_{\xi}f(0) \, \phi, \\
\mathfrak{F}_{1,4}(t,\xi,f)=&\partial_{\xi}^2(it\xi|\xi|^{\alpha})e^{it\xi|\xi|^{\alpha}}f(\xi)(1-\phi),  \hspace{0.3cm} \mathfrak{F}_{1,5}(t,\xi,f)=\big(\partial_{\xi}(it\xi|\xi|^{\alpha})\big)^2e^{it\xi|\xi|^{\alpha}}f(\xi), \\  
\mathfrak{F}_{1,6}(t,\xi,f)=&2\partial_{\xi}(it\xi|\xi|^{\alpha})( e^{it\xi|\xi|^{\alpha}}-1)\partial_{\xi}f(\xi)\phi,  \hspace{0.3cm} \mathfrak{F}_{1,7}(t,\xi,f)=2\partial_{\xi}(it\xi|\xi|^{\alpha})( \partial_{\xi}f(\xi)-\partial_{\xi}f(0))\phi,  \\
\mathfrak{F}_{1,8}(t,\xi,f)=&2\partial_{\xi}(it\xi|\xi|^{\alpha})e^{it\xi|\xi|^{\alpha}}\partial_{\xi}f(\xi)(1-\phi), \hspace{0.3cm} \mathfrak{F}_{1,9}(t,\xi,f)=e^{it\xi|\xi|^{\alpha}}\partial_{\xi}^2f(\xi)\phi.
\end{aligned}
\end{equation}
Then, we find
\begin{equation}\label{eqthreetim5.2}
\partial_{\xi}^2 \widehat{u}(\xi,t)=\sum_{j=1}^{9}\Big(\mathfrak{F}_{1,j}(t,\xi,\widehat{u_0})-\int_0^t \mathfrak{F}_{1,j}(t-\tau,\xi,\widehat{uu_x}) \, d\tau \Big).
\end{equation}

\begin{claim}\label{eqclaimthreetimes2}
Let $((5+2\alpha)(12-3\alpha))/(4(10-3\alpha))<r<5/2+\alpha$, and $s\geq \{s_{\alpha}^{+},\alpha r\}$. If $u\in C([0,T];\dot{Z}_{s,r}(\mathbb{R}))$ with $u_0 \in \dot{Z}_{s,3/2+\alpha}(\mathbb{R})$, then it holds
\begin{equation}
\langle \xi \rangle^{-4}\sum_{\substack{j=1 \\ j\neq 3}}^9 \big( \mathfrak{F}_{1,j}(t,\xi,\widehat{u}_0)-\int_0^t \mathfrak{F}_{1,j}(t-\tau,\xi,\widehat{uu_x})\, d\tau \big) \in L^{\infty}([0,T];H^{1/2+\alpha}_{\xi}(\mathbb{R})).
\end{equation}
\end{claim}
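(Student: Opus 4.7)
\textbf{Proof proposal for Claim \ref{eqclaimthreetimes2}.} The plan is to mimic the templates used in Claims \ref{twotimclaim2} and \ref{claimthreetimes1}, lifted one derivative higher. By property \eqref{propeweigh} (extended to the exponent $1/2+\alpha$ and the weight $\langle\xi\rangle^{-4}$, which follows from \eqref{prelimneq} and \eqref{prelimneq1}), and by Theorem \ref{TheoSteDer}, it is enough to control the $L^2$-norms of $\mathcal{D}_{\xi}^{1/2+\alpha}\bigl(\langle\xi\rangle^{-4}\mathfrak{F}_{1,j}(t,\xi,\widehat{u_0})\bigr)$ and $\int_0^t\|\mathcal{D}_{\xi}^{1/2+\alpha}(\langle\xi\rangle^{-4}\mathfrak{F}_{1,j}(t-\tau,\xi,\widehat{uu_x}))\|_{L^2}\,d\tau$ for $j\in\{1,2,4,5,6,7,8,9\}$. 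The weight $\langle\xi\rangle^{-4}$ is used only to tame the quadratic growth of $\partial_\xi^2(it\xi|\xi|^{\alpha})$ and $(\partial_\xi(it\xi|\xi|^{\alpha}))^2$ at high frequencies; after a fractional Leibniz distribution these factors become harmless and one is left with estimates analogous to those already carried out.

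For the homogeneous contributions, first I would use $\widehat{u_0}(0)=0$ together with $\widehat{u_0}\in H^{5/2+\alpha}_{\xi}(\mathbb{R})\hookrightarrow C^{1,1/2+\alpha}(\mathbb{R})$ to obtain the two key pointwise bounds
\begin{equation*}
|\widehat{u_0}(\xi)|\lesssim |\xi|\,\|\widehat{u_0}\|_{H^{5/2+\alpha}_{\xi}},\qquad \Big|\int_0^1\big(\partial_{\xi}\widehat{u_0}(\sigma\xi)-\partial_{\xi}\widehat{u_0}(0)\big)d\sigma\Big|\lesssim |\xi|^{1/2+\alpha}\,\|\widehat{u_0}\|_{H^{5/2+\alpha}_{\xi}}.
\end{equation*}
These absorb the singular factors $|\xi|^{\alpha-1}$ from $\partial_\xi^2(it\xi|\xi|^{\alpha})$ in $\mathfrak{F}_{1,1}$ and $\mathfrak{F}_{1,2}$ so that $\mathcal{D}_{\xi}^{1/2+\alpha}$ can then be distributed via \eqref{prelimneq} and estimated with Proposition \ref{steinderiweighbet} (for the weight piece) and Lemma \ref{derivexp} applied to $e^{it\xi|\xi|^{\alpha}}-1$ combined with $|\xi|^{1+\alpha}$. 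The terms $\mathfrak{F}_{1,4}$, $\mathfrak{F}_{1,5}$, $\mathfrak{F}_{1,8}$, $\mathfrak{F}_{1,9}$ are either supported away from the origin — in which case $\langle\xi\rangle^{-4}\,\partial_{\xi}^{k}(it\xi|\xi|^{\alpha})^{l}e^{it\xi|\xi|^{\alpha}}(1-\phi)$ and its derivative are bounded, so only \eqref{prelimneq}, \eqref{prelimneq1}, and the bounds $\|\partial_{\xi}\widehat{u_0}\|_{L^{\infty}}\lesssim \|\widehat{u_0}\|_{H^{(3/2)^{+}}_{\xi}}$, $\|\partial_{\xi}^{2}\widehat{u_0}\|_{L^{2}}\lesssim \|\langle x\rangle^{2}u_0\|_{L^2}$ are needed — or involve $\partial_{\xi}^{2}\widehat{u_0}\phi$, handled as in \eqref{eqthreetim1.3} through commutators with the cutoff. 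The terms $\mathfrak{F}_{1,6}$ and $\mathfrak{F}_{1,7}$ mix the two regimes and are treated by the same device as $\mathfrak{F}_{1,1}$ and $\widetilde{\mathfrak{F}}_{0,5}$, subtracting $\partial_\xi\widehat{u_0}(0)$ and exploiting Hölder regularity.

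For the inhomogeneous (integral) contributions I would replace $\widehat{uu_x}=\tfrac{i\xi}{2}\widehat{u^2}$ and proceed as in the analysis of $\widetilde{\mathfrak{F}}_{0,j}(t-\tau,\xi,\tfrac{i\xi}{2}\widehat{u^2})$ in the proof of Claim \ref{claimthreetimes1}, noting that the extra factor $\xi$ already cancels one power of $|\xi|^{\alpha-1}$ at the origin. All local norms eventually reduce to an estimate of the form $\int_0^T\|\langle x\rangle^{5/2+\alpha}u^2(\tau)\|_{L^2}\,d\tau$, which by Sobolev embedding $L^4(\mathbb{R})\hookrightarrow H^{1/4}(\mathbb{R})$ and the complex interpolation Lemma \ref{complexinterpo} is bounded by
\begin{equation*}
\Big\|\langle x\rangle^{\frac{(5+2\alpha)(12-3\alpha)}{4(10-3\alpha)}}u\Big\|_{L^{2}}^{2\frac{10-3\alpha}{12-3\alpha}}\big\|J^{3/2-3\alpha/8}u\big\|_{L^{2}}^{\frac{4}{12-3\alpha}},
\end{equation*}
which is exactly what the hypotheses $((5+2\alpha)(12-3\alpha))/(4(10-3\alpha))<r<5/2+\alpha$ and $s\geq\max\{s_{\alpha}^{+},\alpha r\}$ are designed to accommodate.

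The main obstacle will be $\mathfrak{F}_{1,2}(t,\xi,\widehat{u_0})$ and its integral analogue, because the factor $\partial_\xi^2(it\xi|\xi|^{\alpha})\sim |\xi|^{\alpha-1}$ is not square integrable at the origin when $\alpha\le 1/2$, and $\mathcal{D}^{1/2+\alpha}_{\xi}$ of a product $|\xi|^{\alpha-1}\psi(\xi)$ cannot be distributed naively. My strategy will be to invoke Proposition \ref{steinderiweighbet2} \eqref{eqsteinderiweig1} with $\gamma=1/2+\alpha$ (so that $\psi\in C^{0,\gamma}(\mathbb{R})$ with $\psi(0)=0$) to compute $\mathcal{D}^{1/2+\alpha}\bigl(|\xi|^{\alpha-1}\int_0^1(\partial_\xi\widehat{u_0}(\sigma\cdot)-\partial_\xi\widehat{u_0}(0))d\sigma\bigr)$, and to verify that the resulting decay rate $G_{\beta,\gamma,\theta}$ is integrable on $\supp\phi$ against the remaining factors. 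Once this delicate endpoint estimate is in place, the rest of the argument is a routine, if lengthy, book-keeping of the terms above.
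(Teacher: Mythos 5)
Your overall architecture — reduce to $\mathcal{D}_{\xi}^{1/2+\alpha}$ via Theorem \ref{TheoSteDer}, use $\widehat{u_0}(0)=0$ and Sobolev/H\"older embeddings for the homogeneous terms, write $\widehat{uu_x}=\tfrac{i\xi}{2}\widehat{u^2}$ and close with the $L^4\hookrightarrow H^{1/4}$ embedding plus Lemma \ref{complexinterpo} for the Duhamel terms — is exactly the paper's, and the final interpolation exponent you display is the one that dictates the hypotheses on $r$ and $s$. The commutator-with-cutoff device for $\mathcal{D}_{\xi}^{1/2+\alpha}(\partial_{\xi}\widehat{u_0}(\sigma\xi)\phi)$ and the treatment of the $(1-\phi)$-supported terms also match.

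However, your declared ``main obstacle'' and the strategy you propose for it contain a genuine gap. The term $\mathfrak{F}_{1,2}$ is \emph{defined} with an extra factor of $\xi$ multiplying $\partial_{\xi}^2(it\xi|\xi|^{\alpha})$, and $\partial_{\xi}^2(it\xi|\xi|^{\alpha})\,\xi=\alpha(1+\alpha)\,it|\xi|^{\alpha}$: the singularity $|\xi|^{\alpha-1}$ you are worried about is cancelled algebraically before any harmonic analysis is needed. This one-line observation is how the paper proceeds; combined with the Lipschitz bound $\big|\int_0^1(\partial_{\xi}\widehat{u_0}(\sigma\xi)-\partial_{\xi}\widehat{u_0}(0))\,d\sigma\big|\lesssim|\xi|\,\|\partial_{\xi}^2\widehat{u_0}\|_{L^{\infty}}$ (available since $\alpha>0$ gives $H^{5/2+\alpha}_{\xi}\hookrightarrow C^{1,1}$ locally), the factor $|\xi|\,\mathcal{D}_{\xi}^{1/2+\alpha}(|\xi|^{\alpha}\widetilde{\phi})\sim|\xi|^{1/2}$ is square integrable and the term closes. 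By contrast, your route treats $|\xi|^{\alpha-1}$ as a standalone prefactor of $\psi(\xi)=\int_0^1(\partial_{\xi}\widehat{u_0}(\sigma\xi)-\partial_{\xi}\widehat{u_0}(0))\,d\sigma$ and invokes Proposition \ref{steinderiweighbet2} \eqref{eqsteinderiweig1} with $\gamma=1/2+\alpha$; but that proposition requires $\theta<\gamma$, and here $\theta=1/2+\alpha=\gamma$, so the hypothesis fails. Even taking the admissible Lipschitz choice $\gamma=1$ (for $\alpha<1/2$), one gets $\gamma-\beta-\theta=1-(1-\alpha)-(1/2+\alpha)=-1/2$, hence $G_{\beta,\gamma,\theta}(\eta)\sim|\eta|^{-1/2}$, which is \emph{not} in $L^2$ near the origin — and having folded the $\xi$ into the ``singular prefactor'' you have no leftover power of $|\xi|$ to absorb it. So the endpoint estimate as you formulate it cannot close; the repair is the elementary cancellation above. (A smaller organizational slip: $\mathfrak{F}_{1,5}$ carries no cutoff and its low-frequency piece $|\xi|^{2\alpha}\phi$ must be handled with Proposition \ref{steinderiweighbet}, not dismissed as supported away from the origin.)
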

We omit the proof of Claim \ref{eqclaimthreetimes2} as it follows by rather similar arguments developed in the proof of Claim \ref{claimthreetimes1}, we just need to perform simple modification to treat with dispersions $0<\alpha \leq 1/2$. Now, given that $\partial_{\xi}^2(it\xi|\xi|^{\alpha})\xi+ 2\partial_{\xi}(it\xi|\xi|^{\alpha})=it(1+\alpha)(2+\alpha)|\xi|^{\alpha}$, we have
\begin{equation}\label{eqclaimthreetimes2.1}
\begin{aligned}
\mathfrak{F}_{1,3}(t,\xi,\widehat{u_0})&-\int_0^t \mathfrak{F}_{1,3}(t-\tau,\xi,\widehat{uu_x}) \, d\tau \\
&=i(1+\alpha)(2+\alpha)|\xi|^{\alpha}\big(t\partial_{\xi}\widehat{u_0}(0)-\int_0^t (t-\tau)\partial_{\xi}\widehat{uu_x}(0,\tau)\, d \tau \, \big) \phi.
\end{aligned}
\end{equation}
In virtue of \eqref{eqthreetim5.2}, Claim \ref{eqclaimthreetimes2}, \eqref{eqclaimthreetimes2.1} and arguing as in \eqref{twoeq3.3}, we get
\begin{equation}\label{eqthreetim6}
\begin{aligned}
\langle \xi \rangle^{-4}D^{1/2+\alpha}\big(\partial_{\xi}^2\widehat{u}(\xi,t)\big) &\in L^{2}(\mathbb{R}) \, \text{ if and only if } \\
&D^{1/2+\alpha}\big(\langle \xi \rangle^{-4}|\xi|^{\alpha}\big(t\partial_{\xi}\widehat{u_0}(0)-\int_0^t (t-\tau)\partial_{\xi}\widehat{uu_x}(0,\tau)\, d \tau \, \big) \phi \big) \in  L^2(\mathbb{R}),
\end{aligned}
\end{equation}
when $\alpha=-1/2$, we replace the operator $D$ by $\partial_x$ in the above statement. Thus, by adapting the arguments in \eqref{twoeq8.1}, it is not difficult to deduce
\begin{equation}\label{eqthreetim7}
\partial_{\xi}\big(\langle \xi \rangle^{-4}|\xi|^{1/2}\phi\big), \, D^{1/2+\alpha}\big(\langle \xi \rangle^{-4}|\xi|^{\alpha}\phi\big) \notin L^2(\mathbb{R}),
\end{equation}
whenever $0<\alpha \leq 1/2$. Consequently, hypothesis \eqref{assumpthree} implies that \eqref{eqthreetim6} holds at $t=t_2$. However, as a result of \eqref{eqthreetim7}, it must be the case
\begin{equation}
\begin{aligned}
0=&t_2\partial_{\xi}\widehat{u_0}(0)-\int_0^{t_2} (t_2-\tau)\partial_{\xi}\widehat{uu_x}(0,\tau)\, d \tau\\
=&-it_2\int_{\mathbb{R}} x u_0(x)\, dx-i\int_0^{t_2}(t_2-\tau)\big(\frac{d}{d\tau}\int_{\mathbb{R}} x u(x,\tau)\, dx \big)\, d\tau \\
=&-i\int_0^{t_2}\int_{\mathbb{R}} x u(x,\tau)\, d \tau,
\end{aligned}
\end{equation}
where reversing the Fourier variables, we have applied \eqref{threetimeide2} and integration by parts. By the arguments below \eqref{eqthreetim5.1}, the above expression leads to the desired conclusion.

\begin{remark}\label{remarkthree} 
As we mentioned at the beginning of the proof of Theorem \ref{Theothretimcondi}, by similar considerations developed above, it is not difficult to deduce similar statements as in \eqref{eqthreetim4} and \eqref{eqthreetim6} for the cases $\alpha \in (-1/2,0)\cup (1/2,1)$. More precisely, under the hypothesis of Theorem \ref{Theothretimcondi}, whereas $-1/2<\alpha<0$ one finds
\begin{equation}\label{remreq1}
\begin{aligned}
\langle \xi \rangle^{-2}(D^{1/2+\alpha}\partial_{\xi}^2\widehat{u})&(\xi,t)\in L^{2}(\mathbb{R}) \text{ if and only if } \, \int_0^{t_2} \int x u(x,\tau)\, dx\, d\tau=0.
\end{aligned}
\end{equation}
Assuming now that $1/2<\alpha<1$, it follows
\begin{equation}\label{remreq2}
\begin{aligned}
\langle \xi \rangle^{-6}(D^{\alpha-1/2}\partial_{\xi}^3\widehat{u})&(\xi,t)\in L^{2}(\mathbb{R}) \text{ if and only if } \, \int_0^{t_2} \int x u(x,\tau)\, dx\, d\tau=0.
\end{aligned}
\end{equation}

\end{remark}


\section{Proof of Theorem \ref{reductwotimes}} \label{main5}

Without loss of generality we may assume that
\begin{equation}
t_1=0 \hspace{0.5cm} \text{ and } \hspace{0.5cm} \int xu_0(x)\, dx=0.
\end{equation}
If $-1<\alpha \leq -1/2$, we collect \eqref{threetimeide2}, \eqref{eqthreetim4} and \eqref{eqthreetim5.1} to deduce
\begin{equation}\label{eqreductwotimes}
\begin{aligned}
 \widehat{u}(\cdot,t_2) &\in H^{5/2+\alpha}(\mathbb{R}) \hspace{0.2cm} \text{ implies } \\
&\langle \xi \rangle^{-2}(D^{3/2+\alpha}\partial_{\xi}\widehat{u})(\xi,t_2) \in L^2(\mathbb{R}), \hspace{0.2cm} \text{ this holds if and only if }   \\
&0=\int_0^{t_2} \int x u(x,\tau)\, dx\, d\tau=\frac{1}{2}\int_0^{t_2} \tau\left\|u(\tau)\right\|_{L^2}^2 d\tau=\frac{t_2^2}{4} \left\|u_0\right\|_{L^2}^2.
\end{aligned}
\end{equation}
In virtue of \eqref{eqthreetim6}, \eqref{remreq1} and \eqref{remreq2} in  Remark \ref{remarkthree},  a similar conclusion to \eqref{eqreductwotimes} can be drawn replacing the second line in \eqref{eqreductwotimes} by $\langle \xi \rangle^{-2}(D^{1/2+\alpha}\partial_{\xi}^2\widehat{u})(\xi,t_2)$ for $-1/2<\alpha<0$, or by $\langle \xi \rangle^{-4}(D^{1/2+\alpha}\partial_{\xi}^2\widehat{u})(\xi,t_2)$ for $0<\alpha \leq 1/2$, or by $\langle \xi \rangle^{-6}(D^{\alpha-1/2}\partial_{\xi}^3\widehat{u})(\xi,t_2)$ for $1/2<\alpha<1$. This completes the proof of the theorem.


\section{Proof of Theorem \ref{timesharp} } \label{main6}

We consider $\alpha \in (-1,1)\setminus \{0\}$. Let $u\in C([0,T];\dot{Z}_{s,(5/2+\alpha)^{-}}(\mathbb{R}))$ solution of \eqref{BDBO}, where $s>5/2$ for $-1<\alpha<0$, and $s>\max\{(3+2\alpha(5+2\alpha))/(1+\alpha(5+2\alpha)),(5/2+\alpha)\alpha+1\}$ for $0<\alpha<1$. We first claim 
\begin{equation}\label{eqtimesharp1}
    u\partial_{x}u \in L^{\infty}([0,T];Z_{s-1,5/2+\alpha}(\mathbb{R})).
\end{equation}
Indeed, we write $u \partial_x u=\frac{1}{2}\partial_x(u^2)$, so that taking $0<\epsilon\ll 1$, Lemma \ref{complexinterpo} yields
\begin{equation}\label{eqtimesharp2}
\begin{aligned}
\|\langle x \rangle^{5/2+\alpha} u \partial_x u\|_{L^2} &\lesssim \|\langle x \rangle^{3/2+\alpha} u^2\|_{L^2}+\|J(\langle x \rangle^{5/2+\alpha} u^2)\|_{L^2} \\
&\lesssim \|u\|_{H^{(1/2)^{+}}}\|\langle x \rangle^{3/2+\alpha} u\|_{L^2}+\|J^{a_{1,\alpha}}(u^2)\|_{L^2}^{\theta_{1,\alpha}} \|\langle x\rangle^{b_{1,\alpha}} u^2\|_{L^2}^{1-\theta_{1,\alpha}}\\
&\lesssim \|u\|_{Z_{s,(5/2+\alpha)^{-}}}^2+\|J^{a_{1,\alpha}}u\|_{L^2}^{2\theta_{1,\alpha}}\|\langle x\rangle^{b_{1,\alpha}} u^2\|_{L^2}^{1-\theta_{1,\alpha}}.
\end{aligned}
\end{equation} 
To complete the estimate of the above inequality, we apply complex interpolation to find
\begin{equation}\label{eqtimesharp3}
\begin{aligned}
\|\langle x\rangle^{b_{1,\alpha}} u^2\|_{L^2}\sim \|\langle x\rangle^{\frac{b_{1,\alpha}}{2}} u\|_{L^4}^2 &\lesssim  \|J^{1/4}\big(\langle x\rangle^{\frac{b_{1,\alpha}}{2}} u \big)\|_{L^2}^2 \\
&\lesssim  \|\langle x\rangle^{5/2+\alpha-\epsilon} u\|_{L^2}^{2(1-\theta_{2,\alpha})}\|J^{a_{2,\alpha}}u\|_{L^2}^{2\theta_{2,\alpha}}.
\end{aligned}
\end{equation}
Let us choose $a_{1,\alpha},$ $b_{1,\alpha}$, $b_{2,\alpha}$, $\theta_{1,\alpha}$ and $\theta_{2,\alpha}$ according to the values of $\alpha \in (-1,1)\setminus\{0\}$. For $-1<\alpha<0$, we define
\begin{equation}
\begin{aligned}
a_{1,\alpha}&=\theta_{1,\alpha}^{-1}=\frac{5}{2}\Big(\frac{5+2\alpha-2\epsilon}{5+2\alpha-5\epsilon} \Big), \qquad b_{1,\alpha}=\frac{5}{6}(5+2\alpha-2\epsilon), \\
a_{2,\alpha}&=\frac{3}{2}, \qquad \theta_{2,\alpha}=\frac{1}{6}.
\end{aligned}
\end{equation}
We have chosen $a_{2,\alpha}$ above in concordance with the local theory in $H^{s_1}(\mathbb{R})$, $s_1>3/2$. As a consequence, $a_{1,\alpha}$ is obtained by technical reasons that require the condition $s>5/2$. Next, if $0<\alpha<1$, we consider 
\begin{equation}
\begin{aligned}
a_{1,\alpha}&=\theta_{1,\alpha}^{-1}=\frac{(3+2\alpha(5+2\alpha))(5+2\alpha-2\epsilon)}{(1+\alpha(5+2\alpha))(5+2\alpha-2\epsilon)-2\epsilon(2+\alpha(5+2\alpha))},\\
 b_{1,\alpha}&=\frac{(2(2+\alpha(5+2\alpha))-1)(5+2\alpha-2\epsilon)}{2(2+\alpha(5+2\alpha))}, \\
a_{2,\alpha} &=\frac{2+\alpha(5+2\alpha)}{2}, \qquad \theta_{2,\alpha}=\frac{1}{2(2+\alpha(5+2\alpha))}.
\end{aligned}
\end{equation}
We can justify the choice $a_{2,\alpha}$ for $0<\alpha<1$ from the relation between regularity and decay which imposes $uu_x \in H^{\alpha(5/2+\alpha)}(\mathbb{R})$, so we look for solutions satisfying $u \in H^{\alpha(5/2+\alpha)+1}(\mathbb{R})$. In contrast, the choice of $a_{1,\alpha}$ for $0<\alpha<1$ is completely technical. Hence, plugging \eqref{eqtimesharp3} into \eqref{eqtimesharp2}, and taking $0<\epsilon \ll 1$ sufficiently small, the decay and regularity assumptions on the solution $u$ of \eqref{BDBO} complete the deduction of \eqref{eqtimesharp1}.

Consequently, we can employ \eqref{eqtimesharp1} to perform all the computations developed in the proof of Theorem \ref{Theothretimcondi} directly in the space $ H^{5/2+\alpha}_{\xi}(\mathbb{R})$  without incorporating the weight $\langle \xi \rangle^{-m}$, where $m=2$ for $-1<\alpha<0$, $m=4$ for $0<\alpha \leq 1/2$, and $m=6$ for $1/2<\alpha<1$ (see Remark \ref{remarkthree}). This in turn yields
\begin{equation}
\begin{aligned}
\widehat{u}(\cdot,t) &\in H^{5/2+\alpha}_{\xi}(\mathbb{R}) , \hspace{0.2cm} \text{ if and only if }   \\
&0=\int_0^{t} \int x_1 u(x,\tau)\, dx\, d\tau =\int_0^{t} \big(\int x_1u_0(x)\, dx +\frac{\tau}{2}\left\|u_0\right\|_{L^2}^2 \big)d\tau=0, \hspace{0.2cm} \text{ if and only if } \\
&0= t \left( \int x_1u_0(x)\, dx +\frac{t}{4}\left\|u_0\right\|_{L^2}^2 \right).
\end{aligned}
\end{equation}
The proof of the theorem is complete.


\section{Appendix}\label{appendix}

This section concerns the deduction of Proposition \ref{fractfirstcaldcomm} and Lemma \ref{lemmacomm3}. 

\subsection{Appendix A: Proof of Proposition \ref{fractfirstcaldcomm}}\label{appendA}

We observe
\begin{equation}\label{eqComwellprel1}
\begin{aligned}
 \left[\mathcal{H},g\right]&D_x^{\beta} f(x)=-i\int |\xi_2|^{\beta}\big(\sign(\xi_1+\xi_2)-\sign(\xi_2)\big)\widehat{g}(\xi_1)\widehat{f}(\xi_2)e^{ix\cdot(\xi_1+\xi_2)} \, d\xi_1 d\xi_2,
\end{aligned}
\end{equation}
then neglecting the null measure sets where $\xi_1+\xi_2=0$ or $\xi_2=0$, it follows that the integral in \eqref{eqComwellprel1} is not null only when $(\xi_1+\xi_2)\xi_2<0$, in order words, when $|\xi_2|<|\xi_1|$. Consequently, recalling the family of projector introduced in \eqref{projectors}, by Bony's paraproduct decomposition we may write
\begin{equation*}
    \begin{aligned}
 \left[\mathcal{H},g\right]D^{\beta} f=&\mathcal{H}\big(\sum_{j} P_j g P_{<j-2} D^{\beta}f  \big)-\sum_{j} P_j g P_{<j-2} \mathcal{H} D^{\beta}f +\mathcal{H} \big(\sum_{j} P_j g \widetilde{P}_{j} D^{\beta}f  \big) -\sum_{j} P_j g \widetilde{P}_j\mathcal{H} D^{\beta}f \\
 =:& \mathcal{L}_1+\mathcal{L}_2+\mathcal{L}_3+\mathcal{L}_4,
\end{aligned}
\end{equation*}
where we have set $P_{<j-2}f=\sum_{k<j-2}P_{k}f$ and $\widetilde{P}_jf=\sum_{|j-k|\leq 2}P_{k}f$. Denoting by $\mathcal{M}(\cdot)$ the Hardy-Littlewood maximal function, by the Littlewood-Paley inequality and Fefferman-Stein inequality (see \cite{FefermStein}), we have 
\begin{equation}\label{eqComwellprel2}
\begin{aligned}
\|\mathcal{L}_1\|_{L^p}&\lesssim \|\big(\sum_{|j-k|\leq 2} {P_k}(2^{-j\beta}P^{\ast}_j  D^{\beta}g P_{<j-2} D^{\beta}f)\big)_{l^2_k}\|_{L^p} \\
& \lesssim \|\big(\sum_{|j-k|\leq 2} \mathcal{M}(2^{-j\beta}P_j^{\ast}   D^{\beta}g P_{<j-2} D^{\beta}f)\big)_{l^2_k}\|_{L^p}\\
& \lesssim \|\big( 2^{-j\beta}P_j^{\ast}  D^{\beta}g P_{<j-2} D^{\beta}f\big)_{l^2_j}\|_{L^p},
\end{aligned}
\end{equation}
for some modified projection $P_j^{\ast}$ supported on $|\xi|\sim j$, and we have used $|P_kf(x) |\lesssim \mathcal{M}(f)(x)$. We compute
\begin{equation*}
\begin{aligned}
\big( 2^{-j\beta}P_j^{\ast}  P_j D^{\beta}g P_{<j-2} D^{\beta}f\big)_{l^2_j} \lesssim \big(\sum_{l<j-2} 2^{-\beta(j-l)}\mathcal{M}(D^{\beta}g)\mathcal{M}(P_l f)\big)_{l^2_j}\\
\lesssim \sum_{2<m} 2^{-\beta m} \big(\mathcal{M}(D^{\beta}g)\mathcal{M}(P_{j-m} f)\big)_{l^2_j},
\end{aligned}
\end{equation*}
so that this estimate and \eqref{eqComwellprel2} yield
\begin{equation*}
\begin{aligned}
\|\mathcal{L}_1\|_{L^p}\lesssim \|\big(\mathcal{M}(D^{\beta}g)\mathcal{M}(P_{j} f)\big)_{l^2_j}\|_{L^p} \lesssim \|\mathcal{M}(D^{\beta}g)\|_{L^{\infty}}\|\big(\mathcal{M}(P_jf)\big)_{l^2_j}\|_{L^p} \lesssim \|D^{\beta}g\|_{L^{\infty}}\|f\|_{L^p}.
\end{aligned}
\end{equation*}
Replacing $f$ by $\mathcal{H}f$, the above argument yields the desired estimate for $\mathcal{L}_2$. Next, we have
\begin{equation*}
\mathcal{L}_3+\mathcal{L}_4=\mathcal{H}\sigma(D^{\beta}g,f)+\sigma(D^{\beta}g,\mathcal{H}f),
\end{equation*}
where we define the operators
\begin{equation*}
\sigma(h_1,h_2)=\int e^{ix \cdot (\xi_1+\xi_2)}\sigma(\xi_1,\xi_2)\widehat{h_1}(\xi_1)\widehat{h_2}(\xi_2)\, d\xi_1 d \xi_2,
\end{equation*}
determined by a slightly abuse of notations through the symbol
\begin{equation*}
\sigma(\xi_1,\xi_2)=\sum_j \sum_{|k-j|\leq 2} |\xi_1|^{-\beta}|\xi_2|^{\beta}\psi(2^{-j}\xi_1)\psi(2^{-k}\xi_2).
\end{equation*}
Then it is not difficult to see $\sigma(\xi_1,\xi_2)\in C^{\infty}(\mathbb{R}\times \mathbb{R}\setminus \{(0,0)\})$ and that
\begin{equation*}
|\partial_{\xi_1}^{\gamma_1}\partial_{\xi_2}^{\gamma_2}\sigma(\xi_1,\xi_2)|\lesssim_{\gamma_1,\gamma_2}(|\xi_1|+|\xi_2|)^{-|\gamma_1|-|\gamma_2|} 
\end{equation*} 
for all multi-index $\gamma_1, \gamma_2$ and all $(\xi_1,\xi_2)\neq (0,0)$. Then by  Coifman-Meyer multilinear theorem (see, \cite{CoifmanMeyer,coifman}) we have
\begin{equation*}
\begin{aligned}
\|\mathcal{L}_3+\mathcal{L}_4\|_{L^p}\lesssim \|\sigma(D^{\beta}g,f)\|_{L^{p}}+\|\sigma(D^{\beta}g,\mathcal{H}f)\|_{L^{p}} \lesssim \|D^{\beta}g\|_{L^{\infty}}\|f\|_{L^p},
\end{aligned}
\end{equation*}
which completes the proof.

\subsection{Appendix B: Proof of Lemma \ref{lemmacomm3} }\label{appendB}

In virtue of Bony's paraproduct decomposition we may write
\begin{equation*}
\begin{aligned}
D^{\beta}\left[P^{\phi},f\right]D^{\gamma}g 
=&\sum_j D^{\beta}P^{\phi}(P_jf D^{\gamma} P_{<j-2}g)-D^{\beta}\big(P_jf P^{\phi}D^{\gamma}P_{<j-2}g\big), \\
&+\sum_j D^{\beta}P^{\phi}(P_jf D^{\gamma} \widetilde{P}_{j}g)-D^{\beta}\big(P_jf P^{\phi}D^{\gamma}\widetilde{P}_{j}g\big), \\
&+\sum_j D^{\beta}P^{\phi}(P_{<j-2}f D^{\gamma} P_{j}g)-D^{\beta}\big(P_{<j-2}f P^{\phi}D^{\gamma}P_j g\big) \\
=&\mathcal{L}(l,h)+\mathcal{L}(h,h)+\mathcal{L}(h,l),
\end{aligned}
\end{equation*}
where $\mathcal{L}(h,l)$, $\mathcal{L}(h,h)$ and $\mathcal{L}(l,h)$ denote the high-low, high-high and low-high iterations respectively.  The estimates for $\mathcal{L}(h,l)$ and $\mathcal{L}(h,h)$ are obtained by analyzing separately each term of the commutator and following similar ideas as in the proof of Proposition \ref{fractfirstcaldcomm}. We omit these computations. Let us deal with the low-high iteration $\mathcal{L}(l,h)$. Since
\begin{equation*}
\begin{aligned}
|\xi_1+\xi_2|^{\beta}|\xi_2|^{\gamma}(\phi(\xi_1+\xi_2)-\phi(\xi_2))= \int_0^1|\xi_1+\xi_2|^{\beta}|\xi_2|^{\gamma}\frac{d}{dx}\phi(\lambda\xi_1+\xi_2)\, d \lambda \, \xi_1,
\end{aligned}
\end{equation*}
we write
\begin{equation*}
\mathcal{L}(l,h)=-i\sigma_3(\partial_x f,g),
\end{equation*}
where the operator $\sigma_3$ is associated to the symbol
\begin{equation*}
\begin{aligned}
\sigma_3&(\xi_1,\xi_2)= \\
&\sum_j \Big(\int_0^1 \frac{|\xi_1+\xi_2|^{\beta}|\xi_2|^{\gamma}}{|\lambda \xi_1+\xi_2|^{\beta+\gamma}}\big(|\lambda \xi_1+\xi_2|^{\beta+\gamma}\frac{d}{dx}\phi(\lambda\xi_1+\xi_2) \big)\, d\lambda \Big)\psi_{<j-2}(\xi_1)\psi(\xi_2/2^{j}),
\end{aligned}
\end{equation*}
we set $\psi_{<j-2}(\cdot)=\sum_{k<j-2}\psi(\cdot/2^k)$. Thus, given that $\sigma_3(\xi_1,\xi_2)$ is supported in the region $|\xi_1|\ll |\xi_2|$ and that $\phi\in C^{\infty}_c(\mathbb{R})$ with $\phi(\xi)=1$ for $|\xi|\leq 1$, it is not difficult to verify that $\sigma_3$ satisfies the hypothesis of the Coifman-Meyer multilinear theorem. Consequently,
\begin{equation*}
\|\mathcal{L}(l,h)\|_{L^p}=\|\sigma_3(\partial_x f,g)\|_{L^p} \lesssim \|\partial_x f\|_{L^{\infty}}\|g\|_{L^p}.
\end{equation*}
The proof of the lemma is complete.


\section*{Acknowledgment}

The author gratefully acknowledges the many helpful suggestions of Prof. Felipe Linares. The author wishes to extend his gratitude to Prof. Jean-Claude Saut for pointing out part (c) in Remark \ref{remark3}.


\bibliographystyle{abbrv}
\bibliography{bibli}

\begin{thebibliography}{10}

\bibitem{Pava_2018}
J.~Angulo.
\newblock {Stability properties of solitary waves for fractional {KdV} and
  {BBM} equations}.
\newblock {\em Nonlinearity}, 31(3):920--956, 2018.

\bibitem{Arnesen}
M.~N. Arnesen.
\newblock {Existence of solitary-wave solutions to nonlocal equations}.
\newblock {\em Discrete \& Continuous Dynamical Systems - A}, 36(7):3483--3510,
  2016.

\bibitem{benjamin}
T.~B. Benjamin.
\newblock {Internal waves of permanent form in fluids of great depth}.
\newblock {\em J. Fluid Mech.}, 29(3):559--592, 1967.

\bibitem{Biello}
J.~Biello and J.~K. Hunter.
\newblock {Nonlinear Hamiltonian waves with constant frequency and surface
  waves on vorticity discontinuities}.
\newblock {\em Communications on Pure and Applied Mathematics}, 63(3):303--336,
  2010.

\bibitem{CoifmanMeyer}
R.~Coifman and Y.~Meyer.
\newblock {On Commutators of Singular Integrals and Bilinear Singular
  Integrals}.
\newblock {\em Transactions of the American Mathematical Society},
  212:315--331, 1975.

\bibitem{coifman}
R.~Coifman and Y.~Meyer.
\newblock {\em Au-del{\`a} des op{\'e}rateurs pseudo-diff{\'e}rentiels}.
\newblock Ast\'erisque 57, Soci{\'e}t{\'e} Math{\'e}matique de France, 1978.

\bibitem{DawsonMCPON}
L.~Dawson, H.~McGahagan, and G.~Ponce.
\newblock {On the Decay Properties of Solutions to a Class of Schr\"odinger
  Equations}.
\newblock {\em Proceedings of the American Mathematical Society},
  136(6):2081--2090, 2008.

\bibitem{FefermStein}
C.~Fefferman and E.~M. Stein.
\newblock {Some Maximal Inequalities}.
\newblock {\em American Journal of Mathematics}, 93(1):107--115, 1971.

\bibitem{FLinaPonceWeBO}
G.~Fonseca, F.~Linares, and G.~Ponce.
\newblock {The IVP for the Benjamin-Ono equation in weighted Sobolev spaces
  II}.
\newblock {\em Journal of Functional Analysis}, 262(5):2031 -- 2049, 2012.

\bibitem{FLinaPioncedGBO}
G.~Fonseca, F.~Linares, and G.~Ponce.
\newblock {The IVP for the dispersion generalized Benjamin-Ono equation in
  weighted Sobolev spaces}.
\newblock {\em Annales de l'Institut Henri Poincare (C) Non Linear Analysis},
  30(5):763 -- 790, 2013.

\bibitem{FonPO}
G.~Fonseca and G.~Ponce.
\newblock {The IVP for the Benjamin-Ono equation in weighted Sobolev spaces}.
\newblock {\em Journal of Functional Analysis}, 260(2):436--459, 2011.

\bibitem{Frank2013}
R.~L. Frank and E.~Lenzmann.
\newblock {Uniqueness of non-linear ground states for fractional Laplacians in
  $\mathbb{R}$}.
\newblock {\em Acta Mathematica}, 210(2):261--318, 2013.

\bibitem{ghosh2016caldern}
T.~Ghosh, M.~Salo, and G.~Uhlmann.
\newblock {The Calderón problem for the fractional Schrödinger equation}.
\newblock {\em Anal. PDE}, 13(2):455--475, 2020.

\bibitem{FRAC}
L.~Grafakos and S.~Oh.
\newblock {The Kato-Ponce Inequality}.
\newblock {\em Communications in Partial Differential Equations},
  39(6):1128--1157, 2014.

\bibitem{Herr_Ione_Keni_Koch}
S.~Herr, A.~D. Ionescu, C.~E. Kenig, and H.~Koch.
\newblock {A Para-Differential Renormalization Technique for Nonlinear
  Dispersive Equations}.
\newblock {\em Communications in Partial Differential Equations},
  35(10):1827--1875, 2010.

\bibitem{BuHil}
J.~K. Hunter, M.~Ifrim, D.~Tataru, and T.~K. Wong.
\newblock {Long time solutions for a Burgers-Hilbert equation via a modified
  energy method}.
\newblock {\em Proceedings of the American Mathematical Society},
  143(8):3407--3412, 2015.

\bibitem{Vera3}
V.~M. Hur.
\newblock {On the formation of singularities for surface water waves}.
\newblock {\em Communications on Pure \& Applied Analysis}, 11(4):1465--1474,
  2012.

\bibitem{Vera2}
V.~M. Hur.
\newblock {Wave breaking in the Whitham equation}.
\newblock {\em Advances in Mathematics}, 317:410 -- 437, 2017.

\bibitem{Hur_2014}
V.~M. Hur and L.~Tao.
\newblock {Wave breaking for the Whitham equation with fractional dispersion}.
\newblock {\em Nonlinearity}, 27(12):2937--2949, 2014.

\bibitem{Iorio1}
R.~J. I\'orio.
\newblock {On the Cauchy problem for the Benjamin-Ono equation}.
\newblock {\em Communications in Partial Differential Equations},
  11(10):1031--1081, 1986.

\bibitem{Iorio2}
R.~J. I\'orio.
\newblock {Unique continuation principles for the Benjamin-Ono equation}.
\newblock {\em Differential Integral Equations}, 16(11):1281--1291, 2003.

\bibitem{KP}
T.~Kato and G.~Ponce.
\newblock {Commutator estimates and the Euler and Navier-Stokes equations}.
\newblock {\em Communications on Pure and Applied Mathematics}, 41(7):891--907,
  1988.

\bibitem{KPVUNI1}
C.~Kenig, G.~Ponce, and L.~Vega.
\newblock {Uniqueness properties of solutions to the Benjamin-Ono equation and
  related models}.
\newblock {\em Journal of Functional Analysis}, 278(5):108396, 2020.

\bibitem{KDPVUNI2}
C.~E. Kenig, D.~Pilod, G.~Ponce, and L.~Vega.
\newblock {On the unique continuation of solutions to non-local non-linear
  dispersive equations}.
\newblock {\em Communications in Partial Differential Equations},
  45(8):872--886, 2020.

\bibitem{KPV1993}
C.~E. Kenig, G.~Ponce, and L.~Vega.
\newblock {Well-posedness and scattering results for the generalized
  korteweg-de vries equation via the contraction principle}.
\newblock {\em Communications on Pure and Applied Mathematics}, 46(4):527--620,
  1993.

\bibitem{KLEIN1}
C.~Klein, F.~Linares, D.~Pilod, and J.-C. Saut.
\newblock {On Whitham and Related Equations}.
\newblock {\em Studies in Applied Mathematics}, 140(2):133--177, 2018.

\bibitem{KLEIN2}
C.~Klein and J.-C. Saut.
\newblock {A numerical approach to blow-up issues for dispersive perturbations
  of Burgers’ equation}.
\newblock {\em Physica D: Nonlinear Phenomena}, 295-296:46 -- 65, 2015.

\bibitem{2020KleinSautwang}
C.~{Klein}, J.~C. {Saut}, and Y.~{Wang}.
\newblock {On the modified fractional Korteweg-de Vries and related equations}.
\newblock {\em arXiv:2010.05081}, 2020.

\bibitem{KdV}
D.~J. Korteweg and G.~de~Vries.
\newblock {On the change of form of long waves advancing in a rectangular
  canal, and on a new type of long stationary waves}.
\newblock {\em Philos. Mag.}, 39(240):422--443, 1895.

\bibitem{Dli}
D.~Li.
\newblock {On Kato-Ponce and Fractional Leibniz}.
\newblock {\em Revista Matem\'atica Iberoamericana}, 35(1):23--100, 2019.

\bibitem{Linares2014}
F.~Linares, D.~Pilod, and J.-C. Saut.
\newblock {Dispersive Perturbations of Burgers and Hyperbolic Equations I:
  Local Theory}.
\newblock {\em SIAM Journal on Mathematical Analysis}, 46(2):1505--1537, 2014.

\bibitem{linares2015}
F.~Linares, D.~Pilod, and J.-C. Saut.
\newblock {Remarks on the orbital stability of ground state solutions of fKdV
  and related equations}.
\newblock {\em Adv. Differential Equations}, 20(9/10):835--858, 2015.

\bibitem{Argen1}
A.~J. Mendez.
\newblock {On the propagation of regularity for solutions of the fractional
  Korteweg-de Vries equation}.
\newblock {\em Journal of Differential Equations}, 269(11):9051 -- 9089, 2020.

\bibitem{MOVPIL}
L.~Molinet, D.~Pilod, and S.~Vento.
\newblock {On well-posedness for some dispersive perturbations of Burgers'
  equation}.
\newblock {\em Annales de l'Institut Henri Poincar\'e C, Analyse non
  lin\'eaire}, 35(7):1719 -- 1756, 2018.

\bibitem{molin}
L.~Molinet, J.~Saut, and N.~Tzvetkov.
\newblock {Ill-Posedness Issues for the Benjamin-Ono and Related Equations}.
\newblock {\em SIAM Journal on Mathematical Analysis}, 33(4):982--988, 2001.

\bibitem{NahPonc}
J.~Nahas and G.~Ponce.
\newblock {On the Persistent Properties of Solutions to Semi-Linear
  Schr\"odinger Equation}.
\newblock {\em Communications in Partial Differential Equations},
  34(10):1208--1227, 2009.

\bibitem{naumkin}
P.~Naumkin and I.~A. Shishmar\~ev.
\newblock {\em Nonlinear nonlocal equations in the theory of waves}, volume 133
  of {\em Translations of Mathematical Monographs}.
\newblock American Mathematical Society, Providence, RI, Translated from the
  Russian manuscript by Boris Gommerstadt., 1994.

\bibitem{Ono}
H.~Ono.
\newblock {Algebraic solitary waves on stratified fluids}.
\newblock {\em J. Phys.Soc. Japan}, 39(4):1082--1091, 1975.

\bibitem{OscarWHBO}
O.~G. Ria\~no.
\newblock {The IVP for a higher dimensional version of the Benjamin-Ono
  equation in weighted Sobolev spaces}.
\newblock {\em Journal of Functional Analysis}, 279(8):108707, 2020.

\bibitem{OscarCapil}
O.~G. Riaño.
\newblock {Well-posedness for a two-dimensional dispersive model arising from
  capillary-gravity flows}.
\newblock {\em Journal of Differential Equations}, 280:1--65, 2021.

\bibitem{2020Sautwang}
J.-C. {Saut} and Y.~{Wang}.
\newblock {Global dynamics of small solutions to the modified fractional
  Korteweg-de Vries and nonlinear Schr{\"o}dinger equations}.
\newblock {\em arXiv:2011.02035}, 2020.

\bibitem{Jean_Wang}
J.-C. {Saut} and Y.~{Wang}.
\newblock {The wave breaking for Whitham-type equations revisited}.
\newblock {\em arXiv:2006.03803}, 2020.

\bibitem{2020Sautwang1}
J.-C. {Saut} and Y.~{Wang}.
\newblock {Long time behavior of the fractional Korteweg-de Vries equation with
  cubic nonlinearity}.
\newblock {\em Discrete \& Continuous Dynamical Systems-A}, 41(3):1133--1155,
  2021.

\bibitem{shrira_voronovich}
V.~I. Shrira and V.~V. Voronovich.
\newblock {Nonlinear dynamics of vorticity waves in the coastal zone}.
\newblock {\em Journal of Fluid Mechanics}, 326:181–203, 1996.

\bibitem{SteinThe}
E.~M. Stein.
\newblock {The characterization of functions arising as potentials}.
\newblock {\em Bull. Amer. Math. Soc.}, 67(1):102--104, 1961.

\bibitem{whitham2011linear}
G.~Whitham.
\newblock {\em "Linear and Nonlinear Waves," Reprint of the 1974 original Pure
  and Applied Mathematics}.
\newblock Wiley-Interscience Publication. John Wiley \& Sons, Inc., New York,
  1999.

\bibitem{Yafaev}
D.~Yafaev.
\newblock {Sharp Constants in the Hardy-Rellich Inequalities}.
\newblock {\em Journal of Functional Analysis}, 168(1):121--144, 1999.

\end{thebibliography}

\end{document}